\def\timenow{\@tempcnta\time
  \@tempcntb\@tempcnta
  \divide\@tempcntb60
  \ifnum10>\@tempcntb0\fi\number\@tempcntb
  \multiply\@tempcntb60
  \advance\@tempcnta-\@tempcntb
  :\ifnum10>\@tempcnta0\fi\number\@tempcnta}
\newtheorem{theo}{Theorem}[section]
\newtheorem{prop}[theo]{Proposition}
\newtheorem{lemme}[theo]{Lemma}
\newtheorem{remarque}[theo]{Remark}
\newtheorem{defi}[theo]{Definition}
\newtheorem{as}{Assumption}
\def\tA2{\tilde{A}({\tilde L_2})}
\def \tts2{\tilde \tau^+_2(  h_t/2 ))}
\title{General self-similarity properties for Markov processes and exponential functionals of L\'evy processes}
\author{Gr\'egoire V\'echambre}
\address{NYU-ECNU Institute of Mathematical Sciences at NYU Shanghai, 3663 Zhongshan Road North, Shanghai, 200062, China}
\email{ghv2@nyu.edu}
\subjclass[2010]{60G18, 60G51, 60J25.}
\keywords{Self-similar Markovian processes, L\'evy processes, Lamperti representation, L\'evy processes on Lie groups, exponential functionals of L\'evy processes}
\date{\today\ \`a \currenttime}
\begin{document} 

\maketitle

\begin{abstract}
Positive self-similar Markov processes (pssMp) are positive Markov processes that satisfy the scaling property and it is known that they can be represented as the exponential of a time-changed L\'evy process via Lamperti representation. In this work, we are interested in the following problem: what happens if we consider Markov processes in dimension $1$ or $2$ that satisfy self-similarity properties of a more general form than a scaling property ? Can they all be represented as a function of a time-changed L\'evy process ? If not, how can Lamperti representation be generalized ? We show that, not surprisingly, a Markovian process in dimension $1$ that satisfies self-similarity properties of a general form can indeed be represented as a function of a time-changed L\'evy process, which shows some kind of universality for the classical Lamperti representation in dimension $1$. However, and this is our main result, we show that a Markovian process in dimension $2$ that satisfies self-similarity properties of a general form is represented as a function of a time-changed exponential functional of a bivariate L\'evy process, and processes that can be represented as a function of a time-changed L\'evy process form a strict subclass. This shows that the classical Lamperti representation is not universal in dimension $2$. We briefly discuss the complications that occur in higher dimensions. In dimension $2$ we present an example, built from a self-similar fragmentation process, where our representation in term of an exponential functional of a bivariate L\'evy process appears naturally and has a nice interpretation in term of the self-similar fragmentation process. To prove the general representations in dimension $1$ and $2$, some of our arguments work in the context of a general state space and show that, under some conditions, we can exhibit a topological group structure on the state space of a Markov process that satisfies general self-similarity properties, which allows to write a Lamperti type representation for this process in term of a L\'evy process on the group. 
\end{abstract}


\pagestyle{myheadings}
\markboth{Right}{General self-similarity properties for Markov processes}

\section{Introduction}

We consider $X$, a strongly Markovian c\`ad-l\`ag process on a locally compact separable metric space $E$, and we will assume that $X$ satisfies some self-similarity properties. For $y \in E$, let us denote $X_y$ for the process $X$ starting from $y$. We denote by $P_y$ the law of the process $X_y$. 

We do not assume that $X$ is a Feller process (this restriction would not be natural for the processes we are considering) so, even if for simplicity we do not authorize instantaneous killing for $X_y$ (except in the example given in Section \ref{prelres} below), $X_y$ may have finite life-time if, in finite time, every compact subset of $E$ containing $y$ has been left by $X_y$. 
Let us denote by $\zeta(X_y)$ the life-time of $X_y$, that can possibly be finite or infinite. In the case where $\mathbb{P}(\zeta(X_y) < +\infty) > 0$ we consider the usual compactification $E \cup \{ \Delta \}$ of $E$ by adding a cemetery point $\Delta$ of which the neighborhoods are $K^c \cup \{ \Delta \}$, where $K$ goes along compact sets of $E$, and we put $X_y(t) = \Delta$ for all $t \geq \zeta(X_y)$. Because of the definition of $\zeta(X_y)$, we have that $\Delta$ is an absorbing state for $X_y$ and can only be reached continuously. Thanks to this, we systematically consider that our processes are defined on the time interval $[0, +\infty[$. Note that any homeomorphism $f$ of $E$ is naturally extended to an homeomorphism of $E \cup \{ \Delta \}$ by imposing $f(\Delta)=\Delta$. When we compose our processes by homeomorphisms on $E$ we implicitly consider the extension of the homeomorphisms to $E \cup \{ \Delta \}$, if necessary. 


Classically, a strongly Markovian c\`ad-l\`ag process $X$ on $]0, +\infty[$ is called a \textit{positive self-similar Markovian process} (pssMp) if there is $\alpha \in \mathbb{R}$ such that 
\begin{eqnarray}
\forall y > 0, \lambda > 0, \ \left ( X_{\lambda y} (t), \ t \geq 0 \right ) \overset{\mathcal{L}}{=} \left ( \lambda X_y(\lambda^{-\alpha } t), \ t \geq 0 \right ), \label{self-simclassic}
\end{eqnarray}
that is, the process starting from $\lambda y$ is equal in law to a scaled version of the process starting from $y$. In the above we use the convention $\lambda \Delta = \Delta$. These processes appear as the scaling limits of Markov processes (see Lamperti \cite{Lamperti1}) and in many examples of processes built from stable L\'evy processes (stable L\'evy processes conditioned to stay positive, stable L\'evy processes killed when entering $]-\infty, 0[$, norm of an isotropic $d$-dimensional stable L\'evy processes,...). A famous result of Lamperti \cite{Lamperti2} characterizes and gives a representation, the so-called \textit{Lamperti representation}, of a pssMp as the exponential of a time-changed L\'evy process. 

In the recent decades, some generalizations to pssMp's have been introduced. \textit{Real self-similar Markovian processes} (rssMp's) are strongly Markovian c\`ad-l\`ag processes on $\mathbb{R}$ for which $0$ is an absorbing state and that satisfy \eqref{self-simclassic} for all $y \in \mathbb{R} \setminus \{0\}, \lambda > 0$. It is known that a rssMp can be represented in term of a time-changed Markov additive process. This representation is called Lamperti-Kiu representation. It is attributed to Kiu \cite{KIU1980183}, a complete proof is given in \cite{chaumont2013}. 

More generally, \textit{self-similar Markovian processes on $\mathbb{R}^d$}, commonly denoted ssMp's, are strongly Markovian c\`ad-l\`ag processes on $\mathbb{R}^d$ that satisfy \eqref{self-simclassic} for all $y \in \mathbb{R}^d \setminus \{0\}, \lambda > 0$. The generalized Lamperti-Kiu representation allows to express a ssMp in term of a time-changed Markov additive process that is a little more complicated than the one in the case of a rssMp. This representation is attributed to Kiu \cite{KIU1980183}, a complete proof is given in \cite{alili2017}, see also \cite{Graversen1986}. See Pardo, Rivero \cite{pardosurvey} and Kyprianou \cite{5e0b08215cf94b4f9738d908edd90e6f} for recent and complete accounts on pssMp's and rssMp's (for \cite{pardosurvey}), and on pssMp's, rssMp's and ssMp's (for \cite{5e0b08215cf94b4f9738d908edd90e6f}). 


\subsection{More general self-similarity properties} \label{introgssmp}

We are interested in self-similarity properties that are the most general possible and in characterizing and representing the processes that satisfy such self-similarity properties. \eqref{self-simclassic} says that the process starting from the point $\lambda y$ is equal in law to the process starting from the point $y$, linearly time-changed by $(t \longmapsto \lambda^{-\alpha } t)$ and space-changed by composition of the linear function $(z \longmapsto \lambda z)$. It is an open problem to determine the processes that satisfy self-similarity properties given by more general space-time-changes. The linearity of the time change can not be relaxed in the context of homogenous Markov processes. In order to define more general self-similarity properties, we thus allow arbitrary time-changes as long as they are linear and arbitrary space-changes. We thus define a general notion of self-similarity in the following way: 

\begin{defi} [general self-similarity properties, invariance components] \label{self-simoursens}

We say that a strongly Markovian c\`ad-l\`ag process $X$ on a locally compact separable metric space $E$ satisfies \textbf{general self-similarity} if for some point $y_0 \in E$ we have 
\begin{eqnarray}
\forall y \in E, \ \left ( X_y (t), \ t \geq 0 \right ) \overset{\mathcal{L}}{=} \left ( f_y \left ( X_{y_0}(c_y t) \right ), \ t \geq 0 \right ), \label{self-simoursenseq}
\end{eqnarray}
where $(f_y, \ y \in E)$ is a family of homeomorphisms of $E$ such that $(y,x) \longmapsto f_y(x)$ and $(y,x) \longmapsto f_y^{-1}(x)$ are continuous from $E \times E$ to $E$, and where $(c_y, \ y \in E)$ is a family of positive constants such that $y \longmapsto c_y$ is continuous from $E$ to $\mathbb{R}^*_+$. Then, we say that $((f_y, c_y), \ y \in E)$ is a family of \textbf{invariance components} for $X$, relatively to the reference point $y_0$. 

\end{defi}

\begin{remarque} \label{cemetery}
Note that, in the above definition, for the case where the processes involved reach the cemetery point $\Delta$ in finite time, we have assumed that \eqref{self-simoursenseq} holds for the extension of $f_y(.)$ to $E \cup \{ \Delta \}$ defined by $f_y(\Delta) = \Delta$. It is not difficult to see that the extensions to $E \times E \cup \{ \Delta \}$ of the functions $(y,x) \longmapsto f_y(x)$ and $(y,x) \longmapsto f_y^{-1}(x)$ are continuous. 
\end{remarque}


\begin{remarque} \label{continuity}
The continuity of the applications $(y,x) \longmapsto f_y(x)$ and $(y,x) \longmapsto f_y^{-1}(x)$, assumed in Definition \ref{self-simoursens}, is equivalent to the continuity of the applications $y \longmapsto f_y(.)$ and $y \longmapsto f_y^{-1}(.)$ from $E$ to $\mathcal{C}^0(E, E)$ (respectively $E$ to $\mathcal{C}^0(E \cup \{ \Delta \}, E \cup \{ \Delta \})$ for the case where it is needed to consider the extension to $E \cup \{ \Delta \}$), the set of continuous functions from $E$ to $E$ (respectively from $E \cup \{ \Delta \}$ to $E \cup \{ \Delta \}$) equipped with the natural topology of uniform convergence on every compact sets of $E$ (respectively uniform convergence on $E \cup \{ \Delta \}$). 
\end{remarque}

As mentioned above, natural examples of ssMp's can be built from stable L\'evy processes. For example the norm of an isotropic stable L\'evy process in $\mathbb{R}^d$ is a pssMp, a stable L\'evy process in $\mathbb{R}^d$ is itself a ssMp, etc... For processes satisfying Definition \ref{self-simoursens}, natural examples can be built from exponential functionals of L\'evy processes. For example, let $(\xi_1, \xi_2, \xi_3)$ be a L\'evy process in $\mathbb{R}^3$, and for any $y = (y_1, y_2, y_3) \in \mathbb{R}^3$, let us define 
\[ X_y(t) := \left( y_1 + \xi_1(t), \ y_2 + e^{y_1} \int_0^t e^{\xi_1(s-)} d \xi_2(s), \ y_3 + e^{2 y_1} \int_0^t e^{2\xi_1(s-)} d \xi_3(s) \right). \]
Then $X$ satisfies Definition \ref{self-simoursens} in $\mathbb{R}^3$ and a family of invariance components $((f_y, c_y), \ y \in \mathbb{R}^3)$, with respect to the reference point $(0,0,0)$, is given by 
\begin{eqnarray}
\forall x = \left( x_1, x_2, x_3 \right) \in \mathbb{R}^3, \ f_y(x) = \left( y_1 + x_1, \ y_2 + e^{y_1} x_2, \ y_3 + e^{2y_1} x_3 \right) \ \ \ \text{and} \ \ \ c_y = 1. \label{exampleinvcomp}
\end{eqnarray}
As we will see, exponential functionals of L\'evy processes play a very important role in the representation of processes satisfying Definition \ref{self-simoursens} and, at least in dimension $2$, they even allow to represent all of them, provided that they satisfy some regularity conditions. More generally, it seems that the methods that we develop in this paper could be applied to study some complicated processes built from exponential functionals of L\'evy processes. 

A natural problem is to characterize the generalized self-similar Markovian processes that satisfy Definition \ref{self-simoursens} and to investigate the following questions: 1) Can they all, similarly to pssMp's, be represented as a function of a time-changed L\'evy process ? In other words, do they satisfy some kind of Lamperti representation ? If not, 2
) how can Lamperti representation be generalized ? The object of the present paper is to characterize processes in dimensions $1$ and $2$ that satisfy Definition \ref{self-simoursens} plus some reasonable assumptions and to provide a Lamperti type representation for them. In particular we will see that the answer to 1) is positive in dimension $1$ and negative in dimension $2$. In dimension $2$, there are two cases: the case of processes that can be represented as a function of a time-changed L\'evy process, and another case for which we provided a generalized Lamperti representation in term of an exponential functional of a bivariate L\'evy process. The two cases are unified in one general representation. To prove the claimed representations, we show that, under some regularity and smoothness conditions, we can in general exhibit a topological group structure on the state space of a Markov process that satisfies Definition \ref{self-simoursens}. This allows to write a Lamperti type representation for such a process in term of a L\'evy process on the group and, in dimension $1$ and $2$, we can express these L\'evy processes on groups in term of classical L\'evy processes on $\mathbb{R}$ or $\mathbb{R}^2$. 
We mention in Subsection \ref{dim>2} what kind of results can be expected in dimension greater or equal to $3$ and what are the difficulties in those dimensions. 

The class of processes satisfying Definition \ref{self-simoursens} (which is not, rigorously speaking, a new class of processes since, as our results will show, these processes can be represented in term of functionals of L\'evy processes), can be seen as a generalization of pssMp's, but in a completely different direction than ssMp's. Indeed, there are two important differences between ssMp's and processes satisfying Definition \ref{self-simoursens}. On one hand the assumption of self-similarity that we make is very strong and says that the processes, starting from any point, can be obtained from the process starting at $y_0$. Such a property is not true in general for rssMp's and ssMp's, but it is for pssMp's, and we will see that it implies some similarities between pssMp's and processes satisfying Definition \ref{self-simoursens}, like the appearance of L\'evy processes in the representation of these processes, while Markov additive processes appear in the representation of rssMp's and ssMp's, or the fact that the life-time of the process is always the exponential functional of a L\'evy process. On the other hand, the type of self-similarity that we assume is much more general than a scaling property. In particular, the higher the dimension is, the more complicated may the "structure" of the self-similarity be. We will see that in dimension $1$ this structure is always simple, which leads us to extend Lamperti representation to processes in dimension $1$ that satisfy Definition \ref{self-simoursens} plus some regularity assumptions. In dimension $2$ there are two possible structures, and they can be unified into a generalized Lamperti representation where the L\'evy process is replaced by an exponential functional of a bivariate L\'evy process. For higher dimensions, there are even more possible structures, as explained in Subsection \ref{dim>2} and it is not clear whether all cases can be unified into one nice representation. 

Let us first mention some simple consequences of Definition \ref{self-simoursens}. 

\textbf{Combination with Strong Markov Property:} For any starting point $z$, if $\tilde X_{y_0} \sim P_{y_0}$ is independent from $X_z$ and $S$ is a stopping time for $X_z$, then, according to the strong Markov property at $S$ and Definition \ref{self-simoursens} we have 
\[ \left ( X_z (S+t), \ t \geq 0 \right ) \overset{\mathcal{L}}{=} \left ( f_{X_z(S)} \left ( \tilde X_{y_0}(c_{X_z(S)} t) \right ), \ t \geq 0 \right ). \]

\textbf{Stability by homeomorphism:} Let $E$ and $F$ be locally compact separable metric spaces, and $X$ be a process on $E$ satisfying Definition \ref{self-simoursens} with invariance components $((f_y, c_y), \ y \in E)$, relatively to some reference point $y_0$. Then, if $h : E \longrightarrow F$ is an homeomorphism, we have that $h(X)$ satisfies Definition \ref{self-simoursens} on $F$ with invariance components $((h \circ f_{h^{-1}(z)} \circ h^{-1}, c_{h^{-1}(z)}), \ z \in F)$, relatively to the reference point $h(y_0)$. 

\textbf{Change of reference point:} For a process satisfying Definition \ref{self-simoursens}, the reference point $y_0$ can be chosen arbitrary. Indeed, let $X$ be such a process and let $((f_y, c_y), \ y \in E)$ be a family of invariance components for $X$, relatively to some reference point $y_1$. Let $y_2$ be any other point in $E$, then it is easy to see that $((f_y \circ f_{y_2}^{-1}, c_y/c_{y_2}), \ y \in E)$ is a family of invariance components for $X$, relatively to the reference point $y_2$. 

\textbf{Non-uniqueness of invariance components:} For a given reference point $y_0$ there is not, in general, unicity for the choice of the family of invariance components. Indeed, let $X$ be a process satisfying Definition \ref{self-simoursens} and let $((f_y, c_y), \ y \in E)$ be a family of invariance components for $X$, relatively to some reference point $y_0$. If there is an homeomorphism $\Psi$ from $E$ to $E$ and a constant $\lambda$ such that 
\begin{eqnarray}
\left ( X_{y_0} (t), \ t \geq 0 \right ) \overset{\mathcal{L}}{=} \left ( \Psi \left ( X_{y_0}(\lambda t) \right ), \ t \geq 0 \right ), \label{invarsurplace}
\end{eqnarray}
then clearly $((f_y \circ \Psi, c_y \lambda), \ y \in E)$ is also a family of invariance components for $X$, relatively to the reference point $y_0$. Note that it is also possible to let $\Psi$ and $\lambda$ vary smoothly relatively to $y$ in the set of $(\Psi, \lambda)$ that satisfy \eqref{invarsurplace}, to produce a family of invariance components $((f_y \circ \Psi_y, c_y \lambda_y), \ y \in E)$. In particular, the set of all possible families of invariance components of a process satisfying Definition \ref{self-simoursens} can possibly be quite complicated. Among all possible choices of families of invariance components it will be convenient to work with those that satisfy some nice properties. In the logic of Remark \ref{cemetery}, note that in \eqref{invarsurplace} (and also in \eqref{defsym} below, and everywhere where such equalities in law for processes appear), for the case where the processes involved reach the cemetery point $\Delta$ in finite time, we actually consider the extension of $\Psi$ (respectively $h$) to $E \cup \{ \Delta \}$ that satisfies $\Psi(\Delta)=\Delta$ (respectively $h(\Delta)=\Delta$), but the extension of $\Psi$ (respectively $h$) is still denoted by $\Psi$ (respectively $h$) for simplicity. 

An important part of this work consists in showing the existence of what we will call \textit{good invariance components}. For this, some assumptions on $E$ and $X$ are needed. For $y \in E$, let $Supp(X_y)$ denote the support of $X_y$ in $E$: 
\begin{align}
Supp(X_y) & := \overline{\bigcup_{t \geq 0} Supp(X_y(t))} \label{defsupp} \\ 
& = \left \{ z \in E, \ \text{s.t.} \ \forall \epsilon > 0, \exists t \geq 0 \ \text{for which} \ \mathbb{P} (X_y(t) \in B(z, \epsilon)) > 0 \right \}. \nonumber
\end{align}
By reducing $E$ if necessary, we can assume that $E = Supp(X_{y_0})$. Therefore, we define the following assumption 
\begin{as} \label{hypsupport1}
\[ E = Supp(X_{y_0}). \]
\end{as}
Because of the use of continuity arguments it is often convenient to assume that $E$ is connected. 
Moreover, when $E$ is connected, Assumption \ref{hypsupport1} is implied by a simple assumption of non-degeneracy: 
\begin{as} \label{hypsupport2}
\[ \exists y \in E \ \text{s.t.} \ y \in \mathring{\widehat{Supp(X_{y})}}. \]
\end{as}
Assumption \ref{hypsupport2} can be motivated by the need to avoid some degenerate cases, for example arithmetic processes on $E = \mathbb{R}$. 
The fact that Assumption \ref{hypsupport2} implies Assumption \ref{hypsupport1} is proved in Lemma \ref{equivassumpt12} of Section \ref{technical}. We also need to take into account the symmetries of $X$. For any $y \in E$ let us define $Sym(X_y) \subset \mathcal{C}^0(E, E)$ to be the group of symmetries of $X_y$, i.e. the group of homeomorphisms that leave invariant the law of $X_y$: 
\begin{eqnarray}
Sym(X_y) := \left \{ h \in Hom(E), \ X_y \overset{\mathcal{L}}{=} h(X_y) \right \}. \label{defsym}
\end{eqnarray}
If $((f_y, c_y), \ y \in E)$ is a family of invariance components for $X$, relatively to some reference point $y_0$, then it is easy to see that we have $Sym(X_y) = f_y \circ Sym(X_{y_0}) \circ f_y^{-1}$. Note that, if $y \longmapsto h_y$ is a continuous application from $E$ to $Sym(X_{y_0}) \subset \mathcal{C}^0(E, E)$, then $((f_y \circ h_y, c_y), \ y \in E)$ is still a family of invariance components for $X$, relatively the reference point $y_0$ (the required assumptions of continuity are satisfied because of Remark \ref{continuity}). Therefore, the symmetries of $X$ can interfere with the self-similarity property, in the sens that they can make the set of families of invariance components very complicated. In order to avoid too much problems, we will often need to prove or assume that $Sym(X_{y_0})$ is discrete (as a subset of $\mathcal{C}^0(E, E)$ equipped with the topology of uniform convergence on every compact sets). Note that, since $Sym(X_y) = f_y \circ Sym(X_{y_0}) \circ f_y^{-1}$, "$Sym(X_{y_0})$ is discrete" is always equivalent to "$Sym(X_{y})$ is discrete for some $y \in E$". It will be shown in Lemma \ref{discretendim1} of Section \ref{technical} that, under Assumption \ref{hypsupport1}, "$Sym(X_{y_0})$ is discrete" is always true in dimension $1$. 

Finally, we need assumptions of regularity in order to obtain the existence of what we will call \textit{good invariance components}. When the state space $E$ is equipped with a differential structure of class $\mathcal{C}^k$, we can define \textit{$\mathcal{C}^k$ invariance components} in the following way: 
\begin{defi} [$\mathcal{C}^k$ invariance components] \label{smoothcomponents}
For $k \geq 1$, let $X$ be a process satisfying Definition \ref{self-simoursens} on a $\mathcal{C}^k$-differentiable manifold $\mathcal{M}$. Let $((f_y, c_y), \ y \in \mathcal{M})$ be a family of invariance components for $X$, relatively to some reference point $y_0$. We say that $((f_y, c_y), \ y \in E)$ is a family of \textbf{$\mathcal{C}^k$ invariance components} if the applications $((y,x) \longmapsto f_y(x))$, $((y,x) \longmapsto f_y^{-1}(x))$, and $(y \longmapsto c_y)$ are of class $\mathcal{C}^k$. 

\end{defi}

Note that the procedure of "change of reference point" described above transforms a family of $\mathcal{C}^k$ invariance components into a family of $\mathcal{C}^k$ invariance components. In the remainder, when we consider a process satisfying Definition \ref{self-simoursens} with $\mathcal{C}^k$ invariance components on an open interval $I \subset \mathbb{R}$ (respectively on an open simply connected domain $\mathcal{D} \subset \mathbb{R}^2$), it is implicit that the differential structure on $I$ (respectively on $\mathcal{D}$) is the natural one, arising from the fact that it is an open subset of $\mathbb{R}$ (respectively of $\mathbb{R}^2$), and it induces the usual differentiation on $\mathbb{R}$ (respectively on $\mathbb{R}^2$). 


\subsection{General results}

We now state Lamperti type representations for processes satisfying Definition \ref{self-simoursens} with $\mathcal{C}^k$ invariance components in dimension $1$ and $2$. In dimension $1$, it turns out that they can be expressed as the image by some function of a time-changed L\'evy process (or equivalently as the image by some function of a pssMp). This shows some kind of universality for Lamperti representation, since it is shared by processes in dimension $1$ that satisfy self-similarity properties, no matter what is the form of their invariance components. 

Recall that the starting point of a L\'evy process on $\mathbb{R}$ (or more generally on a group) is always $0$ (respectively the neutral element of the group). 

In dimension $1$ our result is the following: 

\begin{theo} [Dimension $1$] \label{casr} 

For $k \geq 1$, let $X$ be a process satisfying Definition \ref{self-simoursens} with $\mathcal{C}^k$ invariance components on $I$, an open interval of $\mathbb{R}$. Let us fix a point $y_0 \in I$ to be the reference point. We assume that either Assumption \ref{hypsupport1} or Assumption \ref{hypsupport2} is satisfied for $E=I$. Then, there is a $\mathcal{C}^k$-diffeomorphism $\psi : \mathbb{R} \longrightarrow I$, a real L\'evy process $\xi$ and $\alpha \in \mathbb{R}$ such that if we set 
\begin{eqnarray}
\forall t \in [0, +\infty], \ \varphi(t) := \int_0^t e^{\alpha \xi(s)} ds, \label{timechangingdim1}
\end{eqnarray}
then 
\begin{eqnarray}
\zeta(X_{y_0}) = \varphi(+\infty) = \int_0^{+\infty} e^{\alpha \xi(s)} ds \label{tpsviedim1}
\end{eqnarray}
and 
\begin{eqnarray}
\forall \ 0 \leq t < \varphi(+\infty), \ X_{y_0}(t) = \psi \left ( \xi \left (\varphi^{-1}(t) \right ) \right ). \label{represdim1}
\end{eqnarray}


Reciprocally, for an open interval $I \subset \mathbb{R}$, an homeomorphism $\psi : \mathbb{R} \longrightarrow I$, a real L\'evy process $\xi$, and $\alpha \in \mathbb{R}$, let us fix $y \in I$ and define $\forall t \in [0, +\infty], \ \varphi_y(t) := \int_0^t e^{\alpha (\psi^{-1}(y) + \xi(s))} ds$ and $\forall 0 \leq t < \varphi_y(+\infty), X_{y}(t) := \psi ( \psi^{-1}(y) + \xi (\varphi_y^{-1}(t) ))$. If $\varphi_y(+\infty)< +\infty$ then $\forall t \geq \varphi_y(+\infty), X_y(t) := \Delta$, where $\Delta$ is a cemetery point. Then we have $\zeta(X_{y}) = \varphi_y(+\infty)$ a.s. and $X$ satisfies Definition \ref{self-simoursens} on $I$. A family of invariance components $((f_y, c_y), \ y \in I)$, relatively to the reference point $\psi(0)$, is given by $f_y(.) := \psi(\psi^{-1}(y) + \psi^{-1}(.))$ and $c_y := e^{-\alpha \psi^{-1}(y)}$. 



\end{theo}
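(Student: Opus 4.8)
The plan is to prove the direct statement by upgrading the self-similarity into a \emph{group structure} on $I$, reducing the problem to the classical Lamperti representation \cite{Lamperti2}, while the converse is a direct verification. First I would use the freedom in the reference point to normalize $f_{y_0}=\mathrm{id}$ and $c_{y_0}=1$, and note that evaluating \eqref{self-simoursenseq} at $t=0$ gives $f_y(y_0)=y$, so that $y\longmapsto f_y$ is a continuous injection of $I$ into $\mathcal{C}^0(I,I)$ parametrized by $I$ itself. The key step is then to apply the strong-Markov combination stated above, at a common stopping time, both to $X_{y_0}$ and to $X_a\overset{\mathcal{L}}{=}f_a(X_{y_0}(c_a\cdot))$; comparing the two resulting descriptions of the post-$S$ process yields, for all $a,w\in I$, that the pair $(g_{a,w},\rho_{a,w}):=\bigl(f_{f_a(w)}^{-1}\circ f_a\circ f_w,\ c_a c_w/c_{f_a(w)}\bigr)$ satisfies \eqref{invarsurplace} for $X_{y_0}$, with moreover $g_{a,w}(y_0)=y_0$.

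Next I would remove this nuisance factor. Since $Sym(X_{y_0})$ is discrete in dimension $1$ (Lemma \ref{discretendim1}, under Assumption \ref{hypsupport1}, itself implied by Assumption \ref{hypsupport2}), and $(a,w)\longmapsto g_{a,w}$ is continuous with $g_{a,y_0}=g_{y_0,w}=\mathrm{id}$, a connectedness argument forces $g_{a,w}\equiv\mathrm{id}$ and $\rho_{a,w}\equiv 1$ after passing to a family of good invariance components, giving the clean composition law $f_{f_a(w)}=f_a\circ f_w$ and $c_{f_a(w)}=c_a c_w$. Consequently $y\ast w:=f_y(w)$ is an associative operation on $I$ (directly from $f_{f_a(w)}=f_a\circ f_w$) with neutral element $y_0$ and inverses $w^{-1}=f_w^{-1}(y_0)$, turning $I$ into a connected one-dimensional group whose operations are $\mathcal{C}^k$ by Definition \ref{smoothcomponents}, hence a Lie group; being $\mathcal{C}^k$-diffeomorphic to an interval it is non-compact, so it is isomorphic to $(\mathbb{R},+)$. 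This produces a $\mathcal{C}^k$-diffeomorphism $\psi:\mathbb{R}\longrightarrow I$ with $\psi(0)=y_0$ and $\psi(u+v)=f_{\psi(u)}(\psi(v))$.

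I would then transport $X_{y_0}$ through $\psi$: by stability under homeomorphism, $Y:=\psi^{-1}(X_{y_0})$ has \emph{translation} invariance components $x\mapsto z+x$ with constant $c_{\psi(z)}$, and since $z\longmapsto c_{\psi(z)}$ is a continuous homomorphism $(\mathbb{R},+)\to(\mathbb{R}^*_+,\times)$ it equals $e^{-\alpha z}$ for some $\alpha\in\mathbb{R}$, so that $Y_z\overset{\mathcal{L}}{=}z+Y_0(e^{-\alpha z}\,\cdot)$. Hence $Z:=e^{Y}$ is a pssMp of index $\alpha$ in the sense of \eqref{self-simclassic}, and the classical Lamperti representation \cite{Lamperti2} gives $Z(t)=\exp\bigl(\xi(\varphi^{-1}(t))\bigr)$ for a real Lévy process $\xi$, with $\varphi$ as in \eqref{timechangingdim1} and life-time $\varphi(+\infty)$; undoing $\exp$ and $\psi$ yields \eqref{tpsviedim1} and \eqref{represdim1}.

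For the converse I would simply compute: from $\varphi_y(s)=e^{\alpha\psi^{-1}(y)}\varphi(s)=c_y^{-1}\varphi(s)$ one gets $\varphi_y^{-1}(t)=\varphi^{-1}(c_y t)$, whence the \emph{pathwise} identity $X_y(t)=f_y\bigl(X_{\psi(0)}(c_y t)\bigr)$ for a common driving $\xi$, which gives the equality in law required by Definition \ref{self-simoursens}; the strong Markov property, the càd-làg property and $\zeta(X_y)=\varphi_y(+\infty)$ all follow from the Lamperti construction. The main obstacle is the passage carried out in the first two paragraphs, namely extracting an honest group law from the strong-Markov identity, i.e. showing that the symmetry-and-time-scaling factor $(g_{a,w},\rho_{a,w})$ can be made trivial; this is precisely where the discreteness of $Sym(X_{y_0})$, the continuity and connectedness of $I$, and the existence of good invariance components are essential.
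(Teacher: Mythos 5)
Your proposal is correct in outline and follows the paper's backbone for the first two thirds: the strong-Markov comparison you describe is exactly Proposition \ref{compatibility}, the removal of the nuisance pair $(g_{a,w},\rho_{a,w})$ via discreteness of $Sym(X_{y_0})$ and connectedness is Proposition \ref{existsgoodcomponents} followed by Proposition \ref{groupappears}, and the resulting group law $f_{f_a(w)}=f_a\circ f_w$ is what the paper calls the bearing group. Where you genuinely diverge is at the end: the paper constructs the isomorphism $g:(I,\star)\to(\mathbb{R},+)$ explicitly by the integral formula of Lemma \ref{isomorphismedim1} and then proves from scratch that the time-changed process $L(t)=X_{y_0}(\phi^{-1}(t))$ is a L\'evy process on $(I,\star)$ (Proposition \ref{levychangentps}), whereas you transport everything to $]0,+\infty[$ via $\exp\circ\psi^{-1}$, observe that the image is a pssMp of index $\alpha$, and invoke the classical Lamperti representation (in the form of Theorem \ref{classic lamp}) as a black box. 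Your shortcut is legitimate and more economical in dimension $1$, but it is specific to that dimension: it exploits that the only possible group is $(\mathbb{R},+)$, so it cannot feed into the dimension-$2$ case where the bearing group may be $(\mathbb{R}^2,T)$; the paper's direct proof of the L\'evy property on the abstract group is what makes the argument uniform across Theorems \ref{casr}, \ref{casr2} and \ref{casgeneral}.

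Two points in your sketch need more care, though neither is fatal. First, in deriving the compatibility relation you cannot in general find a stopping time at which $X_{y_0}$ sits exactly at the prescribed point $f_a(w)$; the paper's proof of Proposition \ref{compatibility} conditions on the event $\{X_{y_1}(s_\epsilon)\in B(a,\epsilon)\}$ (which has positive probability thanks to Assumption \ref{hypsupport1}) and passes to the limit $\epsilon\to0$ using Slutsky's lemma and the joint continuity of $(y,x)\mapsto f_y(x)$; your phrase ``at a common stopping time'' hides this limiting step. Second, appealing to the abstract classification of connected one-dimensional Lie groups gives the isomorphism $\psi$ as a topological-group isomorphism, but the theorem asserts that $\psi$ is a $\mathcal{C}^k$-diffeomorphism for finite $k$; to get that regularity you should either use the explicit primitive $g(y)=\int_{y_0}^y\frac{dy}{\partial_2 f(y,y_0)}$ of Lemma \ref{isomorphismedim1}, or supply a separate argument that the topological isomorphism is $\mathcal{C}^k$. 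Finally, for the converse, the strong Markov property and $\zeta(X_y)=\varphi_y(+\infty)$ do require the verification carried out in the proof of Proposition \ref{levychangentpssensfacil}; ``follows from the Lamperti construction'' is acceptable shorthand but not a proof.
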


In dimension $2$, it turns out that the class of processes satisfying Definition \ref{self-simoursens} with $\mathcal{C}^k$ invariance components is larger than (i.e. contains strictly) the class of processes that can be expressed as the image by some function of a time-changed two-dimensional L\'evy process. 
More precisely, our main result is the fact that, in dimension $2$, processes satisfying Definition \ref{self-simoursens} with $\mathcal{C}^k$ invariance components are identified with the class of processes that can be expressed as the image by some function of a time-changed exponential functional of a bivariate L\'evy process, and this class contains, as a subclass, processes that can be expressed as the image by some function of a time-changed two-dimensional L\'evy process. For $i \in \{1, 2\}$, let $\pi_i(x)$ denote the linear projection of an element $x \in \mathbb{R}^2$ to its $i^{th}$ coordinate. Our result is the following: 

\begin{theo} [Dimension $2$] \label{casr2}

For $k \geq 2$, let $X$ be a process satisfying Definition \ref{self-simoursens} with $\mathcal{C}^k$ invariance components on $\mathcal{D}$, an open simply connected domain of $\mathbb{R}^2$. Let us fix a point $y_0 \in \mathcal{D}$ to be the reference point. We assume that either Assumption \ref{hypsupport1} or Assumption \ref{hypsupport2} is satisfied for $E=\mathcal{D}$ and that $Sym(X_{y_0})$ is discrete. Then, there is a $\mathcal{C}^k$-diffeomorphism $\psi : \mathbb{R}^2 \longrightarrow \mathcal{D}$, a L\'evy process $(\xi, \eta)$ on $\mathbb{R}^2$, and $\alpha, \beta \in \mathbb{R}$ such that if we set 
\begin{eqnarray}
\forall t \in [0, +\infty], \ \varphi(t) := \int_0^t e^{\alpha \xi(s)} ds, \label{timechangingdim2}
\end{eqnarray}
then 
\begin{eqnarray}
\zeta(X_{y_0}) = \varphi(+\infty) = \int_0^{+\infty} e^{\alpha \xi(s)} ds \label{tpsviedim2}
\end{eqnarray}
and 
\begin{eqnarray}
\forall \ 0 \leq t < \varphi(+\infty), \ X_{y_0}(t) = \psi \left ( \xi \left (\varphi^{-1}(t) \right ), \int_0^{\varphi^{-1}(t)} e^{\beta \xi(s-)} d\eta(s) \right ). \label{represdim2}
\end{eqnarray}




Reciprocally, for an open simply connected domain $\mathcal{D} \subset \mathbb{R}^2$, an homeomorphism $\psi : \mathbb{R}^2 \longrightarrow \mathcal{D}$, a $\mathbb{R}^2$-valued L\'evy process $(\xi, \eta)$, and $\alpha, \beta \in \mathbb{R}$, let us fix $y \in \mathcal{D}$ and define $\forall t \in [0, +\infty], \ \varphi_y(t) := \int_0^t e^{\alpha (\pi_1(\psi^{-1}(y))+\xi(s))} ds$, and $\forall \ 0 \leq t < \varphi_y(+\infty)$, 
\[ X_y(t) := \psi \left ( \pi_1(\psi^{-1}(y)) + \xi (\varphi_y^{-1}(t) ), \pi_2(\psi^{-1}(y)) + \int_0^{\varphi_y^{-1}(t)} e^{\beta (\pi_1(\psi^{-1}(y)) + \xi(s-))} d\eta(s) \right ). \]
If $\varphi_y(+\infty)< +\infty$ then $\forall t \geq \varphi_y(+\infty), X_y(t) := \Delta$, where $\Delta$ is a cemetery point. Then we have $\zeta(X_{y}) = \varphi_y(+\infty)$ a.s. and $X$ satisfies Definition \ref{self-simoursens} on $\mathcal{D}$. A family of invariance components $((f_y, c_y), \ y \in \mathcal{D})$, relatively to the reference point $\psi(0,0)$, is given by $f_y(.) := \psi(\pi_1(\psi^{-1}(y) + \psi^{-1}(.)), \pi_2(\psi^{-1}(y) + e^{\beta \pi_1(\psi^{-1}(y))} \psi^{-1}(.)))$ and $c_y := e^{-\alpha \pi_1(\psi^{-1}(y))}$. 

\end{theo}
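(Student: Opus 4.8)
The plan is to uncover a Lie group structure on the state space $\mathcal{D}$ and then to read off the representation \eqref{represdim2} from the classification of two–dimensional simply connected Lie groups. First I would pass to a family of good invariance components, normalizing $f_{y_0}=\mathrm{Id}$ by the change-of-reference-point procedure; note that $f_y(y_0)=y$ holds automatically, since evaluating \eqref{self-simoursenseq} at $t=0$ gives $y=X_y(0)=f_y(X_{y_0}(0))=f_y(y_0)$. I then define a binary law on $\mathcal{D}$ by $y\star z:=f_y(z)$. The point $y_0$ is a two-sided unit, each $f_y$ is a homeomorphism so inverses exist, and associativity is equivalent to the homomorphism identity $f_{f_y(z)}=f_y\circ f_z$ together with $c_{f_y(z)}=c_yc_z$. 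These I would obtain by combining the Markov property of $X_{y_0}$ with self-similarity: restarting $X_{y_0}$ at a time when it sits at $w$ produces, by the strong Markov consequence of Definition \ref{self-simoursens} recorded above, a fresh copy $f_w(\tilde X_{y_0}(c_w\,\cdot))=w\star\tilde X_{y_0}(c_w\,\cdot)$, and comparing this with the two ways of expanding $X_y(s+\cdot)$ shows that $f_{f_y(w)}^{-1}\circ f_y\circ f_w$, paired with the time factor $c_yc_w/c_{f_y(w)}$, leaves the law of $X_{y_0}$ invariant up to a linear time change in the sense of \eqref{invarsurplace}. At $y=y_0$ this pair is trivial, so by continuity in $(y,w)$ and discreteness of $Sym(X_{y_0})$ it is trivial for all $(y,w)$, giving exactly the homomorphism identity and the multiplicativity of $c$. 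Since the invariance components are of class $\mathcal{C}^k$ with $k\geq 2$, the law $\star$ and the inversion are $\mathcal{C}^k$, so $(\mathcal{D},\star)$ is a connected, simply connected two-dimensional Lie group and $y\mapsto c_y$ is a continuous homomorphism into $(\mathbb{R}^*_+,\times)$.

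Next I would invoke the classification of connected simply connected real Lie groups of dimension $2$: up to isomorphism there are exactly two, the abelian group $(\mathbb{R}^2,+)$ and the non-abelian affine ``$ax+b$'' group, whose law in suitable coordinates is $(u_1,u_2)\cdot(v_1,v_2)=(u_1+v_1,\,u_2+e^{\beta u_1}v_2)$, the abelian case being recovered by $\beta=0$. This yields a $\mathcal{C}^k$-diffeomorphism $\psi:\mathbb{R}^2\to\mathcal{D}$ that is a group isomorphism from the model onto $(\mathcal{D},\star)$; in these coordinates $f_y(\cdot)=\psi(\psi^{-1}(y)\cdot\psi^{-1}(\cdot))$ is precisely left translation, which is the $f_y$ displayed in the statement. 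Because $\log c$ is a continuous homomorphism into $(\mathbb{R},+)$ it must annihilate the commutator subgroup (the $u_2$-axis in the non-abelian case), hence depends only on the first coordinate, and after a further linear adjustment of that coordinate in the abelian case one gets $c_{\psi(u)}=e^{-\alpha u_1}$ for some $\alpha\in\mathbb{R}$, the asserted form of $c_y$.

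The heart of the proof is then a Lamperti-type argument. Transporting $X_{y_0}$ to the model through $\psi^{-1}$ and using $f_w(\cdot)=w\star\cdot$, the Markov-plus-self-similarity relation says that, restarted from its current position $w$, the process repeats itself left-translated by $w$ and time-scaled by $c_w$. After the multiplicative time change removing this scaling, namely $\varphi(t)=\int_0^t e^{\alpha\xi(s)}\,ds$ with $\xi$ the first model coordinate, the transported process becomes a Lévy process $L=(\xi,B)$ on the model group started at the neutral element. Its first coordinate $\xi$ is an ordinary real Lévy process, while the group law forces the increment identity
\[ B(s+u)=B(s)+e^{\beta\xi(s)}\,\tilde B(u), \]
with $\tilde B$ an independent copy, whose solution is $B(s)=\int_0^s e^{\beta\xi(r-)}\,d\eta(r)$ for an $\mathbb{R}^2$-valued Lévy process $(\xi,\eta)$. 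Substituting back produces \eqref{represdim2}, and letting the Lévy clock run to $+\infty$ identifies the life-time with the exponential functional as in \eqref{tpsviedim2}; the abelian case $\beta=0$ gives the time-changed two-dimensional Lévy process as a subclass, as claimed.

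Finally, for the converse I would start from the data $(\psi,(\xi,\eta),\alpha,\beta)$, build $X_y$ by the stated formulas, and verify Definition \ref{self-simoursens} directly: the càdlàg strong Markov property follows from that of $(\xi,\eta)$ and measurability of the time change, while the self-similarity \eqref{self-simoursenseq} with the stated $(f_y,c_y)$ reduces, after applying $\psi^{-1}$, to the cocycle identity
\[ \int_0^{s+u} e^{\beta\xi(r-)}\,d\eta(r)=\int_0^{s} e^{\beta\xi(r-)}\,d\eta(r)+e^{\beta\xi(s)}\int_0^{u} e^{\beta\tilde\xi(r-)}\,d\tilde\eta(r) \]
for the shifted increments, that is, to the left-translation structure of the $ax+b$ group, and $\zeta(X_y)=\varphi_y(+\infty)$ is immediate from the time change. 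I expect the main obstacle to be the first step: producing good invariance components for which $f_{f_y(z)}=f_y\circ f_z$ holds \emph{exactly} rather than only up to a symmetry, since this is where the discreteness of $Sym(X_{y_0})$ and the $\mathcal{C}^k$ continuity must be leveraged, and where the gap between dimension $1$ (discreteness automatic, cf.\ Theorem \ref{casr}) and dimension $2$ (discreteness assumed) becomes decisive. The subsequent passage to a genuine Lie group, which is what forces $k\geq 2$, and the emergence of the exponential functional through the non-abelian model, are by comparison mechanical.
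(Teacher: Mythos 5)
Your overall architecture matches the paper's: a compatibility relation from Markov property plus self-similarity, a group structure on $\mathcal{D}$, reduction to the two canonical two-dimensional Lie groups, representation of L\'evy processes on the non-abelian model by exponential functionals, and a Lamperti time change. But there is a genuine gap in your first step, and it is precisely the step you flag as the main obstacle. You claim that the pair $\bigl(f_{f_y(w)}^{-1}\circ f_y\circ f_w,\ c_yc_w/c_{f_y(w)}\bigr)$, which by the compatibility relation (Proposition \ref{compatibility}) lies in the group $G$ of pairs $(\Psi,\lambda)$ satisfying \eqref{invarsurplace}, must be identically $(\mathrm{id},1)$ ``by continuity in $(y,w)$ and discreteness of $Sym(X_{y_0})$''. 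Discreteness of $Sym(X_{y_0})$ only says that the fibre of $G$ over $\lambda=1$ is discrete; $G$ itself can contain a continuous one-parameter family $(\Psi_\lambda,\lambda)$, in which case continuity plus connectedness says nothing about the second component. Concretely, if $X_y=y+\xi$ with $\xi$ a Brownian motion, then $(x\mapsto x/\sqrt{\lambda},\lambda)\in G$ for every $\lambda>0$ while $Sym(X_0)=\{\pm\mathrm{id}\}$ is discrete, and $f_y(x)=y+x/\sqrt{c_y}$ with an arbitrary continuous $c_y>0$, $c_0=1$, is a legitimate family of invariance components for which $c_{f_y(z)}\neq c_yc_z$ in general. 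So the multiplicativity \eqref{relmorphisme}, i.e.\ the existence of \emph{good} invariance components in the sense of Definition \ref{goodcomponents}, cannot be extracted from discreteness of $Sym(X_{y_0})$ alone. The paper's Proposition \ref{existsgoodcomponents} fills this hole with a genuinely different argument: when the time factor $U_2(y,w)=c_yc_w/c_{f_y(w)}$ is non-constant, one constructs (and proves unique, continuous and $\mathcal{C}^k$) a one-parameter section $\lambda\longmapsto L[\lambda]$ of $G$ over all of $\mathbb{R}_+^*$ and replaces $(f_y,c_y)$ by $(f_y\circ L[1/c_y],1)$. Once goodness is in hand, your continuity/discreteness argument does correctly give $f_{f_y(w)}=f_y\circ f_w$ (that is Proposition \ref{groupappears}), and the rest of your outline goes through.

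Two smaller points. Invoking the abstract classification of simply connected two-dimensional Lie groups is acceptable in principle, though the paper instead builds the isomorphism $\psi$ explicitly via Poincar\'e's lemma (Lemmas \ref{isomorphismecasclassique} and \ref{isomorphismecasnonclassique}), which it needs anyway for the explicit version (Theorem \ref{casr2expl}) and which guarantees $\mathcal{C}^k$ regularity with respect to the given differentiable structure on $\mathcal{D}$. And the assertion that the increment identity for the second coordinate of the L\'evy process on the $ax+b$ group ``has solution'' $\int_0^s e^{\beta\xi(r-)}\,d\eta(r)$ hides the real content of Proposition \ref{levynonclassiques}: one must first show that $\pi_2(Y)$ is a semimartingale (via removal of big jumps and a centering martingale, Lemmas \ref{removingbigjumps} and \ref{centering}) before the stochastic integral defining $\eta$ even makes sense.
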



In the above theorem, a process satisfying Definition \ref{self-simoursens} with $\mathcal{C}^k$ invariance components is expressed as a function of a time-changed process of the form $( \xi(.), \int_0^. e^{\beta \xi(s-)} d \eta(s) )$, where $(\xi, \eta)$ is a two-dimensional L\'evy process and $\beta \in \mathbb{R}$. The particular case where $\beta = 0$ corresponds to the case where $X$ can be expressed as a function of a time-changed two-dimensional L\'evy process. 

\begin{remarque}
In the above two theorems, $\alpha$ plays a similar role as the self-similarity index in Lamperti representation, and in the case of a pssMp, our $\alpha$ even coincides with the classical self-similarity index, as it will be seen in Subsection \ref{examples}. It is possible to have $\alpha = 0$ which corresponds to the case where the change of time $\varphi^{-1}(.)$ is trivial. 
\end{remarque}

An important intermediary result in the proof of the above theorems is the fact that, in general, when the state space is equipped with a differential structure, we can establish a generalized Lamperti representation that involves L\'evy processes. This can seem surprising since there is no natural meaning for a L\'evy process on a general state space. However, we show that the state space can be equipped with a structure of Lie group such that a process satisfying Definition \ref{self-simoursens} with $\mathcal{C}^k$ invariance components is represented as a time-changed left L\'evy process on this Lie group (in the remainder, when we work on a non-commutative group, we only deal with left L\'evy processes and we call these simply L\'evy processes). This result is the following. 

\begin{theo} \label{casgeneral}

For $k \geq 1$, let $X$ be a process satisfying Definition \ref{self-simoursens} with $\mathcal{C}^k$ invariance components on a connected $\mathcal{C}^k$-differentiable manifold $\mathcal{M}$. Let us fix a point $y_0 \in \mathcal{M}$ to be the reference point. We assume that either Assumption \ref{hypsupport1} or Assumption \ref{hypsupport2} is satisfied for $E = \mathcal{M}$ and that $Sym(X_{y_0})$ is discrete. Then, there is an intern composition law $\star$ on $\mathcal{M}$ and a $\mathcal{C}^k$-function $h : \mathcal{M} \longrightarrow \mathbb{R}_+$ such that 
\begin{itemize}
\item $(\mathcal{M},\star)$ is a $\mathcal{C}^k$-Lie group with neutral element $y_0$ ;
\item $h$ is a $\mathcal{C}^k$-Lie group homomorphism from $(\mathcal{M},\star)$ to $(\mathbb{R}_+^*, \times)$ ;
\item There is a L\'evy process $L$ (starting from $y_0$) on $(\mathcal{M},\star)$ such that, if we define 
$\forall t \in [0, +\infty], \varphi(t) := \int_0^t 1/h(L(s)) ds$ then $\zeta(X_{y_0}) = \varphi(+\infty)$ and 
\begin{eqnarray}
\forall \ 0 \leq t < \varphi(+\infty), \ X_{y_0}(t) = L \left (\varphi^{-1}(t) \right ). \label{represmanifold}
\end{eqnarray}
\end{itemize}
Moreover, note that if we let $\xi(.) := \log(1/h(L(.)))$, then $\xi$ is a real L\'evy process and we have $\zeta(X_{y_0}) = \int_0^{+\infty} e^{\xi(s)} ds$. 

\end{theo}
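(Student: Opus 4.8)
The plan is to read the group law directly off the invariance components. Evaluating \eqref{self-simoursenseq} at $t=0$ gives $f_y(y_0)=y$ for every $y$, and the case $y=y_0$ of \eqref{self-simoursenseq} reads $X_{y_0}(t)\overset{\mathcal{L}}{=}f_{y_0}(X_{y_0}(c_{y_0}t))$, i.e. $(f_{y_0},c_{y_0})$ satisfies \eqref{invarsurplace}; since then $(f_{y_0}^{-1},c_{y_0}^{-1})$ does too, replacing the components by $(f_y\circ f_{y_0}^{-1},c_y/c_{y_0})$ as in the non-uniqueness discussion lets me assume $f_{y_0}=\mathrm{id}$, $c_{y_0}=1$ (and $f_y(y_0)=y$ is preserved). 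I then define the candidate law $y\star z:=f_y(z)$ and multiplier $h(y):=c_y$. By construction $y_0$ is a two-sided neutral element and $y\mapsto f_y$ is injective, so the only missing group axiom is associativity, which is exactly the cocycle identity $f_{f_y(z)}=f_y\circ f_z$; the companion multiplicativity is $c_{f_y(z)}=c_yc_z$. Once these hold, since $(y,z)\mapsto f_y(z)$, its inverse and $y\mapsto c_y$ are $\mathcal{C}^k$, the $\mathcal{C}^k$-Lie group structure of $(\mathcal{M},\star)$ (with inverse $y^{-1}=f_y^{-1}(y_0)$) and the fact that $h=c$ is a $\mathcal{C}^k$-homomorphism onto $(\mathbb{R}_+^*,\times)$ are automatic.

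\textbf{The group law (the main obstacle).} Let $Q_t(x,\cdot)$ denote the transition kernel of $X$. At the level of one-dimensional marginals \eqref{self-simoursenseq} reads $Q_t(y,B)=Q_{c_yt}(y_0,f_y^{-1}(B))$, so the whole semigroup is determined by $Q_\cdot(y_0,\cdot)$ and by $((f_y,c_y))$; the essential extra input is that this family must satisfy Chapman--Kolmogorov. Inserting the kernel identity into $Q_{s+t}(y,\cdot)=\int Q_s(y,dw)Q_t(w,\cdot)$, expanding both sides over $Q_{c_ys}(y_0,\cdot)$, and changing variables $w=f_y(v)$, I obtain an equality of two integrands integrated against $Q_{c_ys}(y_0,dv)$. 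Letting $s$ vary and invoking Assumption \ref{hypsupport1} (so the measures $Q_{c_ys}(y_0,\cdot)$ have joint full support) together with continuity, I can strip the integral and conclude that for all $y,v$ the homeomorphism $g_{y,v}:=f_{f_y(v)}^{-1}\circ f_y\circ f_v$ and the constant $\lambda_{y,v}:=c_yc_v/c_{f_y(v)}$ satisfy $g_{y,v}(X_{y_0}(\lambda_{y,v}\,\cdot))\overset{\mathcal{L}}{=}X_{y_0}$, i.e. $(g_{y,v},\lambda_{y,v})$ is a generalized symmetry in the sense of \eqref{invarsurplace}; I expect this extraction to be the most delicate analytic point. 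These symmetries form a group $\mathcal{S}$, on which $(\Psi,\lambda)\mapsto\lambda$ is a homomorphism onto $(\mathbb{R}_+^*,\times)$ with kernel $Sym(X_{y_0})$. Since $Sym(X_{y_0})$ is discrete, $\mathcal{M}$ is connected, $(y,v)\mapsto(g_{y,v},\lambda_{y,v})$ is continuous by Remark \ref{continuity}, and the pair is $(\mathrm{id},1)$ whenever $y=y_0$ or $v=y_0$, the spatial component $g_{y,v}$ is locally constant, hence trivial, and the obstruction to associativity is confined to the abelian one-parameter factor $\lambda_{y,v}$. Exploiting the smooth freedom in \eqref{invarsurplace} to renormalize (replacing $f_y$ by $f_y\circ\Psi_y$ and $c_y$ by $c_y\lambda_y$ for a suitable smooth family in $\mathcal{S}$), I absorb this factor and obtain good invariance components for which $g_{y,v}\equiv\mathrm{id}$ and $\lambda_{y,v}\equiv1$, i.e. exactly the cocycle $f_{f_y(v)}=f_y\circ f_v$ and multiplicativity $c_{f_y(v)}=c_yc_v$ sought above.

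\textbf{The Lévy process on the group.} With $h=c$ a $\mathcal{C}^k$-homomorphism I introduce the additive functional $A(t):=\int_0^t h(X_{y_0}(r))\,dr=\int_0^t c_{X_{y_0}(r)}\,dr$ on $[0,\zeta(X_{y_0}))$, continuous and strictly increasing, and set $L:=X_{y_0}\circ A^{-1}$; then $\varphi:=A^{-1}$ satisfies $\varphi(t)=\int_0^t 1/h(L(s))\,ds$ and $X_{y_0}(t)=L(\varphi^{-1}(t))$ by construction. To show $L$ is a left Lévy process on $(\mathcal{M},\star)$, fix a stopping time $T$ for $L$, put $S:=A^{-1}(T)$ (a stopping time for $X_{y_0}$) and $w:=L(T)=X_{y_0}(S)$. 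Writing $A(S+r)=T+\widetilde A(r)$ with $\widetilde A(r)=\int_0^r h(X_{y_0}(S+v))\,dv$ gives $L(T+u)=X_{y_0}(S+\widetilde A^{-1}(u))$. By the strong Markov combination of \eqref{self-simoursenseq} recalled in the introduction, conditionally on the past the post-$S$ process is an independent copy $f_w(\widetilde X_{y_0}(c_w\cdot))$ with $\widetilde X_{y_0}\sim P_{y_0}$. Substituting and using $h(f_w(x))=h(w)h(x)=c_w\,h(x)$ together with $h(w)=c_w$, the change of variables $v'=c_wv$ turns $\widetilde A$ into $\widetilde A_0(c_w\cdot)$, where $\widetilde A_0(r)=\int_0^r h(\widetilde X_{y_0}(v))\,dv$ is the intrinsic clock of the fresh copy; the two time-changes cancel and I get $L(T+u)\overset{\mathcal{L}}{=}f_w(\widetilde L(u))=w\star\widetilde L(u)$ with $\widetilde L:=\widetilde X_{y_0}\circ\widetilde A_0^{-1}$ an independent copy of $L$. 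Hence $L(T)^{-1}\star L(T+u)\overset{\mathcal{L}}{=}\widetilde L(u)$ is independent of the past and distributed as $L$, which is the left Lévy property; this step uses the homomorphism property of $h=c$ in an essential way.

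\textbf{Conclusion.} The representation $X_{y_0}(t)=L(\varphi^{-1}(t))$ for $0\le t<\varphi(+\infty)$ holds by definition of $L$ and $\varphi=A^{-1}$, and $\zeta(X_{y_0})=\varphi(+\infty)$ since $A$ maps $[0,\zeta(X_{y_0}))$ onto $[0,\varphi(+\infty))$. Finally, as $h$ is a continuous homomorphism into $(\mathbb{R}_+^*,\times)$ and $L$ is a left Lévy process, $s\mapsto h(L(s))$ has stationary independent multiplicative increments, so $\xi(\cdot):=\log(1/h(L(\cdot)))=-\log c_{L(\cdot)}$ is a real Lévy process; since $e^{\xi(s)}=1/h(L(s))$ this yields $\zeta(X_{y_0})=\int_0^{+\infty}e^{\xi(s)}\,ds$. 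Throughout, the cemetery conventions of Remarks \ref{cemetery} and \ref{continuity} guarantee that all composed maps and equalities in law extend to $E\cup\{\Delta\}$ when the life-time is finite.
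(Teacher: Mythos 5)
Your overall route coincides with the paper's: derive the compatibility relation $\bigl(g_{y,v},\lambda_{y,v}\bigr)\in\mathcal{S}$ from the Markov property and full support (this is Proposition \ref{compatibility}, which the paper proves by conditioning on a small ball and passing to the limit with Slutsky's lemma rather than by ``stripping'' a kernel identity, but the idea is the same), then renormalize to good components, read off the group law, and recover $L$ by the additive clock $A(t)=\int_0^t c_{X_{y_0}(r)}\,dr$ (Proposition \ref{levychangentps}). The genuine gap is in your treatment of associativity. You claim that, since $Sym(X_{y_0})$ is discrete, ``the spatial component $g_{y,v}$ is locally constant, hence trivial.'' This does not follow: $g_{y,v}$ takes values in the group $\mathcal{S}$ of pairs satisfying \eqref{invarsurplace}, not in its discrete kernel $Sym(X_{y_0})\times\{1\}$. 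For each fixed $\lambda$ the fibre of the projection $\mathcal{S}\to\mathbb{R}_+^*$ is a \emph{different} coset of $Sym(X_{y_0})$, so when $\lambda_{y,v}$ varies continuously the homeomorphism $g_{y,v}$ can vary continuously as well and need not be the identity. Triviality of the spatial defect is available only \emph{after} one has arranged $\lambda_{y,v}\equiv 1$ — this is exactly the order of Propositions \ref{existsgoodcomponents} and \ref{groupappears}. Consequently, the renormalization you invoke (``replacing $f_y$ by $f_y\circ\Psi_y$ \dots for a suitable smooth family in $\mathcal{S}$'') is precisely the hard part of the argument and is not supplied: one must construct a continuous, $\mathcal{C}^k$ section $\lambda\mapsto L[\lambda]$ of $\mathcal{S}\to H$ over all of $\mathbb{R}_+^*$ (local inversion of $U_2$ to get it near some $\lambda_0$, a piecewise composition to extend it globally, uniqueness via discreteness of $Sym(X_{y_0})$ and connectedness, and a second mesh $\epsilon'$ with $\epsilon'/\epsilon$ irrational to get smoothness at the gluing points), and only then set $\tilde f_y:=f_y\circ L[1/c_y]$, which makes all time constants equal to $1$ and lets the discreteness argument yield $\tilde f_{\tilde f_y(v)}=\tilde f_y\circ\tilde f_v$. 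Without this construction the existence of good components, and hence the associativity of $\star$, is unproved. (Also, the projection $\mathcal{S}\to\mathbb{R}_+^*$ need not be onto; its image $H$ is a priori only a subgroup, shown to be all of $\mathbb{R}_+^*$ only when $U_2$ is non-constant.)

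A secondary gap concerns the lifetime identity. The theorem requires $\varphi(t)=\int_0^t 1/h(L(s))\,ds$ for all $t\in[0,+\infty]$, hence $L$ must be defined on all of $[0,+\infty)$; equivalently one must show $A(\zeta(X_{y_0})-)=+\infty$ a.s. Your sentence ``$\zeta(X_{y_0})=\varphi(+\infty)$ since $A$ maps $[0,\zeta(X_{y_0}))$ onto $[0,\varphi(+\infty))$'' is circular on this point. The paper excludes $A(\zeta(X_{y_0})-)<\infty$ by noting that a L\'evy process on the group whose absorption occurs independently of the past would, with positive probability, reach $\Delta$ while remaining in a compact set, contradicting the fact that $\Delta$ can only be reached continuously. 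This step should be added.
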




The above theorem says that, under some assumptions, processes satisfying Definition \ref{self-simoursens} with $\mathcal{C}^k$ invariance components can be identified as time-changed L\'evy processes on the state space equipped with some group structure. This can be seen as a generalization of Lamperti representation for general state spaces. 
Indeed, since exponentials of L\'evy processes on $(\mathbb{R}, +)$ coincide with L\'evy processes on $(\mathbb{R}_+^*, \times)$, the classical Lamperti representation can be reformulated in the following way: 
\begin{theo} [Reformulated classical Lamperti representation] \label{classic lamp}

Let $X$ be a pssMp of index $\alpha \in \mathbb{R}$. Let us define the continuous group homomorphism $h$ from $(\mathbb{R}_+^*, \times)$ to $(\mathbb{R}_+^*, \times)$ by $h(x) = x^{-\alpha}$. Then, there is a L\'evy process $L$ (starting from $1$) on $(\mathbb{R}_+^*, \times)$ such that, if we define 
$\forall t \in [0, +\infty], \varphi(t) := \int_0^t 1/h(L(s)) ds$ then $\zeta(X_{1}) = \varphi(+\infty)$ and 
\begin{eqnarray}
\forall \ 0 \leq t < \varphi(+\infty), \ X_1(t) = L \left (\varphi^{-1}(t) \right ). \label{represlamp}
\end{eqnarray}


\end{theo}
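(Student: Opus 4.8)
The plan is to deduce this statement directly from the classical Lamperti representation of Lamperti \cite{Lamperti2}, of which it is merely an algebraic reformulation obtained by transporting the underlying real L\'evy process to the multiplicative group $(\mathbb{R}_+^*, \times)$ via the exponential. First I would invoke the classical theorem: since $X$ is a pssMp of index $\alpha$ with the scaling convention \eqref{self-simclassic}, there is a real L\'evy process $\xi$ (possibly killed, with the convention that killing sends it to $-\infty$, i.e. $X_1$ to the cemetery) such that, setting $\tau(s) := \int_0^s e^{\alpha \xi(u)} du$ and denoting by $\tau^{-1}$ its inverse, one has $\zeta(X_1) = \tau(+\infty)$ and $X_1(t) = \exp(\xi(\tau^{-1}(t)))$ for all $0 \leq t < \tau(+\infty)$.

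Next I would transport $\xi$ to the multiplicative group by the map $\exp$. Define $L(s) := \exp(\xi(s))$. Since $\exp : (\mathbb{R}, +) \longrightarrow (\mathbb{R}_+^*, \times)$ is a topological group isomorphism, the additive increments $\xi(t) - \xi(s)$ are carried to the multiplicative increments $L(s)^{-1} L(t) = \exp(\xi(t) - \xi(s))$; hence $L$ inherits stationary and independent increments (in the group sense) as well as c\`ad-l\`ag paths, so $L$ is a L\'evy process on $(\mathbb{R}_+^*, \times)$, starting from $\exp(\xi(0)) = 1$, the neutral element. Then I would match the time changes: the map $h(x) = x^{-\alpha}$ is a continuous homomorphism of $(\mathbb{R}_+^*, \times)$ (as $(xy)^{-\alpha} = x^{-\alpha} y^{-\alpha}$ and $x \longmapsto x^{-\alpha}$ is continuous), and $1/h(L(s)) = L(s)^{\alpha} = e^{\alpha \xi(s)}$. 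Consequently the function $\varphi$ of the statement satisfies $\varphi(t) = \int_0^t 1/h(L(s)) ds = \int_0^t e^{\alpha \xi(s)} ds = \tau(t)$, so that $\varphi \equiv \tau$ and $\varphi^{-1} \equiv \tau^{-1}$.

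Substituting into the classical representation would then give $X_1(t) = \exp(\xi(\tau^{-1}(t))) = L(\varphi^{-1}(t))$ for all $0 \leq t < \varphi(+\infty)$, together with $\zeta(X_1) = \tau(+\infty) = \varphi(+\infty)$, which is exactly \eqref{represlamp}. Since each step is an immediate consequence of the classical theorem and of the isomorphism $\exp$, there is no genuinely hard part here; the only point deserving care is the bookkeeping at the lifetime, namely the treatment of the killing of $\xi$ (equivalently, of $X_1$ reaching the cemetery point, $0$ playing here the role of $\Delta$) and the check that all of Lamperti's regimes, according to whether $\xi$ drifts to $-\infty$, oscillates, or drifts to $+\infty$, are uniformly captured by the single identity $\zeta(X_1) = \varphi(+\infty) = \int_0^{+\infty} e^{\alpha \xi(s)} ds$. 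As in the classical statement, this is handled by the conventions on $\Delta$ and on $\exp(-\infty) = 0$ recalled in the introduction.
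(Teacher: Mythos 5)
Your proposal is correct and follows exactly the route the paper intends: the paper gives no separate proof of Theorem \ref{classic lamp}, justifying it only by the remark that exponentials of L\'evy processes on $(\mathbb{R},+)$ coincide with L\'evy processes on $(\mathbb{R}_+^*,\times)$, which is precisely the transport-by-$\exp$ argument you spell out. Your additional care about the lifetime and the killing conventions is consistent with the paper's framework and does not change the substance.
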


\begin{remarque}
Thanks to the above reformulation of Lamperti representation we see that, in Theorem \ref{casgeneral}, the group homomorphism $h$ generalizes the self-similarity index from Lamperti representation, and the $\alpha$ from Theorems \ref{casr} and \ref{casr2}. 
It is possible to have $h \equiv 1$ (this means $\alpha = 0$ in the case of a pssMp) which corresponds to the case where the change of time, $\varphi^{-1}(.)$, is trivial, in that case $X_{y_0}$ is a L\'evy process on $\mathcal{M}$, equipped with the group structure constructed by the theorem. 
\end{remarque}

The interest of Theorem \ref{casgeneral} is that, not only the state space is quite general, but, besides regularity assumptions, we do not assume any particular form for the invariance components, that is, for the type of self-similarity satisfied by the process. 
It is not difficult to prove that the reciprocal of Theorem \ref{casgeneral} is true: time-changed L\'evy processes on topological groups give rise to processes satisfying Definition \ref{self-simoursens}. We state this in the following proposition. 

\begin{prop} \label{levychangentpssensfacil}

Let $(E,\star)$ be a topological group with neutral element denoted by $e_0$, and where $E$ is a locally compact separable metric space. Let $h$ be a continuous group homomorphism from $(E,\star)$ to $(\mathbb{R}_+^*, \times)$, and $L$ be a L\'evy process on $(E,\star)$. For a fixed $y \in E$ let us define
\begin{eqnarray}
\forall t \in [0, +\infty], \ \varphi_y(t) := \frac1{h(y)} \int_0^t \frac1{h(L(s))} ds, \label{timechangingrecip}
\end{eqnarray}
and 
\begin{eqnarray}
\forall \ 0 \leq t < \varphi_y(+\infty), \ X_y(t) := y \star L \left (\varphi_y^{-1}(t) \right ). \label{timechangedrecip} 
\end{eqnarray}
If $\varphi_y(+\infty)< +\infty$ then 
\[ \forall t \geq \varphi_y(+\infty), \ X_y(t) := \Delta, \]
where $\Delta$ is a cemetery point. Then we have $\zeta(X_{y}) = \varphi_y(+\infty)$ a.s. and $X$ processes satisfies Definition \ref{self-simoursens} on $E$. 
A family of invariance components $((f_y, c_y), \ y \in E)$ is given by $f_y(.) := y \star .$ and $c_y := h(y)$, for the reference point $e_0$. Moreover, note that if we let $\xi(.) := \log(1/h(L(.)))$, then $\xi$ is a real L\'evy process and we have $\zeta(X_{y}) = \frac1{h(y)} \int_0^{+\infty} e^{\xi(s)} ds$. 

\end{prop}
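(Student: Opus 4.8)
The plan is to realize $X_y$ as a time-change of "the L\'evy process started at $y$", and to transfer the strong Markov property, the life-time computation and the self-similarity through this time-change; the group and homomorphism structure make all the algebra transparent.

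First I would introduce, for each $y \in E$, the process $Z^y(u) := y \star L(u)$, $u \geq 0$. Since $L$ is a (left) L\'evy process on $(E,\star)$, its left increments $L(u)^{-1}\star L(u+s)$ are independent of the past and stationary, so that $Z^y(u+s) = Z^y(u)\star(L(u)^{-1}\star L(u+s))$ exhibits $(Z^y)_{y\in E}$ as a time-homogeneous strong Markov family on $E$ whose transition from a point $z$ over time $s$ is the law of $z\star L(s)$; this is the standard strong Markov property of L\'evy processes on groups, and c\`adl\`ag regularity of $Z^y$ follows from that of $L$ and the continuity of $\star$. The key observation is then that, because $h$ is a group homomorphism and $h(y^{-1})=h(y)^{-1}$, one has $1/h(L(s)) = h(y)/h(Z^y(s))$, whence $\varphi_y(t) = \frac1{h(y)}\int_0^t 1/h(L(s))\,ds = \int_0^t 1/h(Z^y(s))\,ds$. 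Thus $\varphi_y$ is the continuous additive functional of $Z^y$ associated with the strictly positive continuous function $1/h$, and $X_y(t)=Z^y(\varphi_y^{-1}(t))$ is precisely the time-change of $Z^y$ by the inverse of this additive functional.

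I would then invoke the classical time-change theorem: time-changing a strong Markov process by the right-continuous inverse of a strictly increasing continuous additive functional yields again a strong Markov process. As the additive functional $\int_0^{\cdot}1/h\,ds$ is the same for every starting point, this shows that $(X_y)_{y\in E}$ is a c\`adl\`ag strong Markov family, which gives the Markovian requirements of Definition \ref{self-simoursens}. For the life-time I would argue as follows. Since $1/h>0$ is continuous and $Z^y$ is c\`adl\`ag, $\varphi_y$ is continuous and strictly increasing, so it maps $[0,+\infty)$ bijectively onto $[0,\varphi_y(+\infty))$ and $\varphi_y^{-1}(t)<+\infty$ for every $t<\varphi_y(+\infty)$. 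Hence for such $t$ the value $X_y(t)=Z^y(\varphi_y^{-1}(t))$ is a genuine point of $E$, and on any $[0,T]$ with $T<\varphi_y(+\infty)$ the range $X_y([0,T]) = Z^y([0,\varphi_y^{-1}(T)])$ is relatively compact (a c\`adl\`ag path on a compact time interval has relatively compact range); thus $X_y$ does not leave every compact before $\varphi_y(+\infty)$, giving $\zeta(X_y)\geq\varphi_y(+\infty)$. Since $X_y(t)=\Delta$ for $t\geq\varphi_y(+\infty)$ by construction, the reverse inequality is clear, and $\zeta(X_y)=\varphi_y(+\infty)$ a.s.; here I use that $L$, being a L\'evy process, has infinite life-time, so no finite-time explosion of $Z^y$ can send $X_y$ to $\Delta$ prematurely.

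Finally I would verify the self-similarity and identify the invariance components. Since $h(e_0)=1$ one has $\varphi_{e_0}(t)=\int_0^t 1/h(L(s))\,ds$ and therefore $\varphi_y=\frac1{h(y)}\varphi_{e_0}$, which gives the pathwise scaling identity $\varphi_y^{-1}(t)=\varphi_{e_0}^{-1}(h(y)t)$. Writing $X_{e_0}(s)=L(\varphi_{e_0}^{-1}(s))$, this yields, for one and the same realization of $L$, the pathwise equality $X_y(t) = y\star L(\varphi_{e_0}^{-1}(h(y)t)) = y\star X_{e_0}(h(y)t) = f_y(X_{e_0}(c_y t))$ with $f_y:=y\star\cdot$ and $c_y:=h(y)$; the cemetery times match as well because $\varphi_y(+\infty)=\varphi_{e_0}(+\infty)/h(y)$. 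In particular \eqref{self-simoursenseq} holds in law with reference point $y_0=e_0$, and $f_y$ is a homeomorphism whose required joint continuity (together with that of $f_y^{-1}=y^{-1}\star\cdot$ and of $y\mapsto c_y=h(y)$) is immediate from the topological group axioms and the continuity of $h$. The last assertion follows since, $h$ being a continuous homomorphism, $h(L(\cdot))$ has independent stationary multiplicative increments, so $\xi:=\log(1/h(L(\cdot)))=-\log h(L(\cdot))$ is a real L\'evy process with $\xi(0)=0$; then $1/h(L(s))=e^{\xi(s)}$ gives $\zeta(X_y)=\varphi_y(+\infty)=\frac1{h(y)}\int_0^{+\infty}e^{\xi(s)}\,ds$. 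The main obstacle is the rigorous transfer of the strong Markov property through the time-change together with the correct identification of the life-time; everything else is a direct consequence of the group and homomorphism structure.
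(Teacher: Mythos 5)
Your overall strategy coincides with the paper's: both realize $X_y$ as the time-change of $y\star L(\cdot)$ by the inverse of the additive functional $\int_0^{\cdot}1/h(y\star L(s))\,ds$, derive the pathwise identity $X_y(\cdot)=y\star X_{e_0}(h(y)\,\cdot)$ from $\varphi_y=\varphi_{e_0}/h(y)$, and read off the invariance components and the exponential functional for $\xi=-\log h(L(\cdot))$. The one methodological difference is that you invoke a general time-change theorem for strong Markov processes, whereas the paper establishes the (strong) Markov property of $X_y$ by an explicit computation linking $\varphi_y$ to the functional $\tilde\varphi_{e_0}$ built from the post-$\varphi_y^{-1}(t)$ increment of $L$; your route is legitimate provided you check that the family $(Z^y)_y$ is a strong Markov family to which such a theorem applies, which is standard for L\'evy processes on groups.

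There is, however, a genuine gap in your treatment of the life-time. You prove $\zeta(X_y)\geq\varphi_y(+\infty)$ correctly via relative compactness of the range, but you dispose of the converse with ``since $X_y(t)=\Delta$ for $t\geq\varphi_y(+\infty)$ by construction, the reverse inequality is clear.'' In the paper's framework $\zeta(X_y)$ is not the time at which one decrees the value $\Delta$; it is the intrinsic quantity $\lim_{n}\tau(X_y,K_n^c)$, and the standing convention is that $\Delta$ can only be reached continuously (no instantaneous killing). So when $\varphi_y(+\infty)<+\infty$ you must actually show that $X_y$ exits every compact $K_n$ strictly before time $\varphi_y(+\infty)$ --- otherwise $X_y$ would have a left limit in $E$ at $\varphi_y(+\infty)$, your definition would introduce a forbidden jump to $\Delta$, and the asserted equality $\zeta(X_y)=\varphi_y(+\infty)$ would fail. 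The paper closes this point by observing that $\varphi_y(+\infty)=\frac1{h(y)}\int_0^{+\infty}1/h(L(s))\,ds<+\infty$ forces $h(y\star L(\cdot))$ to take arbitrarily large values, while $h$ is bounded on each compact $K_n$; hence $\tau(y\star L(\cdot),K_n^c)<+\infty$ almost surely and $\tau(X_y,K_n^c)=\varphi_y\left(\tau(y\star L(\cdot),K_n^c)\right)<\varphi_y(+\infty)$, giving $\zeta(X_y)\leq\varphi_y(+\infty)$. You should add this argument (or an equivalent one) to make the life-time identification complete.
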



\begin{remarque} \label{startingpointnonexpl}
In the representations \eqref{represdim1}, \eqref{represdim2}, \eqref{represmanifold}, and \eqref{represlamp} from the above theorems, if one replaces $\xi$, $( \xi ( . ), \int_0^{.} e^{\beta \xi(s-)} d\eta(s) )$, $L$, and $L$ by respectively $\psi^{-1}(y) + \xi$, \\ $(\pi_1(\psi^{-1}(y)) + \xi(.), \pi_2(\psi^{-1}(y)) + \int_0^{.} e^{\beta (\pi_1(\psi^{-1}(y)) + \xi(s-))} d\eta(s))$, $y \star L$, and $y \times L$ (where, respectively, $y \in I$, $y \in \mathcal{D}$, $y \in E$, and $y \in \mathbb{R}_+^*$), 
then one obtains a version of $X_{y}$. 


\end{remarque}

\begin{remarque} \label{exittimealwaysfonctexpo}
An interesting consequence of the previous results is that, for any process $X$ to which one of our theorems is applicable, the exit time of $X$ from its domain, no matter how complicated this domain is, is always equal in law to the exponential functional of a real L\'evy process, just as in the case of a pssMp (an heuristic explanation for this similarity is proposed a little after Remark \ref{continuity}). This may seem surprising since, according to (generalized) Kiu-Lamperti representation, the life-time of a rssMp and more generally of a ssMp is in general not the exponential functional of a L\'evy process but of a Markov additive process (see for example the end of Sextion 3.2 in \cite{pardosurvey}). Also, a pssMp that has finite life-time always leaves the domain $]0, +\infty[$ at $\{0\}$ (when $\alpha > 0$) or at $\{+\infty\}$ (when $\alpha < 0$). In the case of a process satisfying Definition \ref{self-simoursens}, the way to leave the domain can be very different, since the exit from the domain does not have to be made at a particular point of the boundary. 
It is thus remarkable that the law of the exit time remains the same as in the case of a pssMp. The results of the next subsection make explicite, in some cases, the decompositions from the above theorems so in particular the L\'evy process $\xi$, of which the exponential functional is the the life-time of $X$, can be constructed explicitly from $X$ and its good invariance components (see Remarks \ref{levyexpldim1} and \ref{levyexplarbspace} below). 
\end{remarque}



\subsection{Good components and explicit results}

\begin{defi} [Good invariance components] \label{goodcomponents}
Let $X$ be a processes satisfying Definition \ref{self-simoursens} on a locally compact separable metric space $E$. Let $((f_y, c_y), \ y \in E)$ be a family of invariance components for $X$, relatively to some reference point $y_0$. We say that $((f_y, c_y), \ y \in E)$ are \textbf{good invariance components} if they have the following properties: 
\begin{align}
& f_{y_0} = id_E \ \text{and} \ c_{y_0} = 1, \label{refpoint} \\
& \forall y, z \in E, \ c_{f_y(z)} = c_y \times c_z. \label{relmorphisme}
\end{align}
\end{defi}

For example we see that the invariance components defined in \eqref{exampleinvcomp}, in Proposition \ref{levychangentpssensfacil}, and in the reciprocals of Theorems \ref{casr}-\ref{casr2}, for the processes $X$ defined there, are good invariance components. As we will see, good invariance components are exactly those that can be naturally related to a group structure on the state space $E$, as in Proposition \ref{levychangentpssensfacil}. In the proofs of our results, making appear a group structure on the state space is a key point to represent a process satisfying Definition \ref{self-simoursens} in term of a L\'evy process, but for this we need to work with good invariance components. Therefore, the existence of good invariance components has a theoretical interest, for example for the proofs of Theorems \ref{casr}, \ref{casr2} and \ref{casgeneral}, and on the other hand, having good invariance components is also useful to get explicite relations between the invariance components and the representations of $X$ given in Theorems \ref{casr}, \ref{casr2} and \ref{casgeneral}. In particular, in the context of good invariance components, the group structure appearing in Theorem \ref{casgeneral} is constructed naturally in term of these components, as we will see in Theorem \ref{casgeneralexpl}. 

Note that if $X$ is a process satisfying Definition \ref{self-simoursens} and $((f_y, c_y), \ y \in E)$ is a family of invariance components for $X$, relatively to some reference point $y_0$, then $((f_y \circ f_{y_0}^{-1}, c_y / c_{y_0}), \ y \in E)$ is a family of invariance components for $X$, relatively to the reference point $y_0$, and it satisfies \eqref{refpoint}. However, the existence of a family of invariance components that satisfies \eqref{relmorphisme} is much more difficult to prove, and is established in the following proposition. 

\begin{prop} [Existence of $\mathcal{C}^k$ good invariance components] \label{existsgoodcomponents}

For $k \geq 1$, let $X$ be a process satisfying Definition \ref{self-simoursens} with $\mathcal{C}^k$ invariance components on a connected $\mathcal{C}^k$-differentiable manifold $\mathcal{M}$ and let $y_0$ be an element of $\mathcal{M}$. We assume that either Assumption \ref{hypsupport1} or Assumption \ref{hypsupport2} is satisfied for $E = \mathcal{M}$ and that $Sym(X_{y_0})$ is discrete. Then $X$ admits a family of $\mathcal{C}^k$ good invariance components relatively to the reference point $y_0$. 

\end{prop}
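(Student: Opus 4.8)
The plan is to begin with an arbitrary family of $\mathcal C^k$ invariance components $((f_y,c_y),\,y\in\mathcal M)$ relatively to $y_0$ and to deform it, using only the gauge freedom recorded under ``Non-uniqueness of invariance components'' and \eqref{invarsurplace}, until \eqref{relmorphisme} holds. First I would secure \eqref{refpoint}: as noted just before the statement, replacing the family by $((f_y\circ f_{y_0}^{-1},\,c_y/c_{y_0}))$ we may assume $f_{y_0}=\mathrm{id}_{\mathcal M}$ and $c_{y_0}=1$. Evaluating \eqref{self-simoursenseq} at $t=0$ then forces $f_y(y_0)=y$ for every $y$, whence $c_{f_y(y_0)}=c_y=c_yc_{y_0}$ and $c_{f_{y_0}(z)}=c_z=c_{y_0}c_z$; thus \eqref{relmorphisme} already holds as soon as $y=y_0$ or $z=y_0$, and the whole difficulty is to propagate it to all pairs.

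The key step is a two-stage use of the strong Markov property yielding the cocycle identity
\[ (f_y\circ f_z)\big(X_{y_0}(c_yc_z\,\cdot)\big)\ \overset{\mathcal L}{=}\ f_{f_y(z)}\big(X_{y_0}(c_{f_y(z)}\,\cdot)\big),\qquad y,z\in\mathcal M. \]
To produce it I would run $X_{y_0}$, stop it at a stopping time $S$ and apply the relation displayed under ``Combination with Strong Markov Property'' with $z=X_{y_0}(S)=:v$; inside the fresh copy of $X_{y_0}$ thus appearing I would stop again at an inner stopping time with value $w$ and apply the same relation. Describing the law of the process after the resulting composite time in two ways, directly by the strong Markov property at the instant $X_{y_0}$ reaches $f_v(w)$, and through the two nested self-similarity relations while tracking the linear time changes $c_v$ and $c_w$, gives the identity for $y=v,\ z=w$. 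Under Assumption \ref{hypsupport1} (which holds, being implied by Assumption \ref{hypsupport2} through Lemma \ref{equivassumpt12}) the values $v$ and $w$ are dense in $\mathcal M$, and since both sides are continuous in $(y,z)$ by Remark \ref{continuity} and the continuity of $y\mapsto c_y$, the identity extends to all $y,z\in\mathcal M$.

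Performing the substitution $r\mapsto r/c_{f_y(z)}$ in the cocycle identity exhibits $\big(f_{f_y(z)}^{-1}\circ f_y\circ f_z,\ c_yc_z/c_{f_y(z)}\big)$ as a solution of \eqref{invarsurplace}, i.e. as an element of the group $G$ of self-similarities in place of $X_{y_0}$. Writing $\Lambda\subset\mathbb R_+^*$ for the set of time-scalings occurring in $G$, this means that the defect $D(y,z):=c_{f_y(z)}/(c_yc_z)$ is $\Lambda$-valued, while we already know $D\equiv1$ on the two axes. Being continuous on the connected space $\mathcal M\times\mathcal M$, $D$ has as image a connected subset of $\mathbb R_+^*$ containing $1$, hence an interval. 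If this interval is $\{1\}$ then $D\equiv1$, \eqref{relmorphisme} holds, and the current components are good. Otherwise the subgroup $\Lambda$ contains a nondegenerate interval, so $\Lambda=\mathbb R_+^*$; then $c_y^{-1}\in\Lambda$ for every $y$, and I would trivialise $c$ altogether: picking a $\mathcal C^k$ section $s:\mathbb R_+^*\to G$, $\lambda\mapsto(\Phi_\lambda,\lambda)$, of the time-scaling projection with $s(1)=(\mathrm{id},1)$, the deformed family $((f_y\circ\Phi_{c_y^{-1}},\,1),\,y\in\mathcal M)$ is a family of invariance components by \eqref{invarsurplace}, is of class $\mathcal C^k$ because $y\mapsto c_y$ is, satisfies \eqref{refpoint}, and has constant part $\equiv1$, so that \eqref{relmorphisme} is immediate.

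The main obstacle is the structural analysis of $G$ in this last case: one must upgrade the a priori merely topological one-parameter family of symmetries-with-time-change to a genuinely $\mathcal C^k$ object, so as to obtain the section $s$ and keep the modified components of class $\mathcal C^k$. This is precisely where the discreteness of $Sym(X_{y_0})$ enters, for it is the kernel of the projection $G\to\Lambda$ and makes that projection behave like a covering of the simply connected group $\mathbb R_+^*$, over which sections exist; the requisite $\mathcal C^k$ supply of elements of $G$ is furnished by the distinguished maps $f_{f_y(z)}^{-1}\circ f_y\circ f_z$, whose regularity is inherited from the $\mathcal C^k$ regularity of the $f$'s.
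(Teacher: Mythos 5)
Your outline follows essentially the same route as the paper: the cocycle identity you derive by the two-stage Markov/density argument is exactly Proposition \ref{compatibility}, the dichotomy on the defect $D(y,z)=c_{f_y(z)}/(c_yc_z)$ (constant $\equiv 1$ versus $\Lambda=\mathbb{R}_+^*$) is the paper's Case 1/Case 2, and the final deformation $f_y\mapsto f_y\circ\Phi_{c_y^{-1}}$ with all time constants set to $1$ is precisely the paper's $\tilde f_y:=f_y\circ L[1/c_y]$. So the skeleton is right.

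The one genuine gap is the step you yourself flag as ``the main obstacle'': the existence of the section $\lambda\mapsto(\Phi_\lambda,\lambda)$ is asserted via a covering-space heuristic, but this does not deliver what is actually needed. A covering argument over the simply connected base $\mathbb{R}_+^*$ would at best give a \emph{continuous} section, and only after one has shown that the projection $G\to\Lambda$ is locally trivial in the topology of $\mathcal{C}^0(\mathcal{M},\mathcal{M})\times\mathbb{R}_+^*$ --- which is not automatic. What the final verification requires is that $(x,\lambda)\mapsto\Phi_\lambda(x)$ (and $(x,\lambda)\mapsto\Phi_\lambda^{-1}(x)$) be jointly of class $\mathcal{C}^k$, so that $(y,x)\mapsto\tilde f_y(x)$ and its inverse are $\mathcal{C}^k$; ``of class $\mathcal{C}^k$ because $y\mapsto c_y$ is'' begs this question. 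The paper fills this in by applying the local inversion theorem to $U_2$ along a coordinate where it is nonconstant to get a local $\mathcal{C}^k$ section from the maps $U_1(y_1,y_2)=f_{f_{y_1}(y_2)}^{-1}\circ f_{y_1}\circ f_{y_2}$, normalizing it at $\lambda=1$, extending it to all of $\mathbb{R}_+^*$ by iterated composition, using the discreteness of $Sym(X_{y_0})$ to prove the extension is the \emph{unique} continuous section with $L[1]=\mathrm{id}$ (which also yields $L[\lambda]^{-1}=L[\lambda^{-1}]$, needed for the inverse's regularity), and finally comparing two such constructions with incommensurable step sizes to establish $\mathcal{C}^k$ regularity across the gluing points. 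None of this is routine, and without it the proposal stops one substantive step short of a proof.
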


Thanks to this proposition, we will sometimes assume that a process satisfying Definition \ref{self-simoursens} is given along with a family of good invariance components. 

For processes given along with good invariance components, we have the following results that can be thought of as explicit versions of Theorems \ref{casr}, \ref{casr2}, and \ref{casgeneral}. 

\begin{theo} [Explicit result in dimension $1$] \label{casrexpl}

For $k \geq 1$, let $X$ be a process satisfying Definition \ref{self-simoursens} with $\mathcal{C}^k$ invariance components on $I$, an open interval of $\mathbb{R}$, and let $((f_y, c_y), \ y \in I)$ be a family of \textbf{$\mathcal{C}^k$ good invariance components} for $X$, relatively to some reference point $y_0 \in I$. We assume that either Assumption \ref{hypsupport1} or Assumption \ref{hypsupport2} is satisfied for $E=I$. Let $g : I \longrightarrow \mathbb{R}$ and $\alpha \in \mathbb{R}$ be defined via 
\begin{eqnarray}
\forall y \in I, \ g(y) := \int_{y_0}^y \frac{1}{f_y' (y_0)} dy, \ \ \ \alpha := -\log(c_{g^{-1}(1)}), \label{defgdim1}
\end{eqnarray}
where $f_y'$ denotes the derivative of the function $f_y : I \longrightarrow I$. Then, $g$ and $\alpha$ are well-defined, $g$ is a $\mathcal{C}^k$-diffeomorphism from $I$ to $\mathbb{R}$, and there is a real L\'evy process $\xi$ such that if we set 
\begin{eqnarray}
\forall t \in [0, +\infty], \ \varphi(t) := \int_0^t e^{\alpha \xi(s)} ds, \label{timechangingdim1expl}
\end{eqnarray}
then 
\begin{eqnarray}
\zeta(X_{y_0}) = \varphi(+\infty) = \int_0^{+\infty} e^{\alpha \xi(s)} ds \label{tpsviedim1expl}
\end{eqnarray}
and 
\begin{eqnarray}
\forall \ 0 \leq t < \varphi(+\infty), \ X_{y_0}(t) = g^{-1} \left ( \xi \left (\varphi^{-1}(t) \right ) \right ). \label{represxdim1}
\end{eqnarray}




\end{theo}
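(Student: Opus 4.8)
The plan is to show that the explicit function $g$ built from the good invariance components is precisely the chart that turns the self-similarity action into translations, and then to transport the group-theoretic representation of Theorem~\ref{casgeneral} through this chart. I would organize everything around the composition law $y \star z := f_y(z)$ on $I$, with neutral element $y_0$, whose associativity $f_{f_y(z)} = f_y \circ f_z$ is the structural fact encoding that $((f_y,c_y),\,y\in I)$ are good components. This associativity, together with $f_y(y_0)=y$ (read off from \eqref{self-simoursenseq} at $t=0$) and the relation $c_{f_y(z)}=c_y c_z$ from \eqref{relmorphisme}, is exactly what identifies good components with a group structure (cf. the discussion following Definition~\ref{goodcomponents} and Proposition~\ref{levychangentpssensfacil}); in dimension $1$ the discreteness of $Sym(X_{y_0})$ needed to pin it down is automatic by Lemma~\ref{discretendim1}.

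First I would verify that $g$ and $\alpha$ are well defined and that $g$ is a $\mathcal{C}^k$-diffeomorphism onto $\mathbb{R}$. Each $f_y$ is a $\mathcal{C}^k$-diffeomorphism of the connected interval $I$ with $f_{y_0}=\mathrm{id}$, so by continuity of the nowhere-vanishing map $y\mapsto f_y'(y_0)$ we get $f_y'(y_0)>0$ for all $y$; hence the integrand $1/f_y'(y_0)$ in \eqref{defgdim1} is positive and $\mathcal{C}^{k-1}$, so $g$ is well defined, $\mathcal{C}^k$, strictly increasing, with $g(y_0)=0$. The crux of this step is the identity $g(f_y(z)) = g(y)+g(z)$: differentiating the identity of functions $f_{f_y(z)}=f_y\circ f_z$ at the point $y_0$ and using $f_z(y_0)=z$ gives the infinitesimal cocycle $f_{f_y(z)}'(y_0)=f_y'(z)\,f_z'(y_0)$, which reads exactly $(g\circ f_y)'(z)=g'(z)$; since $g(f_y(y_0))=g(y)$, the difference $g\circ f_y - g$ is the constant $g(y)$. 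Surjectivity onto $\mathbb{R}$ then follows by iteration: for any $y_+>y_0$ one has $g(f_{y_+}^{\circ n}(y_0))=n\,g(y_+)\to+\infty$, and symmetrically some $y_-<y_0$ drives $g$ to $-\infty$; as $g(I)$ is an interval this forces $g(I)=\mathbb{R}$, so $g$ is a diffeomorphism and $g^{-1}(1)$, hence $\alpha$, is well defined. Transporting \eqref{relmorphisme} through $g$, the map $s\mapsto \log c_{g^{-1}(s)}$ is additive and continuous on $\mathbb{R}$, hence linear; writing its value at $s=1$ as $-\alpha$ yields $c_y=e^{-\alpha g(y)}$ and recovers $\alpha=-\log c_{g^{-1}(1)}$, matching \eqref{defgdim1}. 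In particular $g$ is a $\mathcal{C}^k$ Lie-group isomorphism from $(I,\star)$ onto $(\mathbb{R},+)$ carrying $c_\cdot$ to the exponential homomorphism $s\mapsto e^{-\alpha s}$.

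Finally I would transport the representation. Applying Theorem~\ref{casgeneral} on $\mathcal{M}=I$ furnishes the group $(I,\star)$, a homomorphism $h$, and a L\'evy process $L$ on $(I,\star)$ with $X_{y_0}(t)=L(\varphi^{-1}(t))$ and $\varphi(t)=\int_0^t 1/h(L(s))\,ds$; through the correspondence above the homomorphism $h$ may be taken equal to $c_\cdot$ (it is the scaling constant attached to the group action, as in Proposition~\ref{levychangentpssensfacil}). Since $g$ is a group isomorphism onto $(\mathbb{R},+)$, the image $\xi:=g\circ L$ is a classical real L\'evy process, and $1/h(L(s))=c_{L(s)}^{-1}=e^{\alpha g(L(s))}=e^{\alpha\xi(s)}$, so $\varphi$ coincides with \eqref{timechangingdim1expl}. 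Then $\zeta(X_{y_0})=\varphi(+\infty)=\int_0^{+\infty}e^{\alpha\xi(s)}ds$ gives \eqref{tpsviedim1expl}, while $X_{y_0}(t)=L(\varphi^{-1}(t))=g^{-1}(\xi(\varphi^{-1}(t)))$ gives \eqref{represxdim1}.

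I expect the main obstacle to be the structural input of the first paragraph, reused in the third: establishing rigorously that good components obey $f_{f_y(z)}=f_y\circ f_z$ (equivalently that $y\star z=f_y(z)$ is associative), that $h$ can be taken to be $c_\cdot$, and that no spurious symmetry or nontrivial time-rescaling of $X_{y_0}$ intervenes. This is precisely where discreteness of $Sym(X_{y_0})$ (automatic here by Lemma~\ref{discretendim1}) is used, combining the self-similarity property \eqref{self-simoursenseq} with the strong Markov property. Once this is in hand, the well-definedness and surjectivity of $g$ and the transport of Theorem~\ref{casgeneral} are comparatively routine.
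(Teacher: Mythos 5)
Your proposal is correct and follows essentially the same route as the paper: build the group $(I,\star)$ from the good components (Proposition~\ref{groupappears}, with discreteness of $Sym(X_{y_0})$ automatic from Lemma~\ref{discretendim1}), represent $X_{y_0}$ as a time-changed L\'evy process on it (Proposition~\ref{levychangentps}), and push everything through the isomorphism $g$ onto $(\mathbb{R},+)$ (Lemma~\ref{isomorphismedim1}); your derivation of $g(f_y(z))=g(y)+g(z)$ by differentiating $f_{f_y(z)}=f_y\circ f_z$ at $y_0$ is exactly the one-dimensional case of Lemmas~\ref{lemmereljacobiennes} and~\ref{isomorphismedim1}. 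The only cosmetic differences are that you invoke Theorem~\ref{casgeneral} and then argue that $h$ can be taken equal to $c_\cdot$, whereas the paper applies the explicit statement (Proposition~\ref{levychangentps}, i.e.\ Theorem~\ref{casgeneralexpl}) in which $h=c_\cdot$ and the group is the one built from the given components, and that your surjectivity argument for $g$ by iterating $f_{y_+}$ is a harmless variant of the paper's subgroup argument.
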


\begin{remarque} \label{levyexpldim1}
In the above theorem $g$ is constructed directly from the family of good invariance components of $X$ and we see that the L\'evy process $\xi$ can be expressed as $\xi(t) = g(X_{y_0}(\varphi(t)))$. Moreover, as we will justify in the end of the proof of the theorem, $\varphi^{-1}(t)$ has an alternative expression in term of $X_{y_0}$ and of the good invariance components: for all $0 \leq t < \zeta(X_{y_0})$, $\varphi^{-1}(t) = \int_0^t c_{X_{y_0}(u)} du$. This, together with the expression $\xi(t) = g(X_{y_0}(\varphi(t)))$, allows to express explicitly $\xi$ in term of $X_{y_0}$ and of the good invariance components, as mentioned in Remark \ref{exittimealwaysfonctexpo}. Similarly, the $\xi$ in Theorem \ref{casr2expl} below can also be expressed explicitly in term of $X_{y_0}$ and of the good invariance components. 
\end{remarque}

Before stating the result in dimension $2$, let us introduce the notion of \textit{commutative invariance components}. 
\begin{defi} [Commutative invariance components] \label{commutativecomponents}
Let $X$ be a process satisfying Definition \ref{self-simoursens} on a locally compact separable metric space $E$. Let $((f_y, c_y), \ y \in E)$ be a family of invariance components for $X$, relatively to some reference point $y_0$. We say that $((f_y, c_y), \ y \in E)$ are \textbf{commutative invariance components} if they have the following property: 
\begin{eqnarray}
& \forall y, z \in E, \ f_y(z) = f_z(y). \label{commute}
\end{eqnarray}
\end{defi}

\begin{theo} [Explicit result in dimension $2$] \label{casr2expl}

For $k \geq 2$, let $X$ be a process satisfying Definition \ref{self-simoursens} on $\mathcal{D}$, an open simply connected domain of $\mathbb{R}^2$, and let $((f_y, c_y), \ y \in \mathcal{D})$ be a family of \textbf{$\mathcal{C}^k$ good invariance components} associated with $X$, relatively to some reference point $y_0 \in \mathcal{D}$. We assume that either Assumption \ref{hypsupport1} or Assumption \ref{hypsupport2} is satisfied for $E=\mathcal{D}$ and that $Sym(X_{y_0})$ is discrete. 
\begin{itemize}
\item If $((f_y, c_y), \ y \in \mathcal{D})$ are \textbf{commutative} invariance components, then let $Y_0$ be the gradient at $y_0$ of the application $(y \longmapsto c_{y})$. If $Y_0 \neq \begin{pmatrix}
0 \\
0
\end{pmatrix}$ let $M := \begin{pmatrix}
\pi_1(Y_0) & \pi_2(Y_0) \\
-\pi_2(Y_0) & \pi_1(Y_0)
\end{pmatrix}$, if $Y_0 = \begin{pmatrix}
0 \\
0
\end{pmatrix}$ let $M$ be the identity $2\times 2$ matrix. Finally, let $g : \mathcal{D} \longrightarrow \mathbb{R}^2$ and $\alpha \in \mathbb{R}$ be defined via 
\begin{eqnarray}
\forall y \in \mathcal{D}, \ g(y) := \int_a^b M. \left [ J f_{\gamma(s)}(y_0) \right ]^{-1}. \gamma'(s) ds, \ \ \ \alpha := -\log(c_{g^{-1}(1,0)}), \label{defgdim2com}
\end{eqnarray}
where $J f_z$ denotes the Jacobian matrix of the function $f_z(.) : \mathcal{D} \longrightarrow \mathcal{D}$ and where $\gamma : [a, b] \longrightarrow \mathcal{D}$ is any path, locally $\mathcal{C}^1$, with $\gamma(a) = y_0$ and  $\gamma(b) = y$. Then, $g$ and $\alpha$ are well-defined, $g$ is a $\mathcal{C}^k$-diffeomorphism from $\mathcal{D}$ to $\mathbb{R}^2$, and there is a L\'evy process $(\xi, \eta)$ on $\mathbb{R}^2$ such that if we set 
\begin{eqnarray}
\forall t \in [0, +\infty], \ \varphi(t) := \int_0^t e^{\alpha \xi(s)} ds, \label{timechangingdim2expl}
\end{eqnarray}
then 
\begin{eqnarray}
\zeta(X_{y_0}) = \varphi(+\infty) = \int_0^{+\infty} e^{\alpha \xi(s)} ds \label{tpsviedim2expl}
\end{eqnarray}
and 
\begin{eqnarray}
\forall \ 0 \leq t < \varphi(+\infty), \ X_{y_0}(t) = g^{-1} \left ( \xi \left (\varphi^{-1}(t) \right ), \eta \left (\varphi^{-1}(t) \right ) \right ). \label{represxdim2com}
\end{eqnarray}

\item If $((f_y, c_y), \ y \in \mathcal{D})$ are \textbf{not commutative} invariance components, then let $g : \mathcal{D} \longrightarrow \mathbb{R}^2$ be defined as in Lemma \ref{isomorphismecasnonclassique}, and $\alpha := -\log(c_{g^{-1}(1,0)}) \in \mathbb{R}$. Then, $g$ and $\alpha$ are well-defined, $g$ is a $\mathcal{C}^k$-diffeomorphism from $\mathcal{D}$ to $\mathbb{R}^2$, and there is a L\'evy process $(\xi, \eta)$ on $\mathbb{R}^2$ such that if we set 
\begin{eqnarray}
\forall t \in [0, +\infty], \ \varphi(t) := \int_0^t e^{\alpha \xi(s)} ds, \label{timechangingdim2expl2}
\end{eqnarray}
then 
\begin{eqnarray}
\zeta(X_{y_0}) = \varphi(+\infty) = \int_0^{+\infty} e^{\alpha \xi(s)} ds \label{tpsviedim2expl2}
\end{eqnarray}
and 
\begin{eqnarray}
\forall \ 0 \leq t < \varphi(+\infty), \ X_{y_0}(t) = g^{-1} \left ( \xi \left (\varphi^{-1}(t) \right ), \int_0^{\varphi^{-1}(t)} e^{\xi(s-)} d\eta(s) \right ). \label{represxdim2noncom}
\end{eqnarray}

\end{itemize}

\end{theo}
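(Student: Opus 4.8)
First I would turn the self-similarity data into a group. Set $y \star z := f_y(z)$ on $\mathcal{D}$. Since the given components are \emph{good}, \eqref{refpoint} makes $y_0$ a neutral element, and — exactly as in the proof of Theorem \ref{casgeneral}, where the combination of the strong Markov property with Definition \ref{self-simoursens} and the discreteness of $Sym(X_{y_0})$ is used — one gets $f_{f_y(z)} = f_y \circ f_z$, so that $\star$ is associative and $(\mathcal{D}, \star)$ is a connected $2$-dimensional $\mathcal{C}^k$-Lie group; the homomorphism relation \eqref{relmorphisme} says precisely that $h := c_{(\cdot)}$ is a Lie group homomorphism onto $(\mathbb{R}_+^*, \times)$. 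Theorem \ref{casgeneral} then hands me a left Lévy process $L$ on $(\mathcal{D}, \star)$ with $X_{y_0}(t) = L(\varphi^{-1}(t))$ on $[0, \varphi(+\infty))$, $\zeta(X_{y_0}) = \varphi(+\infty)$, and $\varphi(t) = \int_0^t 1/h(L(s))\,ds$. The whole task then reduces to writing down an explicit $\mathcal{C}^k$-isomorphism $g$ from $(\mathcal{D}, \star)$ to a standard model group and reading off the law of $g_* L$.

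In the commutative case, \eqref{commute} says $y \star z = z \star y$, so the group is abelian; as $\mathcal{D}$ is simply connected it carries no torus factor and $(\mathcal{D}, \star) \cong (\mathbb{R}^2, +)$. I would build the isomorphism as the group logarithm. The $\mathbb{R}^2$-valued $1$-form $\omega_y := [Jf_y(y_0)]^{-1}$ (pullback to the identity of the left-translation differentials) is left-invariant; because the group is abelian its Maurer--Cartan equation degenerates to $d\omega = 0$, so $\omega$ is closed, and simple connectedness of $\mathcal{D}$ makes $\int_{y_0}^y \omega$ path-independent. Hence the formula \eqref{defgdim2com}, $g(y) = \int_a^b M\,[Jf_{\gamma(s)}(y_0)]^{-1}\gamma'(s)\,ds$, is well-defined, with $Jg(y) = M\,[Jf_y(y_0)]^{-1}$ (which is $\mathcal{C}^{k-1}$, so $g$ is $\mathcal{C}^k$); left-invariance of $\omega$ gives $g(y \star z) = g(y) + g(z)$, so $g$ is a $\mathcal{C}^k$-diffeomorphism onto $(\mathbb{R}^2,+)$. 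The constant matrix $M$ serves only to choose coordinates so that the homomorphism $\log c$ (linear in the $g$-coordinates, with gradient $Y_0$ at $y_0$) factors through $\pi_1 \circ g$, namely $c_y = e^{-\alpha \pi_1(g(y))}$ with $\alpha = -\log c_{g^{-1}(1,0)}$. Transporting $L$, the process $(\xi, \eta) := g_* L$ is an ordinary $\mathbb{R}^2$-valued Lévy process, $1/h(L(s)) = e^{\alpha \xi(s)}$, and applying $g$ to $X_{y_0}(t) = L(\varphi^{-1}(t))$ yields \eqref{timechangingdim2expl}--\eqref{represxdim2com}.

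In the non-commutative case $(\mathcal{D}, \star)$ is a connected non-abelian $2$-dimensional Lie group, hence identified, via Lemma \ref{isomorphismecasnonclassique}, with the affine group $\mathrm{Aff}_+(\mathbb{R})$ in the model $(\mathbb{R}^2, \odot)$ where $(u_1,v_1)\odot(u_2,v_2) = (u_1+u_2, v_1 + e^{u_1}v_2)$; that lemma supplies the explicit $\mathcal{C}^k$-isomorphism $g$, and the same alignment argument gives $c_y = e^{-\alpha \pi_1(g(y))}$ with $\alpha = -\log c_{g^{-1}(1,0)}$. The structural heart is the shape of a left Lévy process $L$ on this model: its first coordinate $\xi := \pi_1(g(L))$ is a real Lévy process (projection to $(\mathbb{R},+)$ being a homomorphism), while the left-increment $(u_1,v_1)^{-1}\odot(u_2,v_2) = (u_2-u_1, e^{-u_1}(v_2-v_1))$ forces $dV(s) = e^{\xi(s-)}\,d\eta(s)$, i.e. $\pi_2(g(L(t))) = \int_0^t e^{\xi(s-)}\,d\eta(s)$ for a Lévy process $\eta$ with $(\xi,\eta)$ jointly Lévy. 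Applying $g$ to $X_{y_0}(t) = L(\varphi^{-1}(t))$ then produces \eqref{represxdim2noncom}, where the non-abelian structure normalizes the exponent to $\beta = 1$.

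The main obstacle is concentrated in the construction and well-definedness of $g$. In the commutative case it is the verification that $\omega$ is closed (so that $M[Jf_y(y_0)]^{-1}$ is genuinely a gradient field on the simply connected $\mathcal{D}$, making $g$ path-independent and a diffeomorphism), for which commutativity \eqref{commute} and the Lie group structure are both essential; the compatibility of $M$ and $\alpha$ with $c_y = e^{-\alpha\pi_1(g(y))}$ is then a direct linear-algebra check. In the non-commutative case the real content is the identification of the group with $\mathrm{Aff}_+(\mathbb{R})$ and the exponential-functional form of its Lévy processes, carried out in Lemma \ref{isomorphismecasnonclassique}; once that structural description is available, extracting $(\xi,\eta)$ and the exponential functional is routine.
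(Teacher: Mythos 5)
Your proposal is correct and follows essentially the same route as the paper: build the group $(\mathcal{D},\star)$ from the good components, invoke the time-changed Lévy representation on that group, then push forward through the explicit isomorphism to $(\mathbb{R}^2,+)$ or $(\mathbb{R}^2,T)$ and check that $c_{g^{-1}(z)}=e^{-\alpha z_1}$. One small remark: the exponential-functional form of a Lévy process on the affine group, which you treat as forced by the increment structure, is Proposition \ref{levynonclassiques} rather than Lemma \ref{isomorphismecasnonclassique}, and its proof requires genuine work (establishing that $\pi_2(Y)$ is a semimartingale via big-jump removal and moment/martingale estimates) before the stochastic integral defining $\eta$ even makes sense.
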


\begin{theo} \label{casgeneralexpl}

Let $X$ be a process satisfying Definition \ref{self-simoursens} on a connected locally compact separable metric space $E$, and let $((f_y, c_y), \ y \in E)$ be a family of \textbf{good invariance components} for $X$, relatively to some reference point $y_0$. We assume that either Assumption \ref{hypsupport1} or Assumption \ref{hypsupport2} is satisfied for $E$ and that $Sym(X_{y_0})$ is discrete. Let us define an interne composition law $\star$ on $E$ by $y \star x := f_y(x)$. Then: 

\begin{itemize}
\item $(E, \star)$ is a topological group with neutral element $y_0$ ; 
\item $(y \longmapsto c_y)$ is a continuous group homomorphism from $(E,\star)$ to $(\mathbb{R}_+^*, \times)$ ; 
\item There is a L\'evy process $L$ (starting from $y_0$) on $(E,\star)$ such that if we define 
$\forall t \in [0, +\infty], \varphi(t) := \int_0^t 1/c_{L(s)} ds$ then $\zeta(X_{y_0}) = \varphi(+\infty)$ and 
\begin{eqnarray}
\forall \ 0 \leq t < \varphi(+\infty), \ X_{y_0}(t) = L \left (\varphi^{-1}(t) \right ). \label{represgene}
\end{eqnarray} 
\end{itemize}
Let $\xi(.) := \log(1/c_{L(.)})$, then $\xi$ is clearly a real L\'evy process and we have $\zeta(X_{y_0}) = \int_0^{+\infty} e^{\xi(s)} ds$. 


If moreover, for $k \geq 1$, $E$ is a $\mathcal{C}^k$-differentiable manifold and $((f_y, c_y), \ y \in E)$ are $\mathcal{C}^k$ good invariance components, then $(E,\star)$ is even a $\mathcal{C}^k$-Lie group and $(y \longmapsto c_y)$ is a $\mathcal{C}^k$-Lie group homomorphism. 

\end{theo}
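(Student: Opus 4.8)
The plan is to verify the three bullet points in order, using the defining self-similarity relation \eqref{self-simoursenseq} together with the Markov property, and to obtain the smooth case at the end as an immediate strengthening. Throughout I write $y\star x=f_y(x)$ and use that the good invariance components satisfy $f_{y_0}=\mathrm{id}_E$, $c_{y_0}=1$ and $c_{f_y(z)}=c_yc_z$, i.e. \eqref{refpoint}--\eqref{relmorphisme}.

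First I would establish that $(E,\star)$ is a group with neutral element $y_0$. The left identity is immediate, $y_0\star x=f_{y_0}(x)=x$, and the right identity comes for free by evaluating \eqref{self-simoursenseq} at $t=0$: since $X_y(0)=y$ and $f_y(X_{y_0}(0))=f_y(y_0)$ almost surely, equality in law at time $0$ forces $f_y(y_0)=y$, i.e. $x\star y_0=x$. In particular $y\mapsto f_y$ is injective (evaluate $f_y=f_{y'}$ at $y_0$). A right inverse of $y$ is $f_y^{-1}(y_0)$, since $y\star f_y^{-1}(y_0)=f_y(f_y^{-1}(y_0))=y_0$; once associativity is known, a two-sided identity together with right inverses yields a genuine group by the standard semigroup argument.

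The crux, and the main obstacle, is associativity, i.e. $f_{f_y(z)}=f_y\circ f_z$ for all $y,z$. I would obtain it by computing the post-$S$ behaviour of $X_{y_0}$ in two ways. Applying the combination of the strong Markov property and \eqref{self-simoursenseq} at a stopping time $S$, with $z:=X_{y_0}(S)$, and comparing with the expression obtained by first writing $X_y\overset{\mathcal{L}}{=}f_y(X_{y_0}(c_y\cdot))$ and then using self-similarity of the inner $X_{y_0}$ at $S$, the homomorphism property $c_{f_y(z)}=c_yc_z$ makes the two deterministic time scalings coincide; one is left with the identity of processes $f_y\circ f_z(X_{y_0})\overset{\mathcal{L}}{=}f_{f_y(z)}(X_{y_0})$, valid for $z$ ranging over the points reachable by $X_{y_0}$, hence (by Assumption \ref{hypsupport1} and continuity) over a dense subset of $E$. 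This says precisely that $g_{y,z}:=f_{f_y(z)}^{-1}\circ f_y\circ f_z$ lies in $Sym(X_{y_0})$ as defined in \eqref{defsym}. Now $(y,z)\mapsto g_{y,z}$ is continuous into $\mathcal{C}^0(E,E)$ (Remark \ref{continuity}), so the membership $g_{y,z}\in Sym(X_{y_0})$ extends from the dense set to all of $E\times E$; since $E\times E$ is connected, $Sym(X_{y_0})$ is discrete, and $g_{y_0,y_0}=\mathrm{id}$, we conclude $g_{y,z}\equiv\mathrm{id}$, which is associativity. This is the step where the two standing hypotheses—discreteness of $Sym(X_{y_0})$ and connectedness of $E$—are genuinely used, and where care is needed to pass from reachable $z$ to all $z$.

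For the topological structure, continuity of $(y,x)\mapsto f_y(x)$ and $(y,x)\mapsto f_y^{-1}(x)$ from Definition \ref{self-simoursens} makes $\star$ and $y\mapsto y^{-1}=f_y^{-1}(y_0)$ continuous, so $(E,\star)$ is a topological group; and $c_{y\star z}=c_{f_y(z)}=c_yc_z$ with $c_{y_0}=1$ and $y\mapsto c_y$ continuous shows that $c$ is a continuous homomorphism to $(\mathbb{R}_+^*,\times)$. Finally, for the L\'evy representation I would set $A(t):=\int_0^t c_{X_{y_0}(s)}\,ds$ and $L(u):=X_{y_0}(A^{-1}(u))$; a direct substitution gives $\varphi=A^{-1}$ and $X_{y_0}(t)=L(\varphi^{-1}(t))$. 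To see that $L$ is a left L\'evy process on $(E,\star)$, fix $u$, put $z:=L(u)=X_{y_0}(A^{-1}(u))$, and apply the Markov property and self-similarity to the shifted process: it is distributed as $z\star X_{y_0}'(c_z\cdot)$ for an independent copy $X_{y_0}'$. Because $c$ is a homomorphism, $c_{z\star w}=c_zc_w$, so the factor $c_z$ is exactly absorbed by the additive functional and the inner time change rescales by $c_z$; one finds $L(u+v)=z\star L'(v)$ with $L'\overset{\mathcal{L}}{=}L$ independent of the past, whence $L(u)^{-1}\star L(u+v)\overset{\mathcal{L}}{=}L(v)$ is independent of $\mathcal{F}_u$, i.e. $L$ has stationary independent left-increments. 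The life-time identity $\zeta(X_{y_0})=\varphi(+\infty)$ follows from $A(\zeta(X_{y_0}))=\zeta(L)$, and writing $\xi(u):=\log(1/c_{L(u)})$ the homomorphism property turns the left-increments of $L$ into stationary independent increments of $\xi$, so $\xi$ is a real L\'evy process with $\zeta(X_{y_0})=\int_0^{+\infty}e^{\xi(s)}\,ds$. The $\mathcal{C}^k$ addendum is then immediate: if $E$ is a $\mathcal{C}^k$-manifold and the components are $\mathcal{C}^k$, then $(y,x)\mapsto f_y(x)$ and $y\mapsto f_y^{-1}(y_0)$ are $\mathcal{C}^k$, so $(E,\star)$ is a $\mathcal{C}^k$-Lie group and $y\mapsto c_y$ a $\mathcal{C}^k$-Lie group homomorphism.
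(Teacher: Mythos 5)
Your proposal is correct and follows essentially the same route as the paper: the compatibility relation obtained from the Markov property plus self-similarity, combined with connectedness of $E\times E$ and discreteness of $Sym(X_{y_0})$, yields associativity exactly as in Propositions \ref{compatibility} and \ref{groupappears}, and the time change $A(t)=\int_0^t c_{X_{y_0}(s)}\,ds$ with $L=X_{y_0}\circ A^{-1}$ is precisely the paper's Proposition \ref{levychangentps}. The one point you assert rather than prove is that $A(\zeta(X_{y_0}))=\zeta(L)=+\infty$ a.s., which is needed for $L$ to be a conservative L\'evy process and for $\varphi(t)=\int_0^t 1/c_{L(s)}\,ds$ to be defined for all $t$; the paper devotes a separate argument to this, showing that a killed $L$ would have to reach $\Delta$ continuously while, with positive probability, remaining in a compact set, which is a contradiction.
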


\begin{remarque} \label{levyexplarbspace}
As in Remark \ref{levyexpldim1}, let us mention that in the above theorem we see that the L\'evy process $L$ can be expressed as $L(t) = X_{y_0}(\varphi(t))$ and, as we will justify in the end of the proof of the theorem, $\varphi^{-1}(t)$ has an alternative expression in term of $X_{y_0}$ and of the good invariance components: for all $0 \leq t < \zeta(X_{y_0})$, $\varphi^{-1}(t) = \int_0^t c_{X_{y_0}(u)} du$. This, together with the expression $\xi(.) := \log(1/c_{L(.)})$, allows to express explicitly $\xi$ in term of $X_{y_0}$ and of the good invariance components, as mentioned in Remark \ref{exittimealwaysfonctexpo}. 
\end{remarque}

Recall that the family of invariance components defined in Proposition \ref{levychangentpssensfacil} always satisfies Definition \ref{goodcomponents} so it is actually a family of good invariance components. Therefore, Proposition \ref{levychangentpssensfacil} can be seen as the reciprocal of the above theorem. 

\begin{remarque} \label{startingpoint}
In the representations \eqref{represxdim1}, \eqref{represxdim2com}, \eqref{represxdim2noncom}, and \eqref{represgene} from the above theorems, if one replaces $\xi$, $(\xi, \eta)$, $( \xi ( . ), \int^{.} e^{\xi(s-)} d\eta(s) )$, and $L$ by respectively $\psi^{-1}(y) + \xi$, $\psi^{-1}(y) + (\xi, \eta)$, $(\pi_1(\psi^{-1}(y)) + \xi(.), \pi_2(\psi^{-1}(y)) + \int_0^{.} e^{\pi_1(\psi^{-1}(y)) + \xi(s-)} d\eta(s))$, and $y \star L$ (where, respectively, $y \in I$, $y \in \mathcal{D}$, $y \in \mathcal{D}$ and $y \in E$)
then on obtains a version of 
$X_{y}$. 


\end{remarque}

As we said above, the results of this subsection are not only explicit versions of Theorems \ref{casr}, \ref{casr2} and \ref{casgeneral}, but they are also the main ingredients for their proofs. Indeed, as we will see, the direct part of Theorem respectively \ref{casr}, \ref{casr2} and \ref{casgeneral} is a consequence of Proposition \ref{existsgoodcomponents} together with Theorem respectively \ref{casrexpl}, \ref{casr2expl} and \ref{casgeneralexpl}. 

\subsection{Sketch of proof and organization of the paper} \label{sketchofproof} 

\ 

\textit{In the remaining part of this section} we discuss the case of dimension greater or equal to $3$ and then we present some examples and particular cases, for which the results above take a simple or nice form. In particular we present an example in dimension $2$ of a process built from a self-similar fragmentation process, for which the decomposition of Theorem \ref{casr2} in term of the exponential functional of a bivariate L\'evy process appears naturally and is meaningful in term of the underlying self-similar fragmentation process. 

\textit{In Section \ref{prelres}} we prove some preliminary results that are crucial for the rest of the paper. More precisely we prove Proposition \ref{compatibility} which is a compatibility relation that is satisfied by invariance components of a process satisfying Definition \ref{self-simoursens}. 
We then use that relation to prove Proposition \ref{existsgoodcomponents} which ensures the existence of good invariance components. 

\textit{In Section \ref{relwithgroups}} we first prove Proposition \ref{levychangentpssensfacil}. Then, we use the compatibility relation from Proposition \ref{compatibility} to prove Proposition \ref{groupappears} which makes appear a group structure on the state space of a process $X$ satisfying Definition \ref{self-simoursens} and that admits good invariance components. Then, we prove Proposition \ref{levychangentps} which identifies $X$ with a time-changed L\'evy process on its state space equipped with the above-mentioned group structure. We then prove Theorems \ref{casgeneralexpl} and \ref{casgeneral}. 

\textit{In Section \ref{explicitisom}} we construct some explicit Lie group isomorphisms between the Lie groups constructed in the previous section (when the state space is an open interval of $\mathbb{R}$ or an open simply connected domain of $\mathbb{R}^2$) and some canonical Lie groups in dimension $1$ and $2$. Thanks to that, a process that satisfies our assumptions on an open interval of $\mathbb{R}$, or on an open simply connected domain of $\mathbb{R}^2$, can be expressed explicitly as a function of a time-changed L\'evy process on one of these canonical groups. 

\textit{In Section \ref{levysurlegroupnoncomdim2}} we prove Proposition \ref{levynonclassiques} that gives a convenient expression of L\'evy processes on the most complicated of these canonical Lie groups, in term of exponential functionals of bivariate L\'evy processes. 

\textit{In Section \ref{proofsofmainth}} we put the pieces together to prove Theorems \ref{casrexpl} and \ref{casr2expl}. Then, Theorems \ref{casr} and \ref{casr2} follow from those theorems together with Proposition \ref{existsgoodcomponents}. 

\textit{In Section \ref{technical}} we prove some technical lemmas about the assumptions discussed in the end of Subsection \ref{introgssmp}. 

\subsection{The case of dimension greater or equal to $3$} \label{dim>2}

We now briefly describe which points in the present methodology can be generalized in dimension $d \geq 3$ in order to obtain results analogous to Theorems \ref{casrexpl}, \ref{casr}, \ref{casr2expl} and \ref{casr2} and which difficulties make the problem much more difficult (and maybe not possible) to solve for those dimensions. 

In the case where the state space of a process $X$ satisfying Definition \ref{self-simoursens} is $\mathcal{D}$, a simply connected open subset of $\mathbb{R}^d$ for some $d \geq 3$, the following can still be applied: under some assumptions, Proposition \ref{existsgoodcomponents} allows to build good invariance components and Theorem \ref{casgeneralexpl} to identify a Lie group structure $(\mathcal{D}, \star)$ of dimension $d$ on $\mathcal{D}$ for which $X$ is a time-changed L\'evy process. Then, one would need to choose a 'canonical' representative for each isomorphism class of Lie group structure of dimension $d$, to build an explicit isomorphism between $(\mathcal{D}, \star)$ and its representative (in particular one needs to be able to identify the isomorphism class of $(\mathcal{D}, \star)$ from the good invariance components of $X$) and to obtain a convenient expression for L\'evy processes on each of these representative Lie groups. We see that in dimension $d$, there are possibly as many possibilities as the number of Lie group structures of dimension $d$, up to isomorphism. For $d = 1$ this number is $1$ (which is why there is only one case in Theorem \ref{casrexpl}), for $d = 2$ this number is $2$ (which corresponds to the two cases in Theorem \ref{casr2expl}). In dimension $d \geq 3$ there are much more and several problems arise. First, for $d \geq 3$, it is not clear whether each isomorphism class of Lie groups of dimension $d$ can appear among the Lie group structures defined on $\mathcal{D}$ by families of good invariance components of processes. If not, one needs to characterize the isomorphism classes that can appear among all isomorphism classes of Lie groups of dimension $d$. Then, it is not clear either whether, for each isomorphism class of Lie group of dimension $d$, we can build explicit isomorphisms to a canonical representative of this class as we did in dimension $1$ and $2$, and whether it is possible to find a convenient representation of L\'evy processes on the canonical representative of this class in term of functionals of L\'evy processes on $\mathbb{R}^d$ (if the answer is positive, we are not able to anticipate the kind of objects by which exponential functionals of bivariate L\'evy processes should be replaced). Finally, for the expected representations in term of functionals of L\'evy processes on $\mathbb{R}^d$, it is not clear whether it is possible to unify all these representations into a single one, as we did in dimension $2$. Moreover, for any $d \geq 3$, the number of isomorphism classes of Lie groups of dimension $d$ is very large (even if finite) and there is unfortunately no automatic way that allows to treat several isomorphism classes at once, which makes the problem very difficult to solve in any dimension $d \geq 3$. If the problem can be solved in dimension $d \geq 3$, the representations obtained will certainly be much more complicated than those obtained in the present article and depend on much more parameters, and it would be interesting to see whether some particular examples can be produced in which these general representations have a natural interpretation (as for the example produced in the next section for dimension $2$). 


%

\subsection{Some examples and particular cases} \label{examples}

A basic example is the one of a pssMp $X$ with index $\alpha \in \mathbb{R}$. In that case, the state space is the interval $E = ]0, +\infty[$. By definition of a pssMp of index $\alpha$, a family of invariance components $((f_y, c_y), \ y \in ]0, +\infty[)$, with respect to the reference point $1$, is given by $f_y(x) = yx$ and $c_y = y^{-\alpha}$. It is immediate that this family of invariance components is \textit{good} (i.e. it satisfies Definition \ref{goodcomponents}) and of class $\mathcal{C}^{\infty}$. Then, the function $g$ defined in \eqref{defgdim1} is equal to $\log(.)$ and the real number $\alpha$ defined in \eqref{defgdim1} coincides with the self-similarity index $\alpha$ of the pssMp $X$. The application of Theorem \ref{casrexpl} yields that, provided that Assumption \ref{hypsupport1} or Assumption \ref{hypsupport2} is satisfied for $E=]0, +\infty[$, we have 
\[ \zeta(X_{y_0}) = \varphi(+\infty) \ \ \ \text{and} \ \ \ \forall \ 0 \leq t < \varphi(+\infty), \ X_1(t) = \exp \left ( \xi \left (\varphi^{-1}(t) \right ) \right ), \]
where $\xi$ is a real L\'evy process and $\varphi(t) := \int_0^t e^{\alpha \xi(s)} ds$. We have thus retrieved Lamperti's representation for $X$. 

%
%

Another interesting particular case is the case where the time-changes are trivial, i.e. for a process $X$ satisfying Definition \ref{self-simoursens}, there exists a family of invariance components $((f_y, c_y), \ y \in E)$ such that $\forall y \in E, c_y = 1$. In this case the self-similarity relation simply becomes $\forall y \in E, X_y \overset{\mathcal{L}}{=} f_y(X_{y_0})$. Replacing $f_y$ by $f_y \circ f_{y_0}^{-1}$ if necessary, it is plain that the family $((f_y, 1), \ y \in E)$ satisfies Definition \ref{goodcomponents}, so it is a family of good invariance components. 
\begin{itemize}
\item When $E=I$, an open interval of $\mathbb{R}$. If the family of good invariance components $((f_y, 1), \ y \in I)$ is $\mathcal{C}^k$ for some $k \geq 1$, and if Assumption \ref{hypsupport1} or Assumption \ref{hypsupport2} is satisfied for $E=I$, then Theorem \ref{casrexpl} applies, and the $\alpha$ defined in \eqref{defgdim1} clearly equals $0$ so that the time-change function is trivial : $\varphi(t) = t$. Therefore, $X_{y_0}$ is only the image by $g^{-1}$ (where $g$ is defined in \eqref{defgdim1}) of a real L\'evy process $\xi$. 
\item When $E=\mathcal{D}$, an open simply connected domain of $\mathbb{R}^2$. If the family of good invariance components $((f_y, 1), \ y \in \mathcal{D})$ is $\mathcal{C}^k$ for some $k \geq 2$, if Assumption \ref{hypsupport1} or Assumption \ref{hypsupport2} is satisfied for $E=\mathcal{D}$, and if $Sym(X_{y_0})$ is discrete, then Theorem \ref{casr2expl} applies, and the $\alpha$ defined there equals $0$ so that the time-change function is trivial : $\varphi(t) = t$. Therefore, if $((f_y, 1), \ y \in \mathcal{D})$ are \textit{commutative}, $X_{y_0}$ is the image by $g^{-1}$ (where $g$ is defined in \eqref{defgdim2com}) of $(\xi, \eta)$, that is a L\'evy process on $\mathbb{R}^2$. If $((f_y, 1), \ y \in I)$ are \textit{not commutative}, $X_{y_0}$ is the image by $g^{-1}$ (where $g$ is defined in Lemma \ref{isomorphismecasnonclassique}) of $( \xi ( . ), \int^{.} e^{\xi(s-)} d\eta(s) )$, where $(\xi, \eta)$ is a L\'evy process on $\mathbb{R}^2$. 
\item In a more general context, if the assumptions of Theorem \ref{casgeneralexpl} are satisfied ($E$ is connected, Assumption \ref{hypsupport1} or Assumption \ref{hypsupport2} is satisfied, and $Sym(X_{y_0})$ is discrete), then the theorem applies and we see that the time-change function is trivial : $\varphi(t) = t$. Therefore $X_{y_0}$ is a L\'evy process on $E$, equipped with the group structure defined in the theorem. 
\end{itemize}


{
Let us now discuss an example, built from self-similar fragmentation, of a process that satisfies Definition \ref{self-simoursens} and for which the decomposition of Theorem \ref{casr2} appears naturally and is meaningful. For the sake of introducing this example, we recall some background on self-similar fragmentation processes. We consider a self-similar fragmentation process $X$. More precisely, $X$ is a Feller process on the space of mass-partitions  
\[ \mathcal{P} := \left \{ (x_1, x_2, ...), \ x_1 \geq x_2 \geq ... \geq 0, \ M(x_1, x_2, ...) := \sum_{i=1}^{+\infty} x_i < +\infty \right \}, \]
and $X$ satisfies 0) $M(X(t))$ is almost surely non-increasing (no mass creation), 1) the self-similarity property, 2) the branching property. Let us describe the meaning of properties 1) and 2). For $x \geq 0$, $\mathbb{P}_x$ denotes the law of the process $X$ starting from the state $(x, 0, ...)$. $X$ is said to be self-similar with index $\alpha$ if for every $x \geq 0$, the process $(x X(x^{\alpha} t), t \geq 0)$ under $\mathbb{P}_1$ is equal in law to the process $(X(t), t \geq 0)$ under $\mathbb{P}_x$. $X$ is said to fulfill the branching property if for every $(x_1, x_2, ...) \in \mathcal{P}$, the process $X$ starting from the state $(x_1, x_2, ...)$ is equal in law to the the process resulting from the non-increasing rearrangement of fragments of $X^{(1)}, X^{(2)},...$ where $X^{(1)}, X^{(2)},...$ are independent versions of $X$ with law respectively $\mathbb{P}_{x_1}, \mathbb{P}_{x_2},...$ 

Roughly speaking, in this model, a piece of mass $x > 0$ can split into fragments whose sum of the masses is less or equal to $x$. The nature and the rate at which the splittings occur is determined by a $\sigma$-finite measure on 
\[ \mathcal{P}_1 := \left \{ (x_1, x_2, ...), \ x_1 \geq x_2 \geq ... \geq 0, \ \sum_{i=1}^{+\infty} x_i \leq 1\right \}, \]
that we denote by $\nu$ and that is such that $\nu(\{(1,0,...)\})=0$ and $\int(1-x_1)\nu(dx) < +\infty$. $\nu$ is called the \textit{dislocation measure} of $X$ and characterizes the law of $X$. Reciprocally, for any $\sigma$-finite measure $\nu$ on $\mathcal{P}_1$ that satisfies these two conditions one can construct $X$, a self-similar fragmentation process with index $\alpha$ and dislocation measure $\nu$, see Chapter 3 in \cite{bertoin_2006} for details. Note that there are splittings at arbitrary small times if $\nu(\mathcal{P}_1)=+\infty$. If $\nu(\mathcal{P}_1)<+\infty$, the construction of $X$ is simpler: Under $\mathbb{P}_0$ the process stays for ever in the state $(0,0,...)$. Under $\mathbb{P}_x$ for $x > 0$ we have $X(t) = (x, 0, ...)$ until an exponential time with parameter $x^{\alpha} \nu(\mathcal{P}_1)$ where there is transition to a state $(x x_1, x x_2, ...)$ where $(x_1, x_2, ...) \in \mathcal{P}_1$ is chosen according to the law $\nu(.)/\nu(\mathcal{P}_1)$. This together with the branching property determines the dynamic of the process. 

It is known that, for an $\alpha$-self-similar fragmentation process $X$, we can construct a process $Y$ where $Y(t)$ represents the mass at time $t$ of a tagged fragment. If $\nu(\mathcal{P}_1)<+\infty$, the construction of the "tagged fragment process" and of $Y$ can be done as follows: Under $\mathbb{P}_x$ for $x > 0$, the tagged fragment is the only existing fragment (of mass $x$) until the first splitting, and after a splitting of the tagged fragment (into masses $x x_1, x x_2, ...$ where $(x_1, x_2, ...) \in \mathcal{P}_1$), the tagged fragment dies with probability $1 - \sum_{i \geq 1} x_i$ (note that $x(1 - \sum_{i \geq 1} x_i)$ is the mass that has been dissipated in the splitting), or the new tagged fragment is the $i^{th}$ fragment, among those resulting from the splitting, with probability $x_i$ (note that $x x_i$ is the mass of this fragment). Then, $Y(t)$ is the mass of the tagged fragment at time $t$. If the tagged fragment is dead at time $t$ we set $Y(t) = 0$ and the absorbing state $0$ is the cemetery point for the process $Y$ on $]0, +\infty[$. Note that, if there is mass dissipation, then $Y$ has the same positive probability to jump to $0$ at each jump and is therefore almost surely killed in finite time. If there is no mass dissipation, then there is no instantaneous killing for $Y$, but if moreover $\alpha < 0$, then $X$ reaches continuously the absorbing state $(0,0,...)$ in finite time (see Section 1.3 in \cite{bertoin_2006}) so $Y$ reaches the cemetery point $0$ in finite time so it has almost surely finite life-time. It is known that the process $Y$ can still be defined when $\nu(\mathcal{P}_1)=+\infty$, even though defining the "tagged fragment process" is then more abstract, see Chapter 3 in \cite{bertoin_2006}. Moreover, it is known (see Chapter 3 in \cite{bertoin_2006}) that $1/Y$ is a (possibly killed) pssMp with index $\alpha$, so $Y$ is a (possibly killed) pssMp with index $-\alpha$, and that the L\'evy process $\xi$ associated with $1/Y$ in the Lamperti representation is a (possibly killed) pure jump subordinator with L\'evy measure 
\[ \pi (dz) = e^{-z} \sum_{i \geq 1} \nu \left ( -\log x_i \in dz \right ), \]
and killing rate $\int_{\mathcal{P}_1} ( 1 - \sum_{j \geq 1} x_j ) d\nu$. 

For the sake of our example, let us construct another process $Z$ associated with the $\alpha$-self-similar fragmentation process $X$. $Z(t)$ represents the contribution of the tagged fragment to mass dissipation until time $t$. Similarly to $Y$, $Z$ might be interesting in some aspects of the understanding of $X$. For simplicity, let us from now one only consider self-similar fragmentation processes with finite dislocation measure. We consider the "tagged fragment process" defined above. $Z(0) = 0$, $Z$ stays constant between splitting times of the tagged fragment and, each time the tagged fragment splits, $Z$ is increased by the amount of mass dissipated in this splitting. For example, if just before a splitting at a time $t$ the mass of the tagged fragment $Y(t-)$ is equal to a value $x > 0$, and if $x x_1, x x_2, ...$ are the masses of the descents of the tagged fragment at the moment of the splitting, then 
$Z(t) = Z(t-) + x - \sum_{j \geq 1} x x_j = Z(t-) + Y(t-) (1- \sum_{j \geq 1} x_j)$. $Z$ no longer increases if the tagged fragment is dead at time $t$, so the formula $Z(t) = Z(t-) + Y(t-) (1- \sum_{j \geq 1} x_j)$ is still valid when $Y$ is in the absorbing state $0$. 

Unlike $Y$, $Z$ is not a self-similar Markov process. However, the following simple proposition ensures that $(Y,Z)$ is a (possibly killed) process that satisfies Definition \ref{self-simoursens} and for which an explicit representation in term of the exponential functional of a bivariate L\'evy process is available. 

\begin{prop} \label{exemplefrag}

The process $(Y,Z)$ is a (possibly killed) process that satisfies Definition \ref{self-simoursens} 
on $E = ]0, +\infty[ \times [0, +\infty[$. A family of invariance components is given by $((f_y, c_y), \ y \in E)$ with $f_y(x) = (\pi_1(y) \pi_1(x), \pi_2(y) + \pi_1(y) \pi_2(x))$ and $c_y = e^{\alpha \pi_1(y)}$ for the reference point $(1,0)$. Let $(\xi, \eta)$ be a (possibly killed) pure jump L\'evy process on $\mathbb{R}^2$ with L\'evy measure $\Pi (., .)$ where 
\begin{align}
\Pi (da, db) = e^{-a} \sum_{i \geq 1} \nu \left (- \log x_i \in da, 1-\sum_{j \geq 1} x_j \in db \right ), \Pi (\{ + \infty \}, db) & = b \nu \left (1-\sum_{j \geq 1} x_j \in db \right ). \label{mesbivariate} 
\end{align}
In particular, each time $\eta$ makes a jump, $\xi$ dies with a probability equal to the size of this jump so that the killing rate of $\xi$ is $\int_{\mathcal{P}_1} ( 1 - \sum_{j \geq 1} x_j ) d\nu$. Let $T(\xi) := \inf \{ t \geq 0, \ \xi(t) = +\infty \}$ denote the killing time of $\xi$ (set $T(\xi) = +\infty$ if the killing rate of $\xi$ is null). For $x > 0$ let 
\[ \forall t \in [0, T(\xi)], \ \varphi_x(t) := x^{-\alpha} \int_0^t e^{\alpha \xi(s)} ds. \] 
Then, under $\mathbb{P}_x$, $(Y,Z)$ is equal in distribution to the process defined by 
\[ (\tilde Y(t), \tilde Z(t)) := \left ( x e^{-\xi(\varphi_x^{-1}(t))}, x \int_0^{\varphi_x^{-1}(t)} e^{-\xi(s-)} d\eta(s) \right ), \ 0 \leq t \leq \varphi_x(T(\xi)), \]
which has life-time $\varphi_x(T(\xi))$. In particular, the total contribution of the tagged fragment to mass dissipation is equal in distribution to $x \int_0^{T(\xi)} e^{-\xi(s-)} d\eta(s)$. 
\end{prop}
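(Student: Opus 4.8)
The plan is to establish the two assertions in turn: first that $(Y,Z)$ satisfies Definition \ref{self-simoursens} with the stated invariance components, and then the explicit representation through the exponential functional of $(\xi,\eta)$. For the first assertion I would note that, since $\nu$ is finite, under $\mathbb{P}_x$ the pair $(Y,Z)$ is a pure-jump Markov process: between jumps it is constant, and since the role of $Z$ is purely additive, the future depends only on the current mass $Y$. The self-similarity then descends from the $\alpha$-self-similarity and the branching structure of $X$. Concretely, the identity $(xX(x^\alpha t))$ under $\mathbb{P}_1$ $\overset{\mathcal{L}}{=}$ $(X(t))$ under $\mathbb{P}_x$ transfers to the tagged-fragment construction: masses scale by $\pi_1(y)$ and time by $\pi_1(y)^\alpha$, while an initial offset $\pi_2(y)$ of $Z$ is merely added. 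This gives $(Y,Z)_y(t)\overset{\mathcal{L}}{=}f_y\!\left((Y,Z)_{(1,0)}(c_y t)\right)$ with $f_y(x)=(\pi_1(y)\pi_1(x),\,\pi_2(y)+\pi_1(y)\pi_2(x))$ and the announced time-change constant $c_y$, which is exactly \eqref{self-simoursenseq} relative to the reference point $(1,0)$.

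For the representation, the starting point is the fact recalled in the text that $Y$ is a (possibly killed) pssMp of index $-\alpha$ whose Lamperti subordinator $\xi$ has Lévy measure $\pi(dz)=e^{-z}\sum_{i\ge 1}\nu(-\log x_i\in dz)$ and killing rate $\int_{\mathcal{P}_1}(1-\sum_j x_j)\,d\nu$; this already accounts for the first coordinate $\tilde Y(t)=x e^{-\xi(\varphi_x^{-1}(t))}$. The work is to couple $\eta$ with $\xi$ so that the second coordinate becomes the announced exponential functional, and I would do this by comparing jump dynamics. Under $\mathbb{P}_x$ the generator of $(Y,Z)$ is: at rate $Y^\alpha\nu(\mathcal{P}_1)$ a split $(x_1,x_2,\dots)\sim\nu/\nu(\mathcal{P}_1)$ occurs, whereupon $Z$ increases by the dissipated mass $Y(1-\sum_j x_j)$ and, independently, $Y$ is multiplied by $x_i$ with probability $x_i$ (the tagged fragment follows the $i$-th piece) or killed with probability $1-\sum_i x_i$.

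On the other side I take $(\xi,\eta)$ to be the bivariate Lévy process with Lévy measure $\Pi$ of \eqref{mesbivariate} and form $(\tilde Y,\tilde Z)$; by the reciprocal statements of our main results (the reciprocal part of Theorem \ref{casr2}, or Proposition \ref{levychangentpssensfacil}) this is a Markov process satisfying Definition \ref{self-simoursens} with the same invariance components. In the Lamperti time variable $u$ one has $(\tilde Y,\tilde Z)=(x e^{-\xi(u)},\,x\int_0^u e^{-\xi(s-)}d\eta(s))$, so a jump $(a,b)$ of $(\xi,\eta)$ sends $\tilde Y\mapsto \tilde Y e^{-a}$ and $\tilde Z\mapsto \tilde Z+\tilde Y b$, while undoing the time change $\varphi_x$ multiplies the rate by $\tilde Y^\alpha$. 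Using $e^{-a}=x_i$ for $a=-\log x_i$, the finite part $e^{-a}\sum_i\nu(\cdots)$ reproduces exactly the size-biased weight $x_i$ with the update $\tilde Y\mapsto \tilde Y x_i$ and increment $\tilde Z\mapsto \tilde Z+\tilde Y(1-\sum_j x_j)$; and since the death probability $1-\sum_i x_i$ equals the dissipated fraction $b=1-\sum_j x_j$, the killing part $\Pi(\{+\infty\},db)=b\,\nu(1-\sum_j x_j\in db)$ reproduces the death mechanism with its final increment $\tilde Z\mapsto \tilde Z+\tilde Y b$. Hence $(\tilde Y,\tilde Z)$ and $(Y,Z)$ start from $(x,0)$ and share the same jump rate and jump kernel, so by uniqueness of the law of a Markov jump process they coincide, the total dissipation being $\tilde Z$ at the killing time $\varphi_x(T(\xi))$.

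The main obstacle I anticipate is justifying rigorously that, in the finite-$\nu$ tagged-fragment construction, the marked point process recording the pairs (log-mass increment, dissipated fraction) together with the survival/death decision is genuinely a Poisson point process of intensity $\Pi$, so that $(\xi,\eta)$ really is a Lévy process with that measure, rather than merely checking that the one-step kernels agree. The delicate features are that the size-biasing by $x_i$ is precisely what turns the bare intensity $\sum_i\nu(-\log x_i\in da,\cdots)$ into the weighted intensity $e^{-a}\sum_i\nu(\cdots)$, and that the death event must be encoded as an atom of $\eta$ sitting exactly at the killing time of $\xi$ (hence the $\{+\infty\}$-component of $\Pi$), so that the terminal value of $Z$ is captured by $\int_0^{T(\xi)}e^{-\xi(s-)}d\eta(s)$. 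A reassuring consistency check is that the $a$-marginal of $\Pi$ recovers Bertoin's subordinator measure $\pi$ and killing rate, confirming that the coupling is compatible with the already-known law of $Y$.
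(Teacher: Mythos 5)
Your proposal is correct in substance but follows a genuinely different route from the paper. The paper argues pathwise: it records the splitting times $T_1<T_2<\dots$ and marks $P_k\in\mathcal{P}_1$ of the tagged fragment, observes that under the Lamperti clock $\phi(t)=\int_0^t Y(s)^\alpha ds$ the interarrival times $\phi(T_k)-\phi(T_{k-1})=Y(T_{k-1})^\alpha(T_k-T_{k-1})$ are i.i.d.\ exponential of parameter $\nu(\mathcal{P}_1)$, so the time-changed marks form a Poisson point process of intensity $dt\times\nu$; it then \emph{defines} $(\tilde\xi,\tilde\eta)$ as explicit sums over these marks (size-biased selection included), reads off that this is a compound Poisson process with Lévy measure $\Pi$, and verifies by a direct product/sum computation that $Y(\phi^{-1}(t))=xe^{-\tilde\xi(t)}$ and $Z(\phi^{-1}(t))=x\int_0^t e^{-\tilde\xi(s-)}d\tilde\eta(s)$, together with $\phi^{-1}=\varphi_x$. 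This yields an almost-sure coupling and needs no uniqueness theorem. You instead build $(\tilde Y,\tilde Z)$ from an abstract Lévy process with Lévy measure $\Pi$, compute the jump rate and jump kernel on both sides (your bookkeeping of the size-biasing $e^{-a}$, the rate factor $\tilde Y^{\alpha}$ from undoing $\varphi_x$, and the killing atom at $a=+\infty$ is all correct), and conclude by uniqueness of the law of a pure-jump Markov process. That buys a cleaner conceptual statement (two Markov processes with the same generator coincide) but costs you two things you should make explicit: the uniqueness argument must be carried up to the possible accumulation of jumps when $\alpha<0$ (both processes are then absorbed at the cemetery, so this is fine but not free), and identifying the jump kernel of the time-changed exponential functional is itself a small computation. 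Note finally that the ``main obstacle'' you flag --- showing the marked point process of the fragmentation is a PPP of intensity $\Pi$ --- is not an obstacle for your own route (which never constructs $(\xi,\eta)$ from the fragmentation); it is precisely the step the paper's route requires and disposes of in one line via the exponential-interarrival computation above.
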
 

The proof of this proposition is very intuitive but we give all the details for the sake of clarity. 

\begin{proof}

By definition of $Y$ and $Z$ we have for any $t,s \geq 0$ that $(Y(t+s), Z(t+s)) = (\hat Y(s), Z(t) + \hat Z(s))$ where $(\hat Y, \hat Z)$ is define from $\hat X$ just as $(Y, Z)$ is define from $X$, where $\hat X$ is the fragmentation process restricted to the tagged fragment at time $t$ and its descents. By the branching property for $X$ we have that $\hat X$ follows the distribution $\mathbb{P}_{Y(t)}$ so $(Y,Z)$ is strongly Markovian. Then, from the definition of $(Y,Z)$ and the self-similarity of $X$ we see that $(Y,Z)$ satisfies Definition \ref{self-simoursens} with the asserted invariance components. 

Recall that we are in the case $\nu(\mathcal{P}_1)<+\infty$ so we can define $T_1 < T_2 < ...$ to be the (possibly finite) sequence of the splitting times of the tagged fragment (note that this sequence is infinite is and only if there is no dissipation and in this case, note that it has an accumulation point if and only if $\alpha < 0$). By convention we put $T_0 := 0$. Let $(P_k)_{k \geq 1}$ be the (possibly finite) sequence of elements of $\mathcal{P}_1$ that represent the successive splittings of the tagged fragment, that is, if $P_k = (S_1^{(k)}, S_2^{(k)}, ...) \in \mathcal{P}_1$ it means that at instant $T_k$ the tagged fragment of size $Y(T_k-)$ splits into fragments of size $Y(T_k-) S_1^{(k)} \geq Y(T_k-) S_2^{(k)} \geq ...$ and the $i^{th}$ fragment in this sequence becomes the new tagged fragment with probability $S_i^{(k)}$, or the tagged fragment dies with probability $1 - \sum_{i \geq 1} S_i^{(k)}$. 
Let us denote by $I(k)$ the index of the fragment that is chosen to become the new tagged fragment at the splitting time $T_k$. We put $I(k) = 0$ and $S_{I(k)}^{(k)} = 0$ if the tagged fragment dies at $T_k$. 

For $t \in [0, +\infty]$, let us define $\phi(t) := \int_0^t Y(s)^{\alpha} ds$ so for any $T_k$ we have $\phi(T_k) - \phi(T_{k-1}) = Y(T_{k-1})^{\alpha} (T_k - T_{k-1})$. 
The latter follows an exponential distribution with parameter $\nu(\mathcal{P}_1)$, since a fragment of mass $Y(T_{k-1})$ splits after an exponential time of parameter $Y(T_{k-1})^{\alpha} \nu(\mathcal{P}_1)$. Therefore, $\phi(T_1) < \phi(T_2) < ...$ is the sequence of the arrival times of a (possibly killed) standard Poisson process. In particular, if we time-change the tagged fragment process by $\phi^{-1}(.)$ and focus on the elements of $\mathcal{P}_1$ that represent the splittings, then we obtain a (possibly killed) Poisson point process on $[0, +\infty[ \times \mathcal{P}_1$ with intensity measure $dt \times \nu$. Let us define $(\tilde \xi, \tilde \eta)$ by $\forall t \geq 0$, 
\[ \tilde \xi(t) := \sum_{ \{ k \geq 1, \ T_k \leq \phi^{-1}(t) \} } - \log(S_{I(k)}^{(k)}), \ \ \ \tilde \eta(t) := \sum_{ \{ k \geq 1, \ T_k \leq \phi^{-1}(t) \} } \left ( 1 - \sum_{j \geq 1} S_j^{(k)} \right ), \]
where by convention $- \log(0) = +\infty$. Then, clearly, $(\tilde \xi, \tilde \eta)$ is a (possibly killed) compound Poisson process on $\mathbb{R}^2$ with L\'evy measure given by \eqref{mesbivariate}. 
We see that, under $\mathbb{P}_x$, $Y(\phi^{-1}(t))$ and $Z(\phi^{-1}(t))$ can be expressed in the following way: 
\begin{align*}
Y(\phi^{-1}(t)) & = x \prod_{ \{ k \geq 1, \ T_k \leq \phi^{-1}(t) \} } S_{I(k)}^{(k)} = x \exp \left ( - \sum_{ \{ k \geq 1, \ T_k \leq \phi^{-1}(t) \} } - \log(S_{I(k)}^{(k)}) \right ) = xe^{-\tilde \xi(t)}, \\ 
Z(\phi^{-1}(t)) & = \sum_{ \{ k \geq 1, \ T_k \leq \phi^{-1}(t) \} } Z(T_k) - Z(T_k-) = \sum_{ \{ k \geq 1, \ T_k \leq \phi^{-1}(t) \} } Y(T_k-) \left ( 1 - \sum_{j \geq 1} S_j^{(k)} \right ) \\ 
& = \sum_{ \{ k \geq 1, \ T_k \leq \phi^{-1}(t) \} } x e^{-\tilde \xi(T_k-)} ( \tilde \eta(T_k) - \tilde \eta(T_k-) ) = x \int_0^{t} e^{-\tilde \xi(s-)} d\tilde \eta(s). 
\end{align*}
Then, for any $t \geq 0$ we have 
\[ \phi^{-1}(t) = \int_0^{\phi^{-1}(t)} 1 du = \int_0^{t} \frac1{\phi'(\phi^{-1}(v))} dv = \int_0^{t} \frac1{Y(\phi^{-1}(v))^{\alpha}} dv = x^{-\alpha} \int_0^{t} e^{\alpha \tilde \xi(v)} dv = \varphi_x(t), \]
Combining with the above we deduce that $Y(\varphi_x(.)), Z(\varphi_x(.)) = (xe^{-\tilde \xi(.)}, x \int_0^{.} e^{-\tilde \xi(s-)} d\tilde \eta(s))$ so $(Y,Z)$ is equal in distribution to the process $(\tilde Y, \tilde Z)$ defined in the statement of the proposition. 

\end{proof}

It would have been too long to define rigorously the process $Z$ is the case $\nu(\mathcal{P}_1)=+\infty$ but this can be done similarly as for $Y$ and it is possible to see that Proposition \ref{exemplefrag} still holds in that case. 

The representation obtained in Proposition \ref{exemplefrag} for the process $(Y,Z)$ coincides with the one provided by Theorem \ref{casr2} (even if all technical assumptions of Theorem \ref{casr2} are not satisfied in the example of $(Y,Z)$). In the above proof of the representation for the process $(Y,Z)$, the exponential functional of the bivariate L\'evy process $(\xi, \eta)$ appearing in the representation was interpreted very simply as follows: by self-similarity the tagged fragment dissipates proportions of its masses that are idd (these proportion are given by the jumps of $\eta$) ; to get the actual total mass dissipated by the tagged fragment, one needs to multiply these proportions of mass dissipated by the mass of the tagged fragment at the dissipation times (this mass is given by $\exp(-\xi)$) and to sum, which results in the exponential functional. 

From the example of this section, we see that self-similarity in the meaning of Definition \ref{self-simoursens} can appear naturally in dimension greater than $1$ in the context of a "self-similar system" for which several interesting quantities are correlated and that, in this case, the decomposition provided by Theorem \ref{casr2} is meaningful in term of the nature of the system. Also, it is remarkable that the kind of representation (in term of the exponential functional of a bivariate L\'evy process) obtained in this particular example is actually the most general one for Markovian processes in dimension $2$ that satisfy self-similarity properties, as proved by Theorem \ref{casr2}, even if the natural interpretation of the exponential functional is no longer true for general processes. This, we believe, strengthens the interest in proving Theorem \ref{casr2} that makes appear exponential functionals in a general context. 


We could have considered the case of a fragmentation with erosion (see Chapter 3 in \cite{bertoin_2006} for details). In this case, only the bivariate L\'evy process $(\xi, \eta)$ has to be modified for Proposition \ref{exemplefrag} to remain true: Let $c > 0$ be the erosion coefficient, we take $(\xi, \eta)$ with the same L\'evy measure as in \eqref{mesbivariate} but the killing rate of $\xi$ is now $c + \int_{\mathcal{P}_1} ( 1 - \sum_{j \geq 1} x_j ) d\nu$ (i.e. we add an extra killing term $c$) and the drift components is now $(c,c)$ (while it was $(0,0)$ in the proposition). 
}

\section{Compatibility relation and construction of good components} \label{prelres}

We now prove a key result that yields a compatibility relation satisfied by invariance components of a process satisfying Definition \ref{self-simoursens}. This result is important to establish Proposition \ref{existsgoodcomponents} and then to make appear the structure of group in Proposition \ref{groupappears}. In the logic of Remark \ref{cemetery}, note that in the next two propositions, their proofs, and where they are applied, for the case where the processes involved reach the cemetery point $\Delta$ in finite time, we actually work with the extension of homeomorphisms to $E \cup \{ \Delta \}$ that send the cemetery point $\Delta$ on itself (but, for example, the extension of $f_y(.)$ is still denoted by $f_y(.)$ for simplicity). 


\begin{prop} \label{compatibility}

Let $E$ be a locally compact separable metric space, and let $X$ be a process satisfying Definition \ref{self-simoursens} on $E$ with invariance components $((f_y, c_y), \ y \in E)$ relatively to some reference point $y_0 \in E$. We assume that Assumption \ref{hypsupport1} is satisfied. Then we have 
\[ \forall y_1, y_2 \in E, \ f_{f_{y_1}(y_2)} \left ( X_{y_0}(c_{f_{y_1}(y_2)} \times .) \right ) \overset{\mathcal{L}}{=} f_{y_1} \circ f_{y_2} \left ( X_{y_0} (c_{y_1} \times c_{y_2} \times .) \right ). \]

\end{prop}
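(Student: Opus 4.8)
The plan is to read the claimed identity as a consistency (cocycle) relation. By \eqref{self-simoursenseq} applied at the starting point $f_{y_1}(y_2)$, its left-hand side is exactly (a version of) $X_{f_{y_1}(y_2)}$, i.e.\ the process restarted at $f_{y_1}(y_2)$ computed in \emph{one} self-similarity step from $y_0$; while inserting \eqref{self-simoursenseq} for $X_{y_2}$ shows that its right-hand side equals in law $f_{y_1}\big(X_{y_2}(c_{y_1}\,\cdot)\big)$, i.e.\ the same restart computed in \emph{two} steps, $y_0 \to y_2 \to f_{y_1}(y_2)$. The tool to equate them is the strong Markov combination recorded in the introduction, and the mechanism is that $X_{y_0}$ approaches $y_2$ by Assumption \ref{hypsupport1}, after which the Markov property can be invoked. (As a sanity check, evaluating \eqref{self-simoursenseq} at $t=0$ forces $f_y(y_0)=y$.)

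Concretely, I would realize the coupling $X_{y_1}(t) = f_{y_1}(\Xi(c_{y_1}t))$, where $\Xi$ is a copy of $X_{y_0}$ (valid in law by \eqref{self-simoursenseq}). Fix $\epsilon>0$ and let $S^\epsilon := \inf\{t\ge 0 : \Xi(t)\in B(y_2,\epsilon)\}$; then $U^\epsilon := S^\epsilon/c_{y_1}$ is the first hitting time by $X_{y_1}$ of the open set $f_{y_1}(B(y_2,\epsilon))$, hence a stopping time, and $\mathbb{P}(S^\epsilon<\infty)>0$ because $y_2\in E = Supp(X_{y_0})$ by Assumption \ref{hypsupport1}. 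On $\{S^\epsilon<\infty\}$, writing $z := \Xi(S^\epsilon)$ (so $X_{y_1}(U^\epsilon)=f_{y_1}(z)$), I would compute the conditional law of the post-$U^\epsilon$ trajectory in two ways. Applying the strong Markov combination to $X_{y_1}$ at $U^\epsilon$ gives $f_{f_{y_1}(z)}\big(\tilde X_{y_0}(c_{f_{y_1}(z)}\,\cdot)\big)$; applying it instead to $\Xi$ at $S^\epsilon$ and then composing with $f_{y_1}$ and the time change $c_{y_1}$ (using $X_{y_1}(U^\epsilon+s)=f_{y_1}(\Xi(S^\epsilon+c_{y_1}s))$) gives $f_{y_1}\circ f_{z}\big(\tilde X_{y_0}(c_{z}c_{y_1}\,\cdot)\big)$. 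As these are two expressions for the same conditional law, for (law-)almost every reached value $z$ one obtains
\[ f_{f_{y_1}(z)}\big(X_{y_0}(c_{f_{y_1}(z)}\,\cdot)\big) \overset{\mathcal{L}}{=} f_{y_1}\circ f_{z}\big(X_{y_0}(c_{z}c_{y_1}\,\cdot)\big). \]

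Finally I would let $\epsilon\downarrow 0$. Since $\Xi$ is c\`ad-l\`ag and enters $B(y_2,\epsilon)$, the reached value satisfies $z\in\overline{B(y_2,\epsilon)}$, so one can extract reachable values $z_n\to y_2$ along which the displayed identity holds, and it remains to pass to the limit. For this I would fix finitely many times $t_1,\dots,t_m$ (a.s.\ continuity points of $X_{y_0}$) and use the joint continuity of $(y,x)\mapsto f_y(x)$ and of $y\mapsto c_y$: as $z_n\to y_2$ the finite-dimensional marginals of both sides converge to those of $f_{f_{y_1}(y_2)}(X_{y_0}(c_{f_{y_1}(y_2)}\,\cdot))$ and of $f_{y_1}\circ f_{y_2}(X_{y_0}(c_{y_1}c_{y_2}\,\cdot))$ respectively, which is the claim. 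The delicate point is precisely this passage to the limit: because Assumption \ref{hypsupport1} only guarantees that $y_2$ is \emph{approached}, not exactly hit, the equality is first secured at nearby reachable points and must be transported to $y_2$ by continuity. To keep the convergence valid even when the trajectory escapes towards the cemetery, I would work on the compactification $E\cup\{\Delta\}$, where by Remarks \ref{cemetery}--\ref{continuity} the maps $y\mapsto f_y$ converge uniformly, so that $f_{z_n}(X_{y_0}(c_{z_n}t))\to f_{y_2}(X_{y_0}(c_{y_2}t))$ at every continuity point $t$.
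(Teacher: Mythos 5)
Your proposal is correct and follows essentially the same strategy as the paper: use Assumption \ref{hypsupport1} to bring the process near $y_2$ (resp.\ $f_{y_1}(y_2)$), compute the law of the restarted process in two ways via the Markov property combined with \eqref{self-simoursenseq}, and transport the resulting identity to $y_2$ by the assumed joint continuity of $(y,x)\mapsto f_y(x)$ and $y\mapsto c_y$. The only (harmless) difference is organizational: the paper conditions on the positive-probability event $\{X_{y_1}(s_\epsilon)\in B(a,\epsilon)\}$ at a deterministic time and identifies the $\epsilon\to 0$ limit via Slutsky's lemma with a random approximating point, whereas you invoke the strong Markov property at a hitting time, disintegrate to get the identity at almost every reached point $z$, and then pass to the limit along a deterministic sequence $z_n\to y_2$.
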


\begin{proof}

Assume that $X$ and $((f_y, c_y), \ y \in E)$ are as in the statement of the proposition. Let $y_1, y_2 \in E$ be arbitrary. As a consequence of Definition \ref{self-simoursens} we have $Supp (X_{y_1}) = f_{y_1}(Supp (X_{y_0}))$. Because of Assumption \ref{hypsupport1} $Supp (X_{y_0}) = E$ so, since $f_{y_1}$ is an homeomorphism, we get $Supp (X_{y_1}) = E$. Let us define $a := f_{y_1}(y_2)$ and fix $\epsilon > 0$. Since $a \in E = Supp (X_{y_1})$, and by definition of $Supp (X_{y_1})$, there exists $s_{\epsilon} \geq 0$ such that $P(X_{y_1}(s_{\epsilon}) \in B(a,\epsilon)) > 0$. Let us fix such a $s_{\epsilon} \geq 0$. We consider the process 
$( X_{y_1} (s_{\epsilon} + t), \ t \geq 0 )$ 
conditionally on the event $\{ X_{y_1}(s_{\epsilon}) \in B(a,\epsilon) \}$ that has positive probability. Let $\tilde X_{y_0}$ and $\hat X_{y_0}$ be independent with law $P_{y_0}$, and be independent of $X_{y_1}$. On one hand, by the Markov property at $s_{\epsilon}$ and the self-similarity (Definition \ref{self-simoursens}) we have 
\begin{eqnarray}
\mathcal{L} \left ( X_{y_1}(s_{\epsilon}+.) \big | X_{y_1}(s_{\epsilon}) \in B(a,\epsilon) \right ) = \mathcal{L} \left ( f_{X_{y_1}(s_{\epsilon})} \left ( \tilde X_{y_0}(c_{X_{y_1}(s_{\epsilon})} \times .) \right ) \big | X_{y_1}(s_{\epsilon}) \in B(a,\epsilon) \right ). \label{compatibility1}
\end{eqnarray}
On the other hand, by applying the self-similarity (Definition \ref{self-simoursens}) at the starting point $y_1$, and then the Markov property at time $c_{y_1} s_{\epsilon}$ and the self-similarity (Definition \ref{self-simoursens}) at $\hat X_{y_0}(c_{y_1} s_{\epsilon})$, we have 
\begin{align}
& \mathcal{L} \left ( X_{y_1}(s_{\epsilon}+.) \big | X_{y_1}(s_{\epsilon}) \in B(a,\epsilon) \right ) \nonumber \\
= & \mathcal{L} \left ( f_{y_1} \left ( \hat X_{y_0}(c_{y_1} \times (s_{\epsilon}+.)) \right ) \big | f_{y_1} \left ( \hat X_{y_0}(c_{y_1} s_{\epsilon}) \right ) \in B(a,\epsilon) \right ) \nonumber \\
= & \mathcal{L} \left ( f_{y_1} \left ( \hat X_{y_0}(c_{y_1} \times (s_{\epsilon}+.)) \right ) \big | \hat X_{y_0}(c_{y_1} s_{\epsilon}) \in f_{y_1}^{-1} \left ( B(a,\epsilon) \right ) \right ) \nonumber \\
= & \mathcal{L} \left ( f_{y_1} \circ f_{\hat X_{y_0}(c_{y_1} s_{\epsilon})} \left ( \tilde X_{y_0} (c_{y_1} \times c_{\hat X_{y_0}(c_{y_1} s_{\epsilon})} \times .) \right ) \big | \hat X_{y_0}(c_{y_1} s_{\epsilon}) \in f_{y_1}^{-1} \left ( B(a,\epsilon) \right ) \right ). \label{compatibility2}
\end{align}

Let $\epsilon$ decrease to $0$ via a countable sequence (for each of these $\epsilon$ we can chose an $s_{\epsilon} \geq 0$ such that $P(X_{y_1}(s_{\epsilon}) \in B(a,\epsilon)) > 0$, and the above procedure shows that \eqref{compatibility1} and \eqref{compatibility2} hold). In the right hand side of \eqref{compatibility1}, we have the law of a function of the process $\tilde X_{y_0}$ and of an independent random variable $A_{\epsilon}$ where $\mathcal{L} (A_{\epsilon}) = \mathcal{L} (X_{y_1}(s_{\epsilon}) | X_{y_1}(s_{\epsilon}) \in B(a,\epsilon))$. Clearly, $A_{\epsilon}$ converges in distribution to $a$ when $\epsilon$ goes to $0$. Using Slutsky's Lemma and the continuity of $(y,x) \longmapsto f_y(x)$ and of $y \longmapsto c_y$ assumed in Definition \ref{self-simoursens}, we deduce that the right hand side of \eqref{compatibility1} converges (in the meaning of finite-dimensional distributions) to $\mathcal{L} ( f_{a} ( \tilde X_{y_0}(c_{a} \times .) ) )$. 

Similarly, in the right hand side of \eqref{compatibility2}, we have the law of a function of the process $\tilde X_{y_0}$ and of an independent random variable $B_{\epsilon}$ where $\mathcal{L} (B_{\epsilon}) = \mathcal{L} (\hat X_{y_0}(c_{y_1} s_{\epsilon}) | \hat X_{y_0}(c_{y_1} s_{\epsilon}) \in f_{y_1}^{-1} \left ( B(a,\epsilon) \right ))$. By continuity of $f_{y_1}^{-1}$, $B_{\epsilon}$ converges in distribution to $f_{y_1}^{-1}(a)$ when $\epsilon$ goes to $0$. Using Slutsky's Lemma and the continuity of $(y,x) \longmapsto f_y(x)$ and of $y \longmapsto c_y$ assumed in Definition \ref{self-simoursens}, we deduce that the right hand side of \eqref{compatibility2} converges (in the meaning of finite-dimensional distributions) to $\mathcal{L} ( f_{y_1} \circ f_{f_{y_1}^{-1}(a)} ( \tilde X_{y_0} (c_{y_1} \times c_{f_{y_1}^{-1}(a)} \times .) ) )$. 

Since the laws in \eqref{compatibility1} and \eqref{compatibility2} are equal, we can identify the limits of the right hand sides of both expressions. We get 
\[ \mathcal{L} \left ( f_{a} \left ( \tilde X_{y_0}(c_{a} \times .) \right ) \right ) = \mathcal{L} \left ( f_{y_1} \circ f_{f_{y_1}^{-1}(a)} \left ( \tilde X_{y_0} (c_{y_1} \times c_{f_{y_1}^{-1}(a)} \times .) \right ) \right ). \]
Then, recalling that $a = f_{y_1}(y_2)$ and $\tilde X_{y_0}$ follows the law $P_{y_0}$, we obtain the asserted result. 


\end{proof}

We can now prove Proposition \ref{existsgoodcomponents} that provides the existence of good invariance components. 

\begin{proof} of Proposition \ref{existsgoodcomponents}

For $k \geq 1$, $X$ is a process satisfying Definition \ref{self-simoursens} with $\mathcal{C}^k$ invariance components. Then, let $((f_y, c_y), \ y \in \mathcal{M})$ be a family of $\mathcal{C}^k$ invariance components relatively to the reference point $y_0$ (a family of $\mathcal{C}^k$ invariance components exists by assumption, and the reference point can be set at $y_0$ thanks to the property of change of reference point mentioned in the Introduction, the change of reference point indeed preserves the fact that the family of invariance components is $\mathcal{C}^k$). Replacing, if necessary, $((f_y, c_y), \ y \in \mathcal{M})$ by $((f_y \circ f_{y_0}^{-1}, c_y / c_{y_0}), \ y \in \mathcal{M})$, we can assume that the family of invariance components satisfy \eqref{refpoint}. Let $G$ be the subgroup of $Hom(\mathcal{M}) \times \mathbb{R}_+^*$ that contains the couples $(\Psi, \lambda) \in Hom(\mathcal{M}) \times \mathbb{R}_+^*$ for which \eqref{invarsurplace} is satisfied, and let $H$ be the subgroup of $\mathbb{R}_+^*$ defined by $H := \{ \lambda \in \mathbb{R}_+^*, \ \exists \Psi \in Hom(\mathcal{M}) \ \text{s.t.} \ (\Psi, \lambda) \in G \}$. We have assumed that Assumption \ref{hypsupport1} or Assumption \ref{hypsupport2} is satisfied for $E = \mathcal{M}$. Thanks to Lemma \ref{equivassumpt12} we have that, in any case, Assumption \ref{hypsupport1} is satisfied, so Proposition \ref{compatibility} applies and yields that 
\[ \forall y_1, y_2 \in E, \ U(y_1, y_2) := \left ( f^{-1}_{f_{y_1}(y_2)} \circ f_{y_1} \circ f_{y_2}, c_{y_1} \times c_{y_2} / c_{f_{y_1}(y_2)} \right ) \in G. \]
In particular the range of the function $U_2 : (y_1, y_2) \longmapsto c_{y_1} \times c_{y_2} / c_{f_{y_1}(y_2)}$ is included in $H$. Recall from Definition \ref{self-simoursens} and Remark \ref{continuity} that $y \longmapsto c_y$, $(y,x) \longmapsto f_y(x)$, $y \longmapsto f_y(.)$, and $y \longmapsto f_y^{-1}(.)$ are continuous, and since the composition is continuous on $\mathcal{C}^0(\mathcal{M}, \mathcal{M})$, we get in particular that $U$ is continuous from $\mathcal{M} \times \mathcal{M}$ to $\mathcal{C}^0(\mathcal{M}, \mathcal{M}) \times \mathbb{R}_+^*$. We now distinguish two cases : 

\textbf{Case 1:} $U_2$ is constant. In that case, evaluating $U_2$ at a point $(x, y_0)$ (where $x$ is arbitrary) and using \eqref{refpoint}, we obtain that $U_2 \equiv 1$. This yields that \eqref{relmorphisme} is satisfied so $((f_y, c_y), \ y \in \mathcal{M})$ are good invariance components. 

\textbf{Case 2:} $U_2$ is non-constant. In that case, since $\mathcal{M} \times \mathcal{M}$ is connected and $U_2$ is continuous, we have that $U_2 (\mathcal{M} \times \mathcal{M}) \subset H$ is a subinterval of $\mathbb{R}_+^*$, non-reduced to a single point. Then, $H$ is a subgroup of $\mathbb{R}_+^*$ that contains an interval non reduced to a single point so $H = \mathbb{R}_+^*$. We now construct a continuous function $L : \mathbb{R}_+^* \longrightarrow \mathcal{C}^0(\mathcal{M}, \mathcal{M})$ such that $L[1]=id_{\mathcal{M}}$, $(L[\lambda], \lambda) \in G$ for all $\lambda \in \mathbb{R}_+^*$, and $(x, \lambda) \longmapsto L[\lambda](x)$ is of class $\mathcal{C}^k$. 

Since $((f_y, c_y), \ y \in \mathcal{M})$ are $\mathcal{C}^k$ invariance components, $U_2 : \mathcal{M} \times \mathcal{M} \longrightarrow \mathbb{R}_+^*$ is $\mathcal{C}^k$. Since it is non-constant, there exists $\lambda_0 \in \mathbb{R}_+^*, \epsilon > 0$ and a $\mathcal{C}^k$ function $l_1 : ]\lambda_0 e^{- \epsilon}, \lambda_0 e^{\epsilon}[ \longrightarrow \mathcal{M} \times \mathcal{M}$ such that $\forall \lambda \in ]\lambda_0 e^{- \epsilon}, \lambda_0 e^{\epsilon}[, U_2(l_1[\lambda]) = \lambda$ (just apply the \textit{Local Inversion Theorem} for a local coordinate of the $\mathcal{C}^k$ manifold $\mathcal{M} \times \mathcal{M}$ along which $U_2$ is nonconstant, the other coordinates being fixed). Let us define the function $l_2 : ]\lambda_0 e^{- \epsilon}, \lambda_0 e^{\epsilon}[ \longrightarrow \mathcal{C}^0(\mathcal{M}, \mathcal{M})$ by $l_2 := \lambda \longmapsto U_1 (l_1[\lambda])$, where $U_1(y_1, y_2) = f^{-1}_{f_{y_1}(y_2)} \circ f_{y_1} \circ f_{y_2}$. Then $l_2$ is continuous, moreover $(x, \lambda) \longmapsto l_2[\lambda](x)$ is of class $\mathcal{C}^k$, and clearly for any $\lambda \in ]\lambda_0 e^{- \epsilon}, \lambda_0 e^{\epsilon}[$ we have $U(l_1[\lambda]) = (U_1(l_1[\lambda]), U_2(l_1[\lambda])) = (l_2[\lambda], \lambda)$, so in particular $(l_2[\lambda], \lambda) \in G$. 

Then let us define $l_3 : ]e^{- \epsilon}, e^{\epsilon}[ \longrightarrow \mathcal{C}^0(\mathcal{M}, \mathcal{M})$ by $l_3[\lambda] := l_2(\lambda_0 \lambda) \circ (l_2(\lambda_0))^{-1}$. It is not difficult to see that $l_3$ is continuous, that $(x, \lambda) \longmapsto l_3[\lambda](x)$ is of class $\mathcal{C}^k$, that $l_3[1] = id_{\mathcal{M}}$, and that $(l_3[\lambda], \lambda) \in G$ for any $\lambda \in ]e^{- \epsilon}, e^{\epsilon}[$. Let us now define $L_{\epsilon} : \mathbb{R}_+^* \longrightarrow \mathcal{C}^0(E, E)$ by 
\begin{align*}
L_{\epsilon}[\lambda] & := l_3 \big [ \exp \big ( \log(\lambda) - \epsilon \lfloor 2\log(\lambda)/\epsilon \rfloor /2 \big ) \big ] \circ \big ( l_3[e^{\epsilon/2}] \big )^{\circ \lfloor 2\log(\lambda)/\epsilon \rfloor} \ \ \ \text{if} \ \lambda \geq 1, \\
L_{\epsilon}[\lambda] & := l_3 \big [ \exp \big ( \log(\lambda) + \epsilon \lfloor -2\log(\lambda)/\epsilon \rfloor /2 \big ) \big ] \circ \big ( l_3[e^{-\epsilon/2}] \big )^{\circ \lfloor -2\log(\lambda)/\epsilon \rfloor} \ \ \ \text{if} \ \lambda \leq 1. 
\end{align*}
In the above, $( l_3[e^{\epsilon/2}] )^{\circ n}$ (respectively $( l_3[e^{-\epsilon/2}] )^{\circ n}$) means that the function $l_3[e^{\epsilon/2}] \in \mathcal{C}^0(\mathcal{M}, \mathcal{M})$ (respectively $l_3[e^{-\epsilon/2}] \in \mathcal{C}^0(\mathcal{M}, \mathcal{M})$) is composed $n$ times by itself. Then, clearly, $L_{\epsilon}$ is continuous from $\mathbb{R}_+^*$ to $\mathcal{C}^0(\mathcal{M}, \mathcal{M})$, $L_{\epsilon}[1]=id_{\mathcal{M}}$, and $(x, \lambda) \longmapsto L_{\epsilon}[\lambda](x)$ is of class $\mathcal{C}^k$ on $\mathcal{M} \times ]e^{-\epsilon/2}, e^{\epsilon/2}[$ (since it coincides with $l_3$ on this set) and on each set of the form $\mathcal{M} \times ]e^{n\epsilon/2}, e^{(n+1)\epsilon/2}[$ for some $n \in \mathbb{Z}$. Moreover, since $(l_3[r], r) \in G$ for any $r \in ]e^{- \epsilon}, e^{\epsilon}[$ and since $G$ is a group, we see that for $\lambda \geq 1$, 
\[ (L_{\epsilon}[\lambda], \exp (\log(\lambda) - \epsilon \lfloor 2\log(\lambda)/\epsilon \rfloor /2) \times (e^{\epsilon/2})^{\lfloor 2\log(\lambda)/\epsilon \rfloor}) \in G, \] 
that is, $(L_{\epsilon}[\lambda], \lambda) \in G$. Similarly, for $\lambda < 1$, 
\[ (L_{\epsilon}[\lambda], \exp (\log(\lambda) + \epsilon \lfloor -2\log(\lambda)/\epsilon \rfloor /2) \times (e^{-\epsilon/2})^{\lfloor -2\log(\lambda)/\epsilon \rfloor}) \in G, \]
that is, $(L_{\epsilon}[\lambda], \lambda) \in G$. 

Let us now justify that there is a unique function $L : \mathbb{R}_+^* \longrightarrow \mathcal{C}^0(\mathcal{M}, \mathcal{M})$ that satisfies 
\begin{enumerate} [label=\Alph*)]
\item $L$ is continuous from $\mathbb{R}_+^*$ to $\mathcal{C}^0(\mathcal{M}, \mathcal{M})$, 
\item $L[1]=id_{\mathcal{M}}$, 
\item $\forall \lambda \in \mathbb{R}_+^*, (L[\lambda], \lambda) \in G$. 
\end{enumerate}
The function $L_{\epsilon}$ satisfies the required A), B) and C) so we only need to prove uniqueness. 

First, let us prove that a function $L$ that satisfies A), B) and C) also satisfies $L[\lambda^{-1}]=L[\lambda]^{-1}$ for all $\lambda \in \mathbb{R}_+^*$. Indeed, let $L$ satisfy A), B) and C). Using C) and the fact that $G$ is a group we get 
\[ \forall \lambda \in \mathbb{R}_+^*, \ (L[\lambda] \circ L[\lambda^{-1}], 1) \in G \ \text{and} \ (L[\lambda^{-1}] \circ L[\lambda], 1) \in G. \]
Therefore, we have that $L[\lambda] \circ L[\lambda^{-1}]$ and $L[\lambda^{-1}] \circ L[\lambda]$ are in $Sym(X_{y_0})$ for any $\lambda \in \mathbb{R}_+^*$. Moreover, $(\lambda \longmapsto L[\lambda] \circ L[\lambda^{-1}])$ and $(\lambda \longmapsto L[\lambda^{-1}] \circ L[\lambda])$ are continuous from $\mathbb{R}_+^*$ to $\mathcal{C}^0(\mathcal{M}, \mathcal{M})$ because of A). Since, by assumption, $Sym(X_{y_0})$ is discrete, we deduce that the functions $(\lambda \longmapsto L[\lambda] \circ L[\lambda^{-1}])$ and $(\lambda \longmapsto L[\lambda^{-1}] \circ L[\lambda])$ are constant. Evaluating at $\lambda = 1$ and using B), we deduce that $(\lambda \longmapsto L[\lambda] \circ L[\lambda^{-1}]) = (\lambda \longmapsto L[\lambda^{-1}] \circ L[\lambda]) \equiv id_{\mathcal{M}}$. This proves the claim that $L[\lambda^{-1}]=L[\lambda]^{-1}$. In particular, $(\lambda \longmapsto L[\lambda]^{-1})$ is continuous from $\mathbb{R}_+^*$ to $\mathcal{C}^0(\mathcal{M}, \mathcal{M})$. Note that the above argument can be adapted to show that $(\lambda \longmapsto L[\lambda])$ is a group homomorphism from $(\mathbb{R}_+^*, \times)$ to $(Hom(\mathcal{M}), \circ)$. 

We now prove uniqueness of the function satisfying A), B) and C). Let $L$ and $\tilde L$ be two functions from $\mathbb{R}_+^*$ to $\mathcal{C}^0(\mathcal{M}, \mathcal{M})$ that both satisfy A), B) and C). 
Using C) and the fact that $G$ is a group we get $\forall \lambda \in \mathbb{R}_+^*, (\tilde L[\lambda] \circ L[\lambda]^{-1}, 1) \in G$, so $\tilde L[\lambda] \circ L[\lambda]^{-1} \in Sym(X_{y_0})$ for any $\lambda \in \mathbb{R}_+^*$. We have seen that, since $L$ satisfies A), B) and C), $(\lambda \longmapsto L[\lambda]^{-1})$ is continuous. Combining with the fact that $\tilde L$ satisfies A) we get that $\lambda \longmapsto \tilde L[\lambda] \circ L[\lambda]^{-1}$ is continuous 
from $\mathbb{R}_+^*$ to $\mathcal{C}^0(\mathcal{M}, \mathcal{M})$. Since, by assumption, $Sym(X_{y_0})$ is discrete, we deduce that $(\lambda \longmapsto \tilde L[\lambda] \circ L[\lambda]^{-1})$ is constant. Evaluating at $\lambda = 1$ and using B), we deduce that $(\lambda \longmapsto \tilde L[\lambda] \circ L[\lambda]^{-1}) \equiv id_{\mathcal{M}}$. Therefore $L = \tilde L$ so the uniqueness follows. 

Now, let $L$ be the unique continuous function from $\mathbb{R}_+^*$ to $\mathcal{C}^0(\mathcal{M}, \mathcal{M})$ that satisfies $L[1] = id_{\mathcal{M}}$, and $(L[\lambda], \lambda) \in G$ for all $\lambda \in \mathbb{R}_+^*$. We have seen that $L = L_{\epsilon}$ so in particular $(x, \lambda) \longmapsto L[\lambda](x)$ is of class $\mathcal{C}^k$ on $\mathcal{M} \times ]e^{-\epsilon/2}, e^{\epsilon/2}[$ and on each set of the form $\mathcal{M} \times ]e^{n\epsilon/2}, e^{(n+1)\epsilon/2}[$ for some $n \in \mathbb{Z}$. To prove that $(x, \lambda) \longmapsto L[\lambda](x)$ is of class $\mathcal{C}^k$ on $\mathcal{M} \times \mathbb{R}_+^*$, we thus only need to justify that for each $n \in \mathbb{Z} \setminus \{0\}$, there is an open interval $\mathcal{I}_n \ni e^{n\epsilon/2}$ such that $(x, \lambda) \longmapsto L[\lambda](x)$ is of class $\mathcal{C}^k$ on $\mathcal{M} \times \mathcal{I}_n$. Let us chose $\epsilon' \in ]0, \epsilon[$ such that $\epsilon'/\epsilon$ is irrational. We can build $L_{\epsilon'}$ similarly as we built $L_{\epsilon}$. In particular $L_{\epsilon'}$ is continuous from $\mathbb{R}_+^*$ to $\mathcal{C}^0(\mathcal{M}, \mathcal{M})$, $L_{\epsilon'}[1]=id_{\mathcal{M}}$, $\forall \lambda \in \mathbb{R}_+^*,(L_{\epsilon'}[\lambda], \lambda) \in G$, and $(x, \lambda) \longmapsto L_{\epsilon'}[\lambda](x)$ is of class $\mathcal{C}^k$ on each set of the form $\mathcal{M} \times ]e^{m\epsilon'/2}, e^{(m+1)\epsilon'/2}[$ for some $m \in \mathbb{Z}$. Since $\epsilon'/\epsilon$ is irrational, we have that for any $n \in \mathbb{Z} \setminus \{0\}$, there is $m_n \in \mathbb{Z}$ such that $e^{n\epsilon/2} \in ]e^{m_n\epsilon'/2}, e^{(m_n+1)\epsilon'/2}[ =: \mathcal{I}_n$. Clearly $(x, \lambda) \longmapsto L_{\epsilon'}[\lambda](x)$ is of class $\mathcal{C}^k$ on $\mathcal{M} \times \mathcal{I}_n$, but by uniqueness we have $L = L_{\epsilon'}$, so $(x, \lambda) \longmapsto L[\lambda](x)$ is of class $\mathcal{C}^k$ on $\mathcal{M} \times \mathcal{I}_n$. We conclude that $(x, \lambda) \longmapsto L[\lambda](x)$ is of class $\mathcal{C}^k$ on $\mathcal{M} \times \mathbb{R}_+^*$. 


Then, for all $y \in \mathcal{M}$, let us define $\tilde f_y := f_y \circ L[1/c_y]$. We now justify that the family $((\tilde f_y, 1), \ y \in \mathcal{M})$ defines a family of $\mathcal{C}^k$ invariance components for $X$. Let us fix $y \in \mathcal{M}$, 
we have 
\[ \tilde f_y(X_{y_0}) = f_y \left ( L[1/c_y] (X_{y_0}) \right ) \overset{\mathcal{L}}{=} f_y (X_{y_0}(c_y \times .)) \overset{\mathcal{L}}{=} X_{y}, \]
where we have used that $(L[1/c_y], 1/c_y) \in G$ for the first equality in law, and that $((f_z, c_z), \ z \in \mathcal{M})$ is a family of invariance components for $X$ for the second equality in law. Moreover, since $\tilde f_y(x) = f_y (L[1/c_y](x))$ where $(y \longmapsto c_y)$, $((y,x) \longmapsto f_y(x))$ and $(x, \lambda) \longmapsto L[\lambda](x)$ are of class $\mathcal{C}^k$, we see that $((y,x) \longmapsto \tilde f_y(x))$ is of class $\mathcal{C}^k$. Similarly, $\tilde f_y^{-1}(x) = L[1/c_y]^{-1} (f_y^{-1} (x)) = L[c_y] (f_y^{-1} (x))$ where $(y \longmapsto c_y)$, $((y,x) \longmapsto f_y^{-1}(x))$ and $(x, \lambda) \longmapsto L[\lambda](x)$ are of class $\mathcal{C}^k$, so $((y,x) \longmapsto \tilde f_y^{-1}(x))$ is also of class $\mathcal{C}^k$. We conclude that $((\tilde f_y, 1), \ y \in \mathcal{M})$ is a family of $\mathcal{C}^k$ invariance components for $X$. 
Note that $\tilde f_{y_0} = f_{y_0} \circ L[1/c_{y_0}] = id_{\mathcal{M}} \circ L[1] = id_{\mathcal{M}} \circ id_{\mathcal{M}} = id_{\mathcal{M}}$ (we have used that $((f_y, c_y), \ y \in \mathcal{M})$ satisfies \eqref{refpoint}, as mentioned in the beginning of the proof). Moreover, the time-dilatation constants in $((\tilde f_y, 1), \ y \in \mathcal{M})$ are all equal to $1$. Therefore $((\tilde f_y, 1), \ y \in \mathcal{M})$ satisfies Definition \ref{goodcomponents}, so $((\tilde f_y, 1), \ y \in \mathcal{M})$ is even a family of $\mathcal{C}^k$ good invariance components for $X$.

\end{proof}

\begin{remarque}
The above proof actually shows a little more than the statement of Proposition \ref{existsgoodcomponents} and can possibly be used to produce good invariance components in practice. It shows that, for $X$ that satisfies the assumptions of the proposition, if we pick a family $((f_y, c_y), \ y \in \mathcal{M})$ of $\mathcal{C}^k$ invariance components, then we have the following dichotomy: either this family is a family of \textbf{good} invariance components, either this family can be canonically modified (via composition by some functions $L[\lambda](.)$ that are uniquely determined) to produce a family of $\mathcal{C}^k$ \textbf{good} invariance components $((\tilde f_y, 1), \ y \in \mathcal{M})$ for which the changes of time are trivial. Recall that, as mentioned in Subsection \ref{examples}, the case where changes of times are trivial is particularly interesting, because our results show that in that case $X_{y_0}$ is a L\'evy process on some group (in the context of Theorem \ref{casgeneralexpl}), or the image by some function of a L\'evy process on $\mathbb{R}/\mathbb{R}^2$ or of a process built from the exponential functional of a bivariate L\'evy process (in the context of Theorems \ref{casrexpl} and \ref{casr2expl}), with no time-change. In particular, if $X$ satisfies the assumptions of the proposition and is given along with a family of $\mathcal{C}^k$ invariance components, then, if the family is not good we can already conclude that $X_{y_0}$ is as just described. 
\end{remarque}


\section{Relation with groups} \label{relwithgroups}

First, let us prove Proposition \ref{levychangentpssensfacil} which says that it is possible to construct processes satisfying Definition \ref{self-simoursens} when the state space is already equipped with a group structure. 

\begin{proof} of Proposition \ref{levychangentpssensfacil}

Let us fix $y \in E$ and let $\varphi_y$ be defined by \eqref{timechangingrecip}. Since $(s \mapsto L(s))$ is c\`ad-l\`ag and $h$ is continuous and positive, the function $\varphi_y$ is well-defined, continuous, increasing and defines a bijection from $[0, +\infty[$ onto $[0, \varphi_y(+\infty)[$. As a consequence $\varphi_y^{-1}$ is well-defined on $[0, \varphi_y(+\infty)[$ and continuous (we set $\varphi_y^{-1}(t) := +\infty$ for $t \geq \varphi_y(+\infty)$). This implies in particular that $X_y$, defined by \eqref{timechangedrecip}, is well-defined and c\`ad-l\`ag on $[0, \varphi_y(+\infty)[$. 

Let us now justify that $\zeta(X_{y}) = \varphi_y(+\infty)$ a.s. Since $\varphi_y^{-1}$ is well-defined and continuous on $[0, \varphi_y(+\infty)[$, we have that for any $t \in [0, \varphi_y(+\infty)[$, the interval $[0, \varphi_y^{-1}(t)]$ is compact. Since $L$ is c\`ad-l\`ag, and since $(z \longmapsto y \star z)$ is an homeomorphism, the set $y \star L([0, \varphi_y^{-1}(t)])$ is relatively compact, which show that the trajectory of $X_{y}$ on $[0,t]$ is contained inside a compact set. Therefore, $t < \zeta(X_{y})$, and since this is true a.s. for any $t \in [0, \varphi_y(+\infty)[$ we get $\varphi_y(+\infty) \leq \zeta(X_{y})$ a.s. This proves that we have a.s. $\varphi_y(+\infty) = \zeta(X_{y})$ whenever $\varphi_y(+\infty) = +\infty$. Let $(K_n)_{n \geq 1}$ be an increasing sequence of compact sets $K_1 \subset K_2 \subset ...$ such that any compact subset of $E$ is included in $K_n$ for some $n \geq 1$. 
If $\varphi_y(+\infty) < +\infty$, then $h(y \star L(.))$ takes arbitrary large values on $[0, +\infty[$. Therefore, for any $n \geq 1$ there is a.s. $u_n \in [0, +\infty[$ such that $h(y \star L(u_n)) > \sup_{z \in K_n} h(z)$. We thus have $\tau(y \star L(.), K_n^c) \leq u_n < +\infty$ a.s. Since $\varphi_y^{-1}$ is increasing we deduce that $\tau(X_y, K_n^c) = \tau(y \star L(\varphi_y^{-1}(.)), K_n^c) = \varphi_y(\tau(y \star L(.), K_n^c)) \leq \varphi_y(u_n) < \varphi_y(+\infty)$. We deduce that $\zeta(X_y) = \lim_{n \rightarrow +\infty} \tau(X_y, K_n^c) \leq \varphi_y(+\infty)$ a.s. In conclusion $\varphi_y(+\infty) = \zeta(X_{y})$ a.s. which proves in particular that $\Delta$ can only be reached continuously by $X_y$, so $X_y$ is c\`ad-l\`ag on $[0, +\infty[$. 

We denote by $P_{y}$ the law of the process $X_y$. 
Let respectively $(\mathcal{G}_t, \ t \geq 0)$ and $(\mathcal{F}_t, \ t \geq 0)$ be the right continuous filtrations generated by the processes $L$ and $X_y$. Note that for any $t \geq 0$, $\varphi_y^{-1} (t)$ is a (possibly infinite) stopping time for $L$ and that $\mathcal{G}_{\varphi_y^{-1} (t)} = \mathcal{F}_{t}$. 

We now justify that $X_y$ satisfies the Markov property at every instant $t \geq 0$. 
Let us fix an instant $t \geq 0$ and work on $\{ \varphi_y^{-1} (t) < +\infty \} = \{ X_y(t) \neq \Delta \}$ (since $\Delta$ is an absorbing state, the case where $X_y(t) = \Delta$ is trivial). Since $\varphi_y^{-1} (t)$ is a stopping time for the L\'evy process $L$ on $(E, \star)$, the process $\tilde L(.) := (L(\varphi_y^{-1} (t)))^{-1} \star L(\varphi_y^{-1} (t) + .)$ is independent from $\mathcal{G}_{\varphi_y^{-1} (t)} = \mathcal{F}_{t}$ and has the same law as $L$. Now let $\tilde \varphi_{e_0}$ and $\tilde X_{e_0}$ be constructed from $\tilde L$ just as $\varphi_y$ and $X_y$ are constructed from $L$: for all $s \geq 0$, $\tilde \varphi_{e_0} (s) := \int_0^s \frac1{h(\tilde L(u))} du$ and for all $s \in [0, \tilde \varphi_{e_0} (+\infty)[, \tilde X_{e_0}(s) := \tilde L (\tilde \varphi_{e_0}^{-1}(s))$ (and $\tilde X_{e_0}(s) := \Delta$ for $s \geq \tilde \varphi_{e_0} (+\infty)$). Then, since it is a function of $\tilde L$, $\tilde X_{e_0}$ is independent from $\mathcal{F}_t$ and, since $\tilde L$ has the same law as $L$, $\tilde X_{e_0}$ has law $P_{e_0}$. We need to link $\varphi_y$ and $\tilde \varphi_{e_0}$. For any $u \geq 0$ we have from the definition of $\tilde L$ and the homomorphism property for $h$ that $h(\tilde L(u)) = h(L(\varphi_y^{-1} (t) + u))/h(L(\varphi_y^{-1} (t)))$. Therefore, for any $s \in [0, \tilde \varphi_{e_0} (+\infty)[$ we have 
\begin{align*}
s & = \int_{0}^{\tilde \varphi_{e_0}^{-1} (s)} \frac1{h(\tilde L(u))} du = \int_{0}^{\tilde \varphi_{e_0}^{-1} (s)} \frac{h(L(\varphi_y^{-1} (t)))}{h(L(\varphi_y^{-1} (t) + u))} du \\
& = \frac{h(y \star L(\varphi_y^{-1} (t)))}{h(y)} \int_{0}^{\tilde \varphi_{e_0}^{-1} (s)} \frac{1}{h(L(\varphi_y^{-1} (t) + u))} du \\
& = \frac{h(X_y(t))}{h(y)} \int_{\varphi_y^{-1} (t)}^{\varphi_y^{-1} (t) + \tilde \varphi_{e_0}^{-1} (s)} \frac{1}{h(L(u))} du \\
& = h(X_y(t)) \times \left [ \varphi_y \left ( \varphi_y^{-1} (t) + \tilde \varphi_{e_0}^{-1} (s) \right ) - t \right ]. 
\end{align*}
We thus get $\varphi_y ( \varphi_y^{-1} (t) + \tilde \varphi_{e_0}^{-1} (s) ) = t + s/h(X_y(t))$ from which we deduce that $\varphi_y^{-1} (t + s/h(X_y(t))) = \varphi_y^{-1} (t) + \tilde \varphi_{e_0}^{-1} (s)$. As a consequence, for any $s \in [0, \tilde \varphi_{e_0} (+\infty)/h(X_y(t))[$ we have that $\varphi_y^{-1} (t + s)$ is finite and equals $\varphi_y^{-1} (t + h(X_y(t))s/h(X_y(t))) = \varphi_y^{-1} (t) + \tilde \varphi_{e_0}^{-1} (h(X_y(t)) \times s)$. Then, 
\begin{align*}
X_y(t+s) & = y \star L(\varphi_y^{-1} (t + s)) = y \star L \left ( \varphi_y^{-1} (t) + \tilde \varphi_{e_0}^{-1} (h(X_y(t)) \times s) \right ) \\
& = y \star L \left ( \varphi_y^{-1} (t) \right ) \star \tilde L \left ( \tilde \varphi_{e_0}^{-1} (h(X_y(t)) \times s) \right ) \\
& = X_y(t) \star \tilde X_{e_0}(h(X_y(t)) \times s). 
\end{align*}
We have obtained that, on $[0, \tilde \varphi_{e_0} (+\infty)[$, $X_y(t+.) = X_y(t) \star \tilde X_{e_0}(h(X_y(t)) \times .)$, with $\tilde X_{e_0}$ independent of $\mathcal{F}_t$ and having law $P_{e_0}$. We can also see that $\varphi_y(+\infty) = t + \tilde \varphi_{e_0} (+\infty)/h(X_y(t))$ so $X_y(t+s) = \Delta$ if and only if $\tilde X_{e_0}(h(X_y(t)) \times s) = \Delta$. Combing all this we deduce the Markov property for $X_y$ at time $t$. The same procedure can be done in the case where $t$ is a stopping time so $X_y$ is strongly Markovian. 

We now justify the self-similarity of $X$. 
Let $\varphi_{e_0}$ and $X_{e_0}$ be constructed from $L$ as in \eqref{timechangingrecip} and \eqref{timechangedrecip}. Then, $X_{e_0}$ has law $P_{e_0}$. Let $y \in E$ be arbitrary, and 
define $\varphi_y$ and $X_y$ from $L$ as in \eqref{timechangingrecip} and \eqref{timechangedrecip}. Then, $X_y$ has law $P_{y}$. We have 
\[ \forall t \geq 0, \ \varphi_y(t) = \frac1{h(y)} \int_0^t \frac1{h(L(s))} ds = \frac1{h(y)} \varphi_{e_0}(t), \]
so that $\varphi_y^{-1}(.) = \varphi_{e_0}^{-1}(h(y) \times .)$ and $\varphi_y(+\infty) = \varphi_{e_0}(+\infty)/h(y)$. We then have for all $0 \leq t < \varphi_y(+\infty) = \varphi_{e_0}(+\infty)/h(y)$, 
\[ X_y(t) = y \star L \left (\varphi_y^{-1}(t) \right ) = y \star L \left (\varphi_{e_0}^{-1}(h(y) \times t) \right ) = y \star X_{e_0} \left ( h(y) \times t \right ). \]
As a consequence we have $X_y(.) = y \star X_{e_0} \left ( h(y) \times . \right )$ on $[0, \varphi_y(+\infty)[ = [0, \varphi_{e_0}(+\infty)/h(y)[$ and these two processes reach the state $\Delta$ at the same time $\varphi_y(+\infty) = \varphi_{e_0}(+\infty)/h(y)$. 
We deduce that 
\[ \left ( X_y (t), \ t \geq 0 \right ) = \left ( y \star X_{e_0}(h(y) t), \ t \geq 0 \right ), \]
where $X_y$ has law $P_y$ while $X_{e_0}$ has law $P_{e_0}$ (note that we have used the convention $y \star \Delta = \Delta$). Since the above is true for any $y \in E$ we deduce that $X$ satisfies Definition \ref{self-simoursens} with invariance components $((f_y, c_y), \ y \in E)$ given by $f_y(.) := y \star .$ and $c_y := h(y)$, for the reference point $e_0$. Since $e_0$ is the neutral element of $(E,\star)$, and since $h$ is an homomorphism from $(E,\star)$ to $(\mathbb{R}_+^*, \times)$, we see that \eqref{refpoint} and \eqref{relmorphisme} are satisfied so that the family $((f_y, c_y), \ y \in E)$ actually defines good invariance components. 

\end{proof}

We are now interested in proving a key point for our purpose: for $X$ a process satisfying Definition \ref{self-simoursens}, given along with a family of good invariance components, we want to make appear a group structure on the sate space such that $X$ is a time-changed L\'evy process on this group. In the following proposition we make appear the group structure. 

\begin{prop} \label{groupappears}

Let $X$ be a process satisfying Definition \ref{self-simoursens} on a connected locally compact separable metric space $E$ and let $((f_y, c_y), \ y \in E)$ be a family of good invariance components for $X$, relatively to some reference point $y_0$. We assume that either Assumption \ref{hypsupport1} or Assumption \ref{hypsupport2} is satisfied and that $Sym(X_{y_0})$ is discrete. Let us define an interne composition law $\star$ on $E$ by $y \star x := f_y(x)$. Then $(E, \star)$ is a topological group with neutral element $y_0$ and $(y \longmapsto c_y)$ is a continuous group homomorphism from $(E,\star)$ to $(\mathbb{R}_+^*, \times)$. 

If moreover, for $k \geq 1$, $E$ is a $\mathcal{C}^k$-differentiable manifold and $((f_y, c_y), \ y \in E)$ are $\mathcal{C}^k$ good invariance components, then $(E,\star)$ is even a $\mathcal{C}^k$-Lie group and $(y \longmapsto c_y)$ is a $\mathcal{C}^k$-Lie group homomorphism. 

\end{prop}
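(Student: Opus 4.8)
The plan is to derive a single pointwise identity for the invariance components,
\[ \forall y_1, y_2 \in E, \quad f_{f_{y_1}(y_2)} = f_{y_1} \circ f_{y_2}, \]
from which all assertions follow. First I would invoke Proposition \ref{compatibility}; it applies because, by Lemma \ref{equivassumpt12}, either Assumption \ref{hypsupport1} or Assumption \ref{hypsupport2} entails Assumption \ref{hypsupport1}. It gives, for all $y_1, y_2 \in E$,
\[ f_{f_{y_1}(y_2)}\bigl(X_{y_0}(c_{f_{y_1}(y_2)}\,\cdot)\bigr) \overset{\mathcal{L}}{=} f_{y_1}\circ f_{y_2}\bigl(X_{y_0}(c_{y_1}c_{y_2}\,\cdot)\bigr). \]
Since the components are good, \eqref{relmorphisme} gives $c_{f_{y_1}(y_2)} = c_{y_1}c_{y_2}$, so both sides carry the \emph{same} linear time change by $c := c_{y_1}c_{y_2} > 0$. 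Reparametrising time by the bijection $t \mapsto ct$ of $[0,+\infty[$ preserves finite-dimensional laws, so this reduces to $f_{f_{y_1}(y_2)}(X_{y_0}) \overset{\mathcal{L}}{=} f_{y_1}\circ f_{y_2}(X_{y_0})$. Applying the homeomorphism $(f_{y_1}\circ f_{y_2})^{-1}$ to both processes then shows that $\Phi(y_1, y_2) := f_{y_2}^{-1}\circ f_{y_1}^{-1}\circ f_{f_{y_1}(y_2)}$ lies in $Sym(X_{y_0})$ for every pair $(y_1, y_2)$.

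The crux is a connectedness argument in the function space. Using Remark \ref{continuity}, the continuity of $(y_1,y_2)\mapsto f_{y_1}(y_2)$, and the continuity of composition on $\mathcal{C}^0(E,E)$, the map $\Phi : E\times E \longrightarrow \mathcal{C}^0(E,E)$ is continuous with values in $Sym(X_{y_0})$. As $E$ is connected, $E\times E$ is connected, while $Sym(X_{y_0})$ is discrete by hypothesis; hence $\Phi$ is constant. Evaluating at $(y_0,y_0)$ and using \eqref{refpoint}, which gives $f_{y_0}=id_E$ and thus $f_{f_{y_0}(y_0)}=f_{y_0}=id_E$, I get $\Phi(y_0,y_0)=id_E$, so $\Phi\equiv id_E$, which is exactly the desired identity.

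From this identity the rest is formal. Associativity reads $(y_1\star y_2)\star x = f_{y_1\star y_2}(x) = f_{y_1}(f_{y_2}(x)) = y_1\star(y_2\star x)$. The point $y_0$ is a two-sided unit: $y_0\star x = f_{y_0}(x)=x$ by \eqref{refpoint}, and $y\star y_0 = f_y(y_0)=y$ since evaluating \eqref{self-simoursenseq} at $t=0$ gives $y = f_y(y_0)$; the latter also makes $y\mapsto f_y$ injective. Taking $y^{-1}:=f_y^{-1}(y_0)$ gives a right inverse, and the identity forces $f_y\circ f_{y^{-1}} = f_{y_0}=id_E$, so $f_{y^{-1}}=f_y^{-1}$, whence $f_{y^{-1}\star y}=id_E=f_{y_0}$ and injectivity yields $y^{-1}\star y = y_0$. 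Thus $(E,\star)$ is a group with unit $y_0$. It is topological because $(x,y)\mapsto x\star y = f_x(y)$ and $y\mapsto y^{-1}=f_y^{-1}(y_0)$ are continuous by the continuity assumptions in Definition \ref{self-simoursens}. Moreover \eqref{refpoint} and \eqref{relmorphisme} say $c_{y_0}=1$ and $c_{y\star z}=c_yc_z$, so $y\mapsto c_y$ is a continuous homomorphism into $(\mathbb{R}_+^*,\times)$. In the $\mathcal{C}^k$ case the same two formulas, together with Definition \ref{smoothcomponents}, show the multiplication and inversion are $\mathcal{C}^k$, so $(E,\star)$ is a $\mathcal{C}^k$-Lie group and $y\mapsto c_y$ is a $\mathcal{C}^k$-homomorphism.

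I expect the main difficulty to be the passage from the equality in law to the pointwise identity: one must recognise that goodness of the components is precisely what equalises the two time changes so they can be cancelled, and then verify carefully that $\Phi$ is continuous as a map into $\mathcal{C}^0(E,E)$ with the topology of uniform convergence on compacts and takes values in the discrete set $Sym(X_{y_0})$, so that connectedness of $E\times E$ pins it down to a single value.
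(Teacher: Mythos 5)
Your proof is correct and follows essentially the same route as the paper's: invoke Proposition \ref{compatibility}, use \eqref{relmorphisme} to cancel the identical time changes, conclude that $f_{y_2}^{-1}\circ f_{y_1}^{-1}\circ f_{f_{y_1}(y_2)}$ (the inverse of the paper's $U_1(y_1,y_2)$) lies in $Sym(X_{y_0})$, and pin it to $id_E$ by continuity, connectedness of $E\times E$, discreteness of $Sym(X_{y_0})$, and evaluation at $(y_0,y_0)$; the remaining group axioms, continuity, homomorphism property of $c$, and smoothness are checked exactly as in the paper. The only differences are cosmetic (your explicit appeal to Lemma \ref{equivassumpt12} and your spelled-out verification of the left inverse are slightly more careful than the paper's).
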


\begin{proof} 

\textit{Associativity:} This is the key point. According to Proposition \ref{compatibility}, we have that, 
\[ \forall y_1, y_2 \in E, \ f_{f_{y_1}(y_2)} \left ( X_{y_0} \right ) \overset{\mathcal{L}}{=} f_{y_1} \circ f_{y_2} \left ( X_{y_0} \left ( \frac{c_{y_1} \times c_{y_2}}{c_{f_{y_1}(y_2)}} \times . \right ) \right ). \]
Since $((f_y, c_y) \ y \in E)$ is a family of good invariance components it satisfies \eqref{relmorphisme}, so $c_{y_1} \times c_{y_2}/c_{f_{y_1}(y_2)} = 1$. We thus get $f_{f_{y_1}(y_2)} \left ( X_{y_0}(.) \right ) \overset{\mathcal{L}}{=} f_{y_1} \circ f_{y_2} \left ( X_{y_0}(.) \right )$, so that 
\[ \forall y_1, y_2 \in E, \ U_1(y_1, y_2) := f^{-1}_{f_{y_1}(y_2)} \circ f_{y_1} \circ f_{y_2} \in Sym(X_{y_0}). \]
Recall from Definition \ref{self-simoursens} and Remark \ref{continuity} that $(y,x) \longmapsto f_y(x)$, $y \longmapsto f_y(.)$, and $y \longmapsto f_y^{-1}(.)$ are continuous, and since the composition is continuous on $\mathcal{C}^0(E, E)$, we get in particular that $U_1 : E \times E \longrightarrow \mathcal{C}^0(E, E)$ is continuous. 
Moreover, $E \times E$ is connected. By assumption, $Sym(X_{y_0})$ is discrete so $U_1$ is constant on $E \times E$. Moreover, since $((f_y, c_y) \ y \in E)$ is a family of good invariance components it satisfies \eqref{refpoint} from which we deduce $U_1(y_0, y_0) = id_E$. Therefore $U_1 \equiv id_E$ so we get 
\[ \forall y_1, y_2 \in E, \ f_{f_{y_1}(y_2)} = f_{y_1} \circ f_{y_2}. \]
Evaluating the above functions at a point $y_3 \in E$ we obtain exactly $(y_1 \star y_2) \star y_3 = y_1 \star (y_2 \star y_3)$, and the associativity follows. 

\textit{Neutral element:} Using \eqref{refpoint} we see that for any $y \in E$, $y_0 \star y = f_{y_0}(y) = id_E(y) = y$. Also, evaluating \eqref{self-simoursenseq} at $t=0$ and using that $X_{y_0}(0)$ and $X_{y}(0)$ are almost surely equal to respectively $y_0$ and $y$, we see that $f_y(y_0)=y$, that is, $y \star y_0 = y$. This proves that $y_0$ is a neutral element for $\star$. 

\textit{Inverse:} Clearly we only need to justify the existence of a right-inverse for every $y \in E$. Since $f_y$ is bijective, $f_y^{-1}(y_0)$ is well-defined and it clearly satisfies $y \star f_y^{-1}(y_0) = y_0$. 

\textit{Continuity:} Let us denote by $y^{-1}$ the inverse of $y$. The continuity of $((y,x) \longmapsto y \star x)$ and of $(y \longmapsto y^{-1})$ follows from the continuity of $((y,x) \longmapsto f_y(x))$ and of $((y,x) \longmapsto f_y^{-1}(x))$ assumed in Definition \ref{self-simoursens}. 

\textit{Group homomorphism:} The fact that $(y \longmapsto c_y)$ is a group homomorphism from $(E,\star)$ to $(\mathbb{R}_+^*, \times)$ follows from \eqref{relmorphisme}, which is satisfied since, by assumption, $((f_y, c_y) \ y \in E)$ are good invariance components. The continuity of $(y \longmapsto c_y)$ is satisfied by definition of invariance components in Definition \ref{self-simoursens}. 

\textit{Smoothness:} If $E$ is a $\mathcal{C}^k$-differentiable manifold and $((f_y, c_y), \ y \in E)$ are $\mathcal{C}^k$ good invariance components, then it only remains to justify that the applications $((y,x) \longmapsto y \star x$), $(y \longmapsto y^{-1})$, and $(y \longmapsto c_y)$ are $\mathcal{C}^k$, but this follows from Definition \ref{smoothcomponents}. 

%
%
%

\end{proof}


In order to simplify the statement of the forthcoming propositions, let us introduce the following definition: 

\begin{defi} \label{bearinggroup}
Let $X$ be a process satisfying Definition \ref{self-simoursens} on a locally compact separable metric space $E$. Let $((f_y, c_y), \ y \in E)$ be a family of good invariance components for $X$, relatively to some reference point $y_0$. If the interne composition law $\star$, defined on $E$ by $y \star x := f_y(x)$, gives rise to a topological group structure on $E$, then we say that $(E,\star)$ is the \textbf{bearing group} of $X$ associated with the good invariance components $((f_y, c_y), \ y \in E)$. 
\end{defi}



If $(E,\star)$ is a bearing group for $X$, then the self-similarity relation can be re-expressed as 
\[ \forall y \in E, \ \left ( X_y (t), \ t \geq 0 \right ) \overset{\mathcal{L}}{=} \left ( y \star X_{y_0}(c_y t), \ t \geq 0 \right ). \]
In particular, for any starting point $z$, if $\tilde X_{y_0} \sim P_{y_0}$ is independent from $X_z$ and $S$ is a stopping time for $X_z$, then 
\begin{eqnarray}
 \left ( X_z (S+t), \ t \geq 0 \right ) \overset{\mathcal{L}}{=} \left ( X_z(S) \star \tilde X_{y_0}(c_{X_z(S)} t), \ t \geq 0 \right ). \label{relationlevy}
\end{eqnarray}


We can now prove that a process satisfying Definition \ref{self-simoursens} can be identified with a time-changed L\'evy process on the group constructed in Proposition \ref{groupappears}. 

\begin{prop} \label{levychangentps}

Let $X$ be a process satisfying Definition \ref{self-simoursens} on a connected locally compact separable metric space $E$ and let $((f_y, c_y), \ y \in E)$ be a family of good invariance components for $X$, relatively to some reference point $y_0$. We assume that either Assumption \ref{hypsupport1} or Assumption \ref{hypsupport2} is satisfied for $E$ and that $Sym(X_{y_0})$ is discrete. Let $(E,\star)$ be the bearing group associated with $((f_y, c_y), \ y \in E)$. Then there is a L\'evy process $L$ on $(E,\star)$ such that if we set 
\begin{eqnarray}
\forall t \in [0, +\infty], \ \varphi(t) := \int_0^t \frac1{c_{L(s)}} ds, \label{timechanging}
\end{eqnarray}
then 
\begin{eqnarray}
\zeta(X_{y_0}) = \varphi(+\infty) \label{tpsvie}
\end{eqnarray}
and 
\begin{eqnarray}
\forall \ 0 \leq t < \varphi(+\infty), \ X_{y_0}(t) = L \left (\varphi^{-1}(t) \right ). \label{timechanged}
\end{eqnarray}
Moreover, if we let $\xi(.) := \log(1/c_{L(.)})$, then $\xi$ is a real L\'evy process and we have $\zeta(X_{y_0}) = \int_0^{+\infty} e^{\xi(s)} ds$. 

In the particular case where $c_y = 1, \forall y \in E$, then $X_{y_0} = L$, so $X_{y_0}$ is itself a L\'evy process on $(E,\star)$. 

\end{prop}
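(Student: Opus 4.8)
The plan is to recover $L$ from $X_{y_0}$ by undoing the time dilation encoded in the homomorphism $c$, that is, by inverting the construction of Proposition \ref{levychangentpssensfacil}. By Proposition \ref{groupappears} we already know that $(E,\star)$ is a topological group with neutral element $y_0$ and that $(y\mapsto c_y)$ is a continuous homomorphism onto $(\mathbb{R}_+^*,\times)$; the engine of the proof will be the combination of the strong Markov property with Definition \ref{self-simoursens}, as recorded in \eqref{relationlevy} with $z=y_0$. Concretely, I would set $A(t):=\int_0^t c_{X_{y_0}(u)}\,du$ for $0\le t<\zeta(X_{y_0})$ and define $L(s):=X_{y_0}(A^{-1}(s))$, and then check afterwards that the $\varphi$ of \eqref{timechanging}, built from $L$, coincides with $A^{-1}$.

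First I would record the elementary properties of $A$. Since $u\mapsto c_{X_{y_0}(u)}$ is c\`ad-l\`ag and strictly positive, $A$ is continuous and strictly increasing, hence a homeomorphism from $[0,\zeta(X_{y_0}))$ onto $[0,A(\zeta(X_{y_0})^-))$, and for each fixed $s$ in this range $A^{-1}(s)$ is a stopping time of $X_{y_0}$ because $\{A^{-1}(s)\le t\}=\{A(t)\ge s\}\in\mathcal{F}_t$. Consequently $L$ is c\`ad-l\`ag, being a c\`ad-l\`ag path precomposed with a continuous increasing time change, with $L(0)=X_{y_0}(0)=y_0$, the neutral element. The content of the proposition then reduces to showing that $L$ has stationary and independent left-increments, i.e. that for each $s$ the process $u\mapsto L(s)^{-1}\star L(s+u)$ is independent of the past of $L$ up to $s$ and has the law of $L$ itself.

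The crux, and the main obstacle, is precisely this increment identity, where the cancellation of the dilation factor must be made rigorous. I would fix $s$, put $S:=A^{-1}(s)$, and apply the strong Markov property at $S$ together with Definition \ref{self-simoursens} at the point $X_{y_0}(S)$: conditionally on $\mathcal{F}_S$, the post-$S$ path $(X_{y_0}(S+t))_{t\ge0}$ has the law of $(X_{y_0}(S)\star\tilde X_{y_0}(c_{X_{y_0}(S)}t))_{t\ge0}$, where $\tilde X_{y_0}\sim P_{y_0}$ is independent of $\mathcal{F}_S$. Writing $\tilde A(r):=\int_0^r c_{\tilde X_{y_0}(w)}\,dw$ and using the homomorphism property $c_{X_{y_0}(S)\star z}=c_{X_{y_0}(S)}\,c_z$ from \eqref{relmorphisme}, a change of variables gives (in the conditional law) $A(S+t)=s+\tilde A(c_{X_{y_0}(S)}\,t)$. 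Inverting this, $A^{-1}(s+u)=S+\tilde A^{-1}(u)/c_{X_{y_0}(S)}$, so that $L(s+u)=X_{y_0}(S)\star\tilde X_{y_0}(\tilde A^{-1}(u))$ and hence $L(s)^{-1}\star L(s+u)=\tilde X_{y_0}(\tilde A^{-1}(u))=:\tilde L(u)$. The dilation constant $c_{X_{y_0}(S)}$ cancels exactly, and $\tilde L$ is built from $\tilde X_{y_0}$ in the same way $L$ is built from $X_{y_0}$; since $\tilde X_{y_0}\overset{\mathcal{L}}{=}X_{y_0}$ and is independent of $\mathcal{F}_S\supseteq\sigma(L(r):r\le s)$, this shows $u\mapsto L(s)^{-1}\star L(s+u)$ is independent of the past and distributed as $L$. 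Running the same argument at a generic stopping time upgrades this to the full Lévy property on $(E,\star)$. I expect the only delicate point to be the bookkeeping of conditional laws, namely transferring the deterministic time-change functionals through the regular conditional distribution given $\mathcal{F}_S$.

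It then remains to read off the stated formulas. The substitution $s=A(v)$ gives $\int_0^{t}1/c_{L(s)}\,ds=\int_0^{A^{-1}(t)}dv=A^{-1}(t)$, so the $\varphi$ of \eqref{timechanging} equals $A^{-1}$; therefore $\varphi^{-1}=A$ and $X_{y_0}(t)=L(A(t))=L(\varphi^{-1}(t))$, which is \eqref{timechanged}, while letting $t\uparrow\zeta(X_{y_0})$ yields \eqref{tpsvie}. Finally, $y\mapsto-\log c_y$ is a continuous homomorphism from $(E,\star)$ to $(\mathbb{R},+)$, and the image of a Lévy process under a continuous homomorphism is again a Lévy process (homomorphisms map left-increments to left-increments), so $\xi(\cdot)=\log(1/c_{L(\cdot)})$ is a real Lévy process starting at $-\log c_{y_0}=0$, and $\zeta(X_{y_0})=\varphi(+\infty)=\int_0^{+\infty}1/c_{L(s)}\,ds=\int_0^{+\infty}e^{\xi(s)}\,ds$. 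In the particular case $c\equiv1$ one has $A(t)=t$, hence $L=X_{y_0}$ and $X_{y_0}$ is itself a Lévy process on $(E,\star)$.
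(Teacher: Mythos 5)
Your construction is the same as the paper's: you time-change $X_{y_0}$ by the inverse of $A(t)=\int_0^t c_{X_{y_0}(u)}\,du$, use the strong Markov property at the stopping time $A^{-1}(s)$ together with the self-similarity and the homomorphism property \eqref{relmorphisme} to cancel the dilation factor and obtain the left-increment identity, and then verify $\varphi=A^{-1}$ by the change of variables $s=A(v)$. All of that is carried out correctly and matches the paper's argument step by step.

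There is, however, one genuine gap. Your $A$ is a bijection from $[0,\zeta(X_{y_0}))$ onto $[0,A(\zeta(X_{y_0})^-))$, and you acknowledge this, but you never rule out the event $\{A(\zeta(X_{y_0})^-)<+\infty\}$. On that event $L$ is only defined on a finite random interval and is absorbed at $\Delta$ at its right endpoint, so it is not a L\'evy process on $(E,\star)$ in the required sense; moreover \eqref{timechanging} would not define $\varphi(t)$ for all $t\in[0,+\infty]$, and the passage ``letting $t\uparrow\zeta(X_{y_0})$ yields \eqref{tpsvie}'' would only give $\zeta(X_{y_0})=\varphi\bigl(A(\zeta(X_{y_0})^-)^-\bigr)$ rather than $\zeta(X_{y_0})=\varphi(+\infty)$. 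The paper closes this by a separate argument: if absorption of $L$ occurred with positive probability, then, because the killing time is independent of the past increments (it is determined by the i.i.d. increment structure you established), $L$ would with positive probability be absorbed while remaining inside a fixed compact neighborhood of $y_0$; but $\Delta$ can only be reached continuously (by leaving every compact set), a contradiction. Hence $A(\zeta(X_{y_0})^-)=+\infty$ a.s., $L$ is a genuine (unkilled) L\'evy process on $(E,\star)$, and \eqref{tpsvie} follows. You need some version of this step; everything else in your proposal is sound.
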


\begin{proof}

We define for all $0 \leq t < \zeta(X_{y_0})$, $\phi (t) := \int_0^t c_{X_{y_0}(u)} du$ and $L(t) := X_{y_0} (\phi^{-1} (t))$. 

We first justify that $L$ is well-defined. Since $(s \mapsto X_{y_0}(s))$ is c\`ad-l\`ag and $(y \mapsto c_y)$ is continuous and positive, the function $\phi$ is well-defined, continuous, increasing and defines a bijection from $[0, \zeta(X_{y_0})[$ onto its range, $[0, \phi(\zeta(X_{y_0}))[$, where $\phi(\zeta(X_{y_0}))$ has to be considered as possibly finite (we will prove later that it is actually a.s. infinite). As a consequence $\phi^{-1}$ is well-defined on $[0, \phi(\zeta(X_{y_0}))[$ and continuous. This implies in particular that $L$ is well-defined on $[0, \phi(\zeta(X_{y_0}))[$ and c\`ad-l\`ag. We see from $L(t) := X_{y_0} (\phi^{-1} (t))$ that $\phi(\zeta(X_{y_0})) = \zeta(L)$. If $\phi(\zeta(X_{y_0})) < +\infty$, let us put $\phi^{-1} (t) = +\infty$ and $L(t) := \Delta$ for $t \geq \phi(\zeta(X_{y_0}))$, so that $L$ is well-defined on $[0, +\infty[$ and c\`ad-l\`ag (we see that the state $\Delta$ is a.s. reached continuously). 
Let respectively $(\mathcal{G}_t, \ t \geq 0)$ and $(\mathcal{F}_t, \ t \geq 0)$ be the right continuous filtrations generated by the processes $L$ and $X_{y_0}$. Note that for any $t \geq 0$, $\phi^{-1} (t)$ is a stopping time for $X_{y_0}$ and that for any $t \geq 0, \mathcal{G}_t = \mathcal{F}_{\phi^{-1} (t)}$. 

We now prove that $L$ is a L\'evy process on $(E,\star)$, possibly absorbed at $\Delta$. Let us fix an instant $t \geq 0$ and work on $\{ L(t) \neq \Delta\} = \{ \phi^{-1} (t) < +\infty \}$. According to \eqref{relationlevy}, the process $\tilde X_{y_0}(.) := (X_{y_0}(\phi^{-1} (t)))^{-1} \star X_{y_0}(\phi^{-1} (t) + ./c_{X_{y_0}(\phi^{-1} (t))})$ is independent from $\mathcal{F}_{\phi^{-1} (t)} = \mathcal{G}_t$ and has law $P_{y_0}$. Now let $\tilde \phi$ and $\tilde L$ be constructed from $\tilde X_{y_0}$ just as $\phi$ and $L$ are constructed from $X_{y_0}$: for all $0 \leq s < \zeta(\tilde X_{y_0})$, $\tilde \phi (s) := \int_0^s c_{\tilde X_{y_0}(u)} du$ and $\tilde L(s) := \tilde X_{y_0} (\tilde \phi^{-1} (s))$. Then, since it is a function of $\tilde X_{y_0}$, $\tilde L$ is independent from $\mathcal{G}_t$ and, since $\tilde X_{y_0}$ has law $P_{y_0}$, $\tilde L$ has the same law as $L$. We need to link $\phi$ and $\tilde \phi$. For any $0 \leq u < \zeta(\tilde X_{y_0})$ we have from the definition of $\tilde X_{y_0}$ and the homomorphism property of $(y \longmapsto c_y)$ that $c_{\tilde X_{y_0}(u)} = c_{X_{y_0}(\phi^{-1} (t) + u/c_{X_{y_0}(\phi^{-1} (t))})} /c_{X_{y_0}(\phi^{-1} (t))}$. For any $0 \leq s < \tilde \phi(\zeta(\tilde X_{y_0}))$ we have 
\begin{align*}
s & = \int_{0}^{\tilde \phi^{-1} (s)} c_{\tilde X_{y_0}(u)} du = \int_{0}^{\tilde \phi^{-1} (s)} \frac{c_{X_{y_0}(\phi^{-1} (t) + u/c_{X_{y_0}(\phi^{-1} (t))})}}{c_{X_{y_0}(\phi^{-1} (t))}} du = \int_{0}^{\tilde \phi^{-1} (s)/c_{X_{y_0}(\phi^{-1} (t))}} c_{X_{y_0}(\phi^{-1} (t) + v)} dv \\
& = \int_{\phi^{-1}(t)}^{\phi^{-1}(t)+\tilde \phi^{-1} (s)/c_{X_{y_0}(\phi^{-1} (t))}} c_{X_{y_0}(u)} du = \phi \left ( \phi^{-1}(t)+\tilde \phi^{-1} (s)/c_{X_{y_0}(\phi^{-1} (t))} \right ) - t. 
\end{align*}
We thus get $\phi \left ( \phi^{-1}(t)+ \tilde \phi^{-1} (s)/c_{X_{y_0}(\phi^{-1} (t))} \right ) = t + s$ 
from which we deduce that $\phi^{-1}(t + s)$ is finite and satisfies $c_{X_{y_0}(\phi^{-1} (t))}[\phi^{-1}(t + s) - \phi^{-1}(t)] = \tilde \phi^{-1} (s)$. As a consequence, for any $0 \leq s < \tilde \phi(\zeta(\tilde X_{y_0}))$ we have 
\begin{align*}
L(t+s) & = X_{y_0}(\phi^{-1} (t + s)) = X_{y_0} \left ( \phi^{-1} (t) + c_{X_{y_0}(\phi^{-1} (t))} [\phi^{-1} (t + s) - \phi^{-1} (t)] / c_{X_{y_0}(\phi^{-1} (t))} \right ) \\
& = X_{y_0} \left ( \phi^{-1} (t) \right ) \star \tilde X_{y_0} \left ( c_{X_{y_0}(\phi^{-1} (t))} [\phi^{-1} (t + s) - \phi^{-1} (t)] \right ) = X_{y_0} \left ( \phi^{-1} (t) \right ) \star \tilde X_{y_0} \left ( \tilde \phi^{-1} (s) \right ) \\
& = L(t) \star \tilde L(s). 
\end{align*}
We have obtained that, on $[0, \tilde \phi(\zeta(\tilde X_{y_0}))[$, $(L(t))^{-1} \star L(t+.) = \tilde L(.)$. We can also see that $\phi(\zeta(X_{y_0})) = t + \tilde \phi(\zeta(\tilde X_{y_0}))$ 
so $L(t+s) = \Delta$ if and only if $\tilde L(s) = \Delta$. Therefore we have that, on $[0, +\infty[$, $(L(t))^{-1} \star L(t+.) = \tilde L(.)$, with $\tilde L$ independent of $\mathcal{G}_t$ and having the same law as $L$. Since we have such a relation for any $t \geq 0$, and combining with the fact that $L$ is c\`ad-l\`ag, we deduce that it is a L\'evy process on $E$, possibly absorbed at $\Delta$. 

Let us assume that absorption at $\Delta$ can occur with a positive probability: $\mathbb{P}(\zeta(L) < +\infty) > 0$, and recall that, in that case, $\Delta$ is reached continuously a.s. on $\{ \zeta(L) < +\infty \}$. Note that since $L(t+.)$ reaches $\Delta$ at $s$ if and only if $\tilde L$ reaches $\Delta$ at $s$, the absorption at $\Delta$ occurs independently from the past trajectory (i.e. the fact that the absorption occurs on $[t, t+h[$ is independent from the trajectory of $L$ on $[0,t]$) so in particular, given a compact neighborhood $K$ of $y_0$, we can see that $L$ may remain in $K$ on $[0, \zeta(L)[$ with positive probability. 
This means that, with positive probability, $\Delta$ may be not reached continuously which is a contradiction. Therefore $L$ is a.s. never absorbed so it is a regular L\'evy process on $(E,\star)$ and we have a.s. $\phi(\zeta(X_{y_0})) = \zeta(L) = +\infty$. In particular, $\phi^{-1}(t)$ is defined and finite for all $t \geq 0$ (it is a bijection from $[0, +\infty[$ to $[0, \zeta(X_{y_0})[$) so the expression $L(t) = X_{y_0} (\phi^{-1} (t))$ holds for all $t \geq 0$. 

We now need to prove that $\phi^{-1} = \varphi$, for $\varphi$ defined from $L$ as in \eqref{timechanging}. For any $t \geq 0$ we have 
\begin{eqnarray}
\phi^{-1}(t) = \int_0^{\phi^{-1}(t)} 1 du = \int_0^{t} \frac1{\phi'(\phi^{-1}(v))} dv = \int_0^{t} \frac1{c_{X_{y_0}(\phi^{-1}(v))}} dv = \int_0^{t} \frac1{c_{L(v)}} dv = \varphi(t), \label{egchgttps}
\end{eqnarray}
where we have used the change of variable $v = \phi(u)$, the definition of $\phi$, the definition of $L$ in term of $X_{y_0}$ and $\phi^{-1}$, and the definition of $\varphi$ in \eqref{timechanging}. Taking the limit on both sides in \eqref{egchgttps} when $t$ goes to infinity we get $\zeta(X_{y_0}) = \varphi(+\infty)$, which is \eqref{tpsvie}. Combining \eqref{egchgttps} with the expression $L(t) = X_{y_0} (\phi^{-1} (t))$ we obtain \eqref{timechanged}. 

Let us now justify the representation of $\zeta(X_{y_0})$ in term of an exponential functional of a real L\'evy process. Recall from Proposition \ref{groupappears} that $(y \mapsto c_y)$ is a continuous group homomorphism from $(E,\star)$ to $(\mathbb{R}_+^*, \times)$, so $(y \mapsto \log(1/c_y))$ is a continuous group homomorphism from $(E,\star)$ to $(\mathbb{R}, +)$. Therefore, $\xi(.) := \log(1/c_{L(.)})$ is a real valued L\'evy process and, re-writting \eqref{tpsvie} in term of $\xi$ we get $\zeta(X_{y_0}) = \int_0^{+\infty} e^{\xi(s)} ds$. 
\end{proof}

\begin{remarque} \label{chgttimeintermofx}
We note from the previous proof that the change of time $\varphi^{-1}(t)$ in \eqref{timechanged} has an alternative expression in term of $X_{y_0}$: for all $0 \leq t < \zeta(X_{y_0})$, $\varphi^{-1}(t) = \phi (t) = \int_0^t c_{X_{y_0}(u)} du$. 
\end{remarque}

Putting together Proposition \ref{groupappears} and Proposition \ref{levychangentps} we obtain Theorem \ref{casgeneralexpl} and, combining with Proposition \ref{existsgoodcomponents}, we deduce Theorem \ref{casgeneral}. Let us write all the details for the sake of clarity. 

\begin{proof} of Theorem \ref{casgeneralexpl}

Let $E$ be a connected locally compact separable metric space, and $X$ be a process satisfying Definition \ref{self-simoursens} on $E$. $((f_y, c_y), \ y \in E)$ is a family of good invariance components associated with $X$, relatively to some reference point $y_0 \in E$. We assume that either Assumption \ref{hypsupport1} or Assumption \ref{hypsupport2} is satisfied for $E$ and that $Sym(X_{y_0})$ is discrete. 

Let us define an interne composition law $\star$ on $E$ by $y \star x := f_y(x)$. According to Proposition \ref{groupappears}, $(E,\star)$ is a topological group with neutral element $y_0$, and $(y \longmapsto c_y)$ is a continuous group homomorphism from $(E,\star)$ to $(\mathbb{R}_+^*, \times)$. Therefore, $(E,\star)$ is the \textbf{bearing group} for $X$ associated with the good invariance components $((f_y, c_y), \ y \in E)$. According to Proposition \ref{levychangentps}, there is a L\'evy process $L$, on $(E,\star)$, such that if we set $\forall t \in [0, +\infty], \ \varphi(t) := \int_0^t 1/c_{L(s)} ds$, then $\zeta(X_{y_0}) = \varphi(+\infty)$ and 
\begin{eqnarray}
\forall \ 0 \leq t < \varphi(+\infty) = \zeta(X_{y_0}), \ X_{y_0}(t) = L \left (\varphi^{-1}(t) \right ). \label{XenfctdeL0}
\end{eqnarray}
Finally, if moreover, for $k \geq 1$, $E$ is a $\mathcal{C}^k$-differentiable manifold and $((f_y, c_y), \ y \in E)$ are $\mathcal{C}^k$ good invariance components, then the last statement in Proposition \ref{groupappears} guaranties that $(E,\star)$ is even a $\mathcal{C}^k$-Lie group and $(y \longmapsto c_y)$ is a $\mathcal{C}^k$-Lie group homomorphism. 

Let us now justify Remark \ref{levyexplarbspace}. Clearly we only need to justify that for all $0 \leq t < \zeta(X_{y_0})$, $\varphi^{-1}(t) = \int_0^t c_{X_{y_0}(u)} du$. In the above proof, an application of Proposition \ref{levychangentps} yielded the existence of a process $L$ satisfying the relation \eqref{XenfctdeL0}, with $\forall t \in [0, +\infty], \ \varphi(t) = \int_0^t 1/c_{L(s)} ds$. According to Remark \ref{chgttimeintermofx}, this implies that we have $0 \leq t < \zeta(X_{y_0})$, $\varphi^{-1}(t) = \int_0^t c_{X_{y_0}(u)} du$, which is the claim. 

\end{proof}

\begin{proof} of Theorem \ref{casgeneral}

Under the assumptions of the theorem, Proposition \ref{existsgoodcomponents} applies so we can produce a family $((f_y, c_y), \ y \in E)$ of $\mathcal{C}^k$ good invariance components associated with $X$, relatively to the reference point $y_0$. Let us define an interne composition law $\star$ on $\mathcal{M}$ by $y \star x := f_y(x)$ and a $\mathcal{C}^k$-function $h : \mathcal{M} \longrightarrow \mathbb{R}_+$ by $h(y) := c_y$. Then, Theorem \ref{casgeneralexpl} guaranties that the three points in the statement of Theorem \ref{casgeneral} are true for this $\star$ and this $h$. 

\end{proof}

\section{Explicit isomorphisms between some Lie groups} \label{explicitisom}


In the previous section we have made appear a group structure on the space $E$ where a process satisfying Definition \ref{self-simoursens} is defined. If $E = I$ (respectively $\mathcal{D}$), an open interval of $\mathbb{R}$ (respectively an open simply connected domain of $\mathbb{R}^2$), this gives rise to a Lie group structure of dimension $1$ (respectively $2$). In this section we provide some explicit group isomorphisms between the Lie groups obtained and canonical examples of Lie groups in dimension $1$ and $2$, on which L\'evy processes are more classical. This allows to construct the explicit diffeomorphisms appearing in Theorems \ref{casrexpl} and \ref{casr2expl}. 

We consider an open set $\mathcal{O} \subset \mathbb{R}^n$ (typically $I \subset \mathbb{R}$ or $\mathcal{D} \subset \mathbb{R}^2$), together with an intern composition law $\star$ such that $(\mathcal{O}, \star)$ is a group. If the group operations (multiplication and inversion) are of class $\mathcal{C}^{k}$ with respect to the usual differentiation on $\mathbb{R}^n$, then we say that $(\mathcal{O}, \star)$ is a $\mathcal{C}^{k}$-Lie group. In other words, the differential manifold structure attached with the Lie group $(\mathcal{O}, \star)$ is always the natural one, arising from the fact that $\mathcal{O}$ is an open subset of $\mathbb{R}^n$. 


We first need a technical result. For a function $f : \mathcal{O} \times \mathcal{O} \longrightarrow \mathcal{O}$ , $J_1 f$ (respectively $J_2 f$) denotes the Jacobian matrix of $f$ when it is differentiated with respect to the first (respectively the second) entry. In the particular case where $\mathcal{O} = I$, an open interval of $\mathbb{R}$, we shall only write $\partial_1 f$ (respectively $\partial_2 f$) for the first (respectively second) partial derivative of $f$. For a function $g : \mathcal{O} \longrightarrow \mathcal{O}$, $Jg$ denotes the Jacobian matrix of $g$. 


\begin{lemme} \label{lemmereljacobiennes}

Let $\mathcal{O} \subset \mathbb{R}^n$ be open, let $f : \mathcal{O} \times \mathcal{O} \longrightarrow \mathcal{O}$ 
be a function and let $\star$ be the interne composition law on $\mathcal{O}$ induced by $f$. If $(\mathcal{O}, \star)$ is a $\mathcal{C}^{k}$-Lie group (let us denote its neutral element by $y_0$), then $J_2 f(u,v)$ and $[ J_2 f(u,v) ]^{-1}$ are defined and $\mathcal{C}^{k-1}$ in $(u,v) \in \mathcal{O} \times \mathcal{O}$ and we have 
\begin{eqnarray}
\forall u, v, \in \mathcal{O}, \ \left [ J_2 f \left ( f(u,v), y_0 \right ) \right ]^{-1}. J_2 f \left ( u, v \right ) = \left [ J_2 f(v,y_0) \right ]^{-1}. \label{reljacobiennes}
\end{eqnarray}

\end{lemme}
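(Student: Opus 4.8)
The plan is to exploit the associativity of $\star$ by differentiating it once with respect to its last argument. Before writing down the identity, though, I first settle regularity and invertibility, since these are prerequisites for the inverse matrices in \eqref{reljacobiennes} to make sense.

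Since $(\mathcal{O},\star)$ is a $\mathcal{C}^{k}$-Lie group, the multiplication $f:(u,v)\mapsto u\star v$ is $\mathcal{C}^{k}$, so its partial Jacobian $J_2 f$ is $\mathcal{C}^{k-1}$ on $\mathcal{O}\times\mathcal{O}$. For fixed $u$, left translation $L_u:x\mapsto f(u,x)$ is a bijection with inverse $L_{u^{-1}}$ (where $u^{-1}$ denotes the group inverse), and both are $\mathcal{C}^{k}$; hence $L_u$ is a $\mathcal{C}^{k}$-diffeomorphism and its Jacobian $J_2 f(u,v)=J L_u(v)$ is an invertible matrix at every $(u,v)$. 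Because matrix inversion $A\mapsto A^{-1}$ is $\mathcal{C}^{\infty}$ on the open set $\{\det\neq 0\}$ (Cramer's rule), the map $(u,v)\mapsto[J_2 f(u,v)]^{-1}$ is $\mathcal{C}^{k-1}$. This settles the first assertion and guarantees that all inverse matrices appearing in \eqref{reljacobiennes} are well defined.

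For the identity itself, I would start from associativity $f(f(a,b),c)=f(a,f(b,c))$, valid for all $a,b,c\in\mathcal{O}$, and differentiate both sides with respect to $c$. On the left $f(a,b)$ does not depend on $c$, so the derivative is $J_2 f(f(a,b),c)$; on the right the chain rule gives $J_2 f(a,f(b,c))\cdot J_2 f(b,c)$. Thus
\[ J_2 f(f(a,b),c) = J_2 f\bigl(a,f(b,c)\bigr)\cdot J_2 f(b,c). \]
Specializing to $c=y_0$ and using $f(b,y_0)=b\star y_0=b$ yields $J_2 f(f(a,b),y_0)=J_2 f(a,b)\cdot J_2 f(b,y_0)$. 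Renaming $(a,b)=(u,v)$ gives
\[ J_2 f(f(u,v),y_0) = J_2 f(u,v)\cdot J_2 f(v,y_0). \]
Left-multiplying by $[J_2 f(f(u,v),y_0)]^{-1}$ and right-multiplying by $[J_2 f(v,y_0)]^{-1}$ then produces exactly \eqref{reljacobiennes}.

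The computation is short, so I do not expect a serious obstacle; the only points requiring care are the chain-rule bookkeeping (making sure the left-hand side is differentiated while treating $f(a,b)$ as a constant) and the justification that $J_2 f$ is everywhere invertible, for which the clean argument is that $L_u$ is a diffeomorphism rather than any direct manipulation of the matrices. Everything else is a formal rearrangement of the single relation obtained by differentiating associativity.
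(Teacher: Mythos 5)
Your proof is correct and follows essentially the same route as the paper: establish that $J_2f$ is $\mathcal{C}^{k-1}$ and invertible via the left-translation diffeomorphism, then differentiate associativity in the last variable and evaluate at $y_0$. The only cosmetic difference is that the paper obtains smoothness of $(u,v)\mapsto[J_2f(u,v)]^{-1}$ from the explicit identity $Id=J_2f(u^{-1},f(u,v))\cdot J_2f(u,v)$ rather than from smoothness of matrix inversion, which is an equally valid justification.
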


\begin{proof}


Since, by assumption, $(\mathcal{O}, \star)$ is a $\mathcal{C}^{k}$-Lie group, the application $((u,v) \longmapsto u \star v) = ((u,v) \longmapsto f(u,v))$ is $\mathcal{C}^{k}$ so its differential with respect to the second entry, $J_2 f(u,v)$, is well-defined at any $(u,v) \in \mathcal{O} \times \mathcal{O}$ and $\mathcal{C}^{k-1}$. Now let us fix an arbitrary $u \in \mathcal{O}$. The application $l_u := (v \longmapsto u \star v) = (v \longmapsto f(u,v))$ is bijective with inverse application $l_{u^{-1}} = (v \longmapsto u^{-1} \star v)$, where $u^{-1}$ denotes the inverse of $u$ for the group law $\star$. $l_{u^{-1}}$ is $\mathcal{C}^{k}$ just as $l_u$ so we get $Id = J_2 f(u^{-1}, f(u,v)). J_2 f(u,v)$. This justifies that $((u,v) \longmapsto [ J_2 f(u,v) ]^{-1})$ is defined at any $(u,v) \in \mathcal{O} \times \mathcal{O}$ and $\mathcal{C}^{k-1}$. 

The associativity property for $\star$ reads 
\[ \forall u, v, w \in \mathcal{O}, \ f \left ( f(u,v), w \right ) = f \left ( u, f(v,w) \right ). \]
Differentiating with respect to $w$ we get 
\[ \forall u, v, w \in \mathcal{O}, \ J_2 f \left ( f(u,v), w \right ) = J_2 f \left ( u, f(v,w) \right ). J_2 f(v,w). \]
Evaluating the latter relation at $w=y_0$ and using that $f(.,y_0) = id_{\mathcal{O}}$ we obtain 
\[ \forall u, v, \in \mathcal{O}, \ J_2 f \left ( f(u,v), y_0 \right ) = J_2 f \left ( u, v \right ). J_2 f(v,y_0). \]
which yields \eqref{reljacobiennes}. 

\end{proof}

Let $I$ be an open interval of $\mathbb{R}$ and let $\star$ be an intern composition law on $I$ such that $(I, \star)$ is a Lie group. Since we will need to differentiate the group operations, it is convenient to consider the function $f : I \times I \longrightarrow I$ associated with the interne composition law $\star$ ($f(y,z) = y \star z$). The following Lemma builds an explicit group isomorphism between $(I, \star)$ and the canonical group $(\mathbb{R}, +)$. 


\begin{lemme} \label{isomorphismedim1}

Let $I \subset \mathbb{R}$ be an open interval, and let $f : I \times I \longrightarrow I$ define a $\mathcal{C}^{k}$-Lie group structure $(I, \star)$ on $I$, for some $k \geq 1$. Define $g : I \longrightarrow \mathbb{R}$ via 
\begin{eqnarray}
\forall y \in I, \ g(y) := \int_{y_0}^y \frac{1}{\partial_2 f (y,y_0)} dy, \label{exprisomdim1}
\end{eqnarray}
where $y_0 \in I$ is the neutral element of $(I, \star)$. Then $g$ is well-defined, it is a $\mathcal{C}^{k}$-diffeomorphism from $I$ to $\mathbb{R}$, and a $\mathcal{C}^{k}$-Lie group isomorphism from $(I, \star)$ to $(\mathbb{R}, +)$. 

\end{lemme}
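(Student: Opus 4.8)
The plan is to verify, in turn, that $g$ is well-defined and of class $\mathcal{C}^k$, that it is strictly monotone (hence injective), that it is a group homomorphism onto a subgroup of $(\mathbb{R},+)$, and finally that this subgroup is all of $\mathbb{R}$; the homomorphism step, powered by Lemma \ref{lemmereljacobiennes}, is the heart of the matter.

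First I would check well-definedness and regularity. Since $(I,\star)$ is a group, for each $u \in I$ the left-translation $l_u := (v \mapsto f(u,v))$ is a bijection of $I$, and it is $\mathcal{C}^k$ with $\mathcal{C}^k$ inverse $l_{u^{-1}}$; differentiating $l_{u^{-1}} \circ l_u = \mathrm{id}$ shows that $\partial_2 f(u,v) \neq 0$ for all $(u,v)$. In particular the integrand $1/\partial_2 f(\cdot, y_0)$ in \eqref{exprisomdim1} is well-defined, and by Lemma \ref{lemmereljacobiennes} it is $\mathcal{C}^{k-1}$, so $g$ is $\mathcal{C}^k$ with $g'(y) = 1/\partial_2 f(y, y_0) \neq 0$. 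As $I$ is connected, $g'$ has constant sign, so $g$ is strictly monotone, hence a local $\mathcal{C}^k$-diffeomorphism and injective.

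The central step is the homomorphism identity $g(f(u,v)) = g(u) + g(v)$, and this is where I would invoke Lemma \ref{lemmereljacobiennes}. Fixing $u \in I$ and differentiating $v \mapsto g(f(u,v))$ by the chain rule gives $g'(f(u,v)) \, \partial_2 f(u,v) = \partial_2 f(u,v)/\partial_2 f(f(u,v), y_0)$. Specializing relation \eqref{reljacobiennes} to dimension one, the right-hand side is exactly $1/\partial_2 f(v, y_0) = g'(v)$. Hence $\frac{d}{dv}\big[ g(f(u,v)) - g(v) \big] = 0$, so $g(f(u,v)) - g(v)$ is constant in $v$; evaluating at $v = y_0$, where $f(u,y_0) = u$ and $g(y_0) = 0$, identifies this constant as $g(u)$. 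This yields $g(u \star v) = g(u) + g(v)$, so $g$ is a homomorphism into $(\mathbb{R},+)$. I expect this to be the crux: the content is not in any hard estimate but in recognizing that the Jacobian relation of Lemma \ref{lemmereljacobiennes} is precisely what makes the integrand in \eqref{exprisomdim1} the right one to linearize $\star$.

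Finally I would establish surjectivity and conclude. Being a homomorphism, $g(I)$ is a subgroup of $(\mathbb{R},+)$; being the continuous image of the connected set $I$, it is an interval; and being strictly monotone, $g$ is non-constant, so $g(I)$ contains some $a \neq 0$, whence it contains $na$ for all $n \in \mathbb{Z}$ and, as an interval, equals $\mathbb{R}$. Thus $g : I \to \mathbb{R}$ is a $\mathcal{C}^k$ bijection with nowhere-vanishing derivative, i.e.\ a $\mathcal{C}^k$-diffeomorphism, and being multiplicative it is a $\mathcal{C}^k$-Lie group isomorphism from $(I,\star)$ to $(\mathbb{R},+)$.
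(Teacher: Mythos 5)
Your proof is correct and follows essentially the same route as the paper: both hinge on the nonvanishing of $\partial_2 f(\cdot,y_0)$, on relation \eqref{reljacobiennes} from Lemma \ref{lemmereljacobiennes} to establish the homomorphism property, and on the subgroup-plus-interval argument for surjectivity. The only (cosmetic) difference is in the homomorphism step, where you differentiate $v \mapsto g(f(u,v)) - g(v)$ and evaluate at $v=y_0$, whereas the paper performs the equivalent change of variables $z = f(y,v)$ inside the integral.
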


\begin{proof}

If $g$ is the sought isomorphism, then we should have 
\[ \forall y, z \in I, \ g(f(y,z)) = g(y) + g(z), \]
so differentiating with respect to $z$ and evaluating at $z=y_0$ we get: 
\[ g'(y) = g'(y_0)/\partial_2 f (y,y_0). \]
We see that the derivative $g'$ has to be proportional to the function $y \longmapsto 1/\partial_2 f (y,y_0)$. This explains why the expression \eqref{exprisomdim1} is a natural candidate for $g$. Moreover, note that for any $y \in I$, the application $(z \longmapsto f(y,z)) = (z \longmapsto y \star z)$ is $\mathcal{C}^{k}$ and it has an inverse application $(z \longmapsto y^{-1} \star z)$ which is also $\mathcal{C}^{k}$. Therefore, the derivative of $(z \longmapsto f(y,z))$ does not vanish at $y_0$ for all $y \in I$: $\forall y \in I, \partial_2 f (y,y_0) \neq 0$ and, since $((y,z) \longmapsto f(y,z))$ is $\mathcal{C}^{k}$, we have that $y \longmapsto \partial_2 f (y,y_0)$ is $\mathcal{C}^{k-1}$. 
Therefore, the expression of $g$ in \eqref{exprisomdim1} is well-defined and defines a $\mathcal{C}^{k}$ mapping from $I$ to $\mathbb{R}$. Let us thus consider $g$ defined by \eqref{exprisomdim1}. 

We now justify that $g$ is a group homomorphism from $(I, \star)$ to $(\mathbb{R}^2, +)$. Let $y, x \in I$, we have 
\begin{align}
g(f(y,x)) & = \int_{y_0}^{f(y,x)} \frac{1}{\partial_2 f (z,y_0)} dz \nonumber \\
& = \int_{y_0}^y \frac{1}{\partial_2 f (z,y_0)} dz + \int_{y}^{f(y,x)} \frac{1}{\partial_2 f (z,y_0)} dz \nonumber \\
& = g(y) + \int_{y}^{f(y,x)} \frac{1}{\partial_2 f (z,y_0)} dz \nonumber \\
& = g(y) + \int_{y_0}^{x} \frac{\partial_2 f (y,v)}{\partial_2 f (f(y,v),y_0)} dv, \label{cutpathdim1}
\end{align}
where we have made the change of variable $z = f(y,v)$. Here, the relation \eqref{reljacobiennes} from Lemma \ref{lemmereljacobiennes} reads $\frac{\partial_2 f (y,v)}{\partial_2 f (f(y,v),y_0)} = \frac{1}{\partial_2 f (v,y_0)}$. Therefore, the right hand side of \eqref{cutpathdim1} equals 
\[ g(y) + \int_{y_0}^{x} \frac{1}{\partial_2 f (v,0)} dv = g(y) + g(x), \]
where we have used \eqref{exprisomdim1} to make appear $g(x)$ in the last equality. We have obtained $g(f(y,x)) = g(y) + g(x)$ so $g$ is indeed a group homomorphism from $(I, \star)$ to $(\mathbb{R}, +)$. 

We now justify that $g$ is a $\mathcal{C}^{k}$ diffeomorphism. Clearly, $g'(.) = 1/\partial_2 f (.,y_0)$ is continuous and never vanishes so it has constant sign so $g$ is bijective from $I$ to $g(I)$ and $g(I)$ is an open interval of $\mathbb{R}$ containing $g(y_0) = 0$. Since moreover $g(I)$ is a subgroup of $(\mathbb{R},+)$ we conclude that $g(I)=\mathbb{R}$. Therefore, $g$ is bijective from $I$ to $\mathbb{R}$. Since $g$ is $\mathcal{C}^{k}$ and its derivative never vanishes, it is even a $\mathcal{C}^{k}$ diffeomorphism from $I$ to $\mathbb{R}$. Since $g$ is also a group homomorphism from $(I, \star)$ to $(\mathbb{R}, +)$, it is even a $\mathcal{C}^{k}$-Lie group isomorphism from $(I, \star)$ to $(\mathbb{R}, +)$, which concludes the proof. 

\end{proof}

We now consider the two-dimensional case. Let $\mathcal{D}$ be an open simply connected domain of $\mathbb{R}^2$ and let $\star$ be an intern composition law on $\mathcal{D}$ such that $(\mathcal{D}, \star)$ is a Lie group. Since we will need to differentiate the group operations, it is convenient to introduce $f : \mathcal{D} \times \mathcal{D} \longrightarrow \mathcal{D}$ associated with the interne composition law $\star$ ($f(y,z) := y \star z$). 

We now recall some classic facts and notations about Lie groups and their associated Lie algebras, but we formulate everything in the context of $(\mathcal{D}, \star)$. First let us assume for convenience that $(\mathcal{D}, \star)$ is a $\mathcal{C}^{\infty}$-Lie group. Given a $\mathcal{C}^{\infty}$-vector field $(V_y, y \in \mathcal{D})$ on $\mathcal{D}$ and a $\mathcal{C}^{\infty}$ function $h : \mathcal{D} \longrightarrow \mathbb{R}$ we define the $\mathcal{C}^{\infty}$ function $V.h : \mathcal{D} \longrightarrow \mathbb{R}$ by $(V.h)(y) := Dh(y)[V_y]$. The commutator of two vector fields $U, V$ is still a vector field that we denote $[U, V]$: 
\begin{eqnarray}
\forall h \in \mathcal{C}^{\infty}(\mathcal{D}, \mathbb{R}), \ [U, V].h := U.(V.h) - V.(U.h). \label{corchetdef}
\end{eqnarray}
The vector space of $\mathcal{C}^{\infty}$-vector fields on $\mathcal{D}$ equipped with $[.,.]$ is a Lie algebra. A vector field $(V_y, y \in \mathcal{D})$ on $\mathcal{D}$ is called left-invariant if $\forall y, z \in \mathcal{D}, V_{y \star z} = J_2 f(y,z) [V_{z}]$. For any $u \in \mathbb{R}^2$, we can define a left-invariant vector field $V(u)$ by $V(u)_y := J_2 f(y,y_0) [u]$. It can be seen that $(u \longmapsto V(u))$ is a linear isomorphism (we will denote its inverse by $V^{-1}$) so the vector space of left-invariant vector fields is isomorphic to $\mathbb{R}^2$. Moreover, it is a well known property of Lie groups that the commutator of two left-invariant vector fields is still a left-invariant vector field. In particular we have 
\begin{eqnarray}
[V(u), V(v)]_y = J_2 f (y, y_0). [ V(u), V(v) ]_{y_0}, \ \forall y \in \mathcal{D}, u, v \in \mathbb{R}^2. \label{corchetleftinvar}
\end{eqnarray}
Using the correspondance between $\mathbb{R}^2$ and left-invariant vector fields we can define a Lie bracket on $\mathbb{R}^2$ by 
\begin{eqnarray}
\forall u, v \in \mathbb{R}^2, \ [u, v]_L := V^{-1}([ V(u), V(v) ]). \label{corchetr2}
\end{eqnarray}
Then, by construction, the Lie algebra $(\mathbb{R}^2, [.,.]_L)$ is isomorphic to the Lie algebra of left-invariant vector fields, and is usually called the Lie algebra associated with the Lie group $\mathcal{D}$. 

Also, using the definition of the vector fields $V(u), V(v)$ and the definition \eqref{corchetdef} of the commutator, we can give an explicit expression for the vector field $[V(u), V(v)]$ in term of $f(.,.)$: for any $y \in \mathcal{D}$ and $u, v \in \mathbb{R}^2$ we have 
\begin{eqnarray}
[V(u), V(v)]_y = J_1 J_2 f (y, y_0) \left [ J_2 f (y, y_0).u, v \right ] - J_1 J_2 f (y, y_0) \left [ J_2 f (y, y_0).v, u \right ]. \label{corchetexpl}
\end{eqnarray}

In the context where $(\mathcal{D}, \star)$ is only a $\mathcal{C}^{k}$-Lie group for some $k \geq 2$, the left-invariant vector fields $V(u)$ can still be defined by $V(u)_y := J_2 f(y,y_0) [u]$ and they are $\mathcal{C}^{k-1}$. The commutator $[V(u), V(v)]$ can still be defined via \eqref{corchetdef} (and its action on $\mathcal{C}^{k}(\mathcal{D}, \mathbb{R})$) and we still have \eqref{corchetexpl} by direct computation. Finally, it is still possible to see that $[V(u), V(v)]$ is left-invariant so the relation \eqref{corchetleftinvar} still holds and the Lie algebra $(\mathbb{R}^2, [.,.]_L)$ can still be defined by \eqref{corchetr2}. 

Using the so-called \textit{exponential map} it can be seen that the Lie group $(\mathcal{D}, \star)$ is commutative if and only if the Lie bracket $[.,.]_L$ of its associated Lie algebra $(\mathbb{R}^2, [.,.]_L)$ is trivial (i.e. $[u, v]_L = 0$ for all $u, v \in \mathbb{R}^2$). 

Let us now investigate the possible forms of the Lie bracket $[.,.]_L$, to deduce another expression for $[V(u), V(v)]_{y_0}$. Let $\{ e_1, e_2 \}$ be the canonical basis in $\mathbb{R}^2$, so that every $u$ and $v$ in $\mathbb{R}^2$ can be decomposed as $u = u_1 e_1 + u_2 e_2$, $v = v_1 e_1 + v_2 e_2$. Then we have 
\begin{align*}
[u, v]_L & = [u_1 e_1 + u_2 e_2, v_1 e_1 + v_2 e_2]_L \\
& = u_1 v_1 [e_1, e_1]_L + u_1 v_2 [e_1, e_2]_L + u_2 v_1 [e_2, e_1]_L + u_2 v_2 [e_2, e_2]_L \\
& = (u_1 v_2 - u_2 v_1) [e_1, e_2]_L \\
& = \det(u,v) [e_1, e_2]_L, 
\end{align*}
where we have used the bi-linearity of $[.,.]_L$ and the fact that it is anti-symmetric. Let us set $Y_0 := [V(e_1), V(e_2)]_{y_0}$ which can be computed explicitly using \eqref{corchetexpl}. Then for any $u, v \in \mathbb{R}^2$ we have $[V(u), V(v)]_{y_0} = V([u, v]_L)_{y_0} = \det(u,v) V([e_1, e_2]_L)_{y_0} = \det(u,v) [V(e_1), V(e_2)]_{y_0} = \det(u,v) Y_0$. Thus, 
\begin{eqnarray}
\forall u, v \in \mathbb{R}^2, \ [V(u), V(v)]_{y_0} = \det(u,v) Y_0. \label{corchetdet}
\end{eqnarray}
Note in particular that we have $Y_0=0$ if and only if $(\mathcal{D}, \star)$ is commutative. 

We have now all the tools required to prove the following Lemmas \ref{isomorphismecasclassique} and \ref{isomorphismecasnonclassique} that build an explicit group isomorphism between $(\mathcal{D}, \star)$ and one of the two canonical Lie group structures on $\mathbb{R}^2$. The first canonical Lie group structure on $\mathbb{R}^2$ is of course $(\mathbb{R}^2, +)$, and the second is $(\mathbb{R}^2, T)$, defined as follows: 
\begin{eqnarray}
\forall \left( \begin{array}{c}
y_1 \\
y_2 
\end{array} \right), \left( \begin{array}{c}
z_1 \\
z_2 
\end{array} \right) \in \mathbb{R}^2, \ y T z := \left( \begin{array}{c}
y_1 + z_1 \\
y_2 + e^{y_1} z_2
\end{array} \right). \label{canononcommutdim2}
\end{eqnarray}
It is easy to see that $T$ gives rise to a group structure on $\mathbb{R}^2$ and that $(\mathbb{R}^2, T)$ is not commutative. As a consequence, the two-dimensional Lie groups $(\mathcal{D}, \star)$ that we are considering will be isomorphic to $(\mathbb{R}^2, +)$ or $(\mathbb{R}^2, T)$ depending on whether they are commutative or not.




\begin{lemme} \label{isomorphismecasclassique}

Let $\mathcal{D} \subset \mathbb{R}^2$ be an open simply connected domain, and let $f : \mathcal{D} \times \mathcal{D} \longrightarrow \mathcal{D}$ define a \textbf{commutative} $\mathcal{C}^{k}$-Lie group structure $(\mathcal{D}, \star)$ on $\mathcal{D}$, for some $k \geq 2$. Fix $M \in GL_2(\mathbb{R})$ and define $g : \mathcal{D} \longrightarrow \mathbb{R}^2$ via 
\begin{eqnarray}
\forall y \in \mathcal{D}, \ g(y) := \int_a^b M. \left [ J_2 f (\gamma(s), y_0) \right ]^{-1}. \gamma'(s) ds, \label{lemmepoincare}
\end{eqnarray}
where $y_0 \in \mathcal{D}$ is the neutral element of $(\mathcal{D}, \star)$, $J_2 f (z, .)$ denotes the Jacobian matrix of the function $f(z,.) : \mathcal{D} \longrightarrow \mathcal{D}$, and where $\gamma : [a, b] \longrightarrow \mathcal{D}$ is any path, locally $\mathcal{C}^1$, with $\gamma(a) = y_0$ and  $\gamma(b) = y$. Then $g$ is well-defined, it is a $\mathcal{C}^{k}$-diffeomorphism from $\mathcal{D}$ to $\mathbb{R}^2$, we have $Jg(y_0)=M$, and $g$ is a $\mathcal{C}^{k}$-Lie group isomorphism from $(\mathcal{D}, \star)$ to $(\mathbb{R}^2, +)$. 


\end{lemme}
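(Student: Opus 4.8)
The plan is to imitate the one-dimensional construction of Lemma~\ref{isomorphismedim1}, replacing the scalar antiderivative by a line integral of a matrix-valued $1$-form. If $g$ is to be a homomorphism, then $g(f(y,z)) = g(y) + g(z)$; differentiating in $z$ at $z = y_0$ and using $f(\cdot,y_0) = \mathrm{id}_{\mathcal{D}}$ forces $Jg(y) = Jg(y_0)\,[J_2 f(y,y_0)]^{-1}$. This dictates the definition: set $M := Jg(y_0)$ and $A(y) := [J_2 f(y,y_0)]^{-1}$, which by Lemma~\ref{lemmereljacobiennes} is well-defined and of class $\mathcal{C}^{k-1}$ on $\mathcal{D}$, and define $g$ by integrating the $1$-form $\omega_y := M\,A(y)$ along a path from $y_0$ to $y$, which is exactly \eqref{lemmepoincare}.

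The heart of the argument, and the step where commutativity is indispensable, is the well-definedness (path-independence) of the integral. I would observe that the columns of $J_2 f(y,y_0)$ are precisely the left-invariant vector fields $V(e_1)_y, V(e_2)_y$, so the rows of $A(y)$ form the coframe $\{\mu^1,\mu^2\}$ dual to $\{V(e_1),V(e_2)\}$, i.e. $\mu^i(V(e_j)) = \delta_{ij}$. Applying the structure identity $d\mu^k(U,W) = U(\mu^k(W)) - W(\mu^k(U)) - \mu^k([U,W])$ to the pair $U = V(e_i), W = V(e_j)$, the first two terms vanish because $\mu^k(V(e_j)) = \delta_{kj}$ is constant on $\mathcal{D}$, so $d\mu^k(V(e_i),V(e_j)) = -\mu^k([V(e_i),V(e_j)])$. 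Commutativity of $(\mathcal{D},\star)$ means $Y_0 = 0$ in \eqref{corchetdet}, hence $[V(e_i),V(e_j)] \equiv 0$, whence $d\mu^k = 0$: the rows of $A$, and therefore those of $\omega = M A$ (as $M$ is constant), are closed $1$-forms. Since $\mathcal{D}$ is simply connected, the Poincaré lemma makes them exact, so \eqref{lemmepoincare} is independent of the path $\gamma$ and $g$ is well-defined. Because $Jg = MA$ is $\mathcal{C}^{k-1}$, $g$ is $\mathcal{C}^k$, and $Jg(y_0) = M\,[J_2 f(y_0,y_0)]^{-1} = M$ since $J_2 f(y_0,y_0)$ is the identity matrix.

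The homomorphism property is then proved exactly as in the scalar case. Splitting a path from $y_0$ to $f(y,x)$ into a piece from $y_0$ to $y$, which contributes $g(y)$, and a piece from $y$ to $f(y,x)$ parametrized by $z = f(y,v)$ with $v$ running from $y_0$ to $x$, one has $dz = J_2 f(y,v)\,dv$ and
\[
\int_{y}^{f(y,x)} M\,A(z)\,dz = \int_{y_0}^{x} M\,[J_2 f(f(y,v),y_0)]^{-1} J_2 f(y,v)\,dv = \int_{y_0}^{x} M\,[J_2 f(v,y_0)]^{-1}\,dv = g(x),
\]
where the middle equality is relation \eqref{reljacobiennes} of Lemma~\ref{lemmereljacobiennes}. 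Hence $g(f(y,x)) = g(y) + g(x)$, so $g$ is a group homomorphism from $(\mathcal{D},\star)$ to $(\mathbb{R}^2,+)$.

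Finally I would upgrade $g$ to a diffeomorphism. Since $Jg(y) = M\,A(y) \in GL_2(\mathbb{R})$ at every $y$, $g$ is a local $\mathcal{C}^k$-diffeomorphism, hence open and locally injective; its image is therefore an open subgroup of the connected group $(\mathbb{R}^2,+)$ and equals $\mathbb{R}^2$, so $g$ is surjective. Standard topological-group theory then makes the surjective, locally homeomorphic homomorphism $g$ a covering map with fibre the discrete kernel $\ker g$; as $\mathbb{R}^2$ is simply connected and $\mathcal{D}$ is connected, this covering has a single sheet, so $g$ is a bijection. A bijective local $\mathcal{C}^k$-diffeomorphism is a $\mathcal{C}^k$-diffeomorphism, and together with the homomorphism property this exhibits $g$ as a $\mathcal{C}^k$-Lie group isomorphism onto $(\mathbb{R}^2,+)$, with $Jg(y_0) = M$ as computed above. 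The main obstacle is the closedness/exactness step: it is precisely there that commutativity (through $Y_0 = 0$) and simple connectedness of $\mathcal{D}$ are both needed, and one must take care that the covering-space conclusion genuinely forces global injectivity rather than merely the local injectivity already supplied by $Jg \in GL_2(\mathbb{R})$.
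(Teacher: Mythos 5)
Your proposal is correct and follows essentially the same route as the paper: well-definedness via closedness of the matrix-valued $1$-form (reduced, through the vanishing of $[V(e_i),V(e_j)]$ by commutativity, to Poincar\'e's Lemma on the simply connected $\mathcal{D}$), the homomorphism property via path-splitting and relation \eqref{reljacobiennes}, surjectivity via the open-subgroup argument, and injectivity from the simple connectedness of $\mathbb{R}^2$. Your phrasing of the closedness step through the dual coframe and the structure equation, and of injectivity through the covering-map formulation rather than the paper's explicit quotient-group construction, are only presentational variants of the same arguments.
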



\begin{proof}

If $g$ is the sought isomorphism, then, reasoning as in the beginning of the proof of Lemma \ref{isomorphismedim1}, we see that the Jacobian of $g$ has to be $y \longmapsto Q.(J_2 f (y, y_0))^{-1}$ where $Q$ is an invertible $2 \times 2$ matrix. This explains why the expression \eqref{lemmepoincare} is a natural candidate for $g$. Moreover, thanks to Lemma \ref{lemmereljacobiennes} applied with $(u,v) = (y,y_0)$, we can see that $J_2 f (y, y_0)$ is invertible for any $y \in \mathcal{D}$ and that $y \longmapsto (J_2 f (y, y_0))^{-1}$ is of class $\mathcal{C}^{k-1}$. To prove that $g$ is well-defined and $\mathcal{C}^{k}$, we thus only need to prove the existence of a primitive function for $y \longmapsto M.(J_2 f (y, y_0))^{-1}$. The idea is to apply Poincar\'e's Lemma. 


For any $y \in \mathcal{D}$, let us denote by $L_y$ the linear application that sends $z \in \mathbb{R}^2$ to $M. (J_2 f (y, y_0))^{-1}.z \in \mathbb{R}^2$. We wish to apply Poincar\'e's Lemma to the differential forms $y \longmapsto \pi_1 \circ L_y$ and $y \longmapsto \pi_2 \circ L_y$. Let us show that these differential forms are closed. For $i \in \{1, 2\}$ and $y \in \mathcal{D}$ we have 
\begin{align*}
& \frac{d}{dy_2} \left [ \pi_i \circ L_y(e_1) \right ] - \frac{d}{dy_1} \left [ \pi_i \circ L_y(e_2) \right ] = \pi_i \left [ \frac{d}{dy_2} L_y(e_1) - \frac{d}{dy_1} L_y(e_2) \right ] \\
= & \pi_i \left ( M. \left [ \frac{d}{dy_2} (J_2 f (y, y_0))^{-1}.e_1 - \frac{d}{dy_1} (J_2 f (y, y_0))^{-1}.e_2 \right ] \right ) \\
= & - \pi_i \left ( M. \left [ (J_2 f (y, y_0))^{-1}. J_1 J_2 f (y, y_0) \left [ e_2, (J_2 f (y, y_0))^{-1}.e_1 \right ] \right . \right . \\ 
& \left . \left. - (J_2 f (y, y_0))^{-1}. J_1 J_2 f (y, y_0) \left [ e_1, (J_2 f (y, y_0))^{-1}.e_2 \right ] \right ] \right ) \\
= & \pi_i \left ( M. (J_2 f (y, y_0))^{-1}. \left [ J_1 J_2 f (y, y_0) \left [ e_1, (J_2 f (y, y_0))^{-1}.e_2 \right ] - J_1 J_2 f (y, y_0) \left [ e_2, (J_2 f (y, y_0))^{-1}.e_1 \right ] \right ] \right ) \\
= & \pi_i \left ( M. (J_2 f (y, y_0))^{-1}. \left [ V((J_2 f (y, y_0))^{-1}.e_1), V((J_2 f (y, y_0))^{-1}.e_2) \right ]_y \right ) = 0. 
\end{align*}
In the last line we have recognized the expression of the commutator in \eqref{corchetexpl} and used that it is equal to $0$ since $(\mathcal{D}, \star)$ is commutative. 
Therefore 
the differential forms $y \longmapsto \pi_1 \circ L_y$ and $y \longmapsto \pi_2 \circ L_y$ are closed. Then, since $\mathcal{D}$ is simply connected, Poincar\'e's Lemma applies and shows that there are two functions $g_1 : \mathcal{D} \longrightarrow \mathbb{R}$ and $g_2 : \mathcal{D} \longrightarrow \mathbb{R}$ such that $g_i(y_0) = 0$ and for any $y \in \mathcal{D}$, $d g_i(y)$, the differential of $g_i$ at $y$, is $\pi_i \circ L_y = \pi_i ( M. (J_2 f (y, y_0))^{-1}.)$. 

Now, defining the function $g : \mathcal{D} \longrightarrow \mathbb{R}^2$ by $g := (g_1, g_2)$, 
we have obviously that $g(y_0)=0$ and that $\forall y \in \mathcal{D}, Jg(y) = M. (J_2 f (y, y_0))^{-1}$. Therefore $g$ is $\mathcal{C}^{k}$, and for any $y \in \mathcal{D}$ and any locally $\mathcal{C}^1$ path $\gamma : [a, b] \longrightarrow \mathcal{D}$ with $\gamma(a) = y_0, \gamma(b) = y$, the derivative of the function $g \circ \gamma : [a, b] \longrightarrow \mathbb{R}^2$ at $s \in [a,b]$ is clearly $M. (J_2 f (\gamma(s), y_0) )^{-1}. \gamma'(s)$. We thus have 
\[ g(y) = g(y) - g(y_0) = g(\gamma(b)) - g(\gamma(a)) = \int_a^b M. \left [ J_2 f (\gamma(s), y_0) \right ]^{-1}. \gamma'(s) ds, \]
so that \eqref{lemmepoincare} holds for any $y \in \mathcal{D}$ for the $g$ that we have just defined. Therefore $g$ is well-defined. 

Since $g$ is a primitive function of $y \longmapsto M.(J_2 f (y, y_0))^{-1}$ we have $Jg(y_0) = M.(J_2 f (y_0, y_0))^{-1}$, but $f (y_0, .)$ is the identity function on $\mathcal{D}$ so $J_2 f (y_0, y_0)$ is the identity matrix so $Jg(y_0) = M$. 

We now justify that $g$ is a group homomorphism from $(\mathcal{D}, \star)$ to $(\mathbb{R}^2, +)$. Let $y, x \in \mathcal{D}$ and choose a locally $\mathcal{C}^{1}$ path $\gamma : [0, 2] \longrightarrow \mathcal{D}$ with $\gamma(0) = y_0, \gamma(1) = y, \gamma(2) = f(y, x)$. Using two times \eqref{lemmepoincare}, we have 
\begin{align}
g(f(y,x)) & = \int_0^2 M. \left [ J_2 f (\gamma(s), y_0) \right ]^{-1}. \gamma'(s) ds \nonumber \\
& = \int_0^1 M. \left [ J_2 f (\gamma(s), y_0) \right ]^{-1}. \gamma'(s) ds + \int_1^2 M. \left [ J_2 f (\gamma(s), y_0) \right ]^{-1}. \gamma'(s) ds \nonumber \\
& = g(y) + \int_1^2 M. \left [ J_2 f (\gamma(s), y_0) \right ]^{-1}. \gamma'(s) ds. \label{cutpath}
\end{align}
Now, let $h_y$ denote the inverse function of $f(y, .)$ ($h_y := f(y^{-1}, .)$) and let us consider the locally $\mathcal{C}^{1}$ path $\tilde \gamma : [1, 2] \longrightarrow \mathcal{D}$ defined by $\forall s \in [1, 2], \tilde \gamma(s) := h_y (\gamma(s))$. In particular we have $\tilde \gamma(1) = y_0, \tilde \gamma(2) = x$ and 
\[ \forall s \in [1, 2], \ \gamma (s) = f(y, \tilde \gamma(s)), \ \gamma' (s) = J_2 f(y, \tilde \gamma(s)). \tilde \gamma'(s). \]
As a consequence, the second term in the right hand side of \eqref{cutpath} equals 
\[ \int_1^2 M. \left [ J_2 f (f(y, \tilde \gamma(s)), y_0) \right ]^{-1}. J_2 f(y, \tilde \gamma(s)). \tilde \gamma'(s) ds = \int_1^2 M. \left [ J_2 f (\tilde \gamma(s), y_0) \right ]^{-1}. \tilde \gamma'(s) ds = g(x), \]
where we have used \eqref{reljacobiennes} with $u=y, v =\tilde \gamma(s)$, and then \eqref{lemmepoincare}. Putting into \eqref{cutpath} we obtain $g(f(y,x)) = g(y) + g(x)$ so $g$ is indeed a group homomorphism from $(\mathcal{D}, \star)$ to $(\mathbb{R}^2, +)$. 

We now justify that $g$ is onto $\mathbb{R}^2$. $Jg(y_0) = M$ is invertible so, by the \textit{local inversion theorem}, $g(\mathcal{D})$ contains a neighborhood in $\mathbb{R}^2$ of $g(y_0) = (0,0)$. Since moreover $g(\mathcal{D})$ is a subgroup of $(\mathbb{R}^2,+)$ we conclude that $g(\mathcal{D})=\mathbb{R}^2$. 

Let us now justify that $g$ is injective. Assume the contrary, that is, $\ker g \neq \{ y_0 \}$. We have that $Jg(y) = M. (J_2 f (y, y_0))^{-1}$ is invertible for any $y \in \mathcal{D}$ so, thanks to the \textit{local inversion theorem}, we get that $\ker g$ is a discrete group. Let $(Q, \star)$ be the quotient group of $(\mathcal{D}, \star)$ by $\ker g$, equipped with the quotient topology (we still denote the group composition law on $Q$ by $\star$). Then, let $\hat g : Q \longrightarrow \mathbb{R}^2$ denote the quotient application of $g$. Then $\hat g$ is a continuous injective group homomorphism between $(Q, \star)$ and $(\mathbb{R}^2, +)$. $\hat g$ is also onto because $g$ is. As a consequence $\hat g$ is bijective. Let us justify that $\hat g^{-1}$ is continuous: let $(x_n)_{n \geq 1}$ be a sequence converging to some $x \in \mathbb{R}^2$, and let us justify that $\hat g^{-1}(x_n)$ converges to $\hat g^{-1}(x)$. Let $y \in \mathcal{D}$ be an antecedent of $x$ by $g$. Then, the local inversion theorem gives the existence of a neighborhood $\mathcal{U}$ of $x$ and of a neighborhood $\mathcal{V}$ of $y$ such that $g$ is a $\mathcal{C}^{k}$-diffeomorphism from $\mathcal{V}$ to $\mathcal{U}$. Let us choose $n_0$ such that $x_n \in \mathcal{U}$ for all $n \geq n_0$. For such $n \geq n_0$, let us define $y_n \in \mathcal{V}$ to be the antecedent in $\mathcal{V}$ of $x_n$ by $g$. Since $g$ is a $\mathcal{C}^{k}$-diffeomorphism from $\mathcal{V}$ to $\mathcal{U}$ we get that $y_n$ converges to $y$. Let $\pi : \mathcal{D} \longrightarrow Q$ denote the quotient projection. Since $\pi$ is continuous we get that $\pi(y_n)$ converges to $\pi(y)$. Finally, noting that $\pi(y_n) = \hat g^{-1}(x_n)$ and $\pi(y) = \hat g^{-1}(x)$, we obtain the sought convergence. In conclusion have that $\hat g$ is a bi-continuous isomorphism of groups between $(Q, \star)$ and $(\mathbb{R}^2, +)$. In particular $\hat g$ is an homeomorphism between $Q$ and $\mathbb{R}^2$ so the latter two are homeomorphic. Now, let $\gamma : [0, 1] \longrightarrow \mathcal{D}$ be a continuous path such that $\gamma(0) = y_0$ and $\gamma(1) \in \ker g \setminus \{ y_0 \}$. Clearly, $\pi \circ \gamma : [0, 1] \longrightarrow Q$ is a continuous loop that cannot be continuously reduced to a single point. Therefore the topological group of $Q$ is not trivial, so $Q$ is not homeomorphic to $\mathbb{R}^2$ (whose topological group is trivial). This is a contradiction, therefore $g$ is injective. 

Combining what is proved above, we obtain that $g$ is bijective from $\mathcal{D}$ to $\mathbb{R}^2$ and $\mathcal{C}^{k}$. Recall that $Jg(y) = M. (J_2 f (y, y_0))^{-1}$ is invertible for any $y \in \mathcal{D}$. 
Therefore we can conclude that $g$ is a $\mathcal{C}^{k}$-diffeomorphism from $\mathcal{D}$ to $\mathbb{R}^2$. 
Since $g$ is also a group homomorphism from $(\mathcal{D}, \star)$ to $(\mathbb{R}^2, +)$, it is even a $\mathcal{C}^{k}$-Lie group isomorphism from $(\mathcal{D}, \star)$ to $(\mathbb{R}^2, +)$, which concludes the proof. 


\end{proof}


\begin{lemme} \label{isomorphismecasnonclassique}

Let $\mathcal{D} \subset \mathbb{R}^2$ be an open simply connected domain, and let $f : \mathcal{D} \times \mathcal{D} \longrightarrow \mathcal{D}$ define a \textbf{non-commutative} $\mathcal{C}^{k}$-Lie group structure $(\mathcal{D}, \star)$ on $\mathcal{D}$, for some $k \geq 2$. 

Let $Y_0 := [V(e_1), V(e_2)]_{y_0}$ which can be computed explicitly using \eqref{corchetexpl}, and let $M := 
\begin{pmatrix}
\pi_2(Y_0) & -\pi_1(Y_0) \\
\pi_1(Y_0) & \pi_2(Y_0)
\end{pmatrix}$. 
Define $g_1 : \mathcal{D} \longrightarrow \mathbb{R}$ via 
\begin{eqnarray}
g_1(y) = \int_a^b \pi_1 \left ( M. [J_2 f (\gamma(s), y_0)]^{-1}. \gamma'(s) \right ) ds, \label{lemmepoincarenonclassique1}
\end{eqnarray}
where $y_0 \in \mathcal{D}$ is the neutral element of $(\mathcal{D}, \star)$, $J_2 f (z, .)$ denotes the Jacobian matrix of the function $f(z,.) : \mathcal{D} \longrightarrow \mathcal{D}$, and where $\gamma : [a, b] \longrightarrow \mathcal{D}$ is any path, locally $\mathcal{C}^1$, with $\gamma(a) = y_0$ and  $\gamma(b) = y$. 
Define $g_2 : \mathcal{D} \longrightarrow \mathbb{R}$ via 
\begin{eqnarray}
g_2(y) = \int_a^b e^{g_1(\gamma(s))} \pi_2 \left ( M. [J_2 f (\gamma(s), y_0)]^{-1}. \gamma'(s) \right ) ds, \label{lemmepoincarenonclassique2}
\end{eqnarray}
where $\gamma : [a, b] \longrightarrow \mathcal{D}$ is any path, locally $\mathcal{C}^1$, with $\gamma(a) = y_0$ and  $\gamma(b) = y$. Then $g_1$ and $g_2$ are well-defined, $g := (g_1, g_2)$ is a $\mathcal{C}^{k}$-diffeomorphism from $\mathcal{D}$ to $\mathbb{R}^2$, and a $\mathcal{C}^k$-Lie group isomorphism from $(\mathcal{D}, \star)$ to $(\mathbb{R}^2, T)$. 

\end{lemme}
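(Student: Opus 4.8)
The plan is to mirror the proof of Lemma~\ref{isomorphismecasclassique}, the difference being that the target group is now $(\mathbb{R}^2, T)$ rather than $(\mathbb{R}^2, +)$, and that the matrix $M$ has been chosen so that the linear map it induces realises the required isomorphism of Lie algebras. Concretely, a short computation with the left-invariant vector fields of $(\mathbb{R}^2, T)$ shows that its bracket generator (in the sense of \eqref{corchetdet}) is $[V(e_1), V(e_2)]_{(0,0)} = e_2$, while that of $(\mathcal{D}, \star)$ is $Y_0$; the isomorphism condition thus forces $M.Y_0$ to be proportional to $e_2$, and indeed the explicit $M$ satisfies the two identities
\[ M.Y_0 = \det(M)\, e_2 \qquad\text{and}\qquad \det(M) = \pi_1(Y_0)^2 + \pi_2(Y_0)^2 > 0 \]
(positivity because $Y_0 \neq 0$, as $(\mathcal{D},\star)$ is non-commutative), so $M \in GL_2(\mathbb{R})$. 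Writing $A(y) := M.[J_2 f(y,y_0)]^{-1}$, which is $\mathcal{C}^{k-1}$ and invertible by Lemma~\ref{lemmereljacobiennes}, the homomorphism requirement $g(f(y,z)) = g(y)\, T\, g(z)$, differentiated at $z = y_0$ and using $f(y,y_0)=y$ together with $J_2 T(g(y), (0,0)) = \mathrm{diag}(1, e^{g_1(y)})$, forces $dg_1(y) = \pi_1(A(y).)$ and $dg_2(y) = e^{g_1(y)}\pi_2(A(y).)$; this is exactly what \eqref{lemmepoincarenonclassique1} and \eqref{lemmepoincarenonclassique2} integrate.

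First I would establish that $g_1$ is well-defined. The curl computation carried out in the proof of Lemma~\ref{isomorphismecasclassique} is valid verbatim up to its last line (it does not use commutativity there); inserting \eqref{corchetleftinvar} and \eqref{corchetdet} it yields
\[ \frac{d}{dy_2}\bigl[\pi_i\bigl(A(y).e_1\bigr)\bigr] - \frac{d}{dy_1}\bigl[\pi_i\bigl(A(y).e_2\bigr)\bigr] = \frac{\pi_i(M.Y_0)}{\det\bigl(J_2 f(y,y_0)\bigr)}, \qquad i \in \{1,2\}. \]
Since $\pi_1(M.Y_0) = 0$ by the first identity above, the case $i=1$ shows that $y \mapsto \pi_1(A(y).)$ is a closed $1$-form; as $\mathcal{D}$ is simply connected, Poincaré's Lemma produces a $\mathcal{C}^k$ function $g_1$, independent of the path $\gamma$, with $g_1(y_0)=0$ and the prescribed differential.

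With $g_1$ in hand the $1$-form $\omega_2 := e^{g_1(\cdot)}\pi_2(A(\cdot).)$ is genuinely defined, and I would next prove it is closed --- this is the heart of the argument and the one place where the non-commutative twist enters. Expanding its exterior derivative and using $dg_1 = \pi_1(A(\cdot).)$ together with the $i=2$ case of the displayed identity reduces closedness to the pointwise relation $\det(A(y)) = \pi_2(M.Y_0)/\det(J_2 f(y,y_0))$. This holds because $\det(A(y)) = \det(M)/\det(J_2 f(y,y_0))$ while $\pi_2(M.Y_0) = \det(M)$ by the first identity; thus $\omega_2$ is closed and Poincaré's Lemma again gives a well-defined $\mathcal{C}^k$ function $g_2$ with $g_2(y_0)=0$.

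It then remains to check that $g := (g_1, g_2)$ is a group homomorphism onto $(\mathbb{R}^2, T)$ and a diffeomorphism. For $g_1$ the additivity $g_1(f(y,x)) = g_1(y) + g_1(x)$ follows by the path-cutting argument of Lemma~\ref{isomorphismecasclassique} (cutting at $y$, substituting $\tilde\gamma = f(y^{-1}, \gamma)$ and using \eqref{reljacobiennes}), which needs no commutativity. The same substitution for $g_2$, combined with the just-proved additivity of $g_1$ to factor $e^{g_1(\gamma(s))} = e^{g_1(y)}e^{g_1(\tilde\gamma(s))}$ out of the integral, gives $g_2(f(y,x)) = g_2(y) + e^{g_1(y)} g_2(x)$; together these say precisely $g(f(y,x)) = g(y)\, T\, g(x)$ in the notation of \eqref{canononcommutdim2}. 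Finally, since the underlying space of $(\mathbb{R}^2, T)$ is $\mathbb{R}^2$, bijectivity and the diffeomorphism property follow exactly as in Lemma~\ref{isomorphismecasclassique}: $\det Jg(y) = e^{g_1(y)}\det(A(y)) \neq 0$ makes $g$ a local diffeomorphism; surjectivity holds because $g(\mathcal{D})$ is a subgroup of the connected group $(\mathbb{R}^2, T)$ containing a neighbourhood of the identity; and injectivity is the quotient and simple-connectedness contradiction. The main obstacle I anticipate is the closedness of $\omega_2$, where the specific form of $M$ and the two algebraic identities must conspire to cancel the non-vanishing commutator term that obstructs closedness of $\pi_2(A(\cdot).)$ on its own.
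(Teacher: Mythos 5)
Your proposal is correct and follows essentially the same route as the paper's proof: closedness of the first form via $\pi_1(M.Y_0)=0$, closedness of the twisted form $e^{g_1}\pi_2(A(\cdot).)$ via the cancellation $\det(A(y)) = \pi_2(M.Y_0)/\det(J_2 f(y,y_0)) = \det(M)/\det(J_2 f(y,y_0))$, Poincar\'e's Lemma on the simply connected $\mathcal{D}$, the path-cutting argument for the homomorphism property, and the same surjectivity/injectivity arguments. The only cosmetic differences are your Lie-algebra motivation for the choice of $M$ (the paper takes $M$ as given) and your appeal to the general fact that a neighbourhood of the identity generates a connected topological group, where the paper exhibits the generating elements $z_n$ explicitly.
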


\begin{proof}

If $g$ is the sought isomorphism, then, reasoning as in the beginning of the proof of Lemma \ref{isomorphismedim1}, we see that the Jacobian matrix of $g$ has to be $y \longmapsto \begin{pmatrix}
1 & 0 \\
0 & e^{g_1(y)}
\end{pmatrix}. Q. (J_2 f (y, y_0))^{-1}$ where $Q$ is an invertible $2 \times 2$ matrix. This explains why the definition of $g$ in the statement of the lemma is a natural candidate for the isomorphism. Here a again, thanks to Lemma \ref{lemmereljacobiennes} applied with $(u,v) = (y,y_0)$, we can see that $J_2 f (y, y_0)$ is invertible for any $y \in \mathcal{D}$ and that $y \longmapsto (J_2 f (y, y_0))^{-1}$ is of class $\mathcal{C}^{k-1}$. To prove that $g$ is well-defined and $\mathcal{C}^{k}$, we thus only need to prove the existence of a primitive function, $g_1$, for $y \longmapsto \pi_1 (M. (J_2 f (y, y_0))^{-1}.)$, and then to prove the existence of a primitive function, $g_2$, for $y \longmapsto e^{g_1(y)} \pi_2 ( M. (J_2 f (y, y_0))^{-1}.)$. It will appear that the $2 \times 2$ matrix $M$ has been chosen in such a way so that it is possible apply Poincar\'e's Lemma to these differential forms. 

For any $y \in \mathcal{D}$, let us denote by $d^1_y$ the linear form that sends $z \in \mathbb{R}^2$ to $\pi_1 ( M. (J_2 f (y, y_0))^{-1}.z) \in \mathbb{R}^2$. We wish to apply Poincar\'e's Lemma to the differential form $y \longmapsto d^1_y$. Let us check that it is closed. For $y \in \mathcal{D}$ we have 
\begin{align*}
& \frac{d}{dy_2} \left [ d^1_y(e_1) \right ] - \frac{d}{dy_1} \left [ d^1_y(e_2) \right ] = \pi_1 \left ( M. \left [ \frac{d}{dy_2} (J_2 f (y, y_0))^{-1}. e_1 - \frac{d}{dy_1} (J_2 f (y, y_0))^{-1}. e_2 \right ] \right ) \\
= & - \pi_1 \left ( M. \left [ (J_2 f (y, y_0))^{-1}. J_1 J_2 f (y, y_0) \left [ e_2, (J_2 f (y, y_0))^{-1}.e_1 \right ] \right . \right . \\ 
& \left . \left . - (J_2 f (y, y_0))^{-1}. J_1 J_2 f (y, y_0) \left [ e_1, (J_2 f (y, y_0))^{-1}.e_2 \right ] \right ] \right ) \\
= & \pi_1 \left ( M. (J_2 f (y, y_0))^{-1}. \left [ J_1 J_2 f (y, y_0) \left [ e_1, (J_2 f (y, y_0))^{-1}.e_2 \right ] - J_1 J_2 f (y, y_0) \left [ e_2, (J_2 f (y, y_0))^{-1}.e_1 \right ] \right ] \right ) \\
= & \pi_1 \left ( M. (J_2 f (y, y_0))^{-1}. \left [ V((J_2 f (y, y_0))^{-1}.e_1), V((J_2 f (y, y_0))^{-1}.e_2) \right ]_y \right ) \\
= & \pi_1 \left ( M. (J_2 f (y, y_0))^{-1}. J_2 f (y, y_0). \left [ V((J_2 f (y, y_0))^{-1}.e_1), V((J_2 f (y, y_0))^{-1}.e_2) \right ]_{y_0} \right ) \\
= & \pi_1 \left ( M. \left [ V((J_2 f (y, y_0))^{-1}.e_1), V((J_2 f (y, y_0))^{-1}.e_2) \right ]_{y_0} \right ) \\
= & \det((J_2 f (y, y_0))^{-1}) \times \pi_1 \left ( M. Y_0 \right ) = 0. 
\end{align*}
In the above we have recognized the expression of the commutator in \eqref{corchetexpl}, used \eqref{corchetleftinvar}, \eqref{corchetdet}, and the fact, that can easily be seen from the definition of $M$, that $\pi_1 ( M. Y_0 ) = 0$. This proves that the differential form $d^1_y$ is closed. Then, since $\mathcal{D}$ is simply connected, Poincar\'e's Lemma ensures the existence of a function $g_1 : \mathcal{D} \longrightarrow \mathbb{R}$ such that $g_1(y_0) = 0$ and for any $y \in \mathcal{D}$, $d g_1(y)$, the differential of $g_1$ at $y$, is $d^1_y = \pi_1 ( M. (J_2 f (y, y_0))^{-1}.)$. Therefore $g_1$ is $\mathcal{C}^{k}$, and for any $y \in \mathcal{D}$ and any locally $\mathcal{C}^{1}$ path $\gamma : [a, b] \longrightarrow \mathcal{D}$ with $\gamma(a) = y_0, \gamma(b) = y$, the derivative of the function $g_1 \circ \gamma : [a, b] \longrightarrow \mathbb{R}^2$ at $s \in [a,b]$ is clearly $\pi_1 ( M. (J_2 f (\gamma(s), y_0) )^{-1}. \gamma'(s))$, so \eqref{lemmepoincarenonclassique1} follows. 

Now that $g_1$ is defined, we can define $d^2_y$ for any $y \in \mathcal{D}$ as the linear form that sends $z \in \mathbb{R}^2$ to $e^{g_1(y)} \pi_2 ( M. (J_2 f (y, y_0))^{-1}.z) \in \mathbb{R}^2$. Here again, we wish to apply Poincar\'e's Lemma to the differential form $y \longmapsto d^2_y$ so we first check that it is closed. For $y \in \mathcal{D}$ we have 
\begin{align*}
& \frac{d}{dy_2} \left [ d^2_y(e_1) \right ] - \frac{d}{dy_1} \left [ d^2_y(e_2) \right ] \nonumber \\
= & \pi_2 \left ( M. \left ( \frac{d}{dy_2} \left [ e^{g_1(y)}. (J_2 f (y, y_0))^{-1} \right ]. e_1 - \frac{d}{dy_1} \left [ e^{g_1(y)}. (J_2 f (y, y_0))^{-1}. e_2 \right ] \right ) \right ) \nonumber \\
= & \pi_2 \left ( M. \left ( -e^{g_1(y)}. (J_2 f (y, y_0))^{-1}. J_1 J_2 f (y, y_0) \left [ e_2, (J_2 f (y, y_0))^{-1}.e_1 \right ]  + \left (\frac{d}{dy_2} g_1(y) \right ). e^{g_1(y)}. (J_2 f (y, y_0))^{-1}. e_1 \right . \right . \nonumber \\ 
& \left . \left . + e^{g_1(y)}. (J_2 f (y, y_0))^{-1}. J_1 J_2 f (y, y_0) \left [ e_1, (J_2 f (y, y_0))^{-1}.e_2 \right ] - \left (\frac{d}{dy_1} g_1(y) \right ). e^{g_1(y)}. (J_2 f (y, y_0))^{-1}. e_2 \right ) \right ) \nonumber \\
\end{align*}

\begin{align}
= & e^{g_1(y)} \times \pi_2 \left ( M. (J_2 f (y, y_0))^{-1}. \left ( - J_1 J_2 f (y, y_0) \left [ e_2, (J_2 f (y, y_0))^{-1}.e_1 \right ] + \left (\frac{d}{dy_2} g_1(y) \right ). e_1 \right . \right . \nonumber \\ 
& \left . \left . + J_1 J_2 f (y, y_0) \left [ e_1, (J_2 f (y, y_0))^{-1}.e_2 \right ] - \left (\frac{d}{dy_1} g_1(y) \right ). e_2 \right ) \right ) \nonumber \\
= & e^{g_1(y)} \times \pi_2 \left ( M. (J_2 f (y, y_0))^{-1}. \left ( \left [ V((J_2 f (y, y_0))^{-1}.e_1), V((J_2 f (y, y_0))^{-1}.e_2) \right ]_y \right . \right . \nonumber \\
+ & \left . \left . \left (\frac{d}{dy_2} g_1(y) \right ). e_1 - \left (\frac{d}{dy_1} g_1(y) \right ). e_2 \right ) \right ) \nonumber \\
= & e^{g_1(y)} \times \pi_2 \left ( M. (J_2 f (y, y_0))^{-1}. \left ( J_2 f (y, y_0). \left [ V((J_2 f (y, y_0))^{-1}.e_1), V((J_2 f (y, y_0))^{-1}.e_2) \right ]_{y_0} \right . \right . \nonumber \\
+ & \left . \left . \left (\frac{d}{dy_2} g_1(y) \right ). e_1 - \left (\frac{d}{dy_1} g_1(y) \right ). e_2 \right ) \right ) \nonumber \\
= & e^{g_1(y)} \times \pi_2 \left ( M. \left [ V((J_2 f (y, y_0))^{-1}.e_1), V((J_2 f (y, y_0))^{-1}.e_2) \right ]_{y_0} \right ) \nonumber \\
+ & e^{g_1(y)} \left ( \left (\frac{d}{dy_2} g_1(y) \right ) \times \pi_2 \left ( M. (J_2 f (y, y_0))^{-1}. e_1 \right ) - \left (\frac{d}{dy_1} g_1(y) \right ) \times \pi_2 \left ( M. (J_2 f (y, y_0))^{-1}. e_2 \right ) \right ). \label{nonclassic1}
\end{align}
In the above we have recognized the expression of the commutator in \eqref{corchetexpl} ans used \eqref{corchetleftinvar}. Then, using \eqref{corchetdet}, and the definition of $M$: 
\begin{align}
& \pi_2 \left ( M. \left [ V((J_2 f (y, y_0))^{-1}.e_1), V((J_2 f (y, y_0))^{-1}.e_2) \right ]_{y_0} \right ) \nonumber \\
= & \det \left ( (J_2 f (y, y_0))^{-1} \right ) \times \pi_2 \left ( M. Y_0 \right ) \nonumber \\
= & \det \left ( (J_2 f (y, y_0))^{-1} \right ) \times (\pi_1(Y_0)^2 + \pi_2(Y_0)^2). \label{nonclassic2}
\end{align}
On the other hand, recall that the differential of $g_1$ at $y$, is $\pi_1 ( M. (J_2 f (y, y_0))^{-1}.)$. In particular we have $\frac{d}{dy_2} g_1(y) = (M. (J_2 f (y, y_0))^{-1})_{1, 2}$ and $\frac{d}{dy_1} g_1(y) = (M. (J_2 f (y, y_0))^{-1})_{1, 1}$. Also, we have obviously $\pi_2 ( M. (J_2 f (y, y_0))^{-1}. e_1) = (M. (J_2 f (y, y_0))^{-1})_{2, 1}$ and $\pi_2 ( M. (J_2 f (y, y_0))^{-1}. e_2) = (M. (J_2 f (y, y_0))^{-1})_{2, 2}$. We thus get 
\begin{align}
& \left (\frac{d}{dy_2} g_1(y) \right ) \times \pi_2 ( M. (J_2 f (y, y_0))^{-1}. e_1) - \left (\frac{d}{dy_1} g_1(y) \right ) \times \pi_2 ( M. (J_2 f (y, y_0))^{-1}. e_2) \nonumber \\
= & (M. (J_2 f (y, y_0))^{-1})_{1, 2} \times (M. (J_2 f (y, y_0))^{-1})_{2, 1} - (M. (J_2 f (y, y_0))^{-1})_{1, 1} \times (M. (J_2 f (y, y_0))^{-1})_{2, 2} \nonumber \\
= & - \det \left ( M. (J_2 f (y, y_0))^{-1} \right ) = - \det \left ( (J_2 f (y, y_0))^{-1} \right ) \times \det(M) \nonumber \\
= & - \det \left ( (J_2 f (y, y_0))^{-1} \right ) \times (\pi_1(Y_0)^2 + \pi_2(Y_0)^2). \label{nonclassic3}
\end{align}
Putting \eqref{nonclassic2} and \eqref{nonclassic3} into \eqref{nonclassic1} we obtain $\frac{d}{dy_2} \left [ d^2_y(e_1) \right ] - \frac{d}{dy_1} \left [ d^2_y(e_2) \right ] = 0$, that is, the differential form $y \longmapsto d^2_y$ is closed. Then, since $\mathcal{D}$ is simply connected, Poincar\'e's Lemma ensures the existence of a function $g_2 : \mathcal{D} \longrightarrow \mathbb{R}$ such that $g_2(y_0) = 0$ and for any $y \in \mathcal{D}$, $d g_2(y)$, the differential of $g_2$ at $y$, is $d^2_y = e^{g_1(y)} \pi_2 ( M. (J_2 f (y, y_0))^{-1}.)$. Therefore $g_2$ is $\mathcal{C}^{k}$, and for any $y \in \mathcal{D}$ and any locally $\mathcal{C}^{1}$ path $\gamma : [a, b] \longrightarrow \mathcal{D}$ with $\gamma(a) = y_0, \gamma(b) = y$, the derivative of the function $g_2 \circ \gamma : [a, b] \longrightarrow \mathbb{R}^2$ at $s \in [a,b]$ is clearly $e^{g_1(\gamma(s))} \pi_2 ( M. (J_2 f (\gamma(s), y_0) )^{-1}. \gamma'(s))$, so \eqref{lemmepoincarenonclassique2} follows. 

We have seen that $g_1$ and $g_2$ are well-defined and $\mathcal{C}^{k}$, so $g := (g_1, g_2)$ is also well-defined and $\mathcal{C}^{k}$. We now justify that $g$ is a group homomorphism from $(\mathcal{D}, \star)$ to $(\mathbb{R}^2, T)$. Let $y, x \in \mathcal{D}$ and choose a locally $\mathcal{C}^{1}$ path $\gamma : [0, 2] \longrightarrow \mathcal{D}$ with $\gamma(0) = y_0, \gamma(1) = y, \gamma(2) = f(y, x)$. Using two times \eqref{lemmepoincarenonclassique1}, we have 
\begin{align}
g_1(f(y,x)) & = \int_0^2 \pi_1 \left ( M. [J_2 f (\gamma(s), y_0)]^{-1}. \gamma'(s) \right ) ds \nonumber \\
& = \int_0^1 \pi_1 \left ( M. [J_2 f (\gamma(s), y_0)]^{-1}. \gamma'(s) \right ) ds + \int_1^2 \pi_1 \left ( M. [J_2 f (\gamma(s), y_0)]^{-1}. \gamma'(s) \right ) ds \nonumber \\
& = g_1(y) + \int_1^2 \pi_1 \left ( M. [J_2 f (\gamma(s), y_0)]^{-1}. \gamma'(s) \right ) ds. \label{cutpath2}
\end{align}
Now, let $h_y$ denote the inverse function of $f(y, .)$ ($h_y := f(y^{-1}, .)$) and let us consider the locally $\mathcal{C}^{1}$ path $\tilde \gamma : [1, 2] \longrightarrow \mathcal{D}$ defined by $\forall s \in [1, 2], \tilde \gamma(s) := h_y (\gamma(s))$. In particular we have $\tilde \gamma(1) = y_0, \tilde \gamma(2) = x$ and 
\begin{eqnarray}
\forall s \in [1, 2], \ \gamma (s) = f(y, \tilde \gamma(s)), \ \gamma' (s) = J_2 f(y, \tilde \gamma(s)). \tilde \gamma'(s). \label{cheminimage}
\end{eqnarray}
As a consequence, the second term in the right hand side of \eqref{cutpath2} equals 
\[ \int_1^2 \pi_1 \left ( M. \left [ J_2 f (f(y, \tilde \gamma(s)), y_0) \right ]^{-1}. J_2 f(y, \tilde \gamma(s)). \tilde \gamma'(s) \right ) ds = \int_1^2 \pi_1 \left ( M. \left [ J_2 f (\tilde \gamma(s), y_0) \right ]^{-1}. \tilde \gamma'(s) \right ) ds = g_1(x), \]
where we have used \eqref{reljacobiennes} with $u=y, v =\tilde \gamma(s)$ and \eqref{lemmepoincarenonclassique1}. Putting into \eqref{cutpath2} we obtain 
\begin{eqnarray}
g_1(f(y,x)) = g_1(y) + g_1(x). \label{relgroup1erecomp}
\end{eqnarray}
Then, using two times \eqref{lemmepoincarenonclassique2}, we have 
\begin{align}
g_2(f(y,x)) & = \int_0^2 e^{g_1(\gamma(s))} \pi_2 \left ( M. [J_2 f (\gamma(s), y_0)]^{-1}. \gamma'(s) \right ) ds \nonumber \\
& = \int_0^1 e^{g_1(\gamma(s))} \pi_2 \left ( M. [J_2 f (\gamma(s), y_0)]^{-1}. \gamma'(s) \right ) ds \nonumber \\
& + \int_1^2 e^{g_1(\gamma(s))} \pi_2 \left ( M. [J_2 f (\gamma(s), y_0)]^{-1}. \gamma'(s) \right ) ds \nonumber \\
& = g_2(y) + \int_1^2 e^{g_1(\gamma(s))} \pi_2 \left ( M. [J_2 f (\gamma(s), y_0)]^{-1}. \gamma'(s) \right ) ds. \label{cutpath3}
\end{align}
Recall the definition of the locally $\mathcal{C}^{1}$ path $\tilde \gamma$ above, and that it satisfies \eqref{cheminimage}. Note that for all $s \in [1, 2]$ we have $g_1(\gamma(s)) = g_1(f(y, \tilde \gamma(s))) = g_1(y) + g_1(\tilde \gamma(s))$ thanks to \eqref{relgroup1erecomp}. The second term in the right hand side of \eqref{cutpath3} can thus be expressed in term of $\tilde \gamma$. It equals 
\begin{align*}
& e^{g_1(y)} \int_1^2 e^{g_1(\tilde \gamma(s))} \pi_2 \left ( M. \left [ J_2 f (f(y, \tilde \gamma(s)), y_0) \right ]^{-1}. J_2 f(y, \tilde \gamma(s)). \tilde \gamma'(s) \right ) ds \\
= & e^{g_1(y)} \int_1^2 e^{g_1(\tilde \gamma(s))} \pi_2 \left ( M. \left [ J_2 f (\tilde \gamma(s), y_0) \right ]^{-1}. \tilde \gamma'(s) \right ) ds = e^{g_1(y)} g_2(x), 
\end{align*}
where we have used \eqref{reljacobiennes} with $u=y, v =\tilde \gamma(s)$ and \eqref{lemmepoincarenonclassique2}. Putting into \eqref{cutpath3} we obtain 
\begin{eqnarray}
g_2(f(y,x)) = g_2(y) + e^{g_1(y)} g_2(x). \label{relgroup2erecomp}
\end{eqnarray}
The combination of \eqref{relgroup1erecomp} and \eqref{relgroup2erecomp} yields $g(f(y,x)) = g(y) T g(x)$ so $g$ is indeed a group homomorphism from $(\mathcal{D}, \star)$ to $(\mathbb{R}^2, T)$. 

We now justify that $g$ is onto $\mathbb{R}^2$. Since $e^{g(y_0)}=1$, we have $Jg(y_0) = M. (J_2 f (y_0, y_0)^{-1}) = M$ which is invertible so, by the \textit{local inversion theorem}, $g(\mathcal{D})$ contains a neighborhood  in $\mathbb{R}^2$ of $g(y_0) = (0,0)$. Let us justify that any neighborhood $\mathcal{V}$ of $(0,0)$ generates the group $(\mathbb{R}^2,T)$. For $y = \left( \begin{array}{c}
y_1 \\
y_2 
\end{array} \right) \in \mathbb{R}^2$ let us define 
\[ z_n := \left( \begin{array}{c}
y_1 /n \\
y_2 / \left ( \sum_{k = 0}^{n-1} e^{k y_1/n} \right )
\end{array} \right). \]
Then, we see that for any $n \geq 1$, $y$ is equal to the $T$-product of $n$ times the element $z_n$: $y = z_n T ... T z_n$. Moreover, we can see that $z_n \in \mathcal{V}$ when $n$ is large enough. Therefore $\mathcal{V}$ generates the group $(\mathbb{R}^2,T)$. Then, since $g(\mathcal{D})$ is a subgroup of $(\mathbb{R}^2,T)$ that contains a neighborhood of $0$, we conclude that $g(\mathcal{D})=\mathbb{R}^2$. 

To prove that $g$ is injective, we can readily repeat the argument that justified the injectiveness in the proof of Lemma \ref{isomorphismecasclassique}. 

Combining what is proved above, we obtain that $g$ is bijective from $\mathcal{D}$ to $\mathbb{R}^2$ and $\mathcal{C}^{k}$. Recall that $g$ has been defined in such a way that for all $y \in \mathcal{D}$, $Jg(y)$, the Jacobian of $g$ at $y$, equals $\begin{pmatrix}
1 & 0 \\
0 & e^{g_1(y)}
\end{pmatrix}. M. (J_2 f (y, y_0))^{-1}$ where $M$ is invertible and $J_2 f (y, y_0)$ is invertible for any $y \in \mathcal{D}$, as mentioned in the beginning of the proof. Therefore, for all $y \in \mathcal{D}$ the Jacobian of $g$ at $y$ is invertible so we can conclude that $g$ is a $\mathcal{C}^{k}$-diffeomorphism from $\mathcal{D}$ to $\mathbb{R}^2$. Since $g$ is also a group homomorphism from $(\mathcal{D}, \star)$ to $(\mathbb{R}^2, T)$, it is even a $\mathcal{C}^{k}$-Lie group isomorphism from $(\mathcal{D}, \star)$ to $(\mathbb{R}^2, T)$, which concludes the proof. 

\end{proof}

We will also need the following simple lemma about the group $(\mathbb{R}^2, T)$. 

\begin{lemme} \label{shapeofmorphismes}

Let $m : \mathbb{R}^2 \longrightarrow \mathbb{R}$ be a continuous group homomorphism from $(\mathbb{R}^2, T)$ to $(\mathbb{R}, +)$. Then there exists $\beta \in \mathbb{R}$ such that 
\[ \forall z=(z_1, z_2) \in \mathbb{R}^2, \ m(z) = \beta z_1. \]

\end{lemme}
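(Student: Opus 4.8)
The plan is to exploit the fact that $(\mathbb{R}^2, T)$ is the non-commutative affine ``$ax+b$'' group, whose commutator subgroup is the ``vertical'' line $N := \{0\} \times \mathbb{R}$, while the quotient by $N$ is the abelian group carried by the first coordinate. Since the target $(\mathbb{R}, +)$ is abelian, any homomorphism $m$ must factor through this abelianization; concretely, I expect $m$ to vanish on $N$ and then to depend only on $z_1$. So the proof naturally splits into three steps: (i) show $m \equiv 0$ on $N$; (ii) identify $m$ on the ``horizontal'' subgroup $\mathbb{R} \times \{0\}$; (iii) combine the two using a convenient $T$-factorisation of an arbitrary element.

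Step (i) is the crux. The key observation is that a homomorphism into an abelian group is invariant under inner automorphisms: for all $g, h$, $m(g \, T \, h \, T \, g^{-1}) = m(g) + m(h) - m(g) = m(h)$. I would compute the conjugation of a vertical element by a horizontal one. Using $(t,0)^{-1} = (-t,0)$ and the definition \eqref{canononcommutdim2} of $T$, a direct computation gives $(t,0) \, T \, (0,z_2) \, T \, (-t,0) = (0, e^{t} z_2)$. Conjugation invariance then yields $m(0, z_2) = m(0, e^{t} z_2)$ for all $t, z_2 \in \mathbb{R}$. Now $z_2 \longmapsto (0, z_2)$ is a continuous homomorphism from $(\mathbb{R}, +)$ into $(\mathbb{R}^2, T)$ (indeed $(0,z_2) \, T \, (0, z_2') = (0, z_2 + z_2')$), so $\psi := (z_2 \longmapsto m(0,z_2))$ is a continuous additive homomorphism of $(\mathbb{R}, +)$, hence linear, $\psi(z_2) = \gamma z_2$. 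The scaling relation $\psi(e^{t} z_2) = \psi(z_2)$ then forces $\gamma (e^{t} - 1) z_2 = 0$ for all $t, z_2$, whence $\gamma = 0$ and $m \equiv 0$ on $N$.

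For step (ii), the map $z_1 \longmapsto (z_1, 0)$ is likewise a continuous homomorphism from $(\mathbb{R},+)$ into $(\mathbb{R}^2, T)$ (since $(z_1,0) \, T \, (z_1',0) = (z_1 + z_1', 0)$), so by continuity and Cauchy's functional equation $z_1 \longmapsto m(z_1, 0)$ is linear, equal to $\beta z_1$ for some $\beta \in \mathbb{R}$. Finally, for step (iii), any $z = (z_1, z_2)$ factorises as $z = (z_1, 0) \, T \, (0, e^{-z_1} z_2)$, as one checks from \eqref{canononcommutdim2}. Applying the homomorphism property together with the vanishing from step (i) gives $m(z) = m(z_1, 0) + m(0, e^{-z_1} z_2) = \beta z_1 + 0 = \beta z_1$, which is the claim. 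The only genuine subtlety is step (i): the whole point is that $m$ must kill the commutator subgroup, and the cleanest way to see this without invoking general structure theory is the explicit conjugation computation combined with the rigidity of continuous additive characters of $\mathbb{R}$; the remaining steps are routine applications of the linearity of continuous solutions to Cauchy's equation.
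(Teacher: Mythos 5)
Your proof is correct. It follows the same overall skeleton as the paper's --- restrict $m$ to the two one-parameter subgroups $\lambda \longmapsto (\lambda,0)$ and $\lambda \longmapsto (0,\lambda)$, use the rigidity of continuous additive characters of $(\mathbb{R},+)$ to get linearity on each, and recombine via a $T$-factorisation of a general element --- but the mechanism by which the coefficient on the second coordinate is killed is genuinely different. The paper first derives $m(z)=\beta z_1+\beta' z_2$ from the factorisation $(z_1,z_2)=(0,z_2)\,T\,(z_1,0)$ and then evaluates $m(1,1)$ along the two orderings $(0,1)\,T\,(1,0)=(1,1)$ and $(1,0)\,T\,(0,1)=(1,e)$, which forces $\beta+\beta'=\beta+\beta'e$ and hence $\beta'=0$. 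You instead observe that, the target being abelian, $m$ is invariant under inner automorphisms, compute $(t,0)\,T\,(0,z_2)\,T\,(-t,0)=(0,e^{t}z_2)$, and conclude that the linear character $z_2\longmapsto m(0,z_2)$ is invariant under all dilations, hence identically zero. Your route makes explicit the structural reason behind the lemma --- the vertical line $\{0\}\times\mathbb{R}$ is the commutator subgroup of $(\mathbb{R}^2,T)$, so any homomorphism into an abelian group must kill it and factor through the abelianisation $z\longmapsto z_1$ --- at the cost of one extra conjugation computation; the paper's version is a marginally shorter ad hoc evaluation at a single point. All the auxiliary identities you invoke, namely $(0,z_2)\,T\,(0,z_2')=(0,z_2+z_2')$, $(z_1,0)\,T\,(z_1',0)=(z_1+z_1',0)$, $(t,0)^{-1}=(-t,0)$ and $(z_1,z_2)=(z_1,0)\,T\,(0,e^{-z_1}z_2)$, check out directly against the definition \eqref{canononcommutdim2}.
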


A consequence of the above lemma is that, if $\tilde m$ is a continuous group homomorphism from $(\mathbb{R}^2, T)$ to $(\mathbb{R}_+^*, \times)$, then there exists $\beta \in \mathbb{R}$ such that $\forall z=(z_1, z_2) \in \mathbb{R}^2, \ \tilde m(z) = e^{\beta z_1}$. 

\begin{proof} of Lemma \ref{shapeofmorphismes}

The proof is very basic. Note that $(\lambda \longmapsto (\lambda,0))$ and $(\lambda \longmapsto (0,\lambda))$ are continuous group homomorphisms from $(\mathbb{R}, +)$ to $(\mathbb{R}^2, T)$. Therefore, if we define $m_1, m_2 : \mathbb{R} \longrightarrow \mathbb{R}$ by 
\[ m_1(\lambda) := m(\lambda,0) \ \ \ \text{and} \ \ \ m_2(\lambda) := m(0,\lambda), \]
then $m_1$ and $m_2$ are continuous group homomorphisms from $(\mathbb{R}, +)$ to $(\mathbb{R}, +)$, so there exist $\beta$ and $\beta'$ such that $\forall \lambda \in \mathbb{R}, m_1(\lambda) = \beta \lambda, m_2(\lambda) = \beta' \lambda$. Then, note that for any $z=(z_1, z_2) \in \mathbb{R}^2$, 
\begin{align}
m(z) & = m \left ( \left( \begin{array}{c}
0 \\
z_2 
\end{array} \right) T \left( \begin{array}{c}
z_1 \\
0 
\end{array} \right) \right ) = m (0, z_2) + m (z_1, 0) \nonumber \\
& = m_2(z_2) + m_1(z_1) = \beta z_1 + \beta' z_2. \label{exprm}
\end{align}
\eqref{exprm} yields $m(1,1) = \beta + \beta'$. On the other hand we have 
\begin{align*}
m(1,1) & = m \left ( \left( \begin{array}{c}
0 \\
1 
\end{array} \right) T \left( \begin{array}{c}
1 \\
0 
\end{array} \right) \right ) = m (0, 1) + m (1, 0) \\
& = m (1, 0) + m (0, 1) = m \left ( \left( \begin{array}{c}
1 \\
0 
\end{array} \right) T \left( \begin{array}{c}
0 \\
1 
\end{array} \right) \right ) \\
& = m(1, e^1) = \beta + \beta' e^1, 
\end{align*}
where we have used \eqref{exprm} for the last equality. Putting together the above and $m(1,1) = \beta + \beta'$ we deduce that $\beta' = 0$ and, subsituing $0$ to $\beta'$ in \eqref {exprm}, we obtain the asserted result. 
\end{proof}

\section{L\'evy processes on $(\mathbb{R}^2, T)$} \label{levysurlegroupnoncomdim2}

The aim of this section is to identify L\'evy processes on the group $(\mathbb{R}^2, T)$, and in particular to express these L\'evy processes in term of exponential functionals of classical L\'evy processes on $(\mathbb{R}^2, +)$. Recall that the starting point of a L\'evy process on a group is always the neutral element of the group. Our result is the following. 
\begin{prop} \label{levynonclassiques}

Let $Y$ be a L\'evy process on $(\mathbb{R}^2, T)$, then there exists a L\'evy process $(\xi, \eta)$ on $(\mathbb{R}^2, +)$ such that 
\begin{eqnarray}
\forall t \geq 0, \ Y(t) = 
\left( \begin{array}{c}
\xi(t) \\
\int_0^t e^{\xi(s-)} d \eta(s) 
\end{array} \right). \label{relfonctexpo}
\end{eqnarray}
\end{prop}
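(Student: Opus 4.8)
The plan is to read both coordinate processes of the representation directly off the left increments of $Y$. First I would record the relevant group structure: the neutral element of $(\mathbb{R}^2,T)$ is $(0,0)$ and the inverse of $y=(y_1,y_2)$ is $y^{-1}=(-y_1,-e^{-y_1}y_2)$, so that for $s\le t$ the left increment is
$$Y(s)^{-1}\,T\,Y(t)=\begin{pmatrix} Y_1(t)-Y_1(s) \\ e^{-Y_1(s)}\bigl(Y_2(t)-Y_2(s)\bigr)\end{pmatrix}.$$
Since $Y$ is a L\'evy process on $(\mathbb{R}^2,T)$, these increments are stationary and independent of the past. Reading the first coordinate shows that $\xi:=Y_1$ is a real L\'evy process. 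I would also use the standard fact that a L\'evy process on a Lie group is a semimartingale in the ambient chart, so that $Y_2$ is a semimartingale and stochastic integrals against it are meaningful.

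Next I would define
$$\eta(t):=\int_0^t e^{-\xi(s-)}\,dY_2(s),$$
and reduce the proof to two claims: (i) $(\xi,\eta)$ is a L\'evy process on $(\mathbb{R}^2,+)$, and (ii) $Y_2(t)=\int_0^t e^{\xi(s-)}\,d\eta(s)$, which together give \eqref{relfonctexpo}. Claim (ii) is immediate once (i) is known, by associativity of the stochastic integral: since $d\eta(s)=e^{-\xi(s-)}\,dY_2(s)$ one has $\int_0^t e^{\xi(s-)}\,d\eta(s)=\int_0^t e^{\xi(s-)}e^{-\xi(s-)}\,dY_2(s)=\int_0^t dY_2(s)=Y_2(t)$.

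The substance is claim (i), for which I would exploit the time-shift structure of the L\'evy process. Fixing $s\ge 0$, set $\tilde Y(\cdot):=Y(s)^{-1}\,T\,Y(s+\cdot)$, which by the L\'evy property is independent of $\mathcal{F}_s$ and has the same law as $Y$; write $\tilde\xi:=\tilde Y_1$. From the increment formula above, $\tilde Y_2(v)=e^{-\xi(s)}\bigl(Y_2(s+v)-Y_2(s)\bigr)$ and $\tilde\xi(v)=\xi(s+v)-\xi(s)$, whence $dY_2(s+v)=e^{\xi(s)}\,d\tilde Y_2(v)$ and $\xi((s+v)-)=\xi(s)+\tilde\xi(v-)$. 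Substituting these into the integral defining $\eta$ over $[s,t]$ I expect to obtain
$$\eta(t)-\eta(s)=\int_s^t e^{-\xi(u-)}\,dY_2(u)=\int_0^{t-s} e^{-\tilde\xi(v-)}\,d\tilde Y_2(v),$$
which is exactly the functional $\tilde\eta(t-s)$ built from $\tilde Y$ in the same way $\eta$ is built from $Y$. Hence the pair $(\xi(t)-\xi(s),\eta(t)-\eta(s))=(\tilde\xi(t-s),\tilde\eta(t-s))$ is independent of $\mathcal{F}_s$ and equal in law to $(\xi(t-s),\eta(t-s))$; together with c\`ad-l\`ag paths and $(\xi(0),\eta(0))=(0,0)$ this yields (i).

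The main obstacle I anticipate is justifying rigorously the time-shift of the stochastic integral in the displayed identity for $\eta(t)-\eta(s)$: one must check that $\int_s^t e^{-\xi(u-)}\,dY_2(u)$, computed in $(\mathcal{F}_u)$, coincides after the change of variable $u=s+v$ with the integral computed for $\tilde Y$ in the shifted filtration $(\mathcal{F}_{s+v})_v$, matching left limits and predictable integrands on both sides. This step, together with the (citable) fact that $Y$ is a semimartingale on the group, is where the care is needed; the remaining verifications are routine.
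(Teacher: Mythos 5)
Your proposal is correct and its architecture coincides with the paper's: you define $\xi:=\pi_1(Y)$ and $\eta(t):=\int_0^t e^{-\pi_1(Y(s-))}\,d\pi_2(Y(s))$, prove the L\'evy property of $(\xi,\eta)$ by the left-increment shift $\tilde Y(\cdot)=(Y(s))^{-1}\,T\,Y(s+\cdot)$, and recover $\pi_2(Y)$ by associativity of the stochastic integral --- all three steps appear verbatim in the paper, and your increment computation $e^{-\tilde\xi(v-)}\,d\tilde Y_2(v)=e^{-\xi((s+v)-)}\,dY_2(s+v)$ is exactly the one used there. The one genuine divergence is how you make the stochastic integral meaningful: you invoke the standard fact that a L\'evy process on a Lie group is a semimartingale in the ambient chart, which is indeed citable (it follows from Hunt's theorem and the SDE representation of group-valued L\'evy processes). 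The paper deliberately avoids this: it states explicitly that it prefers a route ``that does not require an application of Hunt's theorem'' and instead proves the semimartingale property by hand in Lemmas \ref{removingbigjumps} and \ref{centering} --- removing jumps in $A_M$, establishing the moment bounds \eqref{momentpi1}--\eqref{momentpi2}, centering $\pi_2(R^M[Y])$ by $\alpha_M\int_0^t e^{\pi_1(R^M[Y](u-))}\,du$ to obtain a locally $L^2$-bounded martingale, concluding that $\pi_2(R^M[Y])$ is decomposable hence a semimartingale, and then localizing via the stopping times $T_M$ to pass to $\pi_2(Y)$. Your route is shorter and buys generality at the price of an external black box; the paper's route costs roughly half of its Section \ref{levysurlegroupnoncomdim2} but keeps the argument elementary and self-contained. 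Either is acceptable, provided you actually supply the reference for the semimartingale claim rather than leaving it as folklore, and provided you carry out the filtration-matching in the time-shifted integral that you correctly identified as the delicate point.
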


Let us mention that the general form of generators of L\'evy processes on the group $(\mathbb{R}^2, T)$ can be obtained by an application of Hunt's theorem (see Theorem 5.3.3 in \cite{applebaum_lie}) to the Lie group $(\mathbb{R}^2, T)$. 
It is possible to prove Proposition \ref{levynonclassiques} by 1) computing the generator of the right-hand-side of \eqref{relfonctexpo} thanks to Ito's Lemma for L\'evy processes, 2) expliciting Hunt's theorem in the case of the Lie group $(\mathbb{R}^2, T)$, and 3) identifying the expressions of the two generators. However, we have chosen to proceed in a more intuitive way that does not require an application of Hunt's theorem and relies on simple computations on stochastic integrals. If $Y$ is a L\'evy process on $(\mathbb{R}^2, T)$, the idea is to define 
\[ \forall t \geq 0, \ \xi(t) := \pi_1(Y(t)), \ \ \ \eta(t) := \int_0^t e^{-\pi_1(Y(s-))} d\pi_2(Y(s)), \]
and then to justify that $(\xi, \eta)$ is a L\'evy process on $(\mathbb{R}^2, +)$ and that \eqref{relfonctexpo} holds. This argument 
is very simple modulo the fact that we can write stochastic integrals with respect to the real-valued process $\pi_2(Y)$. Since we do not know \textit{a priori} what this process looks like we first justify the fact that these integrals are well-defined. We proceed in several steps. We first study the procedure of removing the big jumps for a L\'evy process on the group $(\mathbb{R}^2, T)$. This procedure is classical but we provide all the details for the sake of clarity. 

For any $M > 0$ let 
\begin{eqnarray}
A_M := \{ x \in \mathbb{R}^2, \ |x|_{\infty} \geq M \}, \label{ensgdsauts}
\end{eqnarray}
where $|(x_1, x_2)|_{\infty} := |x_1| \vee |x_2|$. For $Y$ a L\'evy process on $(\mathbb{R}^2, T)$ and $t \geq 0$, the jump of $Y$ at $t$ is $\Delta Y(t) := (Y(t-))^{-1} T Y(t)$ (note that $\Delta Y(t) = (0,0)$ if $Y$ is continuous at $t$). Let $R^M[Y]$ be the process obtained by removing the jumps of $Y$ that are in $A_M$. Let us describe briefly the construction of $R^M[Y]$. If $Y$ has no jump in $A_M$ then we put $R^M[Y] := Y$. If $Y$ does have jumps in $A_M$ then, since $Y$ is c\`ad-l\`ag, there are almost surely finitely many jumps of $Y$ in $A_M$ on finite time intervals. Let $0 < t_1 < t_2 < ...$ be the discrete sequence of times associated to the jumps in $A_M$, we then put $R^M[Y](t) := R^M[Y](t_i -) T (Y(t_i))^{-1} T Y(t)$ for $t \in [t_i, t_{i+1}[$, where by convention $t_0 := 0$ and $R^M[Y](0 -) := (0,0)$. 
It will be convenient to work with $R^M[Y]$, which has only bounded jumps, and to notice that $R^M[Y]$ coincides with $Y$ until the instant of the first jump of $Y$ in $A_M$. 
We have the following lemma about $R^M[Y]$: 

\begin{lemme} \label{removingbigjumps}

Let $Y$ be a L\'evy process on $(\mathbb{R}^2, T)$ and let $(\mathcal{F}_t)_{t \geq 0}$ be the right-continuous filtration associated with $Y$. For any $M > 0$, $R^M[Y]$ is a L\'evy process on $(\mathbb{R}^2, T)$ adapted with respect to $(\mathcal{F}_t)_{t \geq 0}$. For any $t \geq 0$ we have 
\begin{eqnarray}
\sup_{s \in [0, t]} \mathbb{E} \left [ e^{2 |\pi_1(R^M[Y](s-))|} \right ] < +\infty, \label{momentpi1}
\end{eqnarray}
and 
\begin{eqnarray}
\sup_{s \in [0, t]} \mathbb{E} \left [ |\pi_2(R^M[Y](s))|^2 \right ] < +\infty. \label{momentpi2}
\end{eqnarray}

\end{lemme}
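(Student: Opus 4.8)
The plan is to treat the three assertions in turn, writing $W:=R^M[Y]$ for brevity. The first assertion (that $W$ is a Lévy process) is the classical interlacing construction, which I would set up as follows. Since $(\mathbb{R}^2,T)$ is a Lie group and $A_M$ is bounded away from the neutral element $(0,0)$, the jumps of $Y$ falling in $A_M$ form, by the Poisson structure of the jumps of a Lévy process, a Poisson point process on $(0,+\infty)$ of finite rate, independent of the remaining jumps and of the continuous part of $Y$. I would first record that $W$ is adapted to $(\mathcal{F}_t)_{t\ge0}$, being for each $t$ a measurable functional of $(Y(s))_{s\le t}$, and that it is càdlàg: on each interval $[t_i,t_{i+1})$ it is a left-translate of $Y$, and a direct computation with the group law (using $(abc)^{-1}=c^{-1}b^{-1}a^{-1}$) shows it is continuous at each splicing time $t_i$ and that every group-jump of $W$ coincides with a kept group-jump of $Y$. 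In particular $|\Delta W(t)|_{\infty}<M$ for all $t$, so $W$ has bounded jumps. The Lévy property is then the statement that excising the big jumps by left-multiplication commutes with the stationary-independent-increment structure of $Y$; this follows from the strong Markov property of $Y$ at the big-jump times together with the independence of the big-jump point process, so that $(W(s))^{-1}TW(s+\cdot)$ is independent of $\mathcal{F}_s$ and stationary.

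\textbf{The bound \eqref{momentpi1}.} Here I would exploit that $\pi_1$ is a continuous group homomorphism from $(\mathbb{R}^2,T)$ to $(\mathbb{R},+)$, since $\pi_1(yTz)=\pi_1(y)+\pi_1(z)$. Hence $\pi_1(W)$ is a real-valued Lévy process whose jumps $\pi_1(\Delta W(t))$ are bounded in absolute value by $M$, so its Lévy measure is supported in $[-M,M]$ and it possesses exponential moments of every order: $\mathbb{E}[e^{u\pi_1(W(s))}]=e^{s\psi(u)}$ with $\psi(u)<+\infty$ for all $u\in\mathbb{R}$. Taking $u=\pm2$, using $e^{2|z|}\le e^{2z}+e^{-2z}$, and passing from $s$ to $s-$ by Fatou, I obtain $\sup_{s\in[0,t]}\mathbb{E}[e^{2|\pi_1(W(s-))|}]\le\max(1,e^{t\psi(2)})+\max(1,e^{t\psi(-2)})<+\infty$, which is \eqref{momentpi1}; the same computation gives $\sup_{u\le t}\mathbb{E}[e^{2\pi_1(W(u))}]<+\infty$.

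\textbf{The bound \eqref{momentpi2}.} This is the delicate point, because the ordinary increments of the second coordinate are \emph{not} bounded: from $\Delta\pi_2(W)(t)=e^{\pi_1(W(t-))}\,\pi_2(\Delta W(t))$ one sees that although $|\pi_2(\Delta W(t))|<M$, the increment carries the unbounded factor $e^{\pi_1(W(t-))}$, which is exactly where \eqref{momentpi1} must enter. My plan is to use the semimartingale decomposition of $W$ viewed as an $\mathbb{R}^2$-valued process: by left-invariance and the explicit form of $T$, a group-jump $x$ from a state $w$ produces the $\mathbb{R}^2$-displacement $wTx-w=(x_1,e^{w_1}x_2)$, and the continuous (Gaussian plus drift) part of the second coordinate is likewise obtained by left-translation and carries the factor $e^{\pi_1(W(u))}$. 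Thus $\pi_2(W(s))$ splits into a drift term $\int_0^s e^{\pi_1(W(u))}b\,du$, a continuous martingale with bracket $\int_0^s e^{2\pi_1(W(u))}\sigma^2\,du$, and a jump integral against the Poisson random measure $N$ of bounded group-jumps, which I compensate on $\{|x_2|\le1\}$ and keep as a finite-activity sum on $\{|x_2|>1\}$. Each piece is square-integrable: Cauchy--Schwarz for the drift, and the Itô isometry for the continuous martingale and the compensated jump part, bound the second moment by a constant times $\big(\sup_{u\le s}\mathbb{E}[e^{2\pi_1(W(u))}]\big)\cdot s\cdot\int (x_2^2\wedge1)\,\nu_M(dx)$, where $\nu_M$, the Lévy measure of $W$, is supported in $\{|x|_{\infty}<M\}$ so that $\int x_2^2\,\nu_M(dx)<+\infty$. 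Since \eqref{momentpi1} makes the supremum finite, summing the contributions and taking $\sup_{s\in[0,t]}$ (the bound is increasing in $s$) yields \eqref{momentpi2}.

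\textbf{Main obstacle.} The genuine difficulty is to justify the $e^{\pi_1}$-scaled semimartingale decomposition of $\pi_2(W)$ without invoking Hunt's theorem (which this section deliberately avoids) and without circularity, since \eqref{momentpi2} is precisely what will later legitimize stochastic integrals against $\pi_2(Y)$. The decomposition I need, however, uses only the explicit group law (for the displacement $(x_1,e^{w_1}x_2)$) and the defining independent-increment property of $W$ (which forces the $e^{\pi_1}$-scaling of the continuous part), not the full classification of generators; and no circularity occurs because each summand is built as a well-defined, square-integrable object in its own right, the finite-activity part being a genuine finite sum and the compensated part an $L^2$-martingale whose integrand $e^{\pi_1(W(u-))}x_2$ is square-integrable thanks to \eqref{momentpi1}. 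As a robust alternative that sidesteps the Gaussian bookkeeping, one may set $m(s):=\mathbb{E}[\pi_2(W(s))^2]$ and derive from the increment relation and the independence of increments a Gronwall inequality of the form $m'(s)\le m(s)+C\sup_{u\le s}\mathbb{E}[e^{2\pi_1(W(u))}]$, again closed off by \eqref{momentpi1}; I would keep the martingale route as the main line since it renders the finiteness transparent.
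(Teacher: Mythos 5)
Your treatment of the first two assertions is sound (write $W:=R^M[Y]$ below). The verification that $W$ is a L\'evy process is in substance the same commutation-with-shifts argument as in the paper, although the paper proves the identity $R^M[S_t[Y]]=S_t[R^M[Y]]$ directly by induction on the splicing times rather than invoking the Poisson structure and independence of the big-jump point process, which is itself a structural fact that would need justification; the elementary route is preferable here. Your proof of \eqref{momentpi1} is actually cleaner than the paper's: since $\pi_1(yTz)=\pi_1(y)+\pi_1(z)$, the process $\pi_1(W)$ is a real L\'evy process whose jumps are bounded by $M$, hence has exponential moments of every order, and \eqref{momentpi1} follows.

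The gap is in \eqref{momentpi2}. Your main route presupposes a L\'evy--It\^o type decomposition of $\pi_2(W)$ into a drift $\int_0^s e^{\pi_1(W(u))}b\,du$, a continuous martingale with bracket $\int_0^s e^{2\pi_1(W(u))}\sigma^2\,du$, and a Poissonian jump integral. The existence of such a decomposition for a L\'evy process on $(\mathbb{R}^2,T)$ is essentially the structural statement (Hunt's theorem, or Proposition \ref{levynonclassiques} itself) that this section is designed to reach by elementary means; it cannot be read off from ``the explicit group law and the independent-increment property'' alone, since nothing at this stage tells you that $W$ splits into a continuous Gaussian part plus jumps, nor that the continuous part of the second coordinate is an $e^{\pi_1}$-scaled Brownian motion with drift. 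The Gronwall fallback has its own defect: to expand $m(s+h)$ for $m(s)=\mathbb{E}[\pi_2(W(s))^2]$ via the increment relation you must already know that $m$ is finite and that $\mathbb{E}[\pi_2(W(h))]$ and $\mathbb{E}[\pi_2(W(h))^2]$ are finite and $O(h)$, which is precisely what is to be proved. The paper avoids both issues with a purely pathwise argument: setting $T_0:=0$ and $T_{n+1}:=\inf\{t>T_n:\ |(W(T_n))^{-1}TW(t)|_\infty>M\}$, the group law together with the bound $|\Delta W|_\infty< M$ gives the deterministic estimates $|\pi_1(W(s))|\le 2nM$ and $|\pi_2(W(s))|\le 2nMe^{2nM}$ for $s\in[0,T_n]$; since $(T_{n+1}-T_n)_{n\ge0}$ is iid, a Chernoff bound on $\mathbb{P}(T_n<t)$ then yields both moment estimates with no a priori integrability input. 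You should replace the semimartingale computation by an argument of this type.
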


\begin{proof}

Let $Y$ be a L\'evy process on $(\mathbb{R}^2, T)$ and let $(\mathcal{F}_t)_{t \geq 0}$ be the right-continuous filtration associated with $Y$. The fact that $R^M[Y]$ is adapted with respect to $(\mathcal{F}_t)_{t \geq 0}$ is plain from the fact that $(R^M[Y](s), \ 0 \leq s \leq t)$ has been constructed as a function of $(Y(s), \ 0 \leq s \leq t)$. We first justify that $R^M[Y]$ is a L\'evy process on $(\mathbb{R}^2, T)$. If $Y$ has a.s. no jumps in $A_M$ then $Y = R^M[Y]$ so the latter is clearly a L\'evy process on $(\mathbb{R}^2, T)$. We thus assume that $Y$ admits jumps in $A_M$. 

By construction, $R^M[Y]$ is a c\`ad-l\`ag process starting from $(0,0)$. We now justify that $R^M[Y]$ has stationary and independent increments. Let us fix $t \geq 0$ and define $S_t [Y] (.) := (Y(t))^{-1} T Y(t+.)$. Then $S_t [Y]$ is independent from $\mathcal{F}_t$ and has the same law as $Y$. For any $s \geq 0$ we have 
\begin{align*}
\Delta Y(t+s) & = (Y(t+s-))^{-1} T Y(t+s) = ((Y(t))^{-1} T Y(t+s-))^{-1} T (Y(t))^{-1} T Y(t+s) \\
& = (S_t [Y](s-))^{-1} T S_t [Y](s) = \Delta S_t [Y](s). 
\end{align*}
As a consequence, $S_t [Y]$ has a jump at $s$ if and only if $Y$ has a jump at $t + s$. Let $0 < s_1 < s_2 < ...$ be the sequence of the jumps of $S_t [Y]$ in $A_M$ (then $ t + s_1 < t + s_2 < ...$ is the sequence of the jumps of $Y$ in $A_M$ after time $t$). If $Y$ has jumps in $A_M$ on $[0, t]$, then let $u$ be the instant of the last jump of $Y$ in $A_M$. If $Y$ has no jump in $[0, t]$, then let $u := 0$. Let us prove by induction on $i$ that 
\begin{eqnarray}
R^M[S_t[Y]](s) = S_t[R^M[Y]](s), \ \forall s \in [s_i, s_{i+1}[, \label{commutationSR}
\end{eqnarray}
where by convention $s_0 := 0$. For $s \in [0, s_1[$ we have by the definitions of $R^M$, $s_1$, and $S_t$: 
\begin{align*}
R^M[S_t[Y]](s) & = S_t[Y](s) = (Y(t))^{-1} T Y(t+s) \\ 
& = (R^M[Y](u -) T (Y(u))^{-1} T Y(t))^{-1} T R^M[Y](u -) T (Y(u))^{-1} T Y(t+s) \\
& = (R^M[Y](t))^{-1} T R^M[Y](t+s) = S_t[ R^M[Y]](s). 
\end{align*}
Therefore, \eqref{commutationSR} holds for $i=0$. Let us assume that it holds for some $i \geq 0$ and prove it for $i+1$. By the induction hypothesis we have that $R^M[S_t[Y]](s_{i+1} -) = S_t[R^M[Y]](s_{i+1} -)$. Then, for any $s \in [s_{i+1}, s_{i+2}[$, using the definition of $R^M$, the previous relation and the definition of $S_t$ we have 
\begin{align*}
R^M[S_t[Y]](s) & = R^M[S_t[Y]](s_{i+1} -) T (S_t[Y](s_{i+1}))^{-1} T S_t[Y](s) \\
& = S_t[R^M[Y]](s_{i+1} -) T (S_t[Y](s_{i+1}))^{-1} T S_t[Y](s) \\ 
& = (R^M[Y](t))^{-1} T R^M[Y](t + s_{i+1} -) T ((Y(t))^{-1} T Y(t+s_{i+1}))^{-1} T (Y(t))^{-1} T Y(t+s) \\ 
& = (R^M[Y](t))^{-1} T R^M[Y](t + s_{i+1} -) T (Y(t+s_{i+1}))^{-1} T Y(t+s) \\ 
& = (R^M[Y](t))^{-1} T R^M[Y](t+s) = S_t[ R^M[Y]](s). 
\end{align*}
Then the induction is completed and we conclude that we have a.s. $R^M[S_t[Y]] = S_t[R^M[Y]]$. This means that $S_t[R^M[Y]]$ is expressed as a deterministic function, $R^M$, of the process $S_t[Y]$ that is independent from $\mathcal{F}_t$ and has the same law as $Y$. As a consequence $S_t[R^M[Y]]$ is independent from $\mathcal{F}_t$ and has the same law as $R^M[Y]$. This proves the independence and stationarity of the increments of $R^M[Y]$ which is therefore a L\'evy process. 

We now justify the claim about moments. For this we follow the proof of Theorem 2.4.7 in \cite{applebaum_2009}, just working on $(\mathbb{R}^2, T)$ instead of $(\mathbb{R}^d, +)$. Let $T_0 := 0$ and recursively for $n \geq 0$, $T_{n+1} := \inf \{ t > T_n, |(R^M[Y](T_n))^{-1} T R^M[Y](t)|_{\infty} > M \}$. By definition of $T_n$ we have a.s. that 
\begin{align}
\forall s \in [T_n, T_{n+1}[, \ & |\pi_1 ((R^M[Y](T_n))^{-1} T R^M[Y](s))| \leq M, \label{trajborne1} \\
& |\pi_2 ((R^M[Y](T_n))^{-1} T R^M[Y](s))| \leq M. \label{trajborne2}
\end{align}
Then, $|\pi_1 ((R^M[Y](T_n))^{-1} T R^M[Y](T_{n+1}))|$ equals a.s. 
\begin{align*}
& |\pi_1 ((R^M[Y](T_n))^{-1} T R^M[Y](T_{n+1}-) T \Delta R^M[Y](T_{n+1}))| \\
= & |\pi_1 ((R^M[Y](T_n))^{-1} T R^M[Y](T_{n+1}-)) + \pi_1 (\Delta R^M[Y](T_{n+1}))| \\
\leq & |\pi_1 ((R^M[Y](T_n))^{-1} T R^M[Y](T_{n+1}-))| + |\pi_1 (\Delta R^M[Y](T_{n+1}))|. 
\end{align*}
According to \eqref{trajborne1} the first term is a.s. less than $M$ and, since $R^M[Y]$ has no jump in $A_M$, the second term is a.s. less than $M$. We thus get $|\pi_1 ((R^M[Y](T_n))^{-1} T R^M[Y](T_{n+1}))| \leq 2M$ a.s. and combining with \eqref{trajborne1} we get that a.s. 
\begin{eqnarray}
\sup_{s \in [T_n, T_{n+1}]} | \pi_1 ((R^M[Y](T_n))^{-1} T R^M[Y](s))| \leq 2M. \label{trajborne3}
\end{eqnarray}
Then, $|\pi_2 ((R^M[Y](T_n))^{-1} T R^M[Y](T_{n+1}))|$ a.s. equals 
\begin{align*}
& |\pi_2 ((R^M[Y](T_n))^{-1} T R^M[Y](T_{n+1}-) T \Delta R^M[Y](T_{n+1}))| \\
= & |\pi_2 ((R^M[Y](T_n))^{-1} T R^M[Y](T_{n+1}-)) + e^{\pi_1 ((R^M[Y](T_n))^{-1} T R^M[Y](T_{n+1}-))} \pi_2 (\Delta R^M[Y](T_{n+1}))| \\
\leq & |\pi_2 ((R^M[Y](T_n))^{-1} T R^M[Y](T_{n+1}-))| + e^{\pi_1 ((R^M[Y](T_n))^{-1} T R^M[Y](T_{n+1}-))} |\pi_2 (\Delta R^M[Y](T_{n+1}))|. 
\end{align*}
According to \eqref{trajborne2} the first term is a.s. less than $M$, and according to \eqref{trajborne1} the first factor in the second term is a.s. less than $e^M$. Since $R^M[Y]$ has no jump in $A_M$, the second factor in the second term is a.s. less than $M$. We thus get that $|\pi_2 ((R^M[Y](T_n))^{-1} T R^M[Y](T_{n+1}))| \leq (1+e^M)M \leq 2 M e^M$ a.s. and, combining with \eqref{trajborne2} we get that a.s. 
\begin{eqnarray}
\sup_{s \in [T_n, T_{n+1}]} | \pi_2 ((R^M[Y](T_n))^{-1} T R^M[Y](s))| \leq 2 M e^M. \label{trajborne4}
\end{eqnarray}

Let us prove by induction that a.s. 
\begin{eqnarray}
\forall s \in [0, T_n], \ |\pi_1(R^M[Y](s))| \leq 2 n M, \ |\pi_2(R^M[Y](s))| \leq 2nM e^{2nM}. \label{trajborne5}
\end{eqnarray}
\eqref{trajborne5} is clearly true for $n=0$. Let us assume that it holds for some $n \geq 0$ and prove it for $n+1$. If the supremum of $|\pi_1(R^M[Y](s))|$ on the interval $[0, T_{n+1}]$ is attained in $[0, T_{n}[$, then the induction hypothesis implies that $\sup_{s \in [0, T_{n+1}]} |\pi_1(R^M[Y](s))| \leq 2 n M \leq 2 (n+1) M$. If the supremum of $|\pi_1(R^M[Y](s))|$ on the interval $[0, T_{n+1}]$ is attained in $[T_{n}, T_{n+1}]$ then we have 
\begin{align*}
\sup_{s \in [0, T_{n+1}]} |\pi_1(R^M[Y](s))| & = \sup_{s \in [T_n, T_{n+1}]} |\pi_1(R^M[Y](T_n) T (R^M[Y](T_n))^{-1} T R^M[Y](s))| \\
& = \sup_{s \in [T_n, T_{n+1}]} |\pi_1(R^M[Y](T_n)) + \pi_1 ((R^M[Y](T_n))^{-1} T R^M[Y](s))| \\
& \leq |\pi_1(R^M[Y](T_n))| + \sup_{s \in [T_n, T_{n+1}]} | \pi_1 ((R^M[Y](T_n))^{-1} T R^M[Y](s))|. 
\end{align*}
By the induction hypothesis the first term is a.s. less than $2nM$ and by \eqref{trajborne3} the second term is a.s. less than $2M$. In any case we thus have $\sup_{s \in [0, T_{n+1}]} |\pi_1(R^M[Y](s))| \leq 2 (n+1) M$. Then, if the supremum of $|\pi_2(R^M[Y](s))|$ on the interval $[0, T_{n+1}]$ is attained in $[0, T_{n}[$, then the induction hypothesis implies that $\sup_{s \in [0, T_{n+1}]} |\pi_2(R^M[Y](s))| \leq 2nM e^{2nM} \leq 2(n+1)M e^{2(n+1)M}$. If the supremum of $|\pi_2(R^M[Y](s))|$ on the interval $[0, T_{n+1}]$ is attained in $[T_{n}, T_{n+1}]$ then we have 
\begin{align*}
\sup_{s \in [0, T_{n+1}]} |\pi_2(R^M[Y](s))| & = \sup_{s \in [T_n, T_{n+1}]} |\pi_2(R^M[Y](T_n) T (R^M[Y](T_n))^{-1} T R^M[Y](s))| \\
& = \sup_{s \in [T_n, T_{n+1}]} |\pi_2(R^M[Y](T_n)) + e^{\pi_1(R^M[Y](T_n))} \pi_2 ((R^M[Y](T_n))^{-1} T R^M[Y](s))| \\
& \leq |\pi_2(R^M[Y](T_n))| + e^{\pi_1(R^M[Y](T_n))} \sup_{s \in [T_n, T_{n+1}]} | \pi_2 ((R^M[Y](T_n))^{-1} T R^M[Y](s))|. 
\end{align*}
By the induction hypothesis the first term is a.s. less than $2nM e^{2nM} \leq 2nM e^{2(n+1)M}$ and the first factor in the second term is a.s. less than $e^{2nM}$. By \eqref{trajborne4} the second factor of the second term is a.s. less than $2Me^M \leq 2Me^{2M}$. In any case we thus have $\sup_{s \in [0, T_{n+1}]} |\pi_2(R^M[Y](s))| \leq 2(n+1)M e^{2(n+1)M}$. In conclusion, \eqref{trajborne5} holds for $n+1$ so the induction is completed. 

Then, since $R^M[Y]$ is a L\'evy process, the sequence $(T_{n+1} - T_n)_{n \geq 0}$ is \textit{iid} so in particular $\mathbb{E}[e^{-\lambda T_n}] = \mathbb{E}[e^{-\lambda (T_n - T_{n-1})}...e^{-\lambda T_{1}}] = (\mathbb{E}[e^{-\lambda T_1}])^{n}$. Then for any $t \geq 0, z > 0$, 
\begin{align*}
\mathbb{P} \left ( |\pi_1(R^M[Y](t))| > z \right ) & \leq \mathbb{P} \left ( |\pi_1(R^M[Y](t))| > 2M \lfloor z/2M \rfloor \right ) \\
& \leq \mathbb{P} \left ( T_{\lfloor z/2M \rfloor} < t \right ) \leq e^{\lambda t} \left ( \mathbb{E} \left [ e^{-\lambda T_{1}} \right ] \right )^{\lfloor z/2M \rfloor}, 
\end{align*}
where we have used \eqref{trajborne5} (which shows that $|\pi_1(R^M[Y](t))| > 2nM \Rightarrow T_n < t$) and Chernoff's inequality. Let us choose $\lambda_{M,1}$ large enough so that $(\mathbb{E}[e^{-\lambda_{M,1} T_1}])^{1/2M} < e^{-2}$. For such a choice of $\lambda_{M,1}$ we get 
\begin{align*}
\forall t \geq 0, \ \mathbb{E} \left [ e^{2 |\pi_1(R^M[Y](t))|} \right ] & = 1 + 2 \int_0^{+\infty} e^{2 z} \mathbb{P} \left ( |\pi_1(R^M[Y](t))| > z \right ) dz \\
& \leq 1 + 2 e^{\lambda_{M,1} t} \int_0^{+\infty} e^{2 z} \left ( \mathbb{E} \left [ e^{-\lambda_{M,1} T_{1}} \right ] \right )^{\lfloor z/2M \rfloor} dz \\
& \leq 1 + e^{\lambda_{M,1} t} C_1, 
\end{align*}
where $C_1$ is some positive constant. We thus get 
\[ \sup_{s \in [0, t]} \mathbb{E} \left [ e^{2 |\pi_1(R^M[Y](s))|} \right ] < +\infty. \]
Taking left-limits and using Fatou's Lemma we obtain \eqref{momentpi1}. 

Let $z_M$ be such that $e^{3M \lfloor \log(z)/3M \rfloor} > 2M \lfloor \log(z)/3M \rfloor e^{2M \lfloor \log(z)/3M \rfloor}$ for any $z > z_M$. Then for any $t \geq 0$ and $z > z_M$, 
\begin{align*}
\mathbb{P} \left ( |\pi_2(R^M[Y](t))| > z \right ) & \leq \mathbb{P} \left ( |\pi_2(R^M[Y](t))| > e^{3M \lfloor \log(z)/3M \rfloor} \right ) \\
& \leq \mathbb{P} \left ( |\pi_2(R^M[Y](t))| > 2M \lfloor \log(z)/3M \rfloor e^{2M \lfloor \log(z)/3M \rfloor} \right ) \\
& \leq \mathbb{P} \left ( T_{\lfloor \log(z)/3M \rfloor} < t \right ) \leq e^{\lambda_{M,1} t} \left ( \mathbb{E} \left [ e^{-\lambda_{M,1} T_{1}} \right ] \right )^{\lfloor \log(z)/3M \rfloor}, 
\end{align*}
where we have used \eqref{trajborne5} (which shows that $|\pi_2(R^M[Y](t))| > 2nMe^{2nM} \Rightarrow T_n < t$) and Chernoff's inequality. Let us choose $\lambda_{M,2}$ large enough so that $(\mathbb{E}[e^{-\lambda_{M,2} T_1}])^{1/3M} < e^{-3}$. For such a choice of $\lambda_{M,2}$ we have $( \mathbb{E} [ e^{-\lambda_{M,2} T_{1}} ] )^{\lfloor \log(z)/3M \rfloor} < ( \mathbb{E} [ e^{-\lambda_{M,2} T_{1}} ] )^{-1} z^{-3}$. We thus get: 
\begin{align*}
\forall t \geq 0, \ \mathbb{E} \left [ |\pi_2(R^M[Y](t))|^2 \right ] & = 2 \int_0^{+\infty} z \mathbb{P} \left ( |\pi_2(R^M[Y](t))| > z \right ) dz \\
& \leq 2 e^{\lambda_{M,2} t} \int_0^{+\infty} z \left ( \mathbb{E} \left [ e^{-\lambda_{M,2} T_{1}} \right ] \right )^{\lfloor \log(z)/3M \rfloor} dz \\
& \leq e^{\lambda_{M,2} t} C_2, 
\end{align*}
where $C_2$ is some positive constant. This yields \eqref{momentpi2}. 



\end{proof}

We now study a procedure of re-centering for the second coordinate of $R^M[Y]$, in order to obtain a martingale. 

\begin{lemme} \label{centering}

Let $Y$ be a L\'evy process on $(\mathbb{R}^2, T)$ and let $(\mathcal{F}_t)_{t \geq 0}$ be the right-continuous filtration associated with $Y$. For any $M > 0$, there is a unique $\alpha_M \in \mathbb{R}$ such that the process 
\begin{eqnarray}
W^M[Y] := \left ( \pi_2(R^M[Y](t)) - \alpha_M \int_0^t e^{\pi_1(R^M[Y](u-))} du, \ t \geq 0 \right ) \label{centering1}
\end{eqnarray}
is a c\`ad-l\`ag martingale with respect to the filtration $(\mathcal{F}_t)_{t \geq 0}$, locally bounded in $L^2$ (that is, for any $t >0$, $\sup_{u \in [0,t]} \mathbb{E} [ (W^M[Y](s))^2 ] < +\infty$). 


\end{lemme}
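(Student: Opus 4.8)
The plan is to work throughout with the big-jump-removed process $R^M[Y]$, whose integrability is guaranteed by Lemma \ref{removingbigjumps}, and to determine $\alpha_M$ as the unique drift rate that annihilates the conditional mean of the second coordinate. First I would record two structural facts. Since $\pi_1$ is a group homomorphism from $(\mathbb{R}^2, T)$ to $(\mathbb{R},+)$, the process $\xi := \pi_1(R^M[Y])$ is a real-valued L\'evy process; by \eqref{momentpi1} it has finite exponential moments, so $t \mapsto \mathbb{E}[e^{\xi(t)}]$ is multiplicative (by stationarity and independence of increments) and there is $\psi \in \mathbb{R}$ with $h(t) := \mathbb{E}[e^{\pi_1(R^M[Y](t))}] = e^{\psi t}$. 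Second, writing the L\'evy increment $(R^M[Y](t))^{-1} T R^M[Y](t+\cdot)$, which is independent of $\mathcal{F}_t$ and distributed as $R^M[Y]$, the definition of $T$ gives the increment relation $\pi_2(R^M[Y](t+s)) = \pi_2(R^M[Y](t)) + e^{\pi_1(R^M[Y](t))} \pi_2\big((R^M[Y](t))^{-1} T R^M[Y](t+s)\big)$, with an analogous additive relation for the first coordinate appearing inside the drift integral.

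Next I would analyse the mean function $g(s) := \mathbb{E}[\pi_2(R^M[Y](s))]$, which is finite by \eqref{momentpi2}. Taking conditional expectation in the increment relation and using independence yields the functional equation $g(t+s) = g(t) + h(t) g(s)$ with $g(0)=0$. Combining this with the version obtained by exchanging $t$ and $s$ gives $g(t)(1-h(s)) = g(s)(1-h(t))$; since $h(t) = e^{\psi t}$, this forces $g(s) = \alpha_M \int_0^s h(v)\,dv$ for a single constant $\alpha_M$ (when $\psi \neq 0$ one reads off $g(s) = c(e^{\psi s}-1)$ and hence $\alpha_M = c\psi$; when $\psi = 0$ the equation is Cauchy's and the measurability of $g$ gives $g(s) = \alpha_M s$). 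This already pins down $\alpha_M$ uniquely and identifies it as the right-derivative of $g$ at $0$.

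Then comes the verification that this $\alpha_M$ works. For $t,s \geq 0$ I would compute $\mathbb{E}[W^M[Y](t+s) - W^M[Y](t) \mid \mathcal{F}_t]$. The increment relation gives $\mathbb{E}[\pi_2(R^M[Y](t+s)) - \pi_2(R^M[Y](t)) \mid \mathcal{F}_t] = e^{\pi_1(R^M[Y](t))} g(s)$, while factoring $e^{\pi_1(R^M[Y](t))}$ out of $\int_t^{t+s} e^{\pi_1(R^M[Y](u-))}\,du$ and applying Fubini (legitimate by \eqref{momentpi1}) gives $\mathbb{E}[\int_t^{t+s} e^{\pi_1(R^M[Y](u-))}\,du \mid \mathcal{F}_t] = e^{\pi_1(R^M[Y](t))} \int_0^s h(v)\,dv$, using that $\mathbb{E}[e^{\xi(v-)}] = h(v)$ for a.e.\ $v$. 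By the choice of $\alpha_M$ these two contributions cancel, so $W^M[Y]$ is an $(\mathcal{F}_t)$-martingale; it is c\`ad-l\`ag since $\pi_2(R^M[Y])$ is c\`ad-l\`ag and the drift integral is continuous.

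Finally, local $L^2$-boundedness follows from the two moment bounds of Lemma \ref{removingbigjumps}: $\mathbb{E}[\pi_2(R^M[Y](u))^2]$ is bounded on $[0,t]$ by \eqref{momentpi2}, and Cauchy--Schwarz gives $\mathbb{E}[(\int_0^u e^{\pi_1(R^M[Y](w-))}\,dw)^2] \le u \int_0^u \mathbb{E}[e^{2\pi_1(R^M[Y](w-))}]\,dw \le t^2 \sup_{s\in[0,t]}\mathbb{E}[e^{2|\pi_1(R^M[Y](s-))|}] < +\infty$ by \eqref{momentpi1}; combining via $(a-b)^2 \le 2a^2+2b^2$ yields $\sup_{u\in[0,t]}\mathbb{E}[(W^M[Y](u))^2] < +\infty$. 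The main obstacle is the middle step --- correctly deriving and solving the functional equation for $g$ so as to single out the unique $\alpha_M$, with the recognition that the natural integrating factor is exactly $e^{\pi_1(R^M[Y](\cdot))}$; once this is in place, all integrability and Fubini manipulations are handled routinely by the bounds already established.
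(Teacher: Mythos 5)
Your argument is correct, and its skeleton matches the paper's: the same increment decomposition $\pi_2(R^M[Y](t+s)) = \pi_2(R^M[Y](t)) + e^{\pi_1(R^M[Y](t))}\,\pi_2\bigl((R^M[Y](t))^{-1}\,T\,R^M[Y](t+s)\bigr)$, the same moment bounds from Lemma \ref{removingbigjumps} to justify all Fubini/conditional-expectation manipulations and the local $L^2$ bound, and the same final cancellation giving the martingale property. Where you genuinely diverge is in the middle step, i.e.\ in showing that the normalizing constant does not depend on the time horizon. The paper defines $\alpha_M(h) := \mathbb{E}[\pi_2(R^M[Y](h))]/\mathbb{E}[\int_0^h e^{\pi_1(R^M[Y](u-))}du]$, proves $\alpha_M(2h)=\alpha_M(h)$ by the increment identity, deduces constancy on dyadics, and then upgrades to all $h$ via $L^1$-continuity (uniform integrability plus stochastic continuity). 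You instead write the identity as the functional equation $g(t+s)=g(t)+h(t)g(s)$ for $g(s)=\mathbb{E}[\pi_2(R^M[Y](s))]$, exploit that $h(t)=\mathbb{E}[e^{\pi_1(R^M[Y](t))}]=e^{\psi t}$ (multiplicativity of exponential moments of the L\'evy process $\pi_1(R^M[Y])$, finite by \eqref{momentpi1}), and solve it in closed form via the symmetrized relation $g(t)(1-h(s))=g(s)(1-h(t))$, splitting the cases $\psi\neq 0$ and $\psi=0$ (Cauchy's equation, where you should note that measurability of $g$ is itself supplied by the same uniform-integrability argument the paper uses for continuity). Your route buys an explicit formula $g(s)=\alpha_M\int_0^s h(v)\,dv$, which makes both the martingale verification and the uniqueness of $\alpha_M$ immediate; the paper's route avoids computing $h$ explicitly and only needs continuity of $\alpha_M(\cdot)$, at the cost of the dyadic induction. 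Both are complete proofs.
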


\begin{proof}

Let $Y$ be a L\'evy process on $(\mathbb{R}^2, T)$ and let $(\mathcal{F}_t)_{t \geq 0}$ be the right-continuous filtration associated with $Y$. The process $(\int_0^t e^{\pi_1(R^M[Y](u-))} du, \ t \geq 0)$ is well-defined and a.s. continuous, because $R^M[Y]$ is a.s. c\`ad-l\`ag as a L\'evy process, according to Lemma \ref{removingbigjumps}. Note from Lemma \ref{removingbigjumps} that the expectations $\mathbb{E} [ \pi_2(R^M[Y](1)) ]$ and $\mathbb{E} [ \int_0^1 e^{\pi_1(R^M[Y](u-))} du ]$ are finite, and notice that the second is positive. Let us put 
\begin{eqnarray}
\alpha_M := \mathbb{E} \left [ \pi_2(R^M[Y](1)) \right ] / \mathbb{E} \left [ \int_0^1 e^{\pi_1(R^M[Y](u-))} du \right ], \label{centering2}
\end{eqnarray}
and define $W^M[Y]$ with this $\alpha_M$, as in \eqref{centering1}. Then, $W^M[Y]$ is well-defined and a.s. c\`ad-l\`ag. 

We now justify the local boundedness in $L^2$. Let us fix $t>0$ and $s \in [0, t]$. We have 
\begin{align}
\mathbb{E} \left [ (W^M[Y](s))^2 \right ] & \leq 2 \mathbb{E} \left [ (\pi_2(R^M[Y](s)))^2 \right ] + 2 \alpha_M^2 \mathbb{E} \left [ \left ( \int_0^s e^{\pi_1(R^M[Y](u-))} du \right )^2 \right ] \nonumber \\
& \leq 2 \sup_{u \in [0, t]} \mathbb{E} \left [ |\pi_2(R^M[Y](u))|^2 \right ] + 2 \alpha_M^2 t^2 \mathbb{E} \left [ \left ( \frac1{t} \int_0^t e^{\pi_1(R^M[Y](u-))} du \right )^2 \right ] \nonumber \\
& \leq 2 \sup_{u \in [0, t]} \mathbb{E} \left [ |\pi_2(R^M[Y](u))|^2 \right ] + 2 \alpha_M^2 t \mathbb{E} \left [ \int_0^t e^{2 \pi_1(R^M[Y](u-))} du \right ] \nonumber \\
& \leq 2 \sup_{u \in [0, t]} \mathbb{E} \left [ |\pi_2(R^M[Y](u))|^2 \right ] + 2 \alpha_M^2 t \int_0^t \mathbb{E} \left [ e^{2 \pi_1(R^M[Y](u-))} \right ] du \nonumber \\
& \leq 2 \sup_{u \in [0, t]} \mathbb{E} \left [ |\pi_2(R^M[Y](u))|^2 \right ] + 2 \alpha_M^2 t^2 \sup_{u \in [0, t]} \mathbb{E} \left [ e^{2 |\pi_1(R^M[Y](u-))|} \right ] \nonumber \\
& < +\infty, \label{boundedl2}
\end{align}
where we have used Jensen's inequality, Fubini's theorem, and Lemma \ref{removingbigjumps} for the finiteness of the two supremums. Since the final upper bound is independent of $s \in [0,t]$, we obtain that $W^M[Y]$ is indeed locally bounded in $L^2$. 

We now justify that $W^M[Y]$ is a martingale. Recall from Lemma \ref{removingbigjumps} that $R^M[Y]$ is adapted with respect to $(\mathcal{F}_t)_{t \geq 0}$. From this it is not difficult to see that $W^M[Y]$ is adapted with respect to $(\mathcal{F}_t)_{t \geq 0}$. Now, for any $h > 0$ let us put 
\[ \alpha_M(h) := \mathbb{E} \left [ \pi_2(R^M[Y](h)) \right ] / \mathbb{E} \left [ \int_0^h e^{\pi_1(R^M[Y](u-))} du \right ], \]
and notice that $\alpha_M$ defined in \eqref{centering2} coincides with $\alpha_M(1)$. By definition, $\alpha_M(h)$ is the unique real number that satisfies 
\[ \mathbb{E} \left [ \pi_2(R^M[Y](h)) - \alpha_M(h) \int_0^h e^{\pi_1(R^M[Y](u-))} du \right ] = 0. \]
Then recall that, since $R^M[Y]$ is a L\'evy process on $(\mathbb{R}^2, T)$ adapted to $(\mathcal{F}_t)_{t\geq0}$, for any $h>0$ and $u > 0$ we have $R^M[Y](h+u) = R^M[Y](h) T S_h[R^M[Y]](u)$ where $S_h[R^M[Y]]$ is equal in law to $R^M[Y]$ and is independent from $\mathcal{F}_h$. Then $\mathbb{E} [ \pi_2(R^M[Y](2h)) - \alpha_M(h) \int_0^{2h} e^{\pi_1(R^M[Y](u-))} du ]$ equals 
\begin{align}
& \mathbb{E} \left [ \pi_2 \left (R^M[Y](h) T S_h[R^M[Y]](h) \right ) - \alpha_M(h) \int_0^h e^{\pi_1(R^M[Y](u-))} du \right. \nonumber \\
- & \left. \alpha_M(h) \int_0^{h} e^{\pi_1(R^M[Y](h) T S_h[R^M[Y]](u-))} du \right ] \nonumber \\
= & \mathbb{E} \left [ \pi_2(R^M[Y](h)) - \alpha_M(h) \int_0^h e^{\pi_1(R^M[Y](u-))} du \right ] \nonumber \\
+ & \mathbb{E} \left [ e^{\pi_1(R^M[Y](h))} \left ( \pi_2(S_h[R^M[Y]](h)) - \alpha_M(h) \int_0^{h} e^{\pi_1(S_h[R^M[Y]](u-))} du \right ) \right ] \nonumber \\
= & 0 + \mathbb{E} \left [ e^{\pi_1(R^M[Y](h))} \mathbb{E} \left [ \pi_2(S_h[R^M[Y]](h)) - \alpha_M(h) \int_0^{h} e^{\pi_1(S_h[R^M[Y]](u-))} du \big | \mathcal{F}_h \right ] \right ] \nonumber \\
= & \mathbb{E} \left [ e^{\pi_1(R^M[Y](h))} \times \mathbb{E} \left [ \pi_2(R^M[Y](h)) - \alpha_M(h) \int_0^{h} e^{\pi_1(R^M[Y](u-))} du \right ] \right ] \nonumber \\
= & \mathbb{E} \left [ e^{\pi_1(R^M[Y](h))} \times 0 \right ] = 0. \label{propmart}
\end{align}
In the above we have used two times the definition of $\alpha_M(h)$ and one time the fact that $S_h[R^M[Y]]$ is equal in law to $R^M[Y]$ and is independent from $\mathcal{F}_h$. By unicity we obtain $\alpha_M(2h) = \alpha_M(h)$ and the previous reasoning can be used to prove by induction that we have actually $\alpha_M(nh) = \alpha_M(h)$, for all $n \in \mathbb{N}$. In particular, the function $\alpha_M(.)$ is constant on the set of positive dyadic numbers. Then, reasoning as in \eqref{boundedl2}, we see that for any $t>0$ the families $(\pi_2(R^M[Y](s)), 0 \leq s \leq t)$ and $(\int_0^{s} e^{\pi_1(R^M[Y](u-))} du, 0 \leq s \leq t)$ are bounded in $L^2$ and therefore uniformly integrable. Since $R^M[Y]$ is a L\'evy process, these two family are also stochastically continuous with respect to $s \in [0, t]$. We deduce that these family are continuous in $L^1$ with respect to $s \in [0, t]$. This proves the continuity of the function $\alpha_M(.)$. Combining with the constantness on the set of positive dyadic numbers, we obtain that this function is constant equal to $\alpha_M$. As a consequence, for any $h>0$ we have 
\[ \mathbb{E} \left [ \pi_2(R^M[Y](h)) - \alpha_M \int_0^h e^{\pi_1(R^M[Y](u-))} du \right ] = 0. \]
Using this and reasoning as in \eqref{propmart} we can now prove that for any $t,s >0$
\[ \mathbb{E} \left [ W^M[Y](t+s) \big | \mathcal{F}_t \right ] = W^M[Y](t). \]
This concludes the proof of the fact that $W^M[Y]$ is a martingale with respect to the filtration $(\mathcal{F}_t)_{t \geq 0}$. 

Conversely, the fact that $W^M[Y]$ is a martingale implies that $\mathbb{E} [ W^M[Y](1) ] = \mathbb{E} [ W^M[Y](0) ] = \mathbb{E} [ 0 ] = 0$. This can hold only for $\alpha_M$ being as in \eqref{centering2}. This proves unicity for the choice of $\alpha_M$ for which $W^M[Y]$ is a martingale. 




\end{proof}


We are now ready to prove Proposition \ref{levynonclassiques}. 

\begin{proof} of Proposition \ref{levynonclassiques}

Let $Y$ be a L\'evy process on $(\mathbb{R}^2, T)$ and let $(\mathcal{F}_t)_{t \geq 0}$ be the right-continuous filtration associated with $Y$. We want to define 
\begin{eqnarray}
\forall t \geq 0, \ \xi(t) := \pi_1(Y(t)), \ \ \ \eta(t) := \int_0^t e^{-\pi_1(Y(s-))} d\pi_2(Y(s)). \label{levynonclassiques1}
\end{eqnarray}

We first need to justify that the stochastic integral in \eqref{levynonclassiques1} is well-defined. Let us fix $M > 0$, and let $W^M[Y]$ denote the process defined in \eqref{centering1}. According to Lemma \ref{centering}, $W^M[Y]$ is a c\`ad-l\`ag martingale with respect to the filtration $(\mathcal{F}_t)_{t \geq 0}$, locally bounded in $L^2$. Also, $( \int_0^t e^{\pi_1(R^M[Y](u-))} du, \ t \geq 0 )$ is continuous, adapted with respect to $(\mathcal{F}_t)_{t \geq 0}$ and has locally bounded variation. Then, since by definition of $W^M[Y]$ in \eqref{centering1} we have $\pi_2(R^M[Y]) = W^M[Y] + \alpha_M \int_0^. e^{\pi_1(R^M[Y](u-))} du$, we deduce that the process $\pi_2(R^M[Y])$ (that is c\`ad-l\`ag and adapted with respect to $(\mathcal{F}_t)_{t \geq 0}$) satisfies the definition of a decomposable process, just before Theorem II.3.9 in \cite{protter}. By this theorem, $\pi_2(R^M[Y])$ is a semimartingale, in the meaning of \cite{protter}, for the filtration $(\mathcal{F}_t)_{t \geq 0}$. In the remainder, the concept of semimartingale has to be always understood in the meaning of \cite{protter}. 

Let $T_M := \inf \{ t \geq 0, \ \Delta Y(t) \in A_M \}$, where $A_M$ is defined in \eqref{ensgdsauts}. $T_M$ is a stopping time for the filtration $(\mathcal{F}_t)_{t \geq 0}$ and $T_M$ converges a.s. to $+\infty$ when $M$ goes to $+\infty$ (otherwise, with positive probability, a finite time interval could contain infinitely many big jumps of $Y$, which would not be compatible with $Y$ being a.s. c\`ad-l\`ag). For $Z$ a real process starting at $0$ and adapted to $(\mathcal{F}_t)_{t \geq 0}$, we define $Z$ stopped at $T_M-$ by $Z^{T_M-}(t) := Z(t) \mathds{1}_{0 \leq t < T_M} + Z(T_M-) \mathds{1}_{t \geq T_M}$. For any $M>0$ we have clearly $(\pi_2(R^M[Y]))^{T_M-} = (\pi_2(Y))^{T_M-}$. Moreover $\pi_2(Y)$ is c\`ad-l\`ag and adapted with respect to $(\mathcal{F}_t)_{t \geq 0}$. By Theorem II.3.6 in \cite{protter}, we can conclude that $\pi_2(Y)$ is a semimartingale for the filtration $(\mathcal{F}_t)_{t \geq 0}$. 

The process $( e^{-\pi_1(Y(t-))}, \ t \geq 0 )$ is adapted with respect to $(\mathcal{F}_t)_{t \geq 0}$ and left continuous with right limits. Section II.4 in \cite{protter} ensures that the stochastic integral of such a process, with respect to a semimartingale for $(\mathcal{F}_t)_{t \geq 0}$, is well-defined (as a limit of stochastic integrals of simple predictable processes with respect to the semimartingale in question), a.s. c\`ad-l\`ag, and adapted with respect to $(\mathcal{F}_t)_{t \geq 0}$. 

We deduce that the process $(\xi, \eta)$ defined in \eqref{levynonclassiques1} is indeed well-defined, a.s. c\`ad-l\`ag, and adapted with respect to $(\mathcal{F}_t)_{t \geq 0}$. 
We now justify that $(\xi, \eta)$ is a L\'evy process on $(\mathbb{R}^2, +)$. We only need to justify the independence and stationarity of the increments. Let us fix $t \geq 0$ and define $\tilde Y (.) := (Y(t))^{-1} T Y(t+.)$. Then $\tilde Y$ is independent from $\mathcal{F}_t$ and has the same law as $Y$. Since $Y(t+.) = Y(t) T \tilde Y(.)$, it is not difficult to see that we have a.s. 
\[ \forall s \geq 0, \ \tilde Y(s) = \left( \begin{array}{c}
\pi_1(Y(t+s)) - \pi_1(Y(t)) \\
e^{-\pi_1(Y(t))} [ \pi_2(Y(t+s)) - \pi_2(Y(t)) ]
\end{array} \right). \]
Let $(\tilde \xi, \tilde \eta)$ be constructed from $\tilde Y$ just as $(\xi, \eta)$ is constructed from $Y$ in \eqref{levynonclassiques1}. Then $(\tilde \xi, \tilde \eta)$ is independent from $\mathcal{F}_t$ and has the same law as $(\xi, \eta)$. Moreover, we have a.s. that for any $s \geq 0$, 
\begin{align*} 
\left( \begin{array}{c}
\tilde \xi(s) \\
\tilde \eta(s) 
\end{array} \right) 
& = 
\left( \begin{array}{c}
\pi_1(\tilde Y(s)) \\
\int_0^s e^{-\pi_1(\tilde Y(u-))} d\pi_2(\tilde Y(u)) 
\end{array} \right) 
= 
\left( \begin{array}{c}
\pi_1(Y(t+s)) - \pi_1(Y(t)) \\
\int_0^{s} e^{-\pi_1(Y(t+u-)) + \pi_1(Y(t))} e^{-\pi_1(Y(t))} d\pi_2(Y(t+u))
\end{array} \right) \\
& = 
\left( \begin{array}{c}
\pi_1(Y(t+s)) - \pi_1(Y(t)) \\
\int_0^{s} e^{-\pi_1(Y(t+u-))} d\pi_2(Y(t+u))
\end{array} \right) 
= 
\left( \begin{array}{c}
\pi_1(Y(t+s)) - \pi_1(Y(t)) \\
\int_t^{t+s} e^{-\pi_1(Y(u-))} d\pi_2(Y(u))
\end{array} \right) \\
& = 
\left( \begin{array}{c}
\xi(t+s) \\
\eta(t+s) 
\end{array} \right) 
- 
\left( \begin{array}{c}
\xi(t) \\
\eta(t) 
\end{array} \right). 
\end{align*}
This proves that $(\xi, \eta)$ has stationary increments. Moreover, recall that $((\xi(s), \eta(s)), \ 0 \leq s \leq t)$ is measurable with respect to $\mathcal{F}_t$ and that $(\tilde \xi, \tilde \eta)$ is independent from $\mathcal{F}_t$, so the independence of of increments for $(\xi, \eta)$ follows. $(\xi, \eta)$ is thus indeed a L\'evy process on $(\mathbb{R}^2, +)$. 

It now only remains to justify that the expression \eqref{relfonctexpo} is satisfied. Since $\eta$ is defined via \eqref{levynonclassiques1}, Theorem II.5.19 in \cite{protter} ensures that $\eta$ is a semimartingale for the filtration $(\mathcal{F}_t)_{t \geq 0}$ (alternatively, this follows from the fact that $\eta$ is a L\'evy process, adapted to $(\mathcal{F}_t)_{t \geq 0}$, and the corollary of Theorem II.3.9 in \cite{protter}). The process $( e^{\pi_1(Y(t-))}, \ t \geq 0 )$ is adapted with respect to $(\mathcal{F}_t)_{t \geq 0}$ and left continuous with right limits. Therefore, by Section II.4 in \cite{protter}, this process can be integrated with respect to $\eta$, and the stochastic integral in the meaning of \cite{protter} coincides with the classical stochastic integral with respect to the L\'evy process $\eta$ (see for example Section 4 in \cite{applebaum_2009} for the definition of stochastic integrals with respect to real L\'evy processes). Indeed, both integrals are limits of stochastic integrals, with respect to $\eta$, of simple predictable processes approaching the integrand. By the associativity property in Theorem II.5.19 of \cite{protter}, we have $\int_0^. e^{\pi_1(Y(s-))} d \eta(s) = \int_0^. e^{\pi_1(Y(s-))} e^{-\pi_1(Y(s-))} d\pi_2(Y(s))$. Then we get that, a.s. for any $t \geq 0$, 
\begin{align*} 
\left( \begin{array}{c}
\xi(t) \\
\int_0^t e^{\xi(s-)} d \eta(s) 
\end{array} \right) 
& = 
\left( \begin{array}{c}
\pi_1(Y(t)) \\
\int_0^t e^{\pi_1(Y(s-))} d \eta(s) 
\end{array} \right)
= 
\left( \begin{array}{c}
\pi_1(Y(t)) \\
\int_0^t e^{\pi_1(Y(s-))} e^{-\pi_1(Y(s-))} d\pi_2(Y(s)) 
\end{array} \right) \\
& = 
\left( \begin{array}{c}
\pi_1(Y(t)) \\
\int_0^t 1 d\pi_2(Y(s)) 
\end{array} \right) 
= 
\left( \begin{array}{c}
\pi_1(Y(t)) \\
\pi_2(Y(t)) 
\end{array} \right)
= Y(t). 
\end{align*}
Therefore, \eqref{relfonctexpo} holds a.s. for our definition of $(\xi, \eta)$. 

\end{proof}

\section{Proof of the main theorems} \label{proofsofmainth}


\subsection{Proof of Theorems \ref{casrexpl} and \ref{casr}}



We first prove Theorem \ref{casrexpl}. Let $I$ be an open interval of $\mathbb{R}$, and $X$ be a process satisfying Definition \ref{self-simoursens} on $I$. For some $k \geq 1$, $((f_y, c_y), \ y \in E)$ is a family of $\mathcal{C}^k$ good invariance components associated with $X$, relatively to some reference point $y_0 \in I$. We assume that either Assumption \ref{hypsupport1} or Assumption \ref{hypsupport2} is satisfied for $E=I$. Lemma \ref{equivassumpt12} then guaranties that Assumption \ref{hypsupport1} is satisfied. 
According to Lemma \ref{discretendim1}, $Sym(X_{y_0})$ is discrete. 

We can thus apply Proposition \ref{groupappears}. Let us define an interne composition law $\star$ on $I$ by $y \star x := f_y(x)$. According to Proposition \ref{groupappears}, $(I,\star)$ is a $\mathcal{C}^k$-Lie group (for the natural differential structure on $I$, arising from the fact that it is an open subset of $\mathbb{R}$) with neutral element $y_0$, and $(y \longmapsto c_y)$ is a $\mathcal{C}^k$-Lie group homomorphism from $(I,\star)$ to $(\mathbb{R}_+^*, \times)$. According to Proposition \ref{levychangentps}, there is a L\'evy process $L$, on $(I,\star)$, such that if we set $\forall t \in [0, +\infty], \ \varphi(t) := \int_0^t 1/c_{L(s)} ds$, then $\zeta(X_{y_0}) = \varphi(+\infty)$ and 
\begin{eqnarray}
\forall \ 0 \leq t < \varphi(+\infty) = \zeta(X_{y_0}), \ X_{y_0}(t) = L \left (\varphi^{-1}(t) \right ). \label{XenfctdeL}
\end{eqnarray}
Let us define $g : I \longrightarrow \mathbb{R}$ as in \eqref{defgdim1}. Then, according to Lemma \ref{isomorphismedim1}, $g$ is well-defined, is a $\mathcal{C}^{k}$ diffeomorphism from $I$ to $\mathbb{R}$, and even a $\mathcal{C}^{k}$-Lie group isomorphism from $(I, \star)$ to $(\mathbb{R}, +)$. Let us define the real valued process $\xi$ as $\xi(t) := g(L(t))$. Since $g$ is a continuous group homomorphism from $(I, \star)$ to $(\mathbb{R}, +)$ we have clearly that $\xi$ is a real L\'evy process. Then, we have $\forall t \in [0, +\infty], \ \varphi(t) = \int_0^t 1/c_{g^{-1}(\xi(s))} ds$ and \eqref{XenfctdeL} can be re-written as 
\[ \forall \ 0 \leq t < \varphi(+\infty) = \zeta(X_{y_0}), \ X_{y_0}(t) = g^{-1} \left ( \xi \left (\varphi^{-1}(t) \right ) \right ). \]
Then, since $g^{-1}$ is a continuous group homomorphism from $(\mathbb{R}, +)$ to $(I,\star)$, and $(y \longmapsto c_y)$ is a continuous group homomorphism from $(I,\star)$ to $(\mathbb{R}_+^*, \times)$, we get that $(x \longmapsto c_{g^{-1}(x)})$ is a continuous group homomorphism from $(\mathbb{R}, +)$ to $(\mathbb{R}_+^*, \times)$. Therefore, there exists $\alpha \in \mathbb{R}$ such that $c_{g^{-1}(x)} = e^{-\alpha x}, \forall x \in \mathbb{R}$. Clearly, this $\alpha$ is equal to $-\log(c_{g^{-1}(1)})$ and "$\varphi(t) = \int_0^t 1/c_{g^{-1}(\xi(s))} ds$" can be re-written as $\varphi(t) = \int_0^t e^{\alpha \xi(s)} ds$, which terminates the proof of the asserted representation for $X$. 


Let us now justify Remark \ref{levyexpldim1}. Clearly we only need to justify that for all $0 \leq t < \zeta(X_{y_0})$, $\varphi^{-1}(t) = \int_0^t c_{X_{y_0}(u)} du$. In the above proof, an application of Proposition \ref{levychangentps} yielded the existence of a process $L$ satisfying the relation \eqref{XenfctdeL}, with $\forall t \in [0, +\infty], \ \varphi(t) = \int_0^t 1/c_{L(s)} ds$. According to Remark \ref{chgttimeintermofx}, this implies that we have $0 \leq t < \zeta(X_{y_0})$, $\varphi^{-1}(t) = \int_0^t c_{X_{y_0}(u)} du$, which is the claim. 

We now justify Theorem \ref{casr}. Under the assumptions of the theorem, Lemma \ref{equivassumpt12} guaranties that Assumption \ref{hypsupport1} is satisfied. Then, Lemma \ref{discretendim1} applies so the conditions are satisfied to apply Proposition \ref{existsgoodcomponents}. We can thus produce a family $((f_y, c_y), \ y \in I)$ of $\mathcal{C}^k$ good invariance components associated with $X$, relatively to the reference point $y_0$. Then Theorem \ref{casrexpl} applies and yields the direct part of the theorem. The reciprocal is straightforward to verify, following the procedure from the proof of Proposition \ref{levychangentpssensfacil}. 

\subsection{Proof of Theorems \ref{casr2expl} and \ref{casr2}}

We first prove Theorem \ref{casr2expl}. Let $\mathcal{D}$ be an open simply connected domain of $\mathbb{R}^2$, and $X$ be a process satisfying Definition \ref{self-simoursens} on $\mathcal{D}$. For some $k \geq 2$, $((f_y, c_y), \ y \in E)$ is a family of $\mathcal{C}^k$ good invariance components associated with $X$, relatively to some reference point $y_0 \in \mathcal{D}$. We assume that either Assumption \ref{hypsupport1} or Assumption \ref{hypsupport2} is satisfied for $E=\mathcal{D}$ and that $Sym(X_{y_0})$ is discrete. 

Let us define an interne composition law $\star$ on $\mathcal{D}$ by $y \star x := f_y(x)$. According to Proposition \ref{groupappears}, $(\mathcal{D},\star)$ is a $\mathcal{C}^k$-Lie group (for the natural differential structure on $\mathcal{D}$, arising from the fact that it is an open subset of $\mathbb{R}^2$) with neutral element $y_0$, and $(y \longmapsto c_y)$ is a $\mathcal{C}^k$-Lie group homomorphism from $(\mathcal{D},\star)$ to $(\mathbb{R}_+^*, \times)$. According to Proposition \ref{levychangentps}, there is a L\'evy process $L$, on $(\mathcal{D},\star)$, such that if we set $\forall t \in [0, +\infty], \ \varphi(t) := \int_0^t 1/c_{L(s)} ds$, then $\zeta(X_{y_0}) = \varphi(+\infty)$ and 
\begin{eqnarray}
\forall \ 0 \leq t < \varphi(+\infty) = \zeta(X_{y_0}), \ X_{y_0}(t) = L \left (\varphi^{-1}(t) \right ). \label{XenfctdeL2}
\end{eqnarray}

We now distinguish two cases: 

1) If $((f_y, c_y), \ y \in \mathcal{D})$ are \textit{commutative invariance components}, then the Lie group $(\mathcal{D},\star)$ is commutative. Let us define $g : \mathcal{D} \longrightarrow \mathbb{R}^2$ as in \eqref{defgdim2com}. Since the matrix $M$, defined a little before \eqref{defgdim2com}, is always invertible, Lemma \ref{isomorphismecasclassique} ensures that $g$ is well-defined, is a $\mathcal{C}^{k}$ diffeomorphism from $\mathcal{D}$ to $\mathbb{R}^2$, and even a $\mathcal{C}^{k}$-Lie group isomorphism from $(\mathcal{D}, \star)$ to $(\mathbb{R}^2, +)$. Let us define the $\mathbb{R}^2$ valued process $(\xi, \eta)$ as $(\xi(t), \eta(t)) := g(L(t))$, where $L$ is the L\'evy process on $(\mathcal{D},\star)$ that appears in \eqref{XenfctdeL2}. Since $g$ is a continuous group homomorphism we have that $(\xi, \eta)$ is a L\'evy process on $(\mathbb{R}^2, +)$. Also, replacing $L$ by $g^{-1}(\xi, \eta)$ we obtain 
\begin{eqnarray}
\forall t \in [0, +\infty], \ \varphi(t) = \int_0^t \frac1{c_{L(s)}} ds = \int_0^t \frac1{c_{g^{-1}(\xi(s), \eta(s))}} ds, \label{chgttpsfctlevy}
\end{eqnarray}
and \eqref{XenfctdeL2} can be re-written as \eqref{represxdim2com} (but with $\varphi(.)$ as in \eqref{chgttpsfctlevy}). It thus only remains to prove that $\varphi(t) = \int_0^t e^{\alpha \xi(s)} ds$ where $\alpha$ is as defined in the statement of the theorem, that is, $\alpha = -\log(c_{g^{-1}(1,0)})$. 

Note that $(z \longmapsto c_{g^{-1}(z)})$ is a $\mathcal{C}^k$-Lie group homomorphism from $(\mathbb{R}^2, +)$ to $(\mathbb{R}_+^*, \times)$. Therefore, there are $\beta_1, \beta_2 \in \mathbb{R}$ such that $\forall z = (z_1, z_2) \in \mathbb{R}^2, c_{g^{-1}(z)} = e^{\beta_1 z_1 + \beta_2 z_2}$. Differentiating at $(0,0)$ with respect to respectively $z_1$ and $z_2$ we get 
\begin{align}
\beta_1 \ = \ ^tY_0.Jg^{-1}(0,0).e_1 \ = \ ^tY_0.(Jg(y_0))^{-1}.e_1 \ = \ ^tY_0.M^{-1}.e_1 \ \ \ \text{and} \ \ \ \beta_2 \ = \ ^tY_0.M^{-1}.e_2. \label{exprbeta}
\end{align}
In the above expression we have used that, as defined in the statement of the theorem, $Y_0$ is the gradient at $y_0$ of the application $(y \longmapsto c_{y})$, and the fact that $Jg(y_0) = M$ (see Lemma \ref{isomorphismecasclassique}). 

If $Y_0 = \begin{pmatrix}
0 \\
0
\end{pmatrix}$ then we have clearly from \eqref{exprbeta} that $\beta_2=0$. If $Y_0 \neq \begin{pmatrix}
0 \\
0
\end{pmatrix}$ then recall that $M = \begin{pmatrix}
\pi_1(Y_0) & \pi_2(Y_0) \\
-\pi_2(Y_0) & \pi_1(Y_0)
\end{pmatrix}$ so $M^{-1} = \frac1{||Y_0||^2} \begin{pmatrix}
\pi_1(Y_0) & -\pi_2(Y_0) \\
\pi_2(Y_0) & \pi_1(Y_0)
\end{pmatrix}$, where $||.||$ denotes the Euclidian norm of vectors of $\mathbb{R}^2$. Combining with \eqref{exprbeta} we get $\beta_2 = 0$. Therefore, in any case, we have $\forall z = (z_1, z_2) \in \mathbb{R}^2, c_{g^{-1}(z)} = e^{\beta_1 z_1}$ and, evaluating at $z = (1,0)$, $\beta_1 = \log(c_{g^{-1}(1,0)}) = -\alpha$. Putting the obtained expression of $c_{g^{-1}(.)}$ into \eqref{chgttpsfctlevy} we get $\varphi(t) = \int_0^t e^{\alpha \xi(s)} ds$ as required, which terminates the proof. 

2) If $((f_y, c_y), \ y \in \mathcal{D})$ are not \textit{commutative invariance components}, then the Lie group $(\mathcal{D},\star)$ is non-commutative. Let us define $g : \mathcal{D} \longrightarrow \mathbb{R}^2$ as in Lemma \ref{isomorphismecasnonclassique}. Then, according to that lemma, $g$ is well-defined, is a $\mathcal{C}^{k}$ diffeomorphism from $\mathcal{D}$ to $\mathbb{R}^2$, and even a $\mathcal{C}^{k}$-Lie group isomorphism from $(\mathcal{D}, \star)$ to $(\mathbb{R}^2, T)$. Let us define the $\mathbb{R}^2$ valued process $Y$ as $Y(t) := g(L(t))$, where $L$ is the L\'evy process on $(\mathcal{D},\star)$ that appears in \eqref{XenfctdeL2}. Since $g$ is a continuous group homomorphism we have that $Y$ is a L\'evy process on $(\mathbb{R}^2, T)$. According to Proposition \ref{levynonclassiques}, there is a L\'evy process $(\xi, \eta)$ on $(\mathbb{R}^2, +)$ such that $Y(t) = (\xi(t), \int_0^t e^{\xi(s-)} d \eta(s)), \forall t \geq 0$. Therefore, we can replace $L(.)$ by $g^{-1}(\xi(.), \int_0^. e^{\xi(s-)} d \eta(s))$. We obtain 
\begin{eqnarray}
\forall t \in [0, +\infty], \ \varphi(t) = \int_0^t \frac1{c_{L(s)}} ds = \int_0^t \frac1{c_{g^{-1}(\xi(s), \int_0^s e^{\xi(u-)} d \eta(u))}} ds, \label{chgttpsfctlevy2}
\end{eqnarray}
and \eqref{XenfctdeL2} can be re-written as \eqref{represxdim2noncom} (but with $\varphi(.)$ as in \eqref{chgttpsfctlevy2}). It thus only remains to prove that $\varphi(t) = \int_0^t e^{\alpha \xi(s)} ds$ where $\alpha$ is as defined in the statement of the theorem, that is, $\alpha = -\log(c_{g^{-1}(1,0)})$. 

$(z \longmapsto c_{g^{-1}(z)})$ is a $\mathcal{C}^k$-Lie group homomorphism from $(\mathbb{R}^2, T)$ to $(\mathbb{R}_+^*, \times)$. Therefore, according to Lemma \ref{shapeofmorphismes}, there is $\beta \in \mathbb{R}$ such that $\forall z = (z_1, z_2) \in \mathbb{R}^2, c_{g^{-1}(z)} = e^{\beta z_1}$ and, evaluating at $z = (1,0)$, $\beta = \log(c_{g^{-1}(1,0)}) = -\alpha$. Putting the obtained expression of $c_{g^{-1}(.)}$ into \eqref{chgttpsfctlevy2} we get $\varphi(t) = \int_0^t e^{\alpha \xi(s)} ds$ as required, which terminates the proof. 

Here, we still have the alternative expression $\varphi^{-1}(t) = \int_0^t c_{X_{y_0}(u)} du$ for $0 \leq t < \zeta(X_{y_0})$. The justification is just as in the proof of Theorem \ref{casrexpl}. 

We now justify Theorem \ref{casr2}. Under the assumptions of the theorem, Proposition \ref{existsgoodcomponents} applies. We can thus produce a family $((f_y, c_y), \ y \in I)$ of $\mathcal{C}^k$ good invariance components associated with $X$, relatively to the reference point $y_0$. Then Theorem \ref{casr2expl} applies and yields the direct part of the theorem. Indeed, if the good invariance components are commutative then Theorem \ref{casr2expl} shows that \eqref{timechangingdim2}, \eqref{tpsviedim2} and \eqref{represdim2} are true for $\psi = g^{-1}$, $\beta = 0$ (and $g$, $(\xi, \eta)$, $\alpha$ are given in the first point of Theorem \ref{casr2expl}). If the good invariance components are not commutative then Theorem \ref{casr2expl} shows that \eqref{timechangingdim2}, \eqref{tpsviedim2} and \eqref{represdim2} are true for $\psi = g^{-1}$, $\beta = 1$ (and $g$, $(\xi, \eta)$, $\alpha$ are given in the second point of Theorem \ref{casr2expl}). The reciprocal is straightforward to verify, following the procedure from the proof of Proposition \ref{levychangentpssensfacil}.

\section{Techinal results} \label{technical}

We now state and prove Lemmas \ref{equivassumpt12} and \ref{discretendim1}, about the assumptions that are discussed in the end of Subsection \ref{introgssmp} and used all along the paper. 

\begin{lemme} \label{equivassumpt12}

Let $E$ be a connected locally compact separable metric space, and let $X$ be a process satisfying Definition \ref{self-simoursens} on $E$ with invariance components $((f_y, c_y), \ y \in E)$ relatively to some reference point $y_0 \in E$. Then, Assumption \ref{hypsupport2} implies Assumption \ref{hypsupport1}. 

\end{lemme}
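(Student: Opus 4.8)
The plan is to prove that $S := Supp(X_{y_0})$ is both open and closed in $E$; since $y_0\in S$ (because $X_{y_0}(0)=y_0$ a.s.) and $E$ is connected, this forces $S=E$, which is exactly Assumption \ref{hypsupport1}. Closedness is built into the definition \eqref{defsupp} of the support as a closure, so the whole difficulty lies in openness. Throughout I would use two elementary consequences of Definition \ref{self-simoursens}: evaluating \eqref{self-simoursenseq} at $t=0$ and using $X_{y_0}(0)=y_0$, $X_y(0)=y$ a.s. gives $f_y(y_0)=y$ for every $y\in E$; and, exactly as at the start of the proof of Proposition \ref{compatibility}, the identity $X_y\overset{\mathcal{L}}{=}f_y(X_{y_0}(c_y\,\cdot))$ together with the fact that $f_y$ is a homeomorphism and $c_y>0$ gives $Supp(X_y)=f_y(S)$ for all $y$, hence also $\mathring{Supp(X_y)}=f_y(\mathring S)$ since a homeomorphism preserves interiors.

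First I would turn Assumption \ref{hypsupport2} into the single useful fact that $y_0\in\mathring S$. If $y_1$ is a point witnessing Assumption \ref{hypsupport2}, then $y_1\in\mathring{Supp(X_{y_1})}=f_{y_1}(\mathring S)$, so $f_{y_1}^{-1}(y_1)\in\mathring S$; but $f_{y_1}(y_0)=y_1$, whence $f_{y_1}^{-1}(y_1)=y_0$, and therefore $y_0\in\mathring S$.

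The technical heart is a reachability (transitivity) statement: for every $z\in S$ one has $Supp(X_z)\subseteq S$. I would prove it by reproducing the conditioning argument of Proposition \ref{compatibility}. Fix $w\in Supp(X_z)$ and $\epsilon>0$. Since $z\in S$, for each $\delta>0$ there is $s_\delta\ge 0$ with $\mathbb{P}(X_{y_0}(s_\delta)\in B(z,\delta))>0$; conditioning the process $X_{y_0}(s_\delta+\cdot)$ on this event and applying the Markov property together with Definition \ref{self-simoursens}, the conditioned process has the law of $f_{A_\delta}(\tilde X_{y_0}(c_{A_\delta}\,\cdot))$, where $\tilde X_{y_0}\sim P_{y_0}$ is independent and $A_\delta$ converges in distribution to $z$ as $\delta\to 0$. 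By Slutsky's Lemma and the continuity of $(y,x)\mapsto f_y(x)$ and $y\mapsto c_y$ assumed in Definition \ref{self-simoursens}, the finite-dimensional distributions converge to those of $f_z(\tilde X_{y_0}(c_z\,\cdot))=X_z$. Choosing, via \eqref{defsupp}, a time $r$ with $\mathbb{P}(X_z(r)\in B(w,\epsilon))>0$ and applying the portmanteau lower bound for the open ball $B(w,\epsilon)$, I obtain $\mathbb{P}(X_{y_0}(s_\delta+r)\in B(w,\epsilon))>0$ for $\delta$ small, so $w\in S$.

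To conclude, let $z\in S$ be arbitrary. Then $f_z(\mathring S)$ is open, it contains $f_z(y_0)=z$, and by the transitivity of the previous paragraph $f_z(\mathring S)\subseteq f_z(S)=Supp(X_z)\subseteq S$; hence $z\in\mathring S$. As $z\in S$ was arbitrary, $S=\mathring S$ is open, and the connectedness argument of the first paragraph gives $S=E$. The step I expect to be the main obstacle is this transitivity statement, and within it the justification that the finite-dimensional distributions of the conditioned processes converge to the law of $X_z$: as in Proposition \ref{compatibility} this rests on Slutsky's Lemma and the continuity hypotheses on the invariance components, the only genuinely delicate point being the passage through the random time-change $c_{A_\delta}$, which I handle by working with the open target $B(w,\epsilon)$ and the portmanteau lower bound rather than with convergence at a fixed evaluation.
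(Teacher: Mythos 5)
Your proof is correct and follows essentially the same route as the paper's: show $Supp(X_{y_0})$ is a neighborhood of each of its points via the transitivity statement $z\in Supp(X_{y_0})\Rightarrow Supp(X_z)\subseteq Supp(X_{y_0})$ together with $y_0\in\mathring{\widehat{Supp(X_{y_0})}}$, then conclude by connectedness since the support is closed and nonempty. The only cosmetic difference is in the transitivity step, where the paper fixes the time $t$ and uses weak continuity of the law of $X_w(t)$ in $w$ to propagate positivity to a neighborhood $\mathcal{U}$ of $z$ before applying the Markov property, whereas you condition on $\{X_{y_0}(s_\delta)\in B(z,\delta)\}$ and pass to the limit with the portmanteau bound — two equivalent formulations of the same continuity argument.
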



\begin{proof}

Let us assume that Assumption \ref{hypsupport2} holds. Let $y$ be such that $y \in \mathring{\widehat{Supp(X_{y})}}$. Clearly we have $Supp(X_{y}) = f_y(Supp(X_{y_0}))$ so, since $f_y$ is an homeomorphism that maps $y_0$ to $y$, we have $y_0 \in \mathring{\widehat{Supp(X_{y_0})}}$. Similarly, $Supp(X_{z}) = f_z(Supp(X_{y_0}))$ with $f_z$ an homeomorphism that maps $y_0$ to $z$, so $z \in \mathring{\widehat{Supp(X_{z})}}$ for all $z \in E$. In other words, for all $z \in E$, $Supp(X_{z})$ is a neighborhood of $z$. 


We now justify that $z \in Supp(X_{y_0}) \Rightarrow Supp(X_{z}) \subset Supp(X_{y_0})$. Let $z \in Supp(X_{y_0})$, $v \in Supp(X_z)$ and $\epsilon > 0$, we need to prove the existence of $r \geq 0$ such that $\mathbb{P} (X_{y_0}(r) \in B(v, \epsilon)) > 0$. By definition of $Supp(X_z)$, there is $t \geq 0$ such that $\mathbb{P} (X_z(t) \in B(v, \epsilon)) > 0$. By self-similarity (Definition \ref{self-simoursens}), $\mathbb{P} (f_z(X_{y_0}(c_z t)) \in B(v, \epsilon)) > 0$. Since $(y,x) \longmapsto f_y(x)$ and $y \longmapsto c_y$ are continuous, and $X_{y_0}$ is stochastically continuous, we have that $f_w(X_{y_0}(c_w t))$ converges in distribution to $f_z(X_{y_0}(c_z t))$ as $w$ goes to $z$. Therefore, there is a neighborhood $\mathcal{U}$ of $z$ such that 
\begin{eqnarray}
\forall w \in \mathcal{U}, \ \mathbb{P} (X_w(t) \in B(v, \epsilon)) = \mathbb{P} (f_w(X_{y_0}(c_w t)) \in B(v, \epsilon)) > 0. \label{preuveequiv1}
\end{eqnarray}
Then, since $\mathcal{U}$ is a neighborhood of $z \in Supp(X_{y_0})$, there is $s \geq 0$ such that 
\begin{eqnarray}
\mathbb{P} (X_{y_0}(s) \in \mathcal{U}) > 0. \label{preuveequiv2}
\end{eqnarray}
The combination of \eqref{preuveequiv2}, \eqref{preuveequiv1}, and the Markov property at time $s$ yields that $\mathbb{P} (X_{y_0}(t+s) \in B(v, \epsilon)) > 0$, proving the claim. 

We have established that for each $z \in Supp(X_{y_0})$, $Supp(X_{y_0})$ contains $Supp(X_{z})$ which is a neighborhood of $z$. Therefore $Supp(X_{y_0})$ is open. Since it also close (see \eqref{defsupp}) and nonempty (it contains $y_0$, as previously justified), and since $E$ is connected, we conclude that $Supp(X_{y_0}) = E$, that is, Assumption \ref{hypsupport1}. 


\end{proof}


\begin{lemme} \label{discretendim1}

Let $I \subset \mathbb{R}$ be an open interval, $y_0 \in I$, $X_{y_0}$ be a Markovian process on $I$ that satisfies Assumption \ref{hypsupport1} with $E=I$, and let $Sym(X_{y_0})$ be defined as in \eqref{defsym}. Then $Sym(X_{y_0})$ is discrete. 

\end{lemme}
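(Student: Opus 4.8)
The plan is to exploit the fact that a homeomorphism of an interval is strictly monotone, together with the fact that a symmetry must preserve every one-dimensional marginal of $X_{y_0}$, in order to show that $Sym(X_{y_0})$ is in fact \emph{finite} (with at most two elements), which is a fortiori discrete in $\mathcal{C}^0(I,I)$.

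First I would record two elementary observations. For each $t \geq 0$ let $\mu_t$ denote the law of $X_{y_0}(t)$ restricted to $I$ (a sub-probability measure on $I$ when the process may be killed). If $h \in Sym(X_{y_0})$, then $X_{y_0} \overset{\mathcal{L}}{=} h(X_{y_0})$ forces, by equality of one-dimensional marginals, that the pushforward $h_* \mu_t$ equals $\mu_t$ for all $t$. Moreover $Sym(X_{y_0})$ is a group under composition: it contains $id_I$, is stable under composition, and is stable under inverse because $X_{y_0} \overset{\mathcal{L}}{=} h(X_{y_0})$ implies $h^{-1}(X_{y_0}) \overset{\mathcal{L}}{=} X_{y_0}$. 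Finally, every $h \in Sym(X_{y_0})$, being a homeomorphism of the interval $I$, is either strictly increasing or strictly decreasing.

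The heart of the argument is the increasing case: I claim every increasing $h \in Sym(X_{y_0})$ equals $id_I$. Writing $F_t(x) := \mu_t(I \cap (-\infty,x])$ for the (sub-)distribution function of $\mu_t$, the invariance $h_*\mu_t = \mu_t$ together with $h^{-1}((-\infty,x]) = (-\infty,h^{-1}(x)]$ (valid since $h$ is increasing) yields $F_t(h^{-1}(x)) = F_t(x)$ for every $x \in I$ and every $t \geq 0$. If $h \neq id_I$, I would pick $z$ with $h^{-1}(z) \neq z$; then the nonempty open subinterval $J$ of $I$ with endpoints $z$ and $h^{-1}(z)$ satisfies $\mu_t(J) = F_t(z \vee h^{-1}(z)) - F_t(z \wedge h^{-1}(z)) = 0$ for all $t$, hence $J \cap Supp(X_{y_0}(t)) = \emptyset$ for all $t$. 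But Assumption \ref{hypsupport1} gives $\overline{\bigcup_{t\geq 0} Supp(X_{y_0}(t))} = I$ (see \eqref{defsupp}), so $\bigcup_t Supp(X_{y_0}(t))$ is dense in $I$ and must meet the nonempty open set $J$, a contradiction. Therefore $h = id_I$.

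It remains to treat decreasing symmetries, and here the group structure finishes the job quickly: if $h_1, h_2 \in Sym(X_{y_0})$ are both decreasing, then $h_1 \circ h_2$ is increasing and lies in $Sym(X_{y_0})$, so by the previous paragraph $h_1 \circ h_2 = id_I$, i.e. $h_2 = h_1^{-1}$. Taking $h_2 = h_1$ shows that any decreasing symmetry is an involution, and taking $h_2$ arbitrary shows that it must coincide with $h_1^{-1} = h_1$; hence there is at most one decreasing symmetry. Consequently $Sym(X_{y_0})$ has at most two elements, so it is finite, and any finite subset of the (metrizable, hence Hausdorff) space $\mathcal{C}^0(I,I)$ is discrete, which is the claim. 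The only point requiring a little care, and the main technical obstacle, is the bookkeeping in the increasing case when the process can be killed, so that the $\mu_t$ are sub-probability measures on $I$ and one works with $Supp$ as defined in \eqref{defsupp}; but the distribution-function computation and the density argument are insensitive to the missing mass at $\Delta$, since $h$ fixes $\Delta$ and maps $I$ onto $I$.
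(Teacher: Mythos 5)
Your proof is correct and follows essentially the same route as the paper's: both use monotonicity of a homeomorphism of an interval together with invariance of the one-dimensional marginals and Assumption \ref{hypsupport1} to force every increasing symmetry to be $id_I$, and then the group structure to show there is at most one decreasing symmetry. The only cosmetic difference is that you argue by contradiction via a nonempty open interval of zero mass under every $\mu_t$, whereas the paper derives the direct estimate $h(x)\in\,]x-\epsilon,x+\epsilon[$ for every $\epsilon>0$; these are two phrasings of the same idea.
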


\begin{proof}

Let $h$ be an increasing element of $Sym(X_{y_0})$, and let us fix $x \in I$. Because of Assumption \ref{hypsupport1} and of the definition of the support (see \eqref{defsupp}), we have that for any $\epsilon > 0$, we can find $t_1, t_2 \geq 0$ such that $\mathbb{P} (X_{y_0}(t_1) \in ]x, x+ \epsilon[) > 0$ and $\mathbb{P} (X_{y_0}(t_2) \in ]x- \epsilon, x[) > 0$. Therefore we have 
\begin{eqnarray}
\mathbb{P} (X_{y_0}(t_1) > x) > \mathbb{P} (X_{y_0}(t_1) > x + \epsilon) \ \ \ \text{and} \ \ \ \mathbb{P} (X_{y_0}(t_2) > x) < \mathbb{P} (X_{y_0}(t_2) > x - \epsilon). \label{presquecroissance}
\end{eqnarray}
$h^{-1} \in Sym(X_{y_0})$ and is also increasing so we have 
\begin{align*}
\mathbb{P} (X_{y_0}(t_1) > x) = \mathbb{P} (h^{-1}(X_{y_0}(t_1)) > x) = \mathbb{P} (X_{y_0}(t_1) > h(x)) \\
\mathbb{P} (X_{y_0}(t_2) > x) = \mathbb{P} (h^{-1}(X_{y_0}(t_2)) > x) = \mathbb{P} (X_{y_0}(t_2) > h(x)). 
\end{align*}
Combining with \eqref{presquecroissance} we obtain $\mathbb{P} (X_{y_0}(t_1) > h(x)) > \mathbb{P} (X_{y_0}(t_1) > x + \epsilon)$ and $\mathbb{P} (X_{y_0}(t_2) > h(x)) < \mathbb{P} (X_{y_0}(t_2) > x - \epsilon)$ which implies $h(x) \in ]x - \epsilon, x +\epsilon[$. Since this is true for any $\epsilon > 0$ we get $h(x) = x$, and since $x \in I$ is arbitrary we conclude that $h = id_I$. 

Since any $h \in Sym(X_{y_0})$ is an homeomorphisms of $I$, it is either increasing or decreasing. Let $h$ be a decreasing element of $Sym(X_{y_0})$, if such an element exists. If $\tilde h$ is any other decreasing element of $Sym(X_{y_0})$, then $\tilde h^{-1} \circ h$ is an increasing element of $Sym(X_{y_0})$ so, by the previous part, it equals $id_I$, so $\tilde h = h$. In conclusion $Sym(X_{y_0})$ contains at most two elements so it is discrete. 

\end{proof}

\bibliographystyle{plain}
\bibliography{thbiblio}

\end{document}